\def\draftdate{November 2, 2010}
\newdimen\wordsquish\wordsquish=1ex
\newdimen\redsquish\redsquish=0ex
\newcommand{\bs}{\backslash}
\newcommand{\ssdot}{\bullet}
\newcommand{\subdot}{_\ssdot}
\newcommand{\dsubdot}{_{\ssdot\ssdot}}
\newcommand{\Sdot}[1][\ssdot]{S_{#1}}
\newcommand{\Spdot}[1][\ssdot]{S'_{#1}}
\newcommand{\Spdotq}[1][\ssdot,\dotsc,\ssdot]{S^{\prime(q)}_{#1}}
\newcommand{\Spdotmac}[2]{S^{\prime(#1)}_{#2}}
\newcommand{\Sdotq}[1][\ssdot,\dotsc,\ssdot]{S^{(q)}_{#1}}
\newcommand{\co}{\mathrm{co}}
\newcommand{\w}{\mathrm{w}}
\newcommand{\COF}[1]{\aC\bs_{\co}#1}
\newcommand{\Lw}[1]{L#1_{\w}}
\newcommand{\Lco}[1]{L#1_{\co}}
\newcommand{\Laut}[1]{\HAut #1}\DeclareMathOperator\HAut{hAut}
\newcommand{\wco}{\widetilde\co}
\newcommand{\W}{\mathbf{W}}
\newcommand{\Wi}{\mathbf{W^{-1}}}
\newcommand{\C}{\mathbf{C}}
\newcommand{\WC}{\Wi\C}
\newcommand{\WCA}{\Wi\C_{A}}
\newcommand{\WW}{\Wi\W}
\newcommand{\WCW}{\Wi\C\Wi}
\newcommand{\WCdotsW}{\mathbf{W^{-1}C\dotsb W^{-1}}}
\mathchardef\varDelta="7101
\newcommand{\DDelta}{{\mathbf \varDelta}}
\mathchardef\varUpsilon="7107
\newcommand{\Word}{\mathbf{\varUpsilon}}
\mathchardef\varPsi="7109
\mathchardef\varXi="7104
\let\iso\cong
\newcommand{\htp}{\simeq}
\renewcommand{\to}{\mathchoice{\longrightarrow}{\rightarrow}{\rightarrow}{\rightarrow}}
\let\catsymbfont\mathcal
\newcommand{\aA}{{\catsymbfont{A}}}
\newcommand{\aB}{{\catsymbfont{B}}}
\newcommand{\aC}{{\catsymbfont{C}}}
\newcommand{\aD}{{\catsymbfont{D}}}
\newcommand{\aP}{{\catsymbfont{P}}}
\newcommand{\aS}{{\catsymbfont{S}}}
\def\quickop#1{\expandafter\DeclareMathOperator\csname
#1\endcsname{#1}}
\newcommand{\MCp}{\MC^{s}}
\newcommand{\wcAr}{\Ar_{\wco}}
\DeclareMathOperator*\hocoendlim{\hocoend\mathstrut}
\numberwithin{equation}{section}
\newtheorem{thm}[equation]{Theorem}
\newtheorem*{thm*}{Theorem}
\newtheorem{cor}[equation]{Corollary}
\newtheorem{lem}[equation]{Lemma}
\newtheorem{prop}[equation]{Proposition}
\theoremstyle{definition}
\newtheorem{defn}[equation]{Definition}
\newtheorem{ex}[equation]{Example}
\newtheorem*{conv}{Convention}
\theoremstyle{remark}
\newtheorem{rem}[equation]{Remark}
\newcommand{\term}[1]{\textit{#1}}
\begin{document}

\title%
{Algebraic $K$-theory and abstract homotopy theory}

\author{Andrew J. Blumberg}
\address{
Department of Mathematics, The University of Texas,
Austin, TX \ 78712}
\email{blumberg@math.utexas.edu}
\thanks{The first author was supported in part by an NSF postdoctoral
fellowship, NSF grant DMS-0111298, and a Clay Mathematics Institute Liftoff Fellowship}

\author{Michael A. Mandell}
\address{Department of Mathematics, Indiana University,
Bloomington, IN \ 47405}
\email{mmandell@indiana.edu}
\thanks{The second author was supported in part by NSF grants
DMS-0504069 and DMS-0804272.}

\date{\draftdate}
\subjclass[2000]{Primary 19D99; Secondary 55U35}

\begin{abstract}
We decompose the $K$-theory space of a Waldhausen category in terms of
its Dwyer-Kan simplicial localization.  This leads to a criterion for
functors to induce equivalences of $K$-theory spectra that generalizes
and explains many of the criteria appearing in the literature.
We show that under mild hypotheses,  a weakly exact
functor that induces an 
equivalence of homotopy categories induces an equivalence of
$K$-theory spectra. 
\end{abstract}

\maketitle

\section{Introduction}

Quillen's higher algebraic $K$-theory provides a powerful and subtle
invariant of rings and schemes.  Waldhausen reformulated the
definition and generalized the original input from algebra to
homological algebra or homotopy theory; in place of exact categories,
which are additive categories with a notion of exact sequence,
Waldhausen's construction allows categories equipped with weak
equivalences (quasi-isomorphisms) and a notion of cofibration
sequence.  Although designed to apply homotopy theory and $K$-theory
to geometric topology, the added flexibility of Waldhausen $K$-theory
turns out to be tremendously useful even when studying the original
algebraic objects.  For instance, the remarkable localization and
Mayer-Vietoris theorems of Thomason and Trobaugh
\cite[7.4,8.1]{TTGrothFest}, which relate the $K$-theory of a scheme to
the $K$-theories of open covers, depend on techniques possible
only in Waldhausen's framework.  One of the most important of these
techniques is the ability to change homological models, using
different categories of complexes with equivalent $K$-theory.  A
central question then becomes when do different models
yield the same $K$-theory \cite[1.9.9]{TTGrothFest}?  More generally,
what is $K$-theory made of?

In terms of comparing different models, Waldhausen's approximation
theorem \cite[1.6.4]{Wald} stands as the 
prototypical example of a $K$-theory equivalence criterion.  Thomason
and Trobaugh \cite[1.9.8]{TTGrothFest} specialized Waldhausen's
approximation theorem to certain categories of complexes, where for
appropriate complicial functors, an equivalence of derived categories
implies an equivalence of $K$-theory.  Based on this result and work
of the Grothendieck school on $K_0$, they articulated the perspective
that higher algebraic $K$-theory ``essentially depends only on the
derived category'' \cite[1.9.9]{TTGrothFest}, with a caveat about
choice of models.  Indeed, Schlichting \cite{SchlichtingDerivedIneq}
subsequently constructed examples of Frobenius categories with
abstractly equivalent derived categories but different algebraic
$K$-theory groups.  On the other hand, for the algebraic $K$-theory of
rings, Dugger and Shipley \cite{DuggerShipley} proved that an abstract
equivalence of derived categories does imply a $K$-theory
equivalence. 
Their argument relies on the folk theorem that a Quillen equivalence
of model categories induces an equivalence of $K$-theory of
appropriate Waldhausen subcategories.
To\"en and Vezzosi \cite{ToenVezzosi} generalized this from
Quillen equivalences to equivalences on
Dwyer-Kan simplicial localizations \cite{DKHammock}.  Other approaches
have tried to construct higher algebraic $K$-theory directly from the
derived category \cite{Neeman1} (and sequels) or using the
Heller-Grothendieck-Keller theory of ``derivators'' \cite{Grothderiv,
Grothpursue,Hellerhom,Kellerderiv} in for example \cite{MaltK}.

In this paper, we describe a precise relationship between the
algebraic $K$-theory space and the Dwyer-Kan simplicial localization
of the Waldhausen category.  To any category with weak equivalences,
the Dwyer-Kan simplicial localization associates simplicial mapping
spaces that have the ``correct'' homotopy type \cite{DKModel} and
that characterize the higher homotopy theory of the category
\cite[5.7]{MandellHH}.  Our description elucidates the nature of the
homotopical information encoded by $K$-theory, and leads to a very
general criterion for functors to induce an equivalence of $K$-theory
spectra, one that includes the approximation theorems above as special
cases.  We regard this decomposition as providing a conceptual
explanation of the phenomena described in the preceding paragraphs.

\begin{thm}\label{thmhocoendone}
Let $\aC$ be a Waldhausen category such that every map admits a 
factorization as a cofibration followed by a weak equivalence and
assume that the weak equivalences satisfy the
two out of three property.
For $n>1$, the nerve of $\w\Sdot[n]\aC$ is weakly equivalent to the homotopy coend
\[
\hocoendlim_{(X_{1},\dotsc,X_{n})\in \w\aC^{n}}L\aC(X_{n-1},X_{n})\times \dotsb \times
L\aC(X_{1},X_{2}),
\]
where $L\aC$ denotes the Dwyer-Kan hammock localization.
\end{thm}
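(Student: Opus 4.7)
The plan is to reduce the comparison to a concrete zigzag model of the hammock localization and then apply Quillen's Theorem A. First, under the factorization hypothesis and two-out-of-three, a Dwyer--Kan calculus-of-fractions argument should identify $L\aC(X,Y)$, up to weak equivalence, with the nerve of the category $\mathcal{M}(X,Y)$ whose objects are short zigzags $X \hookrightarrow Z \xleftarrow{\htp} Y$ (left map a cofibration, right map a weak equivalence), and whose morphisms are weak equivalences $Z \to Z'$ compatible with both endpoints. Using this model together with Thomason's identification of hocolims with nerves of Grothendieck constructions, the homotopy coend in the statement is weakly equivalent to the nerve of a single category $\mathcal{M}_n$ whose objects are tuples $(X_1,\dotsc,X_n)$ together with zigzag data $X_i \hookrightarrow Z_i \xleftarrow{\htp} X_{i+1}$ for $i=1,\dotsc,n-1$, and whose morphisms are componentwise weak equivalences compatible with the zigzag structure.

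Second, I would construct an explicit comparison functor $\Phi\colon \w\Sdot[n]\aC \to \mathcal{M}_n$. An object of $\Sdot[n]\aC$ contains a cofibration chain $X_1 \hookrightarrow X_2 \hookrightarrow \dotsb \hookrightarrow X_n$, and the additional subquotient data forms a contractible enrichment that can be suppressed. Send such a chain to the object of $\mathcal{M}_n$ whose $i$th zigzag is $X_i \hookrightarrow X_{i+1} \xleftarrow{=} X_{i+1}$. A weak equivalence of chains in $\w\Sdot[n]\aC$ induces a morphism in $\mathcal{M}_n$ by the same formula, yielding the functor $\Phi$ and a natural map of nerves.

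Third, I would show $\Phi$ induces an equivalence of nerves via Quillen's Theorem A: for each object $\zeta = (X_i, Z_i)$ of $\mathcal{M}_n$ one must prove the comma category $\Phi/\zeta$ has contractible nerve. An object of $\Phi/\zeta$ consists of a cofibration chain $Y_1 \hookrightarrow \dotsb \hookrightarrow Y_n$ together with weak equivalences $Y_i \xrightarrow{\htp} X_i$ whose induced maps $Y_{i+1} \to Z_i$ (via $Y_{i+1} \xrightarrow{\htp} X_{i+1}$ and the inverted $Z_i \xleftarrow{\htp} X_{i+1}$) are coherent with the cofibrations $Y_i \hookrightarrow Y_{i+1}$ over the given $X_i \hookrightarrow Z_i$. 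I would prove contractibility by induction on $n$: the inductive step uses the factorization hypothesis to build the new stage $Y_{n+1}$ from $Y_n$ and a prescribed zigzag, and two-out-of-three to see that the resulting comparison map is indeed a weak equivalence.

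The main obstacle is this contractibility of the comma categories, which is where both hypotheses do the essential work. The delicate issue is coherence across stages: the factorization hypothesis supplies replacements one stage at a time, but one needs the whole space of replacements that are compatible with the already-built chain to be contractible. I expect this will require a Waldhausen-style gluing lemma (a consequence of the Waldhausen axioms together with two-out-of-three) controlling how consecutive replacements interact, combined with the standard cofinality of cofibrant factorizations among all factorizations, to conclude that each fiber is contractible.
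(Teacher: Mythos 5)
Your overall instinct---replace the hammock mapping spaces by small categories of zigzags obtained from a homotopy calculus of fractions, and then compare with $\w\Sdot[n]\aC$ categorically---is indeed the right flavor (the paper's proof also runs through word-category models $\WC$ and $\WCW$, established via a homotopy calculus of left fractions built from mapping cylinders). But there are two genuine gaps. The first is the variance problem, which your outline never confronts and which is exactly what makes this theorem delicate. Your model $\mathcal{M}(X,Y)$ (cofibration forward, weak equivalence backward) is functorial in weak equivalences of $X$ and of $Y$ only \emph{contravariantly} in both variables (precompose at either end), and the cofibration condition on the forward leg is not even preserved by precomposition in $X$. The homotopy coend, however, requires covariance in one of the two slots in which each $X_i$ appears; the hammock spaces supply this because $L\aC$ contains backward copies of the weak equivalences, but a two-leg zigzag category does not. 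Consequently there is no Grothendieck-type category $\mathcal{M}_n$ of the kind you describe whose nerve is the homotopy coend in the statement: "componentwise weak equivalences compatible with the zigzag structure" cannot be made precise without either inverting weak equivalences on the nose or passing to a three-leg model such as $\WCW$ (backward--forward--backward), which has the \emph{opposite} variance of a function complex and therefore must be compared to the stated coend by a separate reversal argument (the paper's Lemma~\ref{lemreversi}, proved by a fiberwise universal-simplicial-quasifibration analysis). Your appeal to "Thomason's identification of hocolims with nerves of Grothendieck constructions" does not apply to this coend as stated, and the map you propose out of $\w\Sdot[n]\aC$ (sending a chain to the zigzags with identity backward legs and pushing forward along a weak equivalence of chains) implicitly uses exactly the covariant functoriality that your model lacks.

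The second gap is the Theorem~A step. Even granting a corrected target category, the entire content of the theorem is concentrated in the claim that the comma categories $\Phi/\zeta$ are contractible, and your sketch defers this to "a Waldhausen-style gluing lemma" plus "cofinality of cofibrant factorizations" with an induction whose inductive step is precisely the unresolved coherence issue you name. Nothing in the factorization and two-out-of-three hypotheses hands you contractibility of these categories of compatible replacement chains directly; establishing statements of this kind is where the paper does its real work, and it does so by a different mechanism: a reduction of $\w\Sdot[n]\aC$ to the category $F'_{n-1}\aC$ of chains of weak cofibrations, a chain of bisimplicial resolutions ($\mathbb{A}$, $\mathbb{B}$, $\mathbb{C}$) related by levelwise simplicial contractions and homotopy equivalences, and Theorem~B-style arguments through universal simplicial quasifibrations (Propositions~\ref{prophocolimb} and~\ref{prophocoend}, Theorem~\ref{thmC}), rather than a cofinality argument via Quillen's Theorem~A. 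So as written your proposal is an outline whose two load-bearing steps---the identification of the coend with a single category and the contractibility of the comma categories---are each either unavailable in the stated form or unproved; repairing them would essentially force you back onto the paper's machinery (the $\WCW$ model, the variance-reversal lemma, and quasifibration/bisimplicial arguments in place of Theorem~A).
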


The homotopy coend comes with a map to the classifying space
$B\w\aC^{n}$.  We can identify this space and the homotopy fiber of the
map intrinsically in terms of the Dwyer-Kan simplicial localization.
For $X$ an object in $\aC$, let $\Laut{X}$ denote the subspace of
$L\aC(X,X)$ consisting of the components corresponding to the weak
equivalences; precisely, $\Laut{X}$ consists of those components which
have a vertex where all the forward 
maps are weak equivalences. Then $\Laut(X)$ is a grouplike monoid of
homotopy automorphisms of $X$ in $L\aC$.  

\begin{thm}\label{thmhocoendtwo}
Let $\aC$ be as in Theorem~\ref{thmhocoendone}.  For
$n\geq 1$, the nerve of $\w\Sdot[n]\aC$ is weakly equivalent 
to the total space
of a fibration where the base is 
the disjoint union of 
\[
B\Laut{X_{n}}\times \dotsb \times B\Laut{X_{1}}
\]
over $n$-tuples of weak equivalences
classes of objects of $\aC$, and the fiber is equivalent to
\[
L\aC(X_{n-1},X_{n})\times \dotsb \times
L\aC(X_{1},X_{2})
\]
for $n>1$ and contractible for $n=1$.
\end{thm}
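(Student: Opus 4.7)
The plan is to deduce Theorem~\ref{thmhocoendtwo} from Theorem~\ref{thmhocoendone} by exhibiting the homotopy coend there as the total space of a natural quasi-fibration over $(B\w\aC)^{n}$, and then identifying the base via the Dwyer--Kan theorem on classification diagrams.

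To construct the fibration, I would realize the homotopy coend as the geometric realization of a two-sided simplicial bar construction. Collapsing each mapping-space factor $L\aC(X_{i},X_{i+1})$ to a point then produces a natural projection
\[
\hocoendlim_{(X_{1},\dotsc,X_{n})\in \w\aC^{n}}L\aC(X_{n-1},X_{n})\times \dotsb \times L\aC(X_{1},X_{2})
\longrightarrow B(\w\aC^{n})\iso (B\w\aC)^{n}.
\]
The standard fibration lemma for bar constructions, once the Reedy cofibrancy of the underlying simplicial object is verified, shows that this map is a quasi-fibration with fiber over a vertex $(X_{1},\dotsc,X_{n})$ equal to $L\aC(X_{n-1},X_{n})\times \dotsb \times L\aC(X_{1},X_{2})$.

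Next I would invoke the Dwyer--Kan decomposition of the classification diagram: under the two-out-of-three hypothesis on $\w\aC$ inherited from Theorem~\ref{thmhocoendone}, the nerve $B\w\aC$ is weakly equivalent to $\coprod_{[X]} B\Laut{X}$, with $[X]$ running over weak equivalence classes of objects. Taking the $n$-fold product distributes the coproducts and yields precisely the base claimed, and pulling the quasi-fibration back along this equivalence finishes the case $n\geq 2$. For $n=1$, the identification $\Sdot[1]\aC=\aC$ shows that the nerve of $\w\Sdot[1]\aC$ is $B\w\aC$, which is tautologically a fibration with contractible fibers over $\coprod_{[X_{1}]}B\Laut{X_{1}}$ via the same decomposition.

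The main obstacle I anticipate is the quasi-fibration step: one must handle the mixed variance of $L\aC(-,-)$ (contravariant in the source, covariant in the target) carefully when setting up the bar construction, and establish enough cofibrancy that its realization both computes the homotopy coend of Theorem~\ref{thmhocoendone} and yields a bona fide quasi-fibration rather than merely a map with the correct homotopy fibers on average. The homotopical good behavior of the Dwyer--Kan mapping complexes should make these hypotheses tractable, but matching the bar-construction model precisely with the coend featured in Theorem~\ref{thmhocoendone} will require some bookkeeping. Once that is in place, the remainder of the argument is formal assembly.
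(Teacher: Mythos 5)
Your outline is in fact the same decomposition the paper uses: part~(iii) of Theorem~\ref{thmhocoend} is deduced from the hocoend description of part~(i), the projection of the hocoend to $N(\w\aC^{n})$, and the identification $N\w\aC\simeq\coprod B\Laut{X}$ of part~(ii); your $n=1$ reduction via $\Sdot[1]\aC\iso\aC$ is also how the paper handles that case. The first place your justification goes wrong is the quasifibration step. Reedy cofibrancy of the bar/hocoend construction only guarantees that the realization has the correct homotopy type; geometric realization does not preserve quasifibrations, and no cofibrancy hypothesis makes the collapse map to $N(\w\aC^{n})$ a quasifibration. The hypothesis that actually does the work is that $L\aC(-,-)$ carries every morphism of $\w\aC^{n}$ (weak equivalences in each variable) to a weak equivalence of mapping complexes; given that, a Theorem~B-style criterion --- Proposition~\ref{prophocoend} in the paper, i.e.\ Waldhausen's simplicial Theorem~B applied simplexwise --- shows the projection is a universal simplicial quasifibration with fiber $L\aC(X_{n-1},X_{n})\times\dotsb\times L\aC(X_{1},X_{2})$. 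So the ``standard fibration lemma'' you want is a homotopy-invariance/Theorem~B statement, not a cofibrancy statement; as written this step would not go through.

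The second, more substantive gap is the identification of the base. What Dwyer--Kan generalities give for an arbitrary category of weak equivalences is that the loop space of $N\w\aC$ at $X$ is the self-mapping space $L(\w\aC)(X,X)$ of the simplicial localization of $\w\aC$ with respect to all of its morphisms. The theorem asserts the base is $\coprod B\Laut{X}$ with $\Laut{X}$ the weak-equivalence components of $L\aC(X,X)$, computed in the hammock localization of all of $\aC$. The comparison $L(\w\aC)(X,X)\to\Laut{X}$ is not formal, and it is exactly where the factorization hypothesis enters: the paper obtains it from the homotopy calculus of left fractions (Theorem~\ref{thmHCLF}, Theorem~\ref{thmwordcomp}, Corollary~\ref{corcomp}), or equivalently by the Theorem~\ref{thmC} argument in Section~\ref{secpdecomp} identifying the loop space of $N\w\aC$ at $X$ with $N\WW(X,X)\simeq\Laut{X}$. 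Invoking ``the Dwyer--Kan decomposition'' with $\Laut{X}\subset L\aC(X,X)$ at this level of generality therefore assumes precisely the comparison your hypotheses are needed to establish; you must supply that step (it is also what your $n=1$ case rests on) before the assembly you describe is complete.
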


This description of the $K$-theory spaces may provide a
replacement in the abstract setting for certain $K$-theory arguments
that rely on the plus construction description, which is only
available for the $K$-theory of rings or connective ring spectra.
We expect this to apply to the study of Waldhausen's chromatic
convergence conjecture and related localization conjectures of
Rognes.  This is work in progress.

Currently, we can apply these theorems to the models question of Thomason
and Tro\-baugh.  We show that under mild hypotheses, a weakly 
exact functor that induces an equivalence of homotopy categories
induces an equivalence of $K$-theory spectra. The hypotheses hold in
particular in Waldhausen categories that come from model categories.
The main hypothesis is that any map in $\aC$ admits a
factorization as a cofibration followed by a weak 
equivalence; under this hypothesis, we say that $\aC$ 
\term{admits factorization}. 
Factorization generalizes Waldhausen's notion of ``cylinder
functor satisfying the cylinder axiom''.  
The secondary hypothesis involves the relationship between the weak
equivalences in the Waldhausen categories being compared.  One version
is the requirement (that often holds in practice) that the Waldhausen
categories  
have their weak equivalences closed under retracts; we have
included two alternative hypotheses for cases when this does not hold.
We prove the following theorem in Section~\ref{secpfmain}.  This
theorem can also be found in work of Cisinski
\cite{CisinskiUnPub}, where it is proved by other techniques.

\begin{thm}\label{main}
Let $\aC$ and $\aD$ be saturated Waldhausen categories that admit
factorization.  Let $F\colon \aC\to \aD$ be a 
weakly exact functor 
that induces an equivalence on homotopy categories.  If one of the
following additional hypotheses holds
\begin{enumerate}
\item The weak equivalences of $\aC$ and $\aD$ are closed under retracts,
\item A map $f$ in $\aC$ is a weak equivalence if and
only the map $Ff$ in $\aD$ is a weak equivalence, or
\item For any $A,B\in \aC$, the image of $\Ho(\w\aC)(A,B)$ in
$\Ho\aD(FA,FB)$ coincides with the image of $\Ho(\w\aD)(FA,FB)$, 
\end{enumerate}
then $F$ induces
an equivalence of $K$-theory spectra.
\end{thm}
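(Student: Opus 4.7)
The plan is to use Theorem~\ref{thmhocoendtwo} to reduce a $K$-theory equivalence to an equivalence of Dwyer-Kan simplicial localizations, and then to verify the latter using the hypothesis on homotopy categories together with one of the three additional conditions. The simplicial space $n \mapsto N\w\Sdot[n]\aC$ whose geometric realization yields the $K$-theory space is described at each level by the fibration of Theorem~\ref{thmhocoendtwo}, with base a disjoint union of products of the form $B\Laut{X_{n}}\times \dotsb \times B\Laut{X_{1}}$ over tuples of weak equivalence classes of objects, and with fibers products of mapping spaces in $L\aC$. Thus it suffices to show that $F$ induces (a) a bijection on weak equivalence classes of objects, (b) a weak equivalence $\Laut{X} \to \Laut{FX}$ for each $X$, and (c) a weak equivalence $L\aC(X,Y) \to L\aD(FX,FY)$ for all $X,Y$.

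Saturation makes (a) immediate: it identifies weak equivalence classes with isomorphism classes in the homotopy category, and the hypothesis on $\Ho$ supplies the required bijection. For (b) and (c), I would exploit factorization to replace the hammock mapping spaces with a tractable model. Since every morphism factors as a cofibration followed by a weak equivalence, $L\aC(X,Y)$ may be computed using zigzags of simpler shape, such as $X \rightarrowtail Z \xleftarrow{\sim} Y$, indexed by a suitable category of factorizations; the equivalence of homotopy categories then supplies a bijection on $\pi_0$ of these presentations. The upgrade from a $\pi_0$-bijection to a weak equivalence of spaces requires control on the action of the weak equivalences on such zigzags, and this is exactly where the additional hypothesis enters.

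The main obstacle is handling the interaction between the weak equivalences in $\aC$ and those in $\aD$, since weakly exact functors preserve cofibrations and weak equivalences but need not reflect or create them. Under hypothesis (ii), $F$ creates weak equivalences, so the simplicial sets of weak equivalences in any resolution agree on both sides, and (b), (c) follow formally from the equivalence on $\Ho$. Under hypothesis (i), closure under retracts together with saturation lets one recover weak equivalences in $\aC$ from those in $\aD$, using a standard retract argument to reduce to the situation of (ii). Under hypothesis (iii), the image condition guarantees that the components of $\Laut{X}$ corresponding to homotopy classes of weak self-equivalences match up under $F$, which after combining with factorization suffices to conclude that each $B\Laut{X} \to B\Laut{FX}$ is an equivalence and that the fiberwise comparison on $L\aC(X,Y)$ is also an equivalence. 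In each case, combining these observations with factorization yields a Dwyer-Kan equivalence of simplicial localizations, and Theorem~\ref{thmhocoendtwo} then delivers the desired levelwise equivalence $N\w\Sdot[n]\aC \to N\w\Sdot[n]\aD$, hence the $K$-theory equivalence.
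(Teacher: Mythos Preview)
Your strategy has a genuine gap: you are effectively trying to prove that $F$ is a DK-equivalence, i.e., that $L\aC(X,Y)\to L\aD(FX,FY)$ is a weak equivalence for \emph{all} $X,Y$. The paper explicitly flags this as an open question (see the remark following Corollary~\ref{corsuspdkequiv}), and your sketch does not close it. Knowing that $F$ induces a bijection on $\pi_{0}L\aC(X,Y)=\Ho\aC(X,Y)$ says nothing about the higher homotopy groups of $L\aC(X,Y)$ at an arbitrary basepoint; the sentence ``the upgrade from a $\pi_{0}$-bijection to a weak equivalence of spaces requires control on the action of the weak equivalences on such zigzags, and this is exactly where the additional hypothesis enters'' is not an argument, and none of hypotheses (i)--(iii) supplies the missing control. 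A related smaller gap: your claim that saturation alone identifies weak equivalence classes with isomorphism classes in $\Ho\aC$ is false; that is DKHS-saturation, which under the present hypotheses is only available via Theorem~\ref{thmTFAE} when (i) holds.

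What the paper actually does is avoid the DK-equivalence question by introducing the suspension functor $\Sigma$. The key computation (Theorem~\ref{thmhococart}) gives $L\aC(\Sigma X,Y)\simeq\Omega L\aC(X,Y)$ based at the trivial map, so iterating identifies $\pi_{n}$ of the \emph{trivial-map component} of $L\aC(X,Y)$ with $\Ho\aC(\Sigma^{n}X,Y)$; the equivalence on $\Ho$ then shows $LF$ is a weak equivalence on that component, and hence on all of $L\aC(\Sigma X,Y)$ (Corollary~\ref{corsuspdkequiv}). Since $\Sigma$ induces a self-equivalence of $K$-theory by additivity, one replaces $N(\w\Spdot\aC)$ by the telescope under $\Sigma$ and compares via Theorem~\ref{thmhocoend} restricted to the subcategory $\aC_{\Sigma}$ of suspension objects, where $LF$ \emph{is} an equivalence on mapping spaces. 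Hypotheses (i)--(iii) enter only to match the components $\Laut{\Sigma X}$ with $\Laut{F\Sigma X}$ inside these mapping spaces (Propositions~\ref{prophypi} and~\ref{prophypii}). The suspension/telescope maneuver is the missing idea in your proposal; without it, step~(c) cannot be completed.
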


In the statement, a ``weakly exact'' functor is a homotopical
generalization of an exact functor.  An exact functor between
Waldhausen categories preserves weak equivalences,
cofibrations, and pushouts along cofibrations.  A weakly exact functor
preserves weak equivalences, but need only preserve
cofibrations and pushouts along cofibrations up to weak equivalence 
(see Definition~\ref{defweak} below).  The ``homotopy category'' of a
category $\aC$ with weak equivalence is the category $\Ho\aC$ obtained by
formally inverting the 
weak equivalences. $\Ho\aC$ generalizes the derived category to
this context; it is typically not a triangulated category without
additional hypotheses on $\aC$.

Following Waldhausen's notation, we have used $\w\aC$ and $\w\aD$ to
denote the subcategories of weak equivalences for $\aC$ and $\aD$.
The image of $\Ho(\w\aC)$ in $\Ho\aC$ consists of isomorphisms (by
definition), but might not in general contain all isomorphisms of
$\Ho\aC$.  It does contain all the isomorphisms, however, under the
hypotheses of Theorem~\ref{main} when
the weak equivalences of $\aC$ are
closed under retracts; see Section~\ref{sechococart} for a complete
discussion.  Hypotheses~(ii) and~(iii) in Theorem~\ref{main} ensure that the weak
equivalences of $\aC$ and $\aD$ and their formal inverses generate
equivalent subcategories of $\Ho\aD$ even when they do not necessarily
generate all the isomorphisms of $\Ho\aD$.

In the proof of 
Theorem~\ref{main}, we argue that a weakly exact functor that induces
an equivalence of homotopy categories comes very near to being a
DK-equivalence (a functor that induces a weak equivalence of Dwyer-Kan
simplicial localizations); see Corollary~\ref{corsuspdkequiv}.  
It remains an interesting
question to determine when such a functor is a DK-equivalence.  When we drop the
weakly exact hypothesis and consider only functors that preserve weak
equivalences, we can characterize 
DK-equivalences in terms of homotopy
categories of undercategories.  For an object $A$ of $\aC$, let
$\aC\bs A$ denote the category of objects in $\aC$ under $A$, i.e.,
an object consists of a map $A\to X$ in $\aC$ and a map from $A\to X$
to $A\to Y$ consists of a map $X\to Y$ in $\aC$ that commutes with the
maps from $A$; say that such a map is a weak equivalence when its
underlying map $X\to Y$ is a weak equivalence in $\aC$.  We can then
form the homotopy category $\Ho(\aC \bs A)$ by formally inverting the
weak equivalences.  We prove in Section~\ref{secphh} the following
theorem generalizing the main result of \cite{MandellHH}.

\begin{thm}\label{thmhh}
Let $\aC$ and $\aD$ be saturated Waldhausen categories that admit
factorization, and let $F\colon \aC\to \aD$ be a functor that
preserves weak equivalences.  Then $F$ is a DK-equivalence if and only
if it induces an equivalence $\Ho(\aC)\to \Ho(\aD)$ and an equivalence
$\Ho(\aC \bs A)\to \Ho(\aD \bs FA)$ for all objects $A$ of $\aC$.
\end{thm}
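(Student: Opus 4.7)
The forward direction is formal. A DK-equivalence $F$ induces weak equivalences on all hammock mapping spaces $L\aC(X,Y)\to L\aD(FX,FY)$, and taking $\pi_{0}$ yields the equivalence of homotopy categories $\Ho\aC\to \Ho\aD$. For the undercategories, the plan is to identify the hammock mapping space $L(\aC\bs A)(A\to X,\, A\to Y)$ with the homotopy fiber of $L\aC(X,Y)\to L\aC(A,Y)$ over the point corresponding to the structure map $A\to Y$, and similarly in $\aD$. This identification shows that $F\bs A$ is itself a DK-equivalence of slice categories, whose $\pi_{0}$ is the desired equivalence $\Ho(\aC\bs A)\to \Ho(\aD\bs FA)$.

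For the reverse direction, which is the substantial content, the plan is to adapt the strategy of \cite[5.7]{MandellHH} from the model-category setting to saturated Waldhausen categories admitting factorization. The hypothesis on homotopy categories immediately handles $\pi_{0}$ of mapping spaces and essential surjectivity of $F$ on weak equivalence classes of objects, so it suffices to verify that the map $L\aC(A,B) \to L\aD(FA,FB)$ is a weak equivalence for all $A, B\in \aC$.

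The key construction is a cosimplicial object $C^{\bullet}_{A}$ in $\aC\bs A$, functorial in $A$ up to weak equivalence, with $C^{0}_{A}=A$ and whose codegeneracies are weak equivalences and whose latching maps are cofibrations; factorization is used to define each level inductively. This $C^{\bullet}_{A}$ plays the role that the simplicial cylinder $A\otimes\Delta^{\bullet}$ plays in a simplicial model category. The mapping space $L\aC(A,B)$ should then admit a description built entirely out of $\Ho(\aC\bs A)$-valued data on $C^{\bullet}_{A}$: concretely, one expects $\pi_{n}L\aC(A,B)$ based at $f\colon A\to B$ to be computed from morphisms $C^{n}_{A}\to (A\to B)$ in $\Ho(\aC\bs A)$. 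Because $F$ preserves weak equivalences, it carries the cosimplicial replacement $C^{\bullet}_{A}$ to one for $FA$ up to weak equivalence, so the hypothesized equivalence $\Ho(\aC\bs A)\to \Ho(\aD\bs FA)$ delivers the required weak equivalence of mapping spaces.

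The main obstacle is to construct the cosimplicial replacement $C^{\bullet}_{A}$ and verify the mapping-space description using only factorization and saturation, without a full model structure. Saturation is what ensures the formal inversion of weak equivalences produces undercategory homotopy categories that faithfully detect the needed homotopical data, while factorization supplies the cofibrations and weak equivalences needed to build each new cosimplicial level. A Reedy-style induction on cosimplicial degree, with factorization providing the successive latching-map factorizations, should yield both the construction and its uniqueness up to levelwise weak equivalence.
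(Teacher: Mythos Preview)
Your forward direction is correct and matches the paper: both use the identification of $L(\aC\bs A)(\overrightarrow{X},\overrightarrow{Y})$ as a homotopy fiber of $L\aC(X,Y)\to L\aC(A,Y)$ (the paper's Theorem~\ref{thmDKunder}) together with the fact that $\COF{A}\hookrightarrow\aC\bs A$ is a DK-equivalence.

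For the reverse direction, your overall strategy of following \cite{MandellHH} is what the paper does, but your sketch has two real gaps.

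First, the claim that $\pi_{n}L\aC(A,B)$ is ``computed from morphisms $C^{n}_{A}\to(A\to B)$ in $\Ho(\aC\bs A)$'' using only the \emph{single} undercategory $\aC\bs A$ is not right. Already for $n=1$: a loop at $f$ is a map from a cylinder $C^{1}_{A}$ to $B$ agreeing with $f$ on \emph{both} ends, i.e., a morphism in $\Ho(\aC\bs A\vee A)$, not $\Ho(\aC\bs A)$. In general the relevant undercategory changes with $n$. The paper makes this precise by iterating the identification $\Omega_{f}L\aC(B,C)\simeq L(\aC\bs B\vee B)(B,C)$ (Corollary~\ref{corloop}) to get $\pi_{n}L\aC(B,C)\cong\Ho(\aC\bs S^{n-1})(B,X)$ for inductively built objects $S^{n-1}$. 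Your cosimplicial frame would lead to the same place once you track the latching objects, but you would need the hypothesis for undercategories over those latching objects, not just over $A$.

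Second, and more seriously, the assertion that ``$F$ carries the cosimplicial replacement $C^{\bullet}_{A}$ to one for $FA$ up to weak equivalence'' is exactly where the difficulty lies, and it is not automatic. The functor $F$ is only assumed to preserve weak equivalences, not cofibrations or pushouts, so there is no reason the latching maps of $F(C^{\bullet}_{A})$ are cofibrations, nor that $F$ applied to $S^{n}=B^{n}\cup_{S^{n-1}}B^{n}$ agrees with the corresponding pushout $S^{n}_{\aD}$ in $\aD$. The paper confronts this head-on: it constructs $S^{n}_{\aD}$ in $\aD$ independently, obtains a comparison map $S^{n}_{\aD}\to FS^{n}$, and proves by induction (Lemma in Section~\ref{secphh}) that this map is an isomorphism in $\Ho\aD$. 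The inductive step uses Theorem~\ref{thmhococart} to recognize $S^{n}$ and $S^{n}_{\aD}$ as coproducts in the relevant homotopy undercategories, and then invokes the hypothesized equivalence $\Ho(\aC\bs S^{n-1})\to\Ho(\aD\bs FS^{n-1})$ together with Proposition~\ref{propunderwewe}. Your Reedy induction would need an analogous argument, and you have not supplied it.
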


This interpretation relates to Waldhausen's approximation
theorem and provides a conceptual understanding of the role of
Waldhausen's approximation property \cite[1.6.4]{Wald} in the more
specialized approximation theorems.  Recall that
for Waldhausen categories $\aC$ and $\aD$, an exact functor $F\colon
\aC \to \aD$ satisfies the approximation property if
\begin{enumerate}
\item A map $f\colon A \to B$ is a weak equivalence in $\aC$ if and
only if the map $F(f)\colon FA \to FB$ is a weak equivalence
in $\aD$.

\item For every map $FA \to X$ in $\aD$, there exists a
cofibration $A \to B$ in $\aC$ and a weak equivalence
$FB \to X$ in $\aD$ such that the diagram
\[
\xymatrix{
FA \ar[r] \ar@{ >->}[d] & X \\
FB \ar[ur]_{\htp} \\
}
\]
commutes.
\end{enumerate}
We prove the following theorem in Section~\ref{secpwaldapp}.

\begin{thm}\label{propapprox}
Let $\aC$ be a saturated Waldhausen category that admits
factorization.   Let $\aD$ be a saturated
Waldhausen category, and let $F\colon 
\aC\to \aD$ be an exact functor. If $F$ satisfies
Waldhausen's approximation property, then $\Ho(\aC)\to \Ho(\aD)$ is an
equivalence and $\Ho(\aC \bs A)\to \Ho(\aD \bs FA)$ is an equivalence
for all objects $A$ of $\aC$.
\end{thm}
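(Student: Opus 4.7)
The plan is to reduce the second statement to the first, then prove the first by verifying essential surjectivity, fullness, and faithfulness of $\Ho F\colon \Ho\aC \to \Ho\aD$.

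For the reduction, I observe that the approximation property is inherited by $F\bs A\colon \aC\bs A \to \aD\bs FA$ for each object $A \in \aC$: condition~(i) transfers directly because weak equivalences in $\aC\bs A$ and $\aD\bs FA$ are defined on underlying maps, and for condition~(ii) applied to an object $FA \to X'$ of $\aD\bs FA$, the factorization $A \to B$, $FB \xrightarrow{\sim} X'$ produced by the approximation property of $F$ automatically sits under $A$ and $FA$. Because $\aC\bs A$ is saturated and admits factorization, applying the first statement to $F\bs A$ will yield the second.

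Essential surjectivity of $\Ho F$ is immediate: condition~(ii) applied to $\emptyset = F\emptyset \to X$ for each $X \in \aD$ yields $B \in \aC$ with a weak equivalence $FB \xrightarrow{\sim} X$ in $\aD$. For fullness, I represent a morphism $\phi \in \Ho\aD(FA, FB)$ as a finite zigzag in $\aD$ and lift it step by step. Each forward arrow in the zigzag is lifted directly by condition~(ii); each intermediate object is first replaced by one of the form $FC$ using essential surjectivity; and each resulting pair of weak equivalences $FA' \xrightarrow{\sim} Z \xleftarrow{\sim} FB'$ (with $A', B' \in \aC$) is handled by a coproduct trick. Since $F$ is exact, $F(A' \sqcup B') = FA' \sqcup FB'$, and applying condition~(ii) to the combined map $F(A' \sqcup B') \to Z$ produces a cofibration $A' \sqcup B' \to C$ in $\aC$ and a weak equivalence $FC \xrightarrow{\sim} Z$ in $\aD$. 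Two applications of saturation in $\aD$ then show that $FA' \to FC$ and $FB' \to FC$ are weak equivalences, and condition~(i) promotes these to weak equivalences $A' \xrightarrow{\sim} C$ and $B' \xrightarrow{\sim} C$ in $\aC$. Iterating these three moves produces a zigzag in $\aC$ whose $F$-image realizes $\phi$.

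Faithfulness is the main obstacle. If $f, g\colon A \to B$ in $\aC$ satisfy $Ff = Fg$ in $\Ho\aD$, this equality is witnessed by a hammock in the Dwyer--Kan mapping space $L\aD(FA, FB)$ connecting $Ff$ to $Fg$. I plan to lift the entire hammock to a hammock in $L\aC(A, B)$ using the same three moves as in the fullness argument, now applied simultaneously to each row and column. The key difficulty is coherence: when I replace a vertical weak equivalence by the coproduct construction, the resulting zigzag in $\aC$ must be glued consistently with the replacements performed in the adjacent columns and rows, so that the horizontal and vertical data of the lifted hammock remain aligned. Managing this bookkeeping while iterating the lifting is the most delicate part of the argument, but it ultimately produces a hammock in $L\aC(A, B)$ between $f$ and $g$, giving $f = g$ in $\Ho\aC$.
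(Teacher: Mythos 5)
Your essential-surjectivity and fullness steps are sound and in fact reproduce the paper's basic device (approximate $F*\to X$ to get an object over $X$; approximate $F(A'\vee B')\to Z$ and use two-out-of-three plus condition~(i) to turn a cospan in $\aD$ into a zigzag in $\aC$). But the faithfulness step is a genuine gap, and it is precisely the hard point. Knowing $Ff=Fg$ in $\Ho\aD$ gives you a finite chain of hammocks in $\aD$, and lifting it requires lifting \emph{commutative diagrams}, not just zigzags: the approximation property produces new objects and maps with no uniqueness or compatibility whatsoever, so the squares you build in $\aC$ by applying it row-by-row and column-by-column have no reason to commute in $\aC$ (they only commute after applying $F$ and mapping into $\aD$). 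You acknowledge the coherence problem but do not resolve it, and it does not resolve itself. The paper avoids lifting homotopies entirely: it constructs an explicit functor $R\colon\aD\to\Ho\aC$ by approximating $F*\to X$ to get $RX$ with $\epsilon_X\colon FRX\xrightarrow{\,\htp\,}X$, and approximating $F(RX\vee RY)\to Y$ to get a zigzag $RX\to Qf\leftarrow RY$ defining $Rf$; the independence of choices and the functoriality are proved by Lemma~\ref{lemeqQf}, which compares any two candidate zigzags by approximating a map out of the pushout $B\cup_{RX\vee RY}Qf$ (this is where factorization in $\aC$ enters). One then checks $FR\cong\Id$ and $\Id\cong RF$ in the homotopy categories. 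If you want to keep the full/faithful/essentially surjective framing, you need an analogue of Lemma~\ref{lemeqQf} to compare lifts; without it the argument is incomplete.

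The reduction of the undercategory statement is also not correct as stated. $\aC\bs A$ and $\aD\bs FA$ are not Waldhausen categories in general: the initial object $\id_A$ is not a zero object and does not map to arbitrary objects by cofibrations, and the coproduct of two objects of $\aC\bs A$ is a pushout over $A$ in $\aC$, which need not exist unless one leg is a cofibration --- so both the hypotheses of the theorem and your own coproduct trick fail there. The paper handles this by proving the more general Theorem~\ref{thmpropapprox}, in which the target is merely a category with weak equivalences and the functor is only required to satisfy the approximation property and preserve the finite coproducts and pushouts along cofibrations that actually appear; it is then applied to the inclusion $\COF{A}\to\aC\bs A$ and to $F\colon\COF{A}\to\aD\bs FA$ (where $\COF{A}$, the cofibrations under $A$, is a Waldhausen category admitting factorization and has the needed coproducts), and the equivalence $\Ho(\aC\bs A)\to\Ho(\aD\bs FA)$ follows by two-out-of-three. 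Your observation that the approximation property is inherited under $A$ is correct, but you should run it for $\COF{A}$ rather than for all of $\aC\bs A$.
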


In all of the preceding theorems, we required the
hypothesis that the Waldhausen categories be saturated, meaning that the
weak equivalences satisfy the two out of three property: For
composable maps $f$ and $g$, if any two 
of $f$, $g$, and $f\circ g$ are weak equivalences then so is the third.
This usage of the term ``saturated'' differs from the usage of the term in other
sources such as \cite{DKHS}.  Although much of the work
in this paper could be adjusted to avoid this hypothesis, it is a
hypothesis so common and pervasive in homotopy theory that to
do so would lose more in the awkwardness it would engender than it
would gain in the extra abstract generality it might achieve. Rather
than continually repeating this hypothesis throughout the rest of the
paper, we instead incorporate it 
by convention in the definition of weak equivalences.

\begin{conv}
In this paper we always understand a subcategory of weak equivalences to
satisfy the two out of three property.  In particular, all Waldhausen
categories are assumed to be saturated in the sense of Waldhausen.
\end{conv}

Finally, we should note that in virtually every example of interest,
the factorizations hypothesized in the theorems above tend to be
functorial.  Assuming functorial factorizations simplifies many of the
arguments; for these arguments, we assume functorial factorization in
the body of the paper and treat the non-functorial case in the appendices.

\bigskip

The authors would like to thank the Institute for Advanced Study and
the University of Chicago for their hospitality while some of this
work was being done. The authors would also like to thank H. Miller
and L. Hesselholt for asking motivating questions.

\section{Weakly exact functors}\label{secweak}

This section defines weakly exact functor and shows that under mild
technical hypotheses a weakly exact functor between Waldhausen
categories induces a map between their $K$-theory spectra.  Although
we expect that the extra flexibility provided by stating
Theorem~\ref{main} in terms of weakly exact functors
rather than exact functors will increase its applicability, in fact,
weakly exact functors play a vital 
technical role in its proof even in the case when the functor in
question is exact.  Specifically, the proof requires a version of 
Theorem~\ref{thmhocoendone} that is natural in weakly exact
functors, which we state as Theorem~\ref{thmhocoend} at the end of the
section.  We begin with the definition of weakly exact
functor. 

\begin{defn}\label{defweak}
Let $\aC$ and $\aD$ be Waldhausen categories.  A functor $F\colon
\aC\to \aD$ is \term{weakly exact} if the initial map $*\to F*$ in
$\aD$ is a weak equivalence and $F$ preserves weak equivalences, weak
cofibrations, and homotopy cocartesian squares.
\end{defn}

In the definition, a \term{weak cofibration} is a map that is weakly
equivalent (by a zigzag) to a cofibration in the category $\Ar\aC$ of
arrows in $\aC$, and a \term{homotopy 
cocartesian square} is a
square diagram that is weakly equivalent (by a
zigzag) to a pushout square where one of the parallel sets of arrows
consists of cofibrations.  It follows that a functor that preserves
weak equivalences will preserve weak cofibrations and homotopy
cocartesian squares if and only if it takes cofibrations to weak
cofibrations and takes pushouts along cofibrations to homotopy
cocartesian squares.
Clearly the concept of weakly exact functor
will only be useful when homotopy cocartesian squares have the usual
expected properties.   According to \cite[\S2]{BlumbergMandell}, these
properties hold when the Waldhausen category admits ``functorial
factorization of weak cofibrations''.  (See Appendix~\ref{appbm}
for a non-functorial generalization.)

Recall that a Waldhausen category $\aC$ admits \term{functorial factorization}
when any map $f\colon A\to B$ in $\aC$ factors as a cofibration
followed by a weak equivalence 
\[
\xymatrix{%
A\ar@{ >->}[r]\ar@{..>}@/_1em/[rr]_{f}&Tf\ar[r]^{\htp}&B,
}
\]
functorially in $f$ in the category $\Ar\aC$ of arrows in $\aC$.  In
other words, given the map $\phi$ of arrows on the 
left (i.e., commuting diagram),
\[
\xymatrix{%
A\ar[r]^{f}\ar[d]_{a}\ar@{{}{}{}}[dr]|{\phi}&B\ar[d]^{b}
&&A\ar@{ >->}[r]\ar[d]_{a}&Tf\ar[r]^{\htp}\ar[d]_{T\phi}&B\ar[d]^{b}\\
A'\ar[r]_{f'}&B'&&A'\ar@{ >->}[r]&Tf'\ar[r]^{\htp}&B'\\
}
\]
we have a map $T\phi$ that makes the diagram on the right commute and
that satisfies the usual identity and composition relations,
$T\id_{f}=\id_{Tf}$ and $T(\phi'\circ \phi)=T\phi'\circ T\phi$.
A
cylinder functor satisfying the cylinder
axiom in the sense of Waldhausen \cite[\S1.6]{Wald} is a
factorization functor
that in addition satisfies strong exactness properties. 

In Waldhausen categories that admit functorial factorization, every
map is weakly equivalent to a 
cofibration.  This isn't always the case in examples of interest,
especially in ``Waldhausen subcategories''.  To
get around this, in \cite{BlumbergMandell} we worked in terms of 
the technical hypothesis that $\aC$ admit \term{functorial
factorization of weak cofibrations (FFWC)}
\cite[2.2]{BlumbergMandell}, which means that the weak cofibrations
can be factored functorially (in $\Ar\aC$) as above.  
Our interest in FFWC is the following theorem proved in this section.

\begin{thm}\label{thmweakexact}
Let $F\colon \aC \to \aD$ be a weakly exact functor between Waldhausen
categories and assume that $\aD$ admits FFWC.  Then
$F$ induces a map of $K$-theory spectra.
\end{thm}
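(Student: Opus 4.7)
The plan is to factor the construction of $F_{*}$ through an auxiliary ``weak'' $S_\bullet$-construction that a weakly exact functor manifestly preserves, and then use FFWC on $\aD$ to identify this auxiliary construction with the usual one.

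First I would introduce, for any Waldhausen category $\aB$, the simplicial category $\Sdot^{w}\aB$ whose $n$-th object consists of the same diagram data as $\Sdot[n]\aB$ except that the horizontal arrows are required to be only weak cofibrations and the squares only homotopy cocartesian (weak equivalences of such diagrams are levelwise). There is a natural inclusion $\Sdot\aB\hookrightarrow \Sdot^{w}\aB$, and the definition of weakly exact functor in Definition~\ref{defweak} is exactly designed so that $F$ induces a simplicial functor $\Sdot^{w}\aC\to \Sdot^{w}\aD$. More generally, the $n$-fold iteration $\Sdotq\aB$ has an analogous weak version $\Spdotq\aB$ (really $\Sdotq^{w}$) and $F$ induces a simplicial functor on these for each $q$.

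The second step, which is the technical heart of the argument, is to show that when $\aB$ admits FFWC the inclusion
\[
\w N\Sdotq\aB\longrightarrow \w N\Spdotq\aB
\]
is a weak equivalence of $q$-simplicial spaces for every $q\geq 1$. This is exactly the sort of comparison handled by the techniques of \cite{BlumbergMandell}: FFWC produces a functorial factorization replacing any weak cofibration by an honest cofibration and any homotopy cocartesian square by a pushout along a cofibration, and a standard inductive argument over the filtration degree promotes this pointwise replacement to a simplicial functor $\Spdotq\aB\to \Sdotq\aB$ that is a homotopy inverse to the inclusion (after passage to $\w N$). The key point to verify here is that the FFWC hypothesis is inherited by each $\Sdotq\aB$ (with the levelwise cofibrations and weak equivalences), so that the argument can be iterated; this inheritance is essentially worked out in Section~2 of \cite{BlumbergMandell}.

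With those ingredients in place, the theorem follows formally. Applying $\w N$ to the diagram
\[
\xymatrix{
\Sdotq\aC \ar[r]\ar@{^{(}->}[d] & \Sdotq\aD\ar@{^{(}->}[d]\\
\Spdotq\aC \ar[r]^{F_{*}} & \Spdotq\aD
}
\]
yields, after taking diagonals and adjoints, a zigzag
\[
K(\aC)\xrightarrow{\;\htp\;} K^{w}(\aC)\xrightarrow{\;F_{*}\;} K^{w}(\aD)\xleftarrow{\;\htp\;} K(\aD),
\]
of spectra in which the two backward maps are levelwise weak equivalences by the preceding paragraph (applied to $\aC$ and to $\aD$, respectively). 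Inverting these gives a well-defined morphism $K(\aC)\to K(\aD)$ in the stable homotopy category. The one place one must be careful is that the comparison of $\Sdot$ and $\Sdot^{w}$ requires FFWC for the \emph{target} of $F$ at every iterate, which is why the hypothesis of the theorem is imposed on $\aD$ rather than on $\aC$; I expect that verifying the persistence of FFWC under iterated $S_\bullet$ (and checking it gives strict enough functoriality to assemble an actual zigzag of spectrum maps, not merely maps in each simplicial degree) will be the main obstacle and will occupy most of the work of this section.
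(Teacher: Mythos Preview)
Your approach has the right architecture but a genuine gap in the hypotheses. Your zigzag
\[
K(\aC)\xrightarrow{\ \htp\ } K^{w}(\aC)\xrightarrow{\ F_{*}\ } K^{w}(\aD)\xleftarrow{\ \htp\ } K(\aD)
\]
requires the \emph{left} arrow to be a weak equivalence, and that comparison needs FFWC on $\aC$. The theorem assumes FFWC only on $\aD$. (Your own final paragraph acknowledges the hypothesis is on $\aD$ alone, but the zigzag you wrote uses it on both sides.) Relatedly, the top arrow $\Sdotq\aC\to\Sdotq\aD$ in your square does not exist: a weakly exact $F$ need not preserve cofibrations or pushouts on the nose, so there is no induced functor on the strict $\Sdot$-construction of the target. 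That is precisely the obstruction the theorem is meant to overcome.

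The paper's fix is a small but decisive sharpening of your idea: rather than passing through $\Spdot\aC$, one observes that a weakly exact $F$ sends $\Sdot[n]\aC$ \emph{directly} into $\Spdot[n]\aD$, and does so by a weakly exact functor (this is Proposition~\ref{propwexact}). The zigzag is therefore only
\[
\w\Sdot[n]\aC \xrightarrow{\ F\ } \w\Spdot[n]\aD \xleftarrow{\ \htp\ } \w\Sdot[n]\aD,
\]
with a single backward weak equivalence supplied by Proposition~\ref{propbm}(iii), which uses FFWC on $\aD$ only. Iteration then works exactly as you anticipated: Proposition~\ref{propbm}(i) says $\Spdot[n]\aD$ again admits FFWC, so one can repeat the argument with $\Sdot[n]\aC\to\Spdot[n]\aD$ in place of $\aC\to\aD$. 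Your instinct that persistence of FFWC under iteration is the main technical point is correct; what you were missing is that the source side never needs the weak construction at all.
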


We prove Theorem~\ref{thmweakexact} using the $\Spdot$
construction of \cite[\S2]{BlumbergMandell}.  To put this in context,
we begin by reviewing the 
$\Sdot$ construction in detail.  Recall that Waldhausen's $\Sdot$
construction produces a simplicial Waldhausen category $\Sdot\aC$ from
a Waldhausen category $\aC$ and is defined as follows.  Let $\Ar[n]$
denote the category with objects $(i,j)$ for $0\leq i\leq j\leq n$ and
a unique map $(i,j)\to (i',j')$ for $i\leq i'$ and $j\leq j'$.
$\Sdot[n]\aC$ is defined to be the full subcategory of the category of
functors $A\colon \Ar[n]\to \aC$ such that:
\begin{itemize}
\item $A_{i,i}=*$ for all $i$, 
\item The map $A_{i,j}\to A_{i,k}$ is a cofibration for all $i \leq
j \leq k$, and
\item The diagram
\[  \xymatrix@-1pc{%
A_{i,j}\ar[r]\ar[d]&A_{i,k}\ar[d]\\A_{j,j}\ar[r]&A_{j,k}
} \]
is a pushout square for all $i \leq j \leq k$, 
\end{itemize}
where we write $A_{i,j}$ for $A(i,j)$.  The last two conditions can be
simplified to the hypothesis that each map $A_{0,j}\to A_{0,j+1}$ is a
cofibration and the induced maps $A_{0,j}/A_{0,i}\to A_{i,j}$ are
isomorphisms.  This becomes a Waldhausen category by defining a map
$A\to B$ to be a weak equivalence when each $A_{i,j}\to B_{i,j}$ is a
weak equivalence in $\aC$, and to be a cofibration when each
$A_{i,j}\to B_{i,j}$ and each induced map
$A_{i,k}\cup_{A_{i,j}}B_{i,j}\to B_{i,k}$ is a cofibration in $\aC$.
The following definition gives a homotopical version of this
construction for Waldhausen 
categories that admit FFWC.

\begin{defn}
Let $\aC$ be a Waldhausen category that admits FFWC.  Define
$\Spdot[n]\aC$ to be the full subcategory of functors $A\colon
\Ar[n]\to \aC$ such that: 
\begin{itemize}
\item The initial map $*\to A_{i,i}$ is a weak equivalence for all $i$, 
\item The map $A_{i,j}\to A_{i,k}$ is a weak cofibration for all $i
\leq j \leq k$, and 
\item The diagram
\[  \xymatrix@-1pc{%
A_{i,j}\ar[r]\ar[d]&A_{i,k}\ar[d]\\A_{j,j}\ar[r]&A_{j,k}
} \]
is a homotopy cocartesian square for all $i \leq j \leq k$.
\end{itemize}
We define a map $A\to B$ to be a
weak equivalence when each $A_{i,j}\to B_{i,j}$ is a weak equivalence
in $\aC$, and to be a cofibration when each $A_{i,j}\to B_{i,j}$ is
a cofibration in $\aC$ and each induced map
$A_{i,k}\cup_{A_{i,j}}B_{i,j}\to B_{i,k}$ is a weak cofibration in
$\aC$.
\end{defn}

Clearly $\Spdot \aC$ assembles into a simplicial category with the usual
face and degeneracy functors.  Furthermore, we have the following
comparison result \cite[2.8,2.9]{BlumbergMandell}.

\begin{prop}\label{propbm}
Let $\aC$ be a Waldhausen category admitting FFWC.  
\begin{enumerate}
\item $\Spdot\aC$ is a simplicial Waldhausen category admitting FFWC.
\item The inclusion $\Sdot \aC \to \Spdot \aC$ is a simplicial exact
functor.
\item For each $n$, the inclusion $\w\Sdot[n]\aC\to \w\Spdot[n]\aC$ induces a weak
equivalence on nerves.
\end{enumerate}
\end{prop}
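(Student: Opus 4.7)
The plan is to reduce each of the three claims to the corresponding property in $\aC$ combined with FFWC. For part (i), the Waldhausen axioms for $\Spdot[n]\aC$ are verified levelwise. Pushouts along $\Spdot[n]$-cofibrations exist because, under FFWC in $\aC$, both weak cofibrations and homotopy cocartesian squares are preserved by strict pushouts along cofibrations (proved in \cite[\S2]{BlumbergMandell}); the two-out-of-three property for weak equivalences is inherited directly from $\aC$. To obtain FFWC for $\Spdot[n]\aC$ itself, given a weak cofibration $f\colon A\to B$ one builds $Tf$ inductively on columns: apply FFWC in $\aC$ to the induced map $A_{0,j}\cup_{A_{0,j-1}}(Tf)_{0,j-1}\to B_{0,j}$, and define $(Tf)_{i,j}$ by strict pushouts so that the quotient condition $(Tf)_{0,j}/(Tf)_{0,i}\iso (Tf)_{i,j}$ is automatic. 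Simpliciality is forced by functoriality in $[n]$. Part (ii) is then essentially formal: a strict cofibration is a weak cofibration, a strict pushout is in particular homotopy cocartesian, and the simplicial structure maps on $\Sdot\aC$ and $\Spdot\aC$ are defined identically, so the inclusion is simplicial and exact.

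The substance of the proposition lies in part (iii). The approach is to construct a ``rectification'' functor $R\colon \Spdot[n]\aC\to \Sdot[n]\aC$ together with a natural weak equivalence $\eta\colon \iota R\Rightarrow \id$ in $\Spdot[n]\aC$, where $\iota$ denotes the inclusion. Set $R(A)_{0,0}=*$, and inductively, having built cofibrations $R(A)_{0,0}\to\dotsb\to R(A)_{0,j}$ in $\aC$ with a componentwise weak equivalence down to the top row $A_{0,\ssdot}$ through column $j$, apply FFWC in $\aC$ to the composite $R(A)_{0,j}\xrightarrow{\htp}A_{0,j}\to A_{0,j+1}$ (a weak cofibration, hence factorable by FFWC) to obtain a cofibration $R(A)_{0,j}\to R(A)_{0,j+1}$ and a weak equivalence $R(A)_{0,j+1}\xrightarrow{\htp}A_{0,j+1}$. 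Define the off-diagonal entries by the strict pushouts $R(A)_{i,j}=*\cup_{R(A)_{0,i}}R(A)_{0,j}$, placing $R(A)$ in $\Sdot[n]\aC$. Functoriality of FFWC and of pushouts makes $R$ a functor, and the universal property of the pushouts extends the weak equivalences on the top row to a natural transformation $\eta$.

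The main technical obstacle will be verifying that the components of $\eta_{A}$ off the top row are weak equivalences, since there the map $R(A)_{i,j}\to A_{i,j}$ arises from comparing a strict pushout with a homotopy cocartesian square rather than directly from FFWC. This is where the defining hypothesis on $A\in\Spdot[n]\aC$ becomes essential: the diagram in $A$ relating $A_{0,i}$, $A_{0,j}$, $A_{i,i}$, and $A_{i,j}$ is homotopy cocartesian with $A_{i,i}\htp *$, so the gluing lemma for homotopy cocartesian squares (available under FFWC by \cite[\S2]{BlumbergMandell}) forces the comparison map to be a weak equivalence. One must also check that $R$ preserves weak equivalences, which follows from the functoriality of FFWC together with the preservation of weak equivalences by pushouts along cofibrations under FFWC. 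When $A\in \Sdot[n]\aC$ the strict pushouts in $A$ agree with the construction, so $\eta$ restricts to a natural weak equivalence $R\iota\Rightarrow \id$ in $\Sdot[n]\aC$ as well. The two natural weak equivalences then yield simplicial homotopies on the nerves showing that the nerves of $\iota$ and $R$ are inverse homotopy equivalences between the nerves of $\w\Sdot[n]\aC$ and $\w\Spdot[n]\aC$, completing part (iii).
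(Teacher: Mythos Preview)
Your proposal is correct and follows the same strategy the authors use. In the paper this proposition is not reproved but cited as \cite[2.8,2.9]{BlumbergMandell}; however, the appendix (Theorem~\ref{thmfpdot} and Corollary~\ref{corspdot}) carries out the generalization to the non-functorial USE setting, and there the argument is exactly your rectification idea: inductively factor along the top row to produce an object of $F_{n-1}\aC$ (resp.\ $\Sdot[n]\aC$) together with a levelwise weak equivalence $\phi$ down to the original, fill in the remaining entries by strict pushouts, and use the natural transformation $\phi$ to produce the required simplicial homotopies on nerves. In the functorial FFWC case this collapses precisely to the honest functor $R$ and natural weak equivalence $\eta$ you describe.
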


The following proposition is now clear from the definition of weakly
exact.  Theorem~\ref{thmweakexact} then follows from this
proposition and the previous proposition by
iterating the $\Spdot$ construction.

\begin{prop}\label{propwexact}
Let $F\colon \aC \to \aD$  a weakly exact functor between Waldhausen
categories and assume that $\aD$ admits FFWC.  Then for each $n$,  $F$
sends $\Sdot[n]\aC$ into $\Spdot[n]\aD$ by a weakly exact functor. 
\end{prop}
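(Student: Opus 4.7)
The plan is to verify two claims in turn: (a) postcomposition with $F$ sends objects of $\Sdot[n]\aC$ to objects of $\Spdot[n]\aD$; and (b) the resulting functor $\Sdot[n]\aC \to \Spdot[n]\aD$ is itself weakly exact. Both reduce to unwinding the pointwise definitions and invoking the three preservation properties of a weakly exact functor together with FFWC in $\aD$.

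For (a), I would simply check the three defining conditions of $\Spdot[n]\aD$ against the three preservation properties of weak exactness applied pointwise: $* \to FA_{i,i} = F*$ is a weak equivalence by the base-point condition for $F$; each $FA_{i,j} \to FA_{i,k}$ is $F$ applied to a cofibration in $\aC$, hence a weak cofibration in $\aD$ because $F$ preserves weak cofibrations (noting cofibrations are in particular weak cofibrations); and each defining square of $A$, being a pushout along a cofibration, is homotopy cocartesian in $\aC$ and therefore sent by $F$ to a homotopy cocartesian square in $\aD$. Functoriality of the induced assignment $\Sdot[n]\aC \to \Spdot[n]\aD$ is automatic from functoriality of $F$.

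For (b), pointwise weak equivalences are preserved by $F$ and the initial map $* \to F*$ is a pointwise weak equivalence. For weak cofibrations, suppose $f$ is a weak cofibration in $\Sdot[n]\aC$, so there is a zigzag of weak equivalences in $\Ar\Sdot[n]\aC$ from $f$ to some cofibration $g$ of $\Sdot[n]\aC$. Postcomposing with $F$ produces a zigzag in $\Ar\Spdot[n]\aD$ from $Ff$ to $Fg$, and it therefore suffices to show $Fg$ is a weak cofibration in $\Spdot[n]\aD$. The components $g_{i,j}$ and the gluing maps of $g$ are all cofibrations in $\aC$, which $F$ sends to weak cofibrations in $\aD$; FFWC in $\aD$, applied componentwise and functorially across the diagram, then strictifies $Fg$ up to weak equivalence into a genuine cofibration of $\Spdot[n]\aD$, exhibiting $Fg$ as a weak cofibration. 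The identical template, with pushout-along-cofibration squares in place of cofibrations and homotopy cocartesian squares in place of weak cofibrations, shows that $F$ preserves homotopy cocartesian squares.

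The only real subtlety is this last strictification step, where one must verify that FFWC in $\aD$ can be applied componentwise and functorially assembled into the required cofibration (respectively, pushout-along-cofibration square) of $\Spdot[n]\aD$. This is a formal consequence of the definitions and is essentially the content of the constructions behind Proposition~\ref{propbm} in \cite{BlumbergMandell}. Granting the proposition, Theorem~\ref{thmweakexact} follows by iterating the $\Spdot$ construction and using Proposition~\ref{propbm}(iii) to pass between $\Sdot$- and $\Spdot$-nerves.
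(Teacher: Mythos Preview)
Your proposal is correct and matches the paper's approach: the paper simply asserts that the proposition ``is now clear from the definition of weakly exact'' and gives no further argument, so your unpacking of the pointwise verification is exactly the intended reasoning. Your identification of the strictification step as the only place requiring real input (namely the FFWC machinery behind Proposition~\ref{propbm}, i.e., \cite[2.8]{BlumbergMandell}) is accurate, and you could streamline part~(b) slightly by invoking the remark after Definition~\ref{defweak} to start directly with cofibrations rather than weak cofibrations.
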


Finally, we can use the $\Spdot$ construction to express the full
naturality of the weak equivalences in Theorem~\ref{thmhocoendone}.
Unfortunately, the hypothesis FFWC is not
quite strong enough.  We need a slight refinement of this hypothesis:

\begin{defn}\label{defFMCWC}
Let $\aC$ be a Waldhausen category.  We say that $\aC$ has \term{functorial
mapping cylinders for weak cofibrations (FMCWC)} when $\aC$ admits
functorial factorization of weak cofibrations by a functor $T$
together with a natural transformation $B \to Tf$ splitting the
natural weak equivalence $Tf\to B$, for weak cofibrations $f\colon
A\to B$.
\[
\xymatrix@R-1pc{%
&B\ar[d]\ar[dr]^{=}\\
A\ar@{ >->}[r]\ar@{..>}@/_1em/[rr]_{f}&Tf\ar[r]^{\htp}&B,
}
\]
\end{defn}

Functorial factorization of all maps implies functorial
mapping cylinders: For a map $f\colon A\to B$, the factorization of
the map $f+\id_{B}$ from the coproduct, $A\vee B\to B$, provides the functorial
mapping cylinder.  Thus, functorial factorization of all maps is
equivalent to the conjunction of functorial mapping cylinders for weak
cofibrations and all maps being weak cofibrations.  

For a Waldhausen category $\aC$ that has functorial mapping cylinders
for weak cofibrations, and $A,B$ objects in $\aC$, we use
$\Lco\aC(A,B)$ to denote the components of the Dwyer-Kan hammock
function complex $L\aC(A,B)$ that correspond to the weak cofibrations;
precisely, $\Lco\aC(A,B)$ consists of those components that contain as
a vertex a zigzag where all the forward arrows are weak cofibrations.
Likewise, we use $\Lw\aC(A,B)$ to denote the components of the
Dwyer-Kan hammock function complex $L\aC(A,B)$ that contain a zigzag
where all the forward arrows are weak equivalences.  Then $\Lco\aC$
and $\Lw\aC$ are simplicial subcategories of the Dwyer-Kan simplicial
localization $L\aC$.  We prove the following generalization of
Theorems~\ref{thmhocoendone} and~\ref{thmhocoendtwo} in
Section~\ref{secpdecomp}.  (See Appendix~\ref{appmain} for the
corresponding non-functorial statement.)

\begin{thm}\label{thmhocoend}
Let $\aC$ be a saturated Waldhausen category that has FMCWC. 
\begin{enumerate}
\item
For $n>1$, the nerve of $\w\Spdot[n] \aC$ is weakly equivalent to
the homotopy coend 
\[
\hocoendlim_{(X_{1},\dotsc,X_{n})\in \w\aC^{n}}\Lco\aC(X_{n-1},X_{n})\times \dotsb \times
\Lco\aC(X_{1},X_{2}),
\]
naturally in weakly exact functors.  
\item The nerve of $\w\aC$ is weakly equivalent to the disjoint
union of $B\Laut{X}$ over the weak equivalence classes of objects of
$\aC$.
\item
For $n\geq 1$, the nerve of $\w\Spdot[n] \aC$ is weakly equivalent
to the total space
of a fibration where the base is 
the disjoint union of 
\[
B\Laut{X_{n}}\times \dotsb \times B\Laut{X_{1}}
\]
over $n$-tuples of weak equivalences
classes of objects of $\aC$, and the fiber is equivalent to
\[
\Lco\aC(X_{n-1},X_{n})\times \dotsb \times
\Lco\aC(X_{1},X_{2}).
\]
for $n>1$ and contractible for $n=1$.
\end{enumerate}
\end{thm}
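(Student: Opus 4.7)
The plan is to prove (i) first via a fiber analysis of the projection from $\Spdot[n]\aC$ to its initial column, derive (ii) from standard Dwyer-Kan theory, and then deduce (iii) by combining the two. The FMCWC hypothesis enters crucially in controlling spaces of coherent cofiber data.

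For part (i), I would study the functor $\pi_n\colon\w\Spdot[n]\aC\to(\w\aC)^n$ sending a diagram $A$ to the initial column $(A_{0,1},\dotsc,A_{0,n})$. Because $A_{i,i}\htp *$, the homotopy cocartesian conditions force $A_{i,j}$ for $i\geq 1$ to be coherent homotopy cofibers of $A_{0,i}\to A_{0,j}$; thus a preimage of $(X_1,\dotsc,X_n)$ decomposes as a sequence of weak cofibrations $X_1\to X_2\to\dotsb\to X_n$ together with a choice of coherent homotopy cofiber data for it. The space of weak cofibration sequences with fixed source/target tuple is equivalent to $\prod_{i=1}^{n-1}\Lco\aC(X_i,X_{i+1})$ by the Dwyer-Kan classification of weak cofibrations (the weak-cofibration components of the hammock mapping complex classify exactly these). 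The space of coherent cofiber fillings should be contractible: using FMCWC, iterated functorial mapping cylinders produce a canonical filling (hence a section), and an inductive argument compares any other filling to it through weak equivalences of homotopy cocartesian squares, deformation retracting the space onto this section. Assembling these identifications over $(\w\aC)^n$ by a Quillen Theorem B style homotopy-colimit decomposition identifies $N\w\Spdot[n]\aC$ with the homotopy coend.

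Part (ii) is the standard Dwyer-Kan identification of a DK-equivalence class with the classifying space of its grouplike monoid of self-equivalences. The nerve $N\w\aC$ splits as a disjoint union over weak equivalence classes of objects, and on each component the full subcategory of $\w\aC$ spanned by objects weakly equivalent to $X$ is DK-equivalent to $\Laut{X}$ viewed as a one-object simplicial category; the comparison of \cite[5.7]{MandellHH}, or direct analysis of the hammock localization, then yields classifying space $B\Laut{X}$.

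For part (iii), I combine (i) and (ii). The homotopy coend of (i) maps naturally to $N(\w\aC)^n$, identified by (ii) with $\coprod_{(X_1,\dotsc,X_n)}\prod B\Laut{X_i}$; the homotopy fiber over a chosen tuple is $\prod\Lco\aC(X_i,X_{i+1})$ by construction of the coend, with the $n=1$ case collapsing to empty product (contractible fiber) as claimed. Naturality in weakly exact functors holds throughout because such functors preserve weak equivalences, weak cofibrations, and homotopy cocartesian squares by Definition~\ref{defweak}, carrying $\pi_n$ and its fiber data to their counterparts in $\aD$. The main obstacle I expect is the contractibility claim in the fiber analysis of part (i): while the mapping cylinder gives a canonical section, showing that the full space of compatible fillings (with their iterated cofibers of cofibers) actually deformation retracts onto it requires careful induction on $n$, leveraging the homotopy cocartesian structure, FMCWC, and the saturation hypothesis to cancel weak equivalences inductively.
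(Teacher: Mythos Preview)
Your overall architecture for part~(i) starts correctly: stripping off the cofiber data to reduce $\w\Spdot[n]\aC$ to $\w F'_{n-1}\aC$ (sequences of $n-1$ weak cofibrations) is exactly the paper's first step (Proposition~\ref{propfprime}), and your ``contractibility of coherent cofiber fillings'' is a reformulation of that equivalence. Parts~(ii) and~(iii) are then handled essentially as you say. But you misplace the role of FMCWC: the cofiber-data reduction needs only FFWC. FMCWC is used instead to establish that $\aC$, $\wco\aC$, and $\w\aC$ admit a homotopy calculus of left fractions (Theorem~\ref{thmHCLF}), and it is HCLF that powers the real work you have skipped.

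The genuine gap is your assertion that ``the space of weak cofibration sequences with fixed source/target tuple is equivalent to $\prod\Lco\aC(X_i,X_{i+1})$'' via a Theorem~B argument. A weak cofibration sequence with fixed $X_i$ is merely a tuple of maps; the simplicial structure of $\Lco\aC$ has to be extracted from the nerve direction of $\w F'_{n-1}\aC$, and the result is not a product on the nose but a homotopy coend whose variance must be reconciled (the hammock model $\WCW$ is contravariant in both variables under weak equivalences, opposite to what a coend over $\w\aC^n$ requires; the paper handles this in Lemma~\ref{lemreversi}). The paper's actual argument is a chain of explicit bisimplicial comparisons $N\w F'_{n-1}\aC\simeq\mathbb{A}\simeq\mathbb{B}\simeq\mathbb{C}$: attach contractible under/over tails at the two ends ($\mathbb{A}$); use HCLF to insert backward weak equivalences between consecutive columns so each column pair becomes a $\WCW$-type zigzag ($\mathbb{B}$); then rearrange so that the blocks decouple into a product of $\WCW(-,-)_{\wco}$ word categories ($\mathbb{C}$), which Theorem~\ref{thmwordcomp} identifies with $\Lco\aC$. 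Your Theorem~B gesture does not supply any of this, and in particular gives no mechanism for decomposing a composable sequence of weak cofibrations into independent $\Lco$ factors.
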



\section{Outline of the proof of Theorem~\ref{main}}\label{secpfmain}

In this section we prove Theorem~\ref{main} from
Theorem~\ref{thmhocoend} above, Theorem~\ref{thmprehococart}
below, and Propositions~\ref{prophypi}  and~\ref{prophypii} below,
all of which are proved in later sections.   Throughout this section
(and this section only), we fix $\aC$, $\aD$, and $F\colon \aC\to \aD$,
satisfying the hypotheses of Theorem~\ref{main} and such that
factorization is functorial. (See Appendix~\ref{appmain} for the proof
in the non-functorial case.) Moreover, we fix factorization functors
$T$ on $\aC$ and $\aD$; we use the following terminology and notation.

\begin{defn}
For $X$ in $\aC$ (resp. $\aD$), the \term{cone on $X$}, $CX$, is $T(X\to *)$ and the
\term{suspension of $X$}, $\Sigma X$, is $CX/X$.  Let $EX$ denote the cofiber
sequence
\[
\xymatrix@-1pc{%
X\ar@{ >->}[r]&CX\ar[r]&\Sigma X
}
\]
viewed as an object of $S_{2}\aC$ (resp. $S_{2}\aD$), with $A_{0,1}=X$,
$A_{0,2}=CX$, and $A_{1,2}=\Sigma X$.
\end{defn}

It follows from the functoriality of $T$ that $CX$, $\Sigma X$, and
$EX$ assemble to functors in $X$.  A straightforward application of
factorization and \cite[2.5]{BlumbergMandell} (or the gluing axiom) shows
that these functors preserve 
weak equivalences and homotopy cocartesian squares.  This gives the
following proposition; the corresponding result holds for $\aD$.

\begin{prop}
The functors $C$ and $\Sigma$ are weakly exact functors $\aC\to \aC$,
and $E$ is a weakly exact functor $\aC\to S_{2}\aC$.
\end{prop}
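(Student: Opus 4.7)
The plan is to verify weak exactness of $C$, $\Sigma$, and $E$ by checking the four conditions of Definition~\ref{defweak}.  As noted after that definition, preservation of weak equivalences together with preservation of pushouts along cofibrations as homotopy cocartesian squares automatically gives preservation of weak cofibrations: applying the functor to the pushout of $A\rightarrowtail B$ along $A\to *$ yields a homotopy cocartesian square, which is by definition weakly equivalent in the arrow category to a pushout along a cofibration, and its top edge then exhibits $FA\to FB$ as a weak cofibration.  I therefore reduce to verifying the initial weak equivalence, preservation of weak equivalences, and preservation of homotopy cocartesian squares.  The initial condition is immediate: the factorization $*\rightarrowtail C*\xrightarrow{\htp}*$ of $\id_{*}$ combined with saturation gives $*\to C*$ a weak equivalence; then $\Sigma *=C*/*\cong C*$ gives $*\to \Sigma *$ a weak equivalence; and $*\to E*$ is componentwise a weak equivalence.

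For $C$, the fact that $CX\to *$ is always a weak equivalence (by the definition of factorization) makes the remaining two conditions nearly formal: preservation of weak equivalences follows from 2-out-of-3, and any square in the image of $C$ is componentwise weakly equivalent, through the natural maps $CX\to *$, to the trivial pushout square along the cofibration $*\rightarrowtail *$, and hence is homotopy cocartesian.  For $\Sigma$, I would use the pushout characterization $\Sigma X = *\cup_{X}CX$: preservation of weak equivalences follows from the gluing axiom applied to this pushout, using that $X\rightarrowtail CX$ is a cofibration and that $C$ already preserves weak equivalences.  Preservation of homotopy cocartesian squares by $\Sigma$ is the substantive step: given a pushout along a cofibration in $\aC$, I extend each corner $X$ to its cofibration sequence $X\rightarrowtail CX\to \Sigma X$, obtaining a three-layer diagram whose bottom layer is the original pushout along a cofibration, whose middle layer of $CX$'s is a homotopy cocartesian square of contractibles, and whose top layer of $\Sigma X$'s is the square in question.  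An iterated application of the gluing axiom, packaged as \cite[2.5]{BlumbergMandell}, then gives that the top layer is homotopy cocartesian.

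Finally, for $E\colon \aC\to S_{2}\aC$, the three projection functors $S_{2}\aC\to \aC$ at $(0,1),(0,2),(1,2)$ compose with $E$ to give the identity, $C$, and $\Sigma$ respectively, all already weakly exact.  Weak equivalences in $S_{2}\aC$ are levelwise, and pushouts along cofibrations in $S_{2}\aC$ unwrap, via the gluing conditions in the definition of cofibrations in $S_{2}\aC$, into componentwise pushouts along cofibrations with compatibility data controlled by the cofiber-sequence structure; the two required conditions for $E$ thus reduce to the component cases, with the identification $\Sigma X = CX/X$ automatically supplying the needed compatibility on pushouts.  The main obstacle throughout is preservation of homotopy cocartesian squares by $\Sigma$, which requires the iterated gluing axiom on a three-layer diagram of cofibration sequences — this is the substantive content shared across all three verifications and is precisely what \cite[2.5]{BlumbergMandell} packages.
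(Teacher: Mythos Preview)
Your proof is essentially correct and follows the same approach as the paper, which is itself only a one-line sketch citing factorization and \cite[2.5]{BlumbergMandell} (or the gluing axiom). Your expansion of that sketch---treating $C$ via contractibility, $\Sigma$ via the gluing axiom on the defining pushout, and $E$ componentwise---is the intended argument.

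One point deserves correction. Your reduction eliminating the weak-cofibration check is not valid as written: knowing that $F$ applied to the square $A\rightarrowtail B$, $A\to *$, $*\to B/A$ is homotopy cocartesian only says the square is zigzag weakly equivalent to a pushout in which \emph{some} parallel pair consists of cofibrations, not necessarily the horizontal one, so it does not by itself exhibit $FA\to FB$ as a weak cofibration. The correct reason this condition is automatic here is much simpler: in Section~\ref{secpfmain} we have fixed $\aC$ admitting factorization of all maps, so every map in $\aC$ is already a weak cofibration, and any functor $\aC\to\aC$ (or $\aC\to S_{2}\aC$) trivially takes cofibrations to weak cofibrations. The paper's terse proof relies on this implicitly by only mentioning weak equivalences and homotopy cocartesian squares.
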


The factorization functor for $\aC$ induces a factorization functor on
$S_{2}\aC$, and so $E$ induces a map of $K$-theory spectra $K\aC\to
KS_{2}\aC$.  Applying the Additivity Theorem
\cite[1.4.2,1.3.2.(3)]{Wald}, we see that on $K$-theory, the sum
in the stable category of the maps induced by the identity and
suspension is the map induced by the cone.  Since the cone induces the
trivial map, it follows that $\Sigma$ induces on
$K\aC$ the map $-1$ in the stable category.  In particular, we obtain
the following corollary; the corresponding result holds for $\aD$.

\begin{cor}\label{corsuspequiv}
$\Sigma$ induces a weak
equivalence on $K$-theory spectra $K\aC\to K\aC$.
\end{cor}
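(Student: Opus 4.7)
The plan is to formalize the two-sentence sketch given just before the statement. The goal is to establish
\[
K\Sigma \htp -\id_{K\aC}
\]
in the stable homotopy category; as $-\id$ is a self-inverse automorphism, this immediately yields the corollary. I would obtain the identity by combining the additivity relation $KC\htp \id_{K\aC}+K\Sigma$ with the vanishing $KC\htp 0$.

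For the additivity relation, I would apply the Additivity Theorem to the weakly exact functor $E\colon \aC\to S_{2}\aC$. Additivity identifies, on $K$-theory, the middle face map $d_{1}\colon S_{2}\aC\to \aC$ with the sum of the outer face maps $d_{0},d_{2}\colon S_{2}\aC\to \aC$ in the stable category. Unwinding the definition of $E$ (with $A_{0,1}=X$, $A_{0,2}=CX$, $A_{1,2}=\Sigma X$) shows that $d_{2}\circ E=\id_{\aC}$, $d_{1}\circ E=C$, and $d_{0}\circ E=\Sigma$, so composing Additivity with $KE$ yields the claimed relation.

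For the vanishing, I would observe that by construction $CX=T(X\to *)$ is equipped with a natural weak equivalence $CX\to *$, coming from the factorization $X\to CX\to *$ produced by $T$. Thus $C$ is naturally weakly equivalent, as a weakly exact endofunctor of $\aC$, to the constant functor at $*$. The constant functor factors through the trivial Waldhausen category, whose $K$-theory is the zero spectrum, and two weakly exact functors connected by a natural weak equivalence induce homotopic maps on $K$-theory spectra (the transformation supplies a simplicial homotopy after applying $\w\Sdot$). Hence $KC\htp 0$, and combining with the additivity relation yields the desired identity.

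There is no substantive obstacle here. The only points requiring a brief check are the identifications $d_{i}\circ E$, which follow immediately from the definition of $E$, and the naturality in $X$ of the weak equivalence $CX\to *$, which is immediate from the functoriality of $T$.
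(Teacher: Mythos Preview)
Your proposal is correct and follows essentially the same argument as the paper: apply Additivity to the cofiber sequence encoded by $E$ to obtain $KC\htp K\id+K\Sigma$, then use the natural weak equivalence $CX\to *$ to conclude $KC\htp 0$, whence $K\Sigma\htp -\id$. Your write-up is in fact slightly more explicit than the paper's, spelling out the identifications $d_{2}\circ E=\id$, $d_{1}\circ E=C$, $d_{0}\circ E=\Sigma$ and the reason $KC$ is nullhomotopic.
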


Although we do not assume any
relationship between the factorization functors on $\aC$
and $\aD$, nevertheless, we can relate the suspensions.

\begin{prop}\label{propsusphty}
There is a functor $\Xi\colon \aC\to \aD$ and natural weak equivalences
\[
\xymatrix@-.5pc{%
F\Sigma X&\Xi X\ar[l]_{\htp}\ar[r]^{\htp}&\Sigma FX.
}
\]
\end{prop}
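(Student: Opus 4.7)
My plan is to construct $\Xi X$ as a pushout built from the functorial factorization $T$ in $\aD$. Concretely, let
\[
\Xi X := F* \cup_{FX} T(FX \to FCX),
\]
the pushout of $F* \leftarrow FX \rightarrowtail T(FX \to FCX)$ along the cofibration produced by applying $T$ in $\aD$ to the weak cofibration $FX \to FCX$ (the $F$-image of $X \rightarrowtail CX$). Functoriality of $\Xi$ in $X$ follows from functoriality of $T$, $C$, $F$, and of pushouts along cofibrations.

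For the weak equivalence $\Xi X \to F\Sigma X$, I apply $F$ to the pushout square defining $\Sigma X$; weak exactness of $F$ makes the resulting square with corners $FX$, $FCX$, $F*$, $F\Sigma X$ homotopy cocartesian. The map $F* \to F\Sigma X$ (the $F$-image of $* \to \Sigma X$) and the composite $T(FX \to FCX) \xrightarrow{\htp} FCX \to F\Sigma X$ agree after restriction to $FX$, because $X \to CX \to \Sigma X$ factors through $*$; they therefore induce a natural map $\Xi X \to F\Sigma X$ out of the pushout. Since $\Xi X$ is a pushout along a cofibration, it models the homotopy pushout of $F* \leftarrow FX \to FCX$, and $F\Sigma X$ is another such model by weak exactness, so the comparison is a weak equivalence.

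For the weak equivalence $\Xi X \to \Sigma FX$, recall that $\Sigma FX = * \cup_{FX} CFX$ with $CFX = T(FX \to *)$. Applying $T$ functorially to the square with identity on $FX$ and with bottom row the unique map $FX \to *$ produces a natural map $T(FX \to FCX) \to CFX$ that is the identity on $FX$; it is a weak equivalence by two-out-of-three, using $FCX \htp F* \htp *$. Combined with the weak equivalence $F* \to *$ (available because $* \to F*$ is a weak equivalence by definition of weakly exact and $*$ is a zero object), this is a morphism of pushout data in the sense of the gluing axiom in $\aD$, and so induces a weak equivalence $\Xi X \to \Sigma FX$. The only thing requiring care throughout the argument is checking that the auxiliary maps into $F\Sigma X$ and $\Sigma FX$ agree on $FX$, and both checks reduce to the observation that the relevant composites out of $FX$ factor through a zero object.
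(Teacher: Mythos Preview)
Your proposal is correct and essentially identical to the paper's own argument: the paper defines $\Xi X$ by the same pushout $F*\cup_{FX}T(F(X\to CX))$, obtains $\Xi X\to F\Sigma X$ from the factorization weak equivalence $T(F(X\to CX))\to FCX$ together with the universal property of the pushout (citing \cite[2.5]{BlumbergMandell} for the homotopy cocartesian comparison you spell out), and obtains $\Xi X\to \Sigma FX$ from functoriality of $T$ in $\Ar\aD$ and the gluing axiom, exactly as you do. Your added remarks about checking compatibility on $FX$ and the two-out-of-three justification for $T(FX\to FCX)\to CFX$ being a weak equivalence are correct and make explicit what the paper leaves implicit.
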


For example, $\Xi X$ can be defined as the pushout
\[
\xymatrix@C-1pc{%
FX\ar@{ >->}[r]\ar[d]&T(F(X\to CX))\ar[d]\\
F*\ar[r]&\Xi X.
}
\]
The factorization weak equivalence $T(F(X\to CX))\to FCX$ and the
universal property of the pushout induces the map $\Xi X\to F\Sigma X$,
which is a weak equivalence \cite[2.5]{BlumbergMandell}.
Functoriality of $T$
in $\Ar\aD$ then gives a map under $FX$,  
\[
T(F(X\rightarrow CX))\to T(FX \rightarrow *) = CFX,
\]
which is a weak equivalence since the initial map to each is a weak
equivalence.  This map and the final map $F*\to *$ induce the map
$\Xi X\to \Sigma FX$, which is a 
weak equivalence by the gluing axiom. 

To take advantage of the suspension functor, we need to relate it to
the Dwyer-Kan function complexes.  For this we use the following
application of Theorem~\ref{thmhococart}  from
Section~\ref{sechococart}.  Again, the corresponding theorem
also holds for $\aD$. 

\begin{thm}\label{thmprehococart}
If the diagram on the left below is homotopy cocartesian in $\aC$,
\[
\xymatrix@-1pc{%
A\ar[r]\ar[d]&B\ar[d]
&&L\aC(D,Y)\ar[r]\ar[d]&L\aC(C,Y)\ar[d]\\
C\ar[r]&D
&&L\aC(B,Y)\ar[r]&L\aC(A,Y)
}
\]
then for any object $Y$ in $\aC$, the diagram on the right is homotopy
cartesian in the category of simplicial sets.
\end{thm}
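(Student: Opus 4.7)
The plan is to reduce, via the factorization axiom on $\aC$, to the case where the homotopy cocartesian square is a strict pushout along a cofibration, and then to verify the homotopy cartesian property of the square of hammock function complexes for such a pushout using the Dwyer--Kan hammock machinery together with FMCWC.

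First I would use that, by definition, any homotopy cocartesian square is weakly equivalent in $\Ar^2\aC$ (by a zigzag) to a strict pushout square along a cofibration. An objectwise weak equivalence in the source induces a levelwise weak equivalence on hammock function complexes $L\aC(-,Y)$ by the standard invariance of the hammock localization under weak equivalences, and the homotopy cartesian property is invariant under levelwise weak equivalence of squares of simplicial sets. Hence it suffices to prove the theorem when the left square is a genuine pushout $D=B\cup_A C$ with $A\rightarrowtail B$ a cofibration.

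Second, for this strict pushout case, I would model $L\aC(X,Y)$ via a hammock-type cosimplicial resolution of $X$ adapted to the factorization data on $\aC$, in the spirit of \cite{DKHammock,DKModel}, so that $L\aC(X,Y)$ is computed (up to weak equivalence) as the diagonal of a bisimplicial set built from $\Hom_\aC$ sets along the resolution. For a pushout along a cofibration, functoriality of $T$ lets these resolutions assemble into a compatible square in each cosimplicial degree, and the cofibration hypothesis on $A\to B$ is preserved levelwise by FMCWC. Applying $\Hom_\aC(-,Y)$ in each degree therefore yields a strict pullback of simplicial sets that is simultaneously a homotopy pullback, and taking diagonals gives the desired homotopy cartesian square of $L\aC(-,Y)$.

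The hard part will be the second step: since $\aC$ is only a Waldhausen category with factorization and not a full model category, one cannot directly invoke the Dwyer--Kan calculus of bifibrant framings, and must instead carry out a one-sided (cofibration and weak equivalence only) version suited to the Waldhausen setting. The essential technical input that makes this go through is that in a Waldhausen category with FMCWC, pushouts along cofibrations are homotopy-invariant in the sense of \cite[\S 2]{BlumbergMandell}, and this is precisely what is needed to promote the strict pullback of $\Hom_\aC$ sets at each resolution level to a genuine homotopy pullback of simplicial sets.
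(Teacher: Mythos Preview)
Your first reduction is exactly right and matches the paper: since a homotopy cocartesian square is by definition zigzag weakly equivalent to a strict pushout along a cofibration, and since $L\aC(-,Y)$ sends weak equivalences to weak equivalences, one may assume $A\rightarrowtail B$ is a cofibration and $D=B\cup_{A}C$. This is how the paper reduces Theorem~\ref{thmprehococart} to Theorem~\ref{thmhococart}.

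The second step, however, has a genuine gap. Your plan is to build compatible cosimplicial resolutions of $A,B,C,D$ with $D^{\bullet}=B^{\bullet}\cup_{A^{\bullet}}C^{\bullet}$, so that $\Hom_{\aC}(D^{\bullet},Y)$ is the \emph{strict} pullback of $\Hom_{\aC}(B^{\bullet},Y)$ and $\Hom_{\aC}(C^{\bullet},Y)$ over $\Hom_{\aC}(A^{\bullet},Y)$, and then to assert that this strict pullback is a homotopy pullback. In a genuine model category this works because a Reedy cofibration $A^{\bullet}\rightarrowtail B^{\bullet}$ and a fibrant target $Y$ make $\Hom(B^{\bullet},Y)\to\Hom(A^{\bullet},Y)$ a Kan fibration. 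But a Waldhausen category has no fibrations and no fibrant objects, so there is no mechanism available to you for turning that strict pullback into a homotopy pullback. The ingredient you invoke, homotopy invariance of pushouts along cofibrations from \cite[\S2]{BlumbergMandell}, controls pushouts in $\aC$, not pullbacks of mapping simplicial sets; it does not supply the missing fibrancy. Nor does a levelwise (bisimplicial) argument rescue this: a square of bisimplicial sets that is homotopy cartesian in each degree need not be homotopy cartesian on diagonals without an additional $\pi_{*}$-Kan or fibrancy condition, which again you have no way to verify here.

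The paper's route avoids this problem entirely. Using FMCWC, Theorem~\ref{thmHCLF} shows that $\aC$ admits a homotopy calculus of left fractions, so $L\aC(X,E)$ is modeled by the nerve of the small zigzag category $\WC(X,E)$. The square of function complexes is then the square of nerves induced by the functor $f^{*}\colon\WC(B,E)\to\WC(A,E)$, and one proves it is homotopy cartesian by Quillen's Theorem~B in the form of Theorem~\ref{thmC}: Lemma~\ref{lemthmhococart} checks the required hypothesis on comma categories $\overleftrightarrow{X}\downarrow f^{*}$ by identifying them with unions of components of $\WC(B\cup_{A}X,H)$ and using that these nerves send weak equivalences in $X$ and $H$ to weak equivalences. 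The cofibration hypothesis enters only through the universal property of the pushout $B\cup_{A}X$, not through any fibrancy statement. If you want to salvage your approach, you would need to replace the cosimplicial-resolution pullback by exactly this kind of Theorem~B comma-category analysis.
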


Applying this theorem to 
the homotopy cocartesian square defining the suspension, we obtain the
following corollary. 

\begin{cor}
For any $X,Y$ in $\aC$, 
$L\aC(\Sigma X,Y)$ is weakly equivalent to the based loop space of
$L\aC(X,Y)$, based at the 
trivial map $X\to Y$.
\end{cor}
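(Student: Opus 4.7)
The plan is to apply Theorem~\ref{thmprehococart} to the homotopy cocartesian square that defines the suspension, namely
\[
\xymatrix@-1pc{
X\ar@{ >->}[r]\ar[d]&CX\ar[d]\\
*\ar[r]&\Sigma X.
}
\]
This is a strict pushout along a cofibration by construction of $\Sigma X=CX/X$, and so it is homotopy cocartesian. The theorem then supplies a homotopy cartesian square of simplicial sets with $L\aC(\Sigma X,Y)$ in the upper-left, $L\aC(CX,Y)$ and $L\aC(*,Y)$ in the outer corners, and $L\aC(X,Y)$ in the lower-right.

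Next I would collapse the top-right and bottom-left corners to a point. The factorization weak equivalence $CX\to *$ induces a weak equivalence $L\aC(*,Y)\to L\aC(CX,Y)$, so it is enough to show that $L\aC(*,Y)$ is contractible. This is the statement that $*$ remains initial in the Dwyer-Kan simplicial localization $L\aC$; it follows from the fact that $*$ is a zero object in $\aC$ together with standard contraction arguments for hammocks whose leftmost backward arrow lands in $*$.

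Once both outer corners are identified with a point, the homotopy cartesian square reads
\[
\xymatrix@-1pc{
L\aC(\Sigma X,Y)\ar[r]\ar[d]&*\ar[d]\\
*\ar[r]&L\aC(X,Y),
}
\]
and the common map $*\to L\aC(X,Y)$ is precomposition by $X\to *$ of the unique map $*\to Y$, i.e., the trivial map $X\to Y$. This identifies $L\aC(\Sigma X,Y)$ with the homotopy fiber over the trivial map, which is by definition the based loop space $\Omega L\aC(X,Y)$.

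The main obstacle will be the contractibility of $L\aC(*,Y)$: although this is natural from the zero-object perspective, justifying it carefully in the hammock localization setting is the only step that does not reduce immediately to Theorem~\ref{thmprehococart}. One clean way to handle it is to exhibit an explicit contraction of $L\aC(*,Y)$ toward the constant hammock at the unique map $*\to Y$, using the functorial cone as a canonical path from any zigzag through $*$ back to the trivial hammock.
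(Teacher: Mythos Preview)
Your approach is exactly the paper's: apply Theorem~\ref{thmprehococart} to the defining pushout square for $\Sigma X$ and then collapse the two contractible corners. The paper states the corollary as an immediate consequence without spelling out the contractibility of $L\aC(*,Y)$ and $L\aC(CX,Y)$.

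Your only soft spot is the justification you propose for $L\aC(*,Y)\simeq *$. The ``functorial cone as a path'' idea is vague and unnecessary. A cleaner route uses what the paper has already set up: since $\aC$ has FMCWC, Theorem~\ref{thmHCLF} gives a HCLF, so $L\aC(*,Y)\simeq N\WC(*,Y)$. But $*$ is initial in $\aC$, so the datum of an object $*\to X\leftarrow Y$ in $\WC(*,Y)$ is just a weak equivalence $Y\to X$; hence $\WC(*,Y)\cong \w\aC\backslash Y$, which has an initial object and therefore contractible nerve. The weak equivalence $CX\to *$ then handles $L\aC(CX,Y)$ as you say. With this replacement your argument is complete and matches the paper.
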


Applying $F$ to the homotopy cocartesian square defining the
suspension, we see that likewise $L\aD(F\Sigma X,FY)$ is weakly
equivalent to the based loop space of
$L\aD(FX,FY)$, based at $F$ of the trivial map $X\to Y$, or
equivalently, based at the trivial map $FX\to FY$ since it
is in the same component.

Iterating the suspension in the previous proposition, we see that
$\pi_{n}L\aC(X,Y)$ based at the trivial map is 
\[
\pi_{0}L\aC(\Sigma^{n}X,Y) = \Ho\aC(\Sigma^{n}X,Y).
\]
Since $F$ induces an equivalence $\Ho\aC\to \Ho\aD$, we obtain the
following corollary. 

\begin{cor}\label{corsuspdkequiv}
For any $X,Y$ in $\aC$, the restriction of 
\[
LF\colon L\aC(X,Y)\to L\aD(FX,FY)
\]
to the component of the trivial
map is a weak equivalence; the map
\[
LF\colon L\aC(\Sigma X,Y)\to L\aD(F\Sigma X,FY)
\]
is a weak equivalence. 
\end{cor}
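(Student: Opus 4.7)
The plan is to prove the two assertions in turn: the first by a straightforward homotopy-group comparison using the identification stated just before Corollary~\ref{corsuspdkequiv}, and the second by bootstrapping to the full function complex via the loop-space description $L\aC(\Sigma X, Y)\htp \Omega L\aC(X, Y)$ from the preceding corollary.

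For the first assertion, it suffices to show that $LF$ induces a bijection on $\pi_{n}$ of the trivial-map components for every $n\geq 0$. The case $n=0$ is trivial. For $n\geq 1$, the text immediately before the corollary gives $\pi_{n}(L\aC(X,Y),\text{triv})\cong \Ho\aC(\Sigma^{n} X,Y)$ and likewise $\pi_{n}(L\aD(FX,FY),\text{triv})\cong \Ho\aD(\Sigma^{n} FX,FY)$, while Proposition~\ref{propsusphty}, iterated, provides a natural isomorphism $\Ho\aD(\Sigma^{n} FX,FY)\cong \Ho\aD(F\Sigma^{n} X,FY)$. These identifications are natural in the weakly exact functor $F$, so the map induced by $LF$ on homotopy groups corresponds to $F\colon \Ho\aC(\Sigma^{n} X,Y)\to \Ho\aD(F\Sigma^{n} X,FY)$, which is a bijection because $F$ is an equivalence of homotopy categories by hypothesis.

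For the second assertion, apply Theorem~\ref{thmprehococart} to the homotopy cocartesian suspension square in $\aC$ and also to its image under $F$, which is homotopy cocartesian because $F$ is weakly exact and which, via Proposition~\ref{propsusphty}, models the suspension of $FX$. This yields a homotopy commutative diagram
\[
\xymatrix{
L\aC(\Sigma X,Y)\ar[r]^-{\htp}\ar[d]_{LF}& \Omega L\aC(X,Y)\ar[d]^{\Omega(LF)}\\
L\aD(F\Sigma X,FY)\ar[r]^-{\htp}& \Omega L\aD(FX,FY),
}
\]
in which both loop spaces are based at the trivial map. Since $\Omega(-)$ at the trivial map depends only on the trivial-map component of its argument, and since the first assertion already shows that $LF$ restricted to those components is a weak equivalence, the right vertical map is a weak equivalence, and hence so is the left. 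The main technical obstacle is verifying the naturality of the loop-space identification under $LF$: since $F$ does not strictly preserve cones or the initial object, one must use the zigzags of Proposition~\ref{propsusphty} to assemble a homotopy-coherent map between the suspension squares in $\aC$ and $\aD$, after which the naturality of Theorem~\ref{thmprehococart} produces the commutative diagram above.
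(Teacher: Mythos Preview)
Your proof is correct and follows essentially the same approach as the paper: the paper's argument, given in the paragraph immediately preceding the corollary, is exactly the homotopy-group computation you outline for the first assertion, and the second assertion follows (as you argue) from the loop-space identification together with the observation that $F$ applied to the suspension square remains homotopy cocartesian. The only minor difference is that the paper handles the naturality issue you flag more directly---rather than invoking Proposition~\ref{propsusphty}, it simply applies Theorem~\ref{thmprehococart} to the image under $F$ of the suspension square (using that $*\to F*$ and $*\to FCX$ are weak equivalences, so the square still has the shape of a suspension square) to get $L\aD(F\Sigma X,FY)\htp\Omega L\aD(FX,FY)$ based at $F$ of the trivial map, which lies in the trivial component.
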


To apply the previous corollary, we also need to know that the
components of $L\aC(\Sigma X,\Sigma X)$ in $\Laut{\Sigma X}$
correspond to the same components of $L\aD(F\Sigma X,F\Sigma X)$ that
are in $\Laut{F\Sigma X}$.  Since these components consist of exactly
the components representing the image 
of $\Ho\w\aC(\Sigma X,\Sigma X)$ and $\Ho\w\aD(F\Sigma X,F\Sigma X)$,
respectively, this is clear when hypothesis~(iii) in the statement of
Theorem~\ref{main} holds.  The other two cases are handled by the
following propositions.  The first is a special case of
Theorem~\ref{thmTFAE}.

\begin{prop}\label{prophypi}
If hypothesis~(i) in the statement of
Theorem~\ref{main} holds, then a map in $\aC$ is a weak equivalence if
and only if it represents an isomorphism in $\Ho\aC$, and likewise for $\aD$.
\end{prop}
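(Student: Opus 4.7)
The forward direction is immediate from the universal property of the localization; only the converse requires argument, and since the situations of $\aC$ and $\aD$ are symmetric, I focus on $\aC$.

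Let $f\colon A\to B$ represent an isomorphism in $\Ho\aC$. First I would reduce to the case of a cofibration. Factor $f$ functorially as $f = w\circ c$ with $c\colon A\to C$ a cofibration and $w\colon C\to B$ a weak equivalence. Since $w$ represents an isomorphism in $\Ho\aC$, two-out-of-three for isomorphisms in $\Ho\aC$ forces $c$ to represent one as well; by saturation of the weak equivalences in $\aC$, it then suffices to show that any cofibration which is an isomorphism in $\Ho\aC$ is a weak equivalence.

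The plan for this key step is to exhibit such a cofibration $c\colon A\to B$ as a retract, in the arrow category $\Ar\aC$, of some weak equivalence, so that hypothesis~(i) concludes the argument. Form the pushout $P = B\cup_A B$ with cofibrations $i_1, i_2\colon B\to P$ and codiagonal $\nabla\colon P\to B$, and factor $\nabla$ functorially as a cofibration $P\hookrightarrow W$ followed by a weak equivalence $p\colon W\to B$. Write $j_k\colon B\to W$ for the two resulting cofibrations; one has $pj_1 = pj_2 = \id_B$, and moreover $j_1 = j_2$ in $\Ho\aC$ because $c$ is an isomorphism there. Using the hammock localization $L\aC$ to represent the equality $[j_1]=[j_2]$ in $\Ho\aC = \pi_0 L\aC$ by an explicit zigzag of weak equivalences in $\aC$ under $B$, and straightening this zigzag via functorial factorization, I would assemble a commuting diagram in $\Ar\aC$ whose middle column is a weak equivalence and that exhibits $c$ as a retract of it.

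The principal obstacle is this last step: translating the abstract identity $[j_1]=[j_2]$ in $\Ho\aC$ into the concrete retraction diagram in $\aC$. The needed mechanism combines functorial factorization, saturation, and the calculus of fractions available in the Dwyer--Kan hammock localization, and is precisely what Theorem~\ref{thmTFAE} formalizes in its full generality. Once the retraction is in hand, closure of weak equivalences under retracts yields that $c$ is a weak equivalence, completing the proof for $\aC$; the identical argument applied to $\aD$ handles the remaining case.
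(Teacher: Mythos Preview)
Your proposal ultimately defers to Theorem~\ref{thmTFAE}, which is exactly what the paper does: Proposition~\ref{prophypi} is stated as a special case of that theorem. So at the level of ``what proves the proposition,'' you agree with the paper.

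The gap is in your attempted \emph{direct} argument. You correctly observe that if $c\colon A\to B$ is a cofibration which is invertible in $\Ho\aC$, then the two inclusions $j_{1},j_{2}\colon B\to W$ into the factored codiagonal satisfy $[j_{1}]=[j_{2}]$ in $\Ho\aC$. But you do not explain how this equality in $\Ho\aC$ produces an actual retraction diagram in $\aC$ exhibiting $c$ as a retract of a weak equivalence, and there is no evident way to do so: an equality in $\pi_{0}L\aC$ only gives a zigzag of hammocks, not a commuting diagram in $\aC$ of the required shape. Saying that ``Theorem~\ref{thmTFAE} formalizes'' this step is not accurate---that theorem does not furnish the retraction you want.

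The paper's proof of Theorem~\ref{thmTFAE} proceeds differently, in two stages. First, (i)$\Rightarrow$(ii): given composable $A\to B\to C\to D$ with $A\to C$ and $B\to D$ weak equivalences, one factors $A\to C$ through a cofibration, pushes out, and produces an explicit retraction of $C\to D$ by a weak equivalence $B'\to D$; retract-closure then yields DKHS-2/6. Second, (ii)$\Rightarrow$(iii): given $a\colon A\to B$ invertible in $\Ho\aC$, one uses HCLF to write the inverse as a short zigzag, builds two pushout squares, and applies 2/6 to the resulting string of three maps. Retractions appear only in the first step, and in a different configuration than the one you set up; the passage to DKHS-saturation is via 2/6, not via a direct retraction of the given map.
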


The second proposition is proved in Section~\ref{sechclf} as 
Corollary~\ref{corhypii}.

\begin{prop}\label{prophypii}
If hypothesis~(ii) in the statement of Theorem~\ref{main} holds, then
so does hypothesis~(iii).
\end{prop}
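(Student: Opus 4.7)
The plan is to establish the nontrivial inclusion; the opposite inclusion is immediate because $F$, being weakly exact, sends weak equivalence zigzags in $\aC$ to weak equivalence zigzags in $\aD$, so the image of $\Ho(\w\aC)(A,B)$ in $\Ho\aD(FA,FB)$ is automatically contained in that of $\Ho(\w\aD)(FA,FB)$. For the reverse inclusion, note that hypothesis~(ii) asserts $\w\aC = F^{-1}(\w\aD)$; combined with the assumed equivalence $F \colon \Ho\aC \to \Ho\aD$, the aim is to lift any zigzag of weak equivalences in $\aD$ between $FA$ and $FB$ to a zigzag of weak equivalences in $\aC$ between $A$ and $B$, up to the equivalence relation in $\Ho\aD$.

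The construction has two stages. First, given a representative $FA \leftarrow W_{1} \to W_{2} \leftarrow \dotsb \to FB$ with all arrows weak equivalences in $\aD$, essential surjectivity of $F$ on homotopy categories provides objects $Y_{i} \in \aC$ with isomorphisms $FY_{i} \cong W_{i}$ in $\Ho\aD$ (taking $Y_{0} = A$ and $Y_{n} = B$), and fullness produces morphisms $y_{i}$ in $\Ho\aC$ between adjacent $Y_{i}$'s whose $F$-images agree with the original weak equivalences modulo the chosen isomorphisms. Each $y_{i}$ is automatically an isomorphism in $\Ho\aC$ because $F$ reflects isomorphisms as an equivalence of categories. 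Second, using factorization in $\aC$, write each $y_{i}$ in the normal form $y_{i} = [w_{i}]^{-1} \circ [f_{i}]$ with $f_{i}\colon Y_{i-1} \to Z_{i}$ a cofibration and $w_{i}\colon Y_{i} \to Z_{i}$ a weak equivalence. The goal is to promote $f_{i}$ to an honest weak equivalence by exploiting that $[Ff_{i}] = [Fw_{i}] \circ F[y_{i}]$ in $\Ho\aD$ is represented by a composite of weak equivalences, and then invoking hypothesis~(ii) to conclude $f_{i}$ is a weak equivalence in $\aC$. Stringing together the resulting zigzags $Y_{i-1} \xrightarrow{f_{i}} Z_{i} \xleftarrow{w_{i}} Y_{i}$ then yields a zigzag of weak equivalences in $\aC$ whose $F$-image represents the original $\phi$.

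The principal obstacle is this promotion step: a priori, $[Ff_{i}]$ is only known to be an isomorphism in $\Ho\aD$, while hypothesis~(ii) demands that $Ff_{i}$ be an actual weak equivalence in $\aD$. Without the retract-closure hypothesis, where Theorem~\ref{thmTFAE} would close the gap directly, bridging this requires care: one must either sharpen the choice of the lift $y_{i}$ and its factorization, or further factor $Ff_{i}$ in $\aD$ and insert additional vertices in the zigzag, so that each connecting arrow becomes an honest weak equivalence in $\aD$ before hypothesis~(ii) is invoked. Managing this interplay between the factorization structures on $\aC$ and on $\aD$ is where I expect the bulk of the work to lie, and it is the reason the result is proved in a dedicated section rather than as a direct corollary of the equivalence of homotopy categories.
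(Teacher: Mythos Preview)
Your outline identifies the crux correctly---the passage from ``$[Ff_i]$ is an isomorphism in $\Ho\aD$'' to ``$Ff_i$ is an actual weak equivalence in $\aD$''---but leaves it unresolved, and your proposed workarounds (sharpening lifts, inserting extra factorizations in $\aD$) do not obviously close it. The paper's proof is in fact much shorter than you anticipate, and hinges on one piece of machinery you do not invoke: Theorem~\ref{thmwordcomp}, the component identification for the short-word model of the Dwyer--Kan function complex.

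Concretely, the paper does not lift the zigzag in $\aD$ term by term. Instead it lifts only the \emph{class} $\phi\in\Ho\aD(FA,FB)$ to some $\phi'\in\Ho\aC(A,B)$ via the equivalence $\Ho\aC\to\Ho\aD$, and then represents $\phi'$ by a single word
\[
A\xrightarrow{\,f_2\,} X \xleftarrow{\,\htp\,} B
\]
in $\WC(A,B)$ (possible by HCLF, Theorem~\ref{thmHCLF}). Applying $F$ gives a word $FA\to FX\leftarrow FB$ in $\WC(FA,FB)$ representing $\phi$. Now the key step: since $\phi$ lies in the image of $\Ho(\w\aD)(FA,FB)$, it lies in the components $\Lw\aD(FA,FB)$, and Theorem~\ref{thmwordcomp} says that the map $N\WC(FA,FB)_{\w}\to \Lw\aD(FA,FB)$ is a weak equivalence onto those components. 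Hence the word $FA\to FX\leftarrow FB$ lies in $\WC(FA,FB)_{\w}$, which by definition means $Ff_2$ is a weak equivalence. Hypothesis~(ii) then gives that $f_2$ is a weak equivalence in $\aC$, and the original word exhibits $\phi'$ as coming from $\Ho(\w\aC)(A,B)$.

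So the ``bulk of the work'' you anticipate is absorbed entirely by Theorem~\ref{thmwordcomp}: that result is precisely the statement that membership in the weak-equivalence components of $L\aD$ is detected at the level of any $\WC$-representative. Without it (or an equivalent component-identification result), your strategy of further factoring $Ff_i$ in $\aD$ runs in circles, since each new factorization produces a new forward arrow with the same problem.
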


We have now assembled all we need to prove Theorem~\ref{main}.  By
Proposition~\ref{propbm}, we can use $N(\w\Spdot\aC)$, the diagonal of
the nerve of the simplicial category $\w\Spdot\aC$, as a model for the
(delooped) $K$-theory space of $\aC$.
Since suspension induces a weak equivalence on $K$-theory, the
telescope under suspension
\[
\Tel_{\Sigma} N(\w\Spdot \aC)
= \Tel (
\xymatrix@-1pc{%
N(\w\Spdot\aC)\ar[r]^-{\Sigma}&
N(\w\Spdot\aC)\ar[r]^-{\Sigma}&
N(\w\Spdot\aC)\ar[r]^-{\Sigma}&\dotsb 
})
\]
is equivalent to $N(\w\Spdot\aC)$ via the inclusion.  The same
observations apply to $\aD$, and Proposition~\ref{propsusphty}
provides homotopies to construct a map of telescopes
\begin{equation}\label{eqtel}
\Tel_{\Sigma} N(\w\Spdot[n]\aC)\to 
\Tel_{\Sigma} N(\w\Spdot[n]\aD)
\end{equation}
for all $n$.  Now we use the models from Theorem~\ref{thmhocoend}.  We write
\[
L\aC(X_{1},\dotsc,X_{n})=L\aC(X_{n-1},X_{n})\times \dotsb \times L\aC(X_{1},X_{2})
\]
and similarly for $\aD$ to save space.  Then by
Theorem~\ref{thmhocoend}.(i), the square in the homotopy category
formed by the $\Spdot[n]$ constructions
\[
\xymatrix{%
N(\w\Spdot[n]\aC)\ar[r]^{F_{*}}\ar[d]_{\Sigma_{*}}
&N(\w\Spdot[n]\aD)\ar[d]^{\Sigma_{*}}\\
N(\w\Spdot[n]\aC)\ar[r]_{F_{*}}
&N(\w\Spdot[n]\aD)
}
\]
is isomorphic in the homotopy category to the square in the homotopy
category formed by the homotopy coends
\[
\xymatrix{%
\hocoend L\aC(X_{1},\dotsc,X_{n})\ar[r]^{LF_{*}}\ar[d]_{L\Sigma_{*}}
&\hocoend L\aD(Y_{1},\dotsc,Y_{n})\ar[d]^{L\Sigma_{*}}\\
\hocoend L\aC(X_{1},\dotsc,X_{n})\ar[r]_{LF_{*}}
&\hocoend L\aD(Y_{1},\dotsc,Y_{n}),
}
\]
where the homotopy coends are over $(X_{1},\dotsc,X_{n})\in \w\aC^{n}$
and $(Y_{1},\dotsc,Y_{n})\in \w\aD^{n}$.  Writing
$\aC_{\Sigma}$ and
$\aD_{\Sigma}$ for the full subcategories of $\aC$ and $\aD$
consisting of objects that are weakly equivalent to suspensions, it is
clear that the vertical map of arrows factors through the arrow
\[
\hocoendlim_{(X_{1},\dotsc,X_{n})\in \w\aC^{n}_{\Sigma }} 
L\aC(X_{1},\dotsc,X_{n})\quad 
\xrightarrow{\ LF_{*}\ }
\hocoendlim_{(Y_{1},\dotsc,Y_{n})\in \w\aD^{n}_{\Sigma }}
L\aD(Y_{1},\dotsc,Y_{n}).
\]
Corollary~\ref{corsuspdkequiv} and
Theorem~\ref{thmhocoend} imply this latter map is a weak
equivalence, and we conclude that the map of telescopes~\eqref{eqtel}
is a weak equivalence.  This completes the proof of Theorem~\ref{main}.

\section{Universal simplicial quasifibrations}

In this section, we introduce the first of two techniques which
provide the foundation for our subsequent work in this paper.  
The proofs of the theorems in the previous sections depend on
machinery for solving two related problems: The 
identification of certain squares of simplicial sets as homotopy
cartesian squares and the identification of the homotopy fiber of
certain maps of simplicial sets.  Quillen's Theorem~B \cite{QuillenAK}
and its simplicial variant \cite[1.4.B]{Wald} provide a flexible tool
for these purposes.  We rely on a particular formulation of Theorem~B
in terms of a notion of ``universal simplicial quasifibration''.  Our
exposition and viewpoint on the subject is heavily influenced by
postings of Tom Goodwillie on Don Davis' algebraic topology mailing
list \cite{Goodwillie-Davis}.

Recall that a map $X\to Y$ of spaces is a quasifibration when for
every point $x$ of $X$, the map from the fiber to the homotopy fiber
is a weak equivalence. We say that a map of simplicial sets $X\subdot
\to Y\subdot$ is a quasifibration when its geometric realization is a
quasifibration of spaces.

\begin{defn}\label{defunivsq}
A map of simplicial sets $X\subdot \to Y\subdot$ is a \term{universal simplicial
quasifibration} when for every map of simplicial sets $Z\subdot\to
Y\subdot$, the induced map of the pullback $X\subdot \times_{Y\subdot}
Z\subdot\to Z\subdot$ is a quasifibration.
\end{defn}

The definition specifies a class of maps for which it is easy to
identify pullbacks as homotopy pullbacks.  The following proposition
implies that to verify that a map is a universal simplicial
quasifibration, it suffices to check the condition on the simplexes of
$X\subdot$; for the simplices, checking that the pullback map to the simplex
is a quasifibration then 
amounts to checking that the fiber over a vertex includes as a weak
equivalence.  The proof in one direction is the restriction of the universal 
simplicial quasifibration property to the standard simplices; the
proof in the other direction is Waldhausen's version of Theorem~B
\cite[1.4.B]{Wald}.

\begin{prop}\label{propunivsq}
A map of simplicial sets $X\subdot \to Y\subdot$ is a universal simplicial
quasifibration if and only if for every $n$ and every map $\Delta[n]\to
Y\subdot$, the induced map of the pullback $X\subdot \times_{Y\subdot}
\Delta[n]\to \Delta[n]$ is a quasifibration.
\end{prop}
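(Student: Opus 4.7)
The plan is to prove the two implications separately. The ``only if'' direction is essentially tautological: if $X\subdot\to Y\subdot$ is a universal simplicial quasifibration, then for every $n$ and every $\Delta[n]\to Y\subdot$, applying the universal property with $Z\subdot=\Delta[n]$ shows that $X\subdot\times_{Y\subdot}\Delta[n]\to \Delta[n]$ is a quasifibration. This is just specialization to $Z\subdot=\Delta[n]$.

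For the ``if'' direction, the main idea is that pullback along an arbitrary map $Z\subdot\to Y\subdot$ can be analyzed one simplex of $Z\subdot$ at a time. The key observation is that pullbacks compose: for any simplex $\sigma\colon \Delta[n]\to Z\subdot$, with composite $\Delta[n]\xrightarrow{\sigma}Z\subdot\to Y\subdot$, there is a canonical identification
\[
(X\subdot\times_{Y\subdot} Z\subdot)\times_{Z\subdot}\Delta[n] \;\iso\; X\subdot\times_{Y\subdot}\Delta[n],
\]
and by the standing hypothesis the right-hand side is a quasifibration over $\Delta[n]$. Thus the map $X\subdot\times_{Y\subdot} Z\subdot\to Z\subdot$ has the property that its pullback along every simplex of its target is a quasifibration.

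Next I would invoke Waldhausen's simplicial version of Theorem~B \cite[1.4.B]{Wald}, which is precisely the tool for promoting ``quasifibration over each simplex of the base'' to ``quasifibration globally''. Concretely, one checks that for each face or degeneracy map $\Delta[m]\to \Delta[n]$ appearing as the edge of simplices in $Z\subdot$, the induced map on fibers is a weak equivalence; this follows because a quasifibration over $\Delta[n]$ has all fibers weakly equivalent to the total space (since $\Delta[n]$ is contractible), so any two fibers over vertices of $\Delta[n]$ are weakly equivalent via the inclusion. This verifies the hypothesis of \cite[1.4.B]{Wald} for $X\subdot\times_{Y\subdot} Z\subdot\to Z\subdot$.

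The main obstacle (and really the only content beyond bookkeeping) is the appeal to Waldhausen's \cite[1.4.B]{Wald}; everything else is functoriality of pullback and the trivial observation that quasifibrations over a contractible base have weakly equivalent fibers. Once that theorem is applied, we conclude that $X\subdot\times_{Y\subdot} Z\subdot\to Z\subdot$ is a quasifibration for every $Z\subdot\to Y\subdot$, which is the definition of $X\subdot\to Y\subdot$ being a universal simplicial quasifibration.
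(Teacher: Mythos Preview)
Your proposal is correct and takes essentially the same approach as the paper: the ``only if'' direction is specialization to $Z\subdot=\Delta[n]$, and the ``if'' direction is Waldhausen's simplicial Theorem~B \cite[1.4.B]{Wald}. The paper's proof is in fact just a one-line pointer to these two facts, so you have spelled out more detail (the pullback-composability observation and the verification that fibers over vertices of a simplex are weakly equivalent) than the paper itself provides.
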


In general, the simplicial sets we use arise as homotopy colimits.
Thus, it is convenient to state the following proposition.  For a
small category $\aC$, we write $N\aC$ to denote the simplicial
nerve.

\begin{prop}\label{prophocolimb}
Let $\aC$ be a small category and $F$ a functor from $\aC$ to simplicial
sets.  Suppose that for every map $f\colon x\to y$ in $\aC$, the
induced map $F(f)\colon F(x)\to F(y)$ is a weak equivalence.  Then the
map $\hocolim_{\aC}F\to N\aC$ is a universal simplicial quasifibration.
\end{prop}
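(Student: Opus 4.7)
The strategy is to invoke Proposition~\ref{propunivsq}: it suffices to check that for every $n\geq 0$ and every simplex $\sigma\colon \Delta[n]\to N\aC$, the pullback $\Delta[n]\times_{N\aC}\hocolim_{\aC}F\to\Delta[n]$ is a quasifibration. A map $\Delta[n]\to N\aC$ is the same data as a functor $\sigma\colon [n]\to\aC$, and unwinding the standard bar-construction model of $\hocolim_{\aC}F$ identifies the pullback with $\hocolim_{[n]}(F\sigma)$: a $k$-simplex is a pair $(\alpha\colon [k]\to [n],\, s\in (F\sigma(\alpha(0)))_k)$, and the projection to $\Delta[n]=N[n]$ records $\alpha$.

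Writing $G:=F\sigma$ (which inherits the property that every morphism is sent to a weak equivalence), I would then exploit the terminal object $n$ of $[n]$. The natural transformation $G\to\mathrm{const}(G(n))$ whose component at $i$ is $G(i\leq n)$ is a pointwise weak equivalence, so since the bar-construction hocolim preserves pointwise weak equivalences of diagrams of simplicial sets, one obtains
\[
\hocolim_{[n]}G \xrightarrow{\htp} \hocolim_{[n]}\mathrm{const}(G(n)) = G(n)\times\Delta[n].
\]
Projecting to $G(n)$ gives a weak equivalence $\hocolim_{[n]}G\xrightarrow{\htp} G(n)$ which is split by the inclusion of the ``constant at $n$'' simplices $([k]\xrightarrow{n}[n],s)$; in particular this inclusion $G(n)\hookrightarrow\hocolim_{[n]}G$ is a weak equivalence.

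To complete the quasifibration check, one verifies that the inclusion of each geometric fiber of $|\hocolim_{[n]}G|\to|\Delta[n]|$ into the total space is a weak equivalence; since $|\Delta[n]|$ is contractible, this coincides with the inclusion into the homotopy fiber. For a point in the interior of the face $(a_0<\cdots<a_p)$, the bisimplicial description of $\hocolim_{[n]}G$ identifies the fiber canonically with $|G(a_0)|$, and the inclusion into $|\hocolim_{[n]}G|$ factors as
\[
|G(a_0)|\xrightarrow{|G(a_0\leq n)|}|G(n)|\hookrightarrow|\hocolim_{[n]}G|,
\]
a composite of two weak equivalences. The main technical point is the fiber identification over interior points of positive-dimensional faces; this is a direct but notationally dense consequence of the explicit bisimplicial model, and is the only place where any real work is required.
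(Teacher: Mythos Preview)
Your proposal is correct and follows essentially the same route as the paper. Both arguments reduce via Proposition~\ref{propunivsq} to a single simplex, identify the pullback over $\sigma\colon\Delta[n]\to N\aC$ with $\hocolim_{[n]}(F\circ\sigma)$, and then argue that the fiber inclusions are weak equivalences. The paper's proof is a three-line sketch that checks only the fibers over vertices (relying on the remark just before Proposition~\ref{propunivsq} that this suffices); you are more explicit, establishing $\hocolim_{[n]}G\htp G(n)$ via the terminal object and then treating fibers over interior points of all faces. One small caution: your claimed factorization $|G(a_0)|\to|G(n)|\hookrightarrow|\hocolim_{[n]}G|$ of the fiber inclusion does not hold on the nose, only up to homotopy (slide along the simplex toward the vertex $n$); this is harmless for the weak-equivalence conclusion but is worth stating accurately.
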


\begin{proof}
We can identify an $n$-simplex of $N\aC$ as a functor
$\sigma \colon \DDelta[n]\to \aC$, where $\DDelta[n]$ is the poset of $0,\dotsc,n$
under $\leq$.  The pullback of $\hocolim_{\aC}F$ over this simplex is
$\hocolim_{\DDelta[n]} F\circ \sigma$.  For any vertex $i$, the inclusion
of the fiber, $F(\sigma(i))$, in $\hocolim_{\DDelta[n]} F\circ \sigma$
is a weak equivalence.
\end{proof}

The same proof also gives the following proposition, which we apply
directly in the proof of Theorem~\ref{thmhocoend}.

\begin{prop}\label{prophocoend}
Let $\aC$ be a small category and $F$ a functor from $\aC^{\op}\times
\aC$ to simplicial 
sets.  Suppose that for every $z$ in $\aC$ and every map $f\colon x\to y$ in $\aC$, the
induced maps $F(f,z)\colon F(y,z)\to F(x,z)$ and $F(z,f)\colon
F(z,x)\to F(z,y)$ are weak equivalences.  Then the
map $\hocoend_{\aC}F\to N\aC$ is a universal simplicial quasifibration.
\end{prop}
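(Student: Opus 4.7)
The plan is to mirror the proof of Proposition~\ref{prophocolimb} almost verbatim, with the twisted variance of the hocoend replacing that of the hocolim. By Proposition~\ref{propunivsq}, it suffices to check that for every $n$ and every $n$-simplex of $N\aC$, which by the Yoneda lemma is the same as a functor $\sigma\colon \DDelta[n]\to\aC$, the pulled-back map
\[
\hocoend_\aC F\times_{N\aC}\Delta[n]\to \Delta[n]
\]
is a quasifibration.

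First I would identify this pullback explicitly. Realizing $\hocoend_\aC F$ as (the diagonal of) the two-sided bar construction whose $k$-simplices in the outer direction are indexed by chains $\alpha\colon [k]\to \aC$ with coefficients $F(\alpha(k),\alpha(0))$, the pullback over $\Delta[n]$ along $\sigma$ picks out precisely the chains of the form $\alpha=\sigma\circ\gamma$ for $\gamma\colon [k]\to \DDelta[n]$. This identifies the pullback with the hocoend $\hocoend_{\DDelta[n]}(F\circ(\sigma^{\op}\times \sigma))$ together with its canonical projection to $N\DDelta[n]=\Delta[n]$.

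Next I would compute the fiber over a vertex $i$ of $\Delta[n]$: only the constant chain $\gamma\equiv i$ contributes, so the fiber is isomorphic to $F(\sigma(i),\sigma(i))$. The reasoning at the end of the proof of Proposition~\ref{prophocolimb} then applies verbatim to reduce the universal quasifibration property to showing that the inclusion
\[
F(\sigma(i),\sigma(i))\hookrightarrow \hocoend_{\DDelta[n]}(F\circ(\sigma^{\op}\times \sigma))
\]
is a weak equivalence for every vertex $i$.

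The main obstacle is this weak equivalence. The hypothesis that every structure map of $F$ is a weak equivalence in both slots implies the same for the restricted bifunctor $F\circ(\sigma^{\op}\times \sigma)$, so the needed assertion is the homotopy invariance of the hocoend of a bifunctor with all structure maps weak equivalences. I would handle this by a cofinality argument identifying $\hocoend_{\DDelta[n]}G$ with the hocolim of $G$, viewed as a functor on the twisted arrow category, over $\operatorname{tw}(\DDelta[n])$; since $\DDelta[n]$ has a terminal object, $\operatorname{tw}(\DDelta[n])$ is contractible, and the vertex inclusion then becomes a special case of the hocolim fact already invoked in the proof of Proposition~\ref{prophocolimb}.
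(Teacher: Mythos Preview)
Your proposal is correct and follows essentially the same approach as the paper, which simply records that ``the same proof'' as for Proposition~\ref{prophocolimb} applies: identify the pullback over an $n$-simplex as $\hocoend_{\DDelta[n]}F\circ(\sigma^{\op}\times\sigma)$ and check that the fiber $F(\sigma(i),\sigma(i))$ includes as a weak equivalence. The only place you add detail beyond the paper is in justifying this last inclusion via the twisted arrow category; the paper leaves this implicit, but your argument is a standard and correct way to make it precise (note that the contractibility of $\operatorname{tw}(\DDelta[n])$ comes from its own initial object $0\to n$, not merely from $\DDelta[n]$ having a terminal object).
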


We also repeatedly use the following refinement of Quillen's Theorem~B
\cite{QuillenAK}.  Recall that for a functor $\phi \colon\aD \to \aC$ and
a fixed object $Z$ of $\aC$, the comma category $Z \downarrow \phi$ has
as objects the pairs $(Y, Z \to \phi Y)$ consisting of an object $Y$ of $\aD$ and
a map in $\aC$ from $Z$ to $\phi Y$.  A morphism 
\[(Y, Z\rightarrow \phi Y) \to (Y', Z \rightarrow \phi Y')\]
consists of a map $Y \to Y'$ in $\aD$ such that the diagram
\[
\xymatrix@R-1pc@C-2pc{
&Z \ar[dr]\ar[dl]\\ 
\phi Y \ar[rr] 
&& \phi Y'
}
\]
in $\aD$ commutes.  The following theorem gives a useful sufficient condition for a
commuting square of functors to induce a homotopy cartesian square of
nerves.   

\begin{thm}\label{thmC}
Let $\aA$, $\aB$, $\aC$, $\aD$ be small categories, and let
\[
\xymatrix@-1pc{%
\aD\ar[r]^{\delta}\ar[d]&\aC\ar[d]^{\gamma}\\
\aB\ar[r]_{\beta}&\aA
}
\]
be a (strictly) commuting diagram of functors. If the following
two conditions hold, then the induced square of nerves is homotopy
cartesian: 
\begin{enumerate}
\item For every map $X\to X'$
in $\aA$, the induced functor on comma categories $X'\downarrow \beta\to
X\downarrow \beta$ induces a weak equivalence of nerves, and
\item For every object $Z$ in $\aC$, the functor $Z\downarrow\delta
\to \gamma Z\downarrow \beta$ induces a weak equivalence of nerves.
\end{enumerate}
\end{thm}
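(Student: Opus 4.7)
The plan is to apply Quillen's Theorem~B, in the universal simplicial quasifibration formulation of Proposition~\ref{prophocolimb}, to both $\beta$ and $\delta$, identifying the homotopy fibers of $N\beta$ and $N\delta$ as nerves of the relevant comma categories $X\downarrow\beta$ and $Z\downarrow\delta$, and then using hypothesis~(ii) to recognize the induced map on these fibers as a weak equivalence. Hypothesis~(i) is exactly what Theorem~B requires for $\beta$; the analogous hypothesis for $\delta$ will fall out of (i) and (ii) together with the strict commutativity of the square.

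First I would dispose of $\beta$. The functor $X\mapsto N(X\downarrow\beta)$ from $\aA^{\op}$ to simplicial sets preserves weak equivalences by (i), so Proposition~\ref{prophocolimb} says that the canonical projection
\[
E\beta\ :=\ \hocolim_{X\in\aA^{\op}} N(X\downarrow\beta)\ \longrightarrow\ N\aA^{\op}\iso N\aA
\]
is a universal simplicial quasifibration with fiber $N(X\downarrow\beta)$ over the vertex~$X$. The total space $E\beta$ is equivalent to the nerve of the category of triples $(X,Y,X\to\beta Y)$ with morphisms the obvious commuting squares; its projection to $\aB$ has the right adjoint $Y\mapsto (\beta Y,Y,\id)$, so the natural map $N\aB\to E\beta$ is a weak equivalence over $N\aA$. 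This is the standard quasifibration-replacement form of Quillen's Theorem~B.

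Next I would verify the analogous hypothesis for $\delta$. Let $\epsilon\colon \aD\to\aB$ be the functor making the square strictly commute, so $\beta\epsilon=\gamma\delta$. The assignment $(Y,Z\to\delta Y)\mapsto (\epsilon Y,\gamma Z\to\beta\epsilon Y)$ defines a functor $Z\downarrow\delta\to\gamma Z\downarrow\beta$ natural in $Z\in\aC^{\op}$, and for any $Z\to Z'$ in $\aC$ I would consider
\[
\xymatrix{
N(Z'\downarrow\delta)\ar[r]\ar[d]&N(Z\downarrow\delta)\ar[d]\\
N(\gamma Z'\downarrow\beta)\ar[r]&N(\gamma Z\downarrow\beta)
}
\]
in which the verticals are weak equivalences by (ii), the bottom is a weak equivalence by (i) applied to $\gamma(Z\to Z')$, and therefore the top is a weak equivalence. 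Running the construction of the previous paragraph with $\delta$ in place of $\beta$ now produces a universal simplicial quasifibration $E\delta\to N\aC$ with $N\aD\htp E\delta$ over $N\aC$ and with fiber $N(Z\downarrow\delta)$ over~$Z$.

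Finally I would use the naturality of $E(-)$ applied to the strictly commuting square to obtain a comparison $E\delta\to N\aC\times_{N\aA}E\beta$ over $N\aC$. Both sides are quasifibrations over $N\aC$---the right hand side because $E\beta\to N\aA$ is a universal simplicial quasifibration---and on the fiber over a vertex $Z$ this is the map $N(Z\downarrow\delta)\to N(\gamma Z\downarrow\beta)$ of hypothesis~(ii), which is a weak equivalence. A fiberwise weak equivalence between quasifibrations is a weak equivalence of total spaces, and $N\aC\times_{N\aA}E\beta$ computes the homotopy pullback $N\aB\times_{N\aA}^{h}N\aC$, so the square of nerves is homotopy cartesian. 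The hard part, and really the only technical work, will be assembling the naturality of the $E(-)$ construction so that the comparison map $E\delta\to E\beta$ is defined and its restriction to each fiber is the expected map $N(Z\downarrow\delta)\to N(\gamma Z\downarrow\beta)$; once that is in place, everything else is immediate from Proposition~\ref{prophocolimb}.
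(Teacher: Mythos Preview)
Your argument is correct and follows essentially the same route as the paper: replace $N\aB$ and $N\aD$ by the homotopy colimits $E\beta=\hocolim_{X}N(X\downarrow\beta)$ and $E\delta=\hocolim_{Z}N(Z\downarrow\delta)$, identify the pullback $N\aC\times_{N\aA}E\beta$ with $\hocolim_{Z}N(\gamma Z\downarrow\beta)$, and use hypothesis~(i) to make $E\beta\to N\aA$ a universal simplicial quasifibration so that this pullback is a homotopy pullback.

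The one organizational difference is that you take a small detour: you first use (i) and (ii) together to verify the Theorem~B hypothesis for $\delta$, so that $E\delta\to N\aC$ is itself a quasifibration, and then compare $E\delta$ with the pullback fiberwise. The paper skips this step entirely and instead observes directly that the natural transformation $N(Z\downarrow\delta)\to N(\gamma Z\downarrow\beta)$ is a levelwise weak equivalence by~(ii), hence induces a weak equivalence on homotopy colimits; no quasifibration property of $E\delta$ is needed. Your detour is harmless and gives the same conclusion, but the paper's version is a bit shorter. Your closing remark about the naturality of the $E(-)$ construction is exactly the point the paper handles by noting that the right-hand square is a strict pullback of simplicial sets.
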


\begin{proof}
As in Quillen's proof of Theorem~B, we have natural weak equivalences
\[
\hocolim_{X\in\aA} N(X\downarrow \beta) \to N\aB\quad\text{and}\quad 
\hocolim_{Z\in\aC} N(Z\downarrow \delta) \to N\aD.
\]
These then fit into the commutative diagram on the left
\[
\xymatrix@-1pc{%
\hocolim_{Z\in\aC} N(Z\downarrow \delta)\ar[d]\ar[r]&N\aC\ar[d]
&&\hocolim_{Z\in\aC} N(\gamma Z\downarrow \beta)\ar[d]\ar[r]&N\aC\ar[d]\\
\hocolim_{X\in\aA} N(X\downarrow \beta)\ar[r]& N\aA
&&\hocolim_{X\in\aA} N(X\downarrow \beta)\ar[r]& N\aA
}
\]
weakly equivalent to the diagram of nerves in question.  By (ii), we
have that the canonical map
\[
\hocolim_{Z\in\aC} N(Z\downarrow \delta)\to \hocolim_{Z\in\aC} N(\gamma Z\downarrow \beta)
\]
is a weak equivalence, and it follows that the square on the right
above is weakly equivalent to the square on the left.  The square on
the right is a pullback square of simplicial sets, and by
Proposition~\ref{prophocolimb}, (i) implies that the bottom horizontal
map is a universal simplicial quasifibration.  We conclude that the
square is homotopy cartesian.
\end{proof}

\section[Homotopy calculi of fractions]%
{Homotopy calculi of fractions and mapping cylinders}\label{sechclf}

In this section, we describe the second technical device 
essential to the proof of the main theorems, the homotopy calculi of
fraction introduced in \cite{DKHammock}.  When a category with weak
equivalences admits a homotopy calculus of fractions, the Dwyer-Kan
function complexes $L\aC(A,B)$ admit significantly smaller models
that are nerves of categories of words of a specified type.  We begin
the section with a concise review of this theory.  We then prove that
a Waldhausen category with \hbox{FMCWC} admits a homotopy calculus of left
fractions.  (See Appendix~\ref{appmain} for statements and proofs in
the non-functorial case.)

We begin with the notation for the categories of words of specified
types.  
Let $\aC$ be a category with a subcategory $\w\aC$ of weak
equivalences.  We consider the words on letters $\C$, $\W$, and
$\Wi$:  To every such word $\Word$ and pair of objects $A,B$ in
$\aC$, we associate a category $\Word (A,B)$, where the objects are
roughly speaking words in $\aC$ of the type specified by the letters
in $\Word$ and the morphisms are the weak equivalences.  The
precise definition is as follows. 

\begin{defn}
Let $\Word$ be a word of length $n$ on letters $\C$, $\W$, and
$\Wi$, and let $A,B$ be objects of $\aC$.  We define the $\Word (A,B)$ to
be the following category. An object in $\Word (A,B)$ consists
of:
\begin{itemize}
\item A collection of objects $X_{1},\dotsc,X_{n-1}$ of $\aC$, 
\item A map $f_{i}\colon X_{i}\to X_{i-1}$ in $\aC$ whenever the $i$-th letter of
$\Word$ is $\C$ for some $1\leq i\leq n$,
\item A map $f_{i}\colon X_{i}\to X_{i-1}$ in $\w\aC$ whenever the $i$-th letter of
$\Word$ is $\W$ for some $1\leq i\leq n$, and
\item A map $f_{i}\colon X_{i-1}\to X_{i}$ in $\w\aC$ whenever the $i$-th letter of
$\Word$ is $\Wi$ for some $1\leq i\leq n$,
\end{itemize}
where we interpret $X_{n}$ as $A$ and $X_{0}$ as $B$ in the conditions
above.  A morphism in $\Word (A,B)$ from $\{X_{i},f_{i}\}$ to
$\{X'_{i},f'_{i}\}$ is a collection of maps $g_{i}\colon X_{i}\to
X'_{i}$ in $\w\aC$ that are the identity on $A$ and $B$ and make the
evident diagram commute.
\[
\xymatrix@R-\wordsquish{%
&X_{n-1}\ar@{{}-{}}[r]^{f_{n-1}}\ar[dd]_{\htp}^{g_{n-1}}
&\dotsb\ar@{{}-{}}[r]^{f_{2}}\ar[dd]_{\htp}
&X_{1}\ar@{{}-{}}[dr]^{f_{1}}\ar[dd]_{\htp}^{g_{1}}\\
A\ar@{{}-{}}[ur]^{f_{n}}\ar@{{}-{}}[dr]_{f'_{n}}&&&&B\\
&X'_{n-1}\ar@{{}-{}}[r]_{f'_{n-1}}
&\dotsb\ar@{{}-{}}[r]_{f'_{2}}&X'_{1}\ar@{{}-{}}[ur]_{f'_{1}}
}
\]
\end{defn}

The numbering, which may seem unusual in the diagrams, is forced by the
convention that the letters in the word follow composition order,
where we think of the object $\{X_{i},f_{i}\}$ as representing a
formal composition $f_{1}\circ \dotsb \circ f_{n}$ with the $i$-th map
corresponding to the $i$-th letter (with $f_{i}^{-1}$ in place of
$f_{i}$ in the formal
composition when the $i$-th letter is $\Wi$).  Our words
indicate the same categories and diagrams as those in
\cite{DKHammock}, which are numbered slightly differently.

In our work below, the three most important words are $\WC$, $\WW$,
and $\WCW$.  For convenience, we spell out these categories explicitly.

\begin{ex}\label{exwords}The categories of words $\WC$, $\WW$, and $\WCW$.
\begin{enumerate}
\item $\WC$: An object $\{X,f_{1},f_{2}\}$ is pictured on the left and
a map from $\{X,f_{1},f_{2}\}$ to $\{X',f'_{1},f'_{2}\}$ is pictured
on the right.
\[
\xymatrix@R-\wordsquish{%
&&&&&X\ar[dd]_{\htp}\\
A\ar[r]^{f_{2}}&X&B\ar[l]_{f_{1}}^{\htp}
&&A\ar[ur]^{f_{2}}\ar[dr]_{f'_{2}}
&&B\ar[ul]_{f_{1}}^{\htp}\ar[dl]^{f'_{1}}_{\htp}\\
&&&&&X'
}
\]
\item $\WW$: Objects and maps look the same, but the map $A\to X$ is
required to be in $\w\aC$.
\item $\WCW$: An object $\{X_{1},X_{2},f_{1},f_{2},f_{3}\}$ is pictured on the left and
a map from $\{X_{1},X_{2},f_{1},f_{2},f_{3}\}$ to
$\{X'_{1},X'_{2},f'_{1},f'_{2},f'_{3}\}$ is pictured 
on the right.
\[
\xymatrix@R-\wordsquish{%
&&&&&X_{2}\ar[dd]_{\htp}\ar[r]^{f_{2}}\ar[dl]_{f_{3}}^{\htp}
&X_{1}\ar[dd]_{\htp}\\
A&X_{2}\ar[r]^{f_{2}}\ar[l]_{f_{3}}^{\htp}&X_{1}&B\ar[l]_{f_{1}}^{\htp}
&A
&&&B\ar[ul]_{f_{1}}^{\htp}\ar[dl]^{f'_{1}}_{\htp}\\
&&&&&X'_{2}\ar[r]_{f'_{2}}\ar[ul]^{f'_{3}}_{\htp}&X'_{1}
}
\]
\end{enumerate}
\end{ex}

As described in \cite[5.5]{DKHammock}, the function complex
$L\aC(A,B)$ is a colimit of the nerves of these categories of words.  The
hypothesis of a homotopy
calculus of fractions is a homotopical requirement on how these nerves
fit together.   The following
definition is \cite[6.1]{DKHammock}.  Although we use only homotopy
calculus of left fractions and homotopy calculus of two-sided
fractions in our work below, we include the definition of homotopy
calculus of right fractions for completeness. 

\begin{defn}[Homotopy calculi of fractions]
Let $\aC$ be a category with a subcategory of weak equivalences
$\w\aC$.  
\begin{enumerate}
\item $\aC$ admits a \term{homotopy calculus of 
two-sided fractions (HC2F)} means that for every pair of integers $i,j \geq
0$, the functors given by inserting an identity morphism in the
$(i+1)$-st spot,
\begin{gather*}
\Wi \C^{i+j} \Wi \to \Wi \C^{i} \W^{-1} \C^{j} \Wi \\
\Wi \W^{i+j} \Wi \to \Wi \W^{i} \Wi \W^{j} \Wi,
\end{gather*}
induce weak equivalences on nerves for every pair of objects of $\aC$.
\item $\aC$ admits a \term{homotopy calculus of left
fractions (HCLF)} means that for every pair of integers $i,j \geq 0$, 
the functors given by inserting an identity morphism in the
$(i+1)$-st spot,
\begin{gather*}
\Wi\C^{i+j} \to \Wi \C^{i} \Wi \C^{j} \\
\Wi\W^{i+j} \to \Wi \W^{i} \Wi \W^{j},
\end{gather*}
induce weak equivalences on nerves for every pair of objects of $\aC$.
\item$\aC$ admits a \term{homotopy calculus of right
fractions (HCRF)} means that for every pair of integers $i,j \geq 0$, 
the functors given by inserting an identity morphism in the
$i$-th spot,
\begin{gather*}
\C^{i+j} \Wi \to \C^{i} \Wi \C^{j} \Wi \\ 
\W^{i+j} \Wi \to \W^{i} \Wi \W^{j} \Wi,
\end{gather*}
induce weak equivalences on nerves for every pair of objects of $\aC$.
\end{enumerate}
\end{defn}

Dwyer and Kan observe \cite[6.1,\S9]{DKHammock} 
that if $\aC$ admits a homotopy calculus of left or right fractions, then
$\aC$ admits a homotopy calculus of two-sided fractions.  The following
proposition \cite[6.2]{DKHammock} explains the utility and terminology
of the definition.

\begin{prop}
Let $\aC$ be a category with a subcategory of weak equivalences
$\w\aC$.  
\begin{enumerate}
\item If $\aC$ admits a homotopy calculus of two-sided
fractions, then the maps 
\begin{gather*}
N\WCW (A,B) \to L\aC(A,B) \\
N\Wi \W \Wi (A,B) \to L(\w\aC)(A,B)
\end{gather*}
are weak equivalences.

\item If $\aC$ admits a homotopy calculus of left
fractions, then the maps
\begin{gather*}
N\WC (A,B) \to L\aC(A,B) \\
N\WW (A,B) \to L(\w\aC)(A,B)
\end{gather*}
are weak equivalences.
\item If $\aC$ admits a homotopy calculus of right
fractions, then the maps
\begin{gather*}
N\C \Wi (A,B) \to L\aC(A,B) \\
N\W \Wi (A,B) \to L(\w\aC)(A,B)
\end{gather*}
are weak equivalences.
\end{enumerate}
\end{prop}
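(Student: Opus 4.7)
The plan is to follow the Dwyer-Kan argument from \cite[\S 5,\S 6,\S 9]{DKHammock}, based on the description of $L\aC(A,B)$ as a filtered colimit of the word-category nerves $N\Word(A,B)$ over the poset $\Xi$ of words in $\C,\W,\Wi$, with order given by insertion of identity letters and transition maps given by the corresponding insertion functors. Since a filtered colimit of weak equivalences of simplicial sets is a weak equivalence, it suffices to identify a cofinal subposet $\Xi_{0}\subset\Xi$ such that every transition map in $\Xi_{0}$ induces a weak equivalence on nerves; the canonical comparison map from the nerve of any word in $\Xi_{0}$ to $L\aC(A,B)$ is then automatically a weak equivalence.

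For part (i), I would take $\Xi_{0}$ to be the subposet of words of the form $\Wi\C^{n}\Wi$ (and separately the subposet of words $\Wi\W^{n}\Wi$ when working inside $\w\aC$). Cofinality in $\Xi$ is verified by a straightforward induction on word length: an arbitrary word can be enlarged to the shape $\Wi\C^{n}\Wi$ by prepending and appending $\Wi$'s and by inserting $\Wi$'s between consecutive $\C$ or $\W$ letters. Each such elementary move is precisely one of the insertion functors whose induced map on nerves HC2F asserts to be a weak equivalence. Passing to the colimit over this cofinal subsystem recovers $N\WCW(A,B)$ (respectively $N\Wi\W\Wi(A,B)$) and yields the claimed weak equivalences to $L\aC(A,B)$ and $L(\w\aC)(A,B)$.

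For parts (ii) and (iii), I would first invoke the Dwyer-Kan observation \cite[\S 9]{DKHammock} that HCLF and HCRF each imply HC2F, so that part (i) already furnishes a weak equivalence $N\WCW(A,B)\to L\aC(A,B)$. The additional insertion maps supplied by HCLF (respectively HCRF) then allow further reduction of the cofinal subsystem from $\Wi\C^{n}\Wi$ shape to $\Wi\C^{n}$ shape (respectively $\C^{n}\Wi$ shape), with each reduction step being a weak equivalence by hypothesis. Passing to the colimit gives the desired comparison maps from $N\WC(A,B)$ and $N\C\Wi(A,B)$ to $L\aC(A,B)$, and analogously for $L(\w\aC)(A,B)$.

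The main obstacle is the cofinality verification: for each word $\Word$ one must produce an explicit chain of identity-insertion moves terminating in a word of the chosen target shape, with each individual move matching one of the insertion functors appearing in the HCF hypotheses. This is a combinatorial bookkeeping exercise, but delicate because the HCF conditions recognize only moves of a very specific form as weak equivalences, so care is required in organizing the reduction. Once cofinality is settled, the remainder of the argument is formal.
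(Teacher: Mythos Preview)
The paper does not supply its own proof of this proposition; it is quoted directly from Dwyer--Kan \cite[6.2]{DKHammock}. Your plan to follow their filtered-colimit description of $L\aC(A,B)$ and reduce via the calculus-of-fractions hypotheses is the right overall strategy and is essentially theirs.

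That said, your execution of the cofinality step is incorrect. The family $\{\Wi\C^{n}\Wi\}_{n\geq 0}$ is \emph{not} cofinal in the poset of words under letter insertion: the word $\C\Wi\C$, for instance, cannot be enlarged by insertion to any word containing only two occurrences of $\Wi$. Indeed, the very moves you describe---prepending and appending $\Wi$'s and inserting $\Wi$'s between adjacent $\C$'s---increase the number of $\Wi$'s and drive you toward the alternating words $(\Wi\C)^{k}\Wi$, not toward $\Wi\C^{n}\Wi$; your sketch conflates these two families. There is a further gap: the HC2F hypothesis as stated only asserts that $N\Wi\C^{i+j}\Wi(A,B)\to N\Wi\C^{i}\Wi\C^{j}\Wi(A,B)$ is a weak equivalence for the \emph{whole} word of that exact shape, not that inserting a $\Wi$ into a subword $\cdots\C\C\cdots$ of an arbitrary longer word is one. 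Promoting the hypothesis from whole words to subwords is precisely the content of the reduction lemmas in \cite[\S 9]{DKHammock}, and it is not automatic. So the ``combinatorial bookkeeping'' you flag as the main obstacle is genuinely nontrivial, and your sketch both misidentifies the cofinal family and overstates what HC2F delivers directly.
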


The following theorem, the main theorem of this section, is proved
below.  In it, $\wco\aC$ 
denotes the category of weak cofibrations
(q.v. \cite[2.4]{BlumbergMandell}), which by definition contains the
weak equivalences $\w\aC$ as a subcategory.

\begin{thm}\label{thmHCLF}
Let $\aC$ be a Waldhausen category with FMCWC.  Then $\aC$,
$\wco\aC$, and $\w\aC$ admit homotopy
calculi of left fractions.
\end{thm}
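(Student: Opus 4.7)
The plan is to verify the defining HCLF axioms directly.  By definition, I must show that for every $i, j \geq 0$ and every pair $A, B$, the insertion functors
\[
\iota_\C \colon \Wi\C^{i+j}(A,B) \to \Wi\C^i\Wi\C^j(A,B), \qquad
\iota_\W \colon \Wi\W^{i+j}(A,B) \to \Wi\W^i\Wi\W^j(A,B)
\]
induce weak equivalences on nerves, and similarly for $\wco\aC$ and $\w\aC$.  My strategy is to apply a Quillen Theorem~A-style argument: for each object $Y$ of the target, I would show that an appropriate comma category ($\iota/Y$ or $Y/\iota$) is contractible by exhibiting a homotopy (co)final object constructed from $Y$ using FMCWC.

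The key observation enabling this construction is that every weak equivalence in $\aC$ is a weak cofibration: if $w\colon A\to B$ is a weak equivalence, then the commuting square with horizontal maps $\id_A$ and $w$ and vertical maps $\id_A$ and $w$ is a weak equivalence in $\Ar\aC$ from $\id_A$ (a cofibration) to $w$.  Consequently FMCWC applies to the inserted middle inverse weak equivalence $g \colon Y_{i+1} \to Y_{i+2}$ of any target object $Y$, yielding a factorization $Y_{i+1} \to Tg \xrightarrow{p,\htp} Y_{i+2}$ together with a section $s \colon Y_{i+2} \to Tg$ of $p$; the section is itself a weak equivalence by two-out-of-three.  One then uses $s$ to combine the two halves of $Y$ separated by $g$: iteratively pushing out the right-hand cofibration chain of $Y$ along $s$ and applying FMCWC to the intermediate composites $Y_k \to Y_{i+2} \xrightarrow{s} Tg$ (which are weak cofibrations) produces a canonical object of $\Wi\C^{i+j}(A,B)$ related to $Y$ via an explicit morphism that is (homotopy) (co)final in the requisite comma category.

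The argument for $\iota_\W$ runs analogously inside $\w\aC$, which is closed under the necessary pushouts along cofibrations by the gluing axiom.  The statements for $\wco\aC$ and $\w\aC$ are direct variants: $\wco\aC$ shares its weak equivalences with $\aC$ and its enlarged cofibrations (the weak cofibrations) are exactly the maps to which FMCWC provides mapping cylinders; for $\w\aC$ the cofibrations coincide with the weak equivalences, so both HCLF conditions reduce to the $\Wi\W$ case already handled.  The main obstacle is the detailed bookkeeping in the reduction construction---propagating the section $s$ through an arbitrarily long chain of cofibrations while verifying the required commutativities and ensuring the gluing axiom applies at each step---which requires careful use of the closure properties of weak cofibrations and homotopy cocartesian squares from \cite[\S2]{BlumbergMandell}.
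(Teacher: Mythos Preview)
Your Theorem~A strategy has a genuine gap. The construction you sketch does not produce a (co)final object of the comma category. Concretely, for an object $Y$ of $\Wi\C^{i}\Wi\C^{j}(A,B)$ with middle backward weak equivalence $f\colon Z\to Y_{1}$, the mapping cylinder gives a cofibration $Z\rightarrowtail Tf$, a weak equivalence $p\colon Tf\to Y_{1}$, and a section $s\colon Y_{1}\to Tf$; pushing out the $X$-chain along $Z\rightarrowtail Tf$ produces an object $R(Y)$ of $\Wi\C^{i+j}(A,B)$. But there is \emph{no} morphism $Y\to\iota(R(Y))$ or $\iota(R(Y))\to Y$ in the word category: the relevant square at the $(i{+}1)$-st slot would require $s\circ f$ to equal the cofibration $Z\rightarrowtail Tf$, and the mapping-cylinder axioms do not give this. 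What one gets instead is a \emph{zigzag} $Y\to M\leftarrow\iota(R(Y))$ through an intermediate object $M$ that has $Tf\xrightarrow{p}Y_{1}$ in the middle $\Wi$ slot, hence $M$ is not in the image of $\iota$. So $(R(Y),\text{--})$ is not even an object of either comma category, and your contractibility claim is unsupported. A secondary issue: the ``intermediate composites $Y_{k}\to Y_{i+2}\xrightarrow{s}Tg$'' you propose to feed into FMCWC need not be weak cofibrations in $\aC$ (the $\C^{j}$ letters are arbitrary maps there), so FMCWC does not apply to them.

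The paper's argument avoids Theorem~A entirely and instead uses the \emph{functoriality} in FMCWC to make $Y\mapsto R(Y)$ a functor, then shows directly that $N\iota$ is a homotopy equivalence: the composite $R\circ\iota$ admits a single natural transformation to the identity (because when $f=\id_{Y_{1}}$ the collapse $Tf\to Y_{1}$ is available), while $\iota\circ R$ is related to the identity by the zigzag of natural transformations through $M$ above. This is exactly where the ``F'' in FMCWC is doing work; a pointwise construction of $R(Y)$ does not suffice. The case $i=0$ is handled separately and more simply by composing the two $\Wi$'s.
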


Under the hypothesis of FMCWC, the previous theorem in particular
allows us to model the function complex $L\aC(A,B)$ in terms of the
categories of words $\WC(A,B)$ and $\WCW(A,B)$.  The following theorem
identifies the subsets  $\Lco\aC(A,B)$ and $\Lw\aC(A,B)$ in these
terms.  In it, for $\Word =\WC$ and $\Word=\WCW$, we write
$\Word (A,B)_{\wco}$ and $\Word (A,B)_{\w}$ for the full
subcategory of $\Word(A,B)$ of diagrams whose forward map is a weak
cofibration and weak equivalence, respectively.

\begin{thm}\label{thmwordcomp}
Let $\aC$ be a Waldhausen category with FMCWC.  Then the maps
\begin{gather*}
\WC(A,B)_{\wco}\to \Lco\aC(A,B), \qquad 
\WCW(A,B)_{\wco}\to \Lco\aC(A,B),\\
\WC(A,B)_{\w}\to \Lw\aC(A,B),\qquad\text{and}\qquad 
\WCW(A,B)_{\w}\to \Lw\aC(A,B)
\end{gather*}
are weak equivalences
\end{thm}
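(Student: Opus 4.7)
My plan is to compare each of the four word categories to $L\aC(A,B)$ by factoring through the Dwyer--Kan function complex of the appropriate subcategory ($\wco\aC$ or $\w\aC$), combining the weak equivalence from Theorem~\ref{thmHCLF} applied to that subcategory with a component-level analysis. The argument runs in essentially parallel pieces, so I describe the case $\WC(A,B)_{\wco}\to \Lco\aC(A,B)$ in detail and indicate the variations for the others.

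First I would verify that each of $\WC(A,B)_{\wco}$, $\WC(A,B)_{\w}$, $\WCW(A,B)_{\wco}$, $\WCW(A,B)_{\w}$ is a union of connected components of the ambient word category. A morphism in $\WC(A,B)$ from $(X,f_{2},f_{1})$ to $(X',f'_{2},f'_{1})$ consists of a weak equivalence $g\colon X\to X'$ with $gf_{2}=f'_{2}$; the pair $(\id_{A},g)$ is then a weak equivalence in $\Ar\aC$ from $f_{2}$ to $f'_{2}$, so $f_{2}$ is a weak cofibration (respectively a weak equivalence) if and only if $f'_{2}$ is. An identical argument handles $\WCW(A,B)$, now using both weak equivalences of middle objects to produce a weak equivalence in $\Ar\aC$ between the two forward arrows.

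Next, observe that as a category with weak equivalences $\WC(A,B)_{\wco}$ coincides with the analogous word category formed inside $\wco\aC$: the forward arrow is by definition a morphism of $\wco\aC$, and the backward weak equivalence lies in both $\aC$ and $\wco\aC$. Since Theorem~\ref{thmHCLF} asserts that $\wco\aC$ admits HCLF, the preceding proposition (applied to $\wco\aC$) yields a weak equivalence $N\WC(A,B)_{\wco}\to L(\wco\aC)(A,B)$, while the same proposition applied to $\aC$ yields $N\WC(A,B)\to L\aC(A,B)$. Because $N\WC(A,B)_{\wco}$ is a union of components of $N\WC(A,B)$, the restriction $N\WC(A,B)_{\wco}\to L\aC(A,B)$ is automatically a weak equivalence onto the union of components of $L\aC(A,B)$ that is its image, and the theorem reduces to identifying this image with $\Lco\aC(A,B)$ on $\pi_{0}$.

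The key remaining step, and I expect the main obstacle, is the $\pi_{0}$ surjection onto $\Lco\aC(A,B)$. Containment in $\Lco\aC(A,B)$ is immediate from the definitions, since an object of $\WC(A,B)_{\wco}$ is a zigzag whose forward arrow is a weak cofibration. For the reverse direction, a component of $\Lco\aC(A,B)$ contains some hammock vertex $Z$ whose forward arrows are all weak cofibrations, so $Z$ is a zigzag in $\wco\aC$; applying HCLF for $\wco\aC$ produces a $\WC$-style representative in the same component of $L(\wco\aC)(A,B)$, i.e.\ an object of $\WC(A,B)_{\wco}$ joined to $Z$ by a hammock entirely in $\wco\aC$. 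That hammock is a fortiori a hammock in $\aC$, so this representative lies in the component of $Z$ in $L\aC(A,B)$, completing the argument. The cases $\WCW(A,B)_{\wco}\to \Lco\aC(A,B)$ and $\WC(A,B)_{\w}$, $\WCW(A,B)_{\w}\to \Lw\aC(A,B)$ follow by the same scheme, replacing $\WC$ by $\WCW$ (and invoking HC2F, which follows from HCLF) and replacing $\wco\aC$ by $\w\aC$ as appropriate.
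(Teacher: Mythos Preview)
Your proposal is correct and follows essentially the same approach as the paper: reduce to a $\pi_{0}$ comparison using that the word subcategories are unions of components and that $N\WC(A,B)\to L\aC(A,B)$ is a weak equivalence by HCLF, then establish surjectivity onto the components of $\Lco\aC(A,B)$ by interpreting a defining zigzag inside $\wco\aC$, invoking HCLF there (the paper's ``key observation'' that $\WC(A,B)_{\wco}$ is the $\WC$ word category in $\wco\aC$), and mapping back via $L(\wco\aC)(A,B)\to L\aC(A,B)$. Your write-up is somewhat more explicit about the component bookkeeping and the parallel $\WCW$ and $\w$ cases, but the logic is the same.
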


\begin{proof}
We do the case for the weak cofibrations; the case for the weak
equivalences is identical.  Since $\WC(A,B)_{\wco}$ and
$\WCW(A,B)_{\wco}$ are collections of components of the categories
$\WC(A,B)$ and $\WCW(A,B)$, and $\Lco\aC(A,B)$ is a collection of
components of the simplicial set $L\aC(A,B)$, we just need to check that
these components coincide.  For this, it suffices to show that every
component of $\Lco\aC(A,B)$ contains as a vertex the image of an
object of $\WC(A,B)_{\wco}$.  The key observation is that the category
$\WC(A,B)_{\wco}$ is precisely the category of words $\WC(A,B)$ in
the category $\wco\aC$.  By definition, a component of $\Lco\aC(A,B)$
contains a vertex represented by a zigzag in which all the forward
arrows are weak cofibrations.  Interpreting this zigzag as a vertex in
$L(\wco\aC)(A,B)$, we get a path in $L(\wco\aC)(A,B)$ to a vertex in
the image of $\WC(A,B)_{\wco}$ since $\wco\aC$ admits a homotopy
calculus of left fractions.  The inclusion of $\wco\aC$ induces a map
$L(\wco\aC)(A,B)\to \Lco\aC(A,B)$ that gives us a path in
$\Lco\aC(A,B)$ to a vertex in the image of $\WC(A,B)_{\wco}$.
\end{proof}

Using the previous theorem and the observation in its proof
that $\WC(A,B)_{\wco}$ and $\WC(A,B)_{\w}$ coincide with the categories of
words $\WC(A,B)$ for $\wco\aC$ and $\w\aC$ (respectively), we
obtain the following corollary.

\begin{cor}\label{corcomp}
For any $A,B$ in $\aC$, the inclusions $L(\wco\aC)(A,B)\to
\Lco\aC(A,B)$ and $L(\w\aC)(A,B)\to \Lw\aC(A,B)$ are weak
equivalences. 
\end{cor}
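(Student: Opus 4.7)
The plan is to assemble the corollary from Theorem~\ref{thmwordcomp} and Theorem~\ref{thmHCLF} via a two-out-of-three argument in a commuting triangle. I will treat the weak cofibration case in detail; the weak equivalence case is formally identical.

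First I would make explicit the observation already used in the proof of Theorem~\ref{thmwordcomp}: the subcategory $\WC(A,B)_{\wco}$ of $\WC(A,B)$ is \emph{literally} the category of $\WC$-words for the category with weak equivalences $(\wco\aC,\w\aC)$. Indeed, an object in either category is the same data: a pair $(X,f_{1}\colon B\xrightarrow{\htp}X,\,f_{2}\colon A\to X)$ with $f_{1}\in\w\aC$ and $f_{2}\in\wco\aC$; and the morphisms in both are the weak equivalences $g\colon X\to X'$ commuting with the structure maps. The analogous identification shows that $\WC(A,B)_{\w}$ is the category of $\WC$-words for $(\w\aC,\w\aC)$.

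Next I would invoke Theorem~\ref{thmHCLF}, which says that $\wco\aC$ and $\w\aC$ both admit a homotopy calculus of left fractions. Combined with the preceding identification, this yields weak equivalences
\[
N\WC(A,B)_{\wco}\xrightarrow{\ \htp\ }L(\wco\aC)(A,B),\qquad
N\WC(A,B)_{\w}\xrightarrow{\ \htp\ }L(\w\aC)(A,B).
\]
The inclusions $\w\aC\subset\wco\aC\subset\aC$ induce simplicial functors $L(\w\aC)\to L(\wco\aC)\to L\aC$, and the images of $L(\wco\aC)(A,B)$ and $L(\w\aC)(A,B)$ are tautologically contained in $\Lco\aC(A,B)$ and $\Lw\aC(A,B)$, respectively, since every hammock in the image has all of its forward arrows weak cofibrations (resp.\ weak equivalences).

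Finally I would assemble the commuting triangle
\[
\xymatrix@R-1pc{
N\WC(A,B)_{\wco}\ar[rr]\ar[dr]_{\htp}&&\Lco\aC(A,B)\\
&L(\wco\aC)(A,B)\ar[ur]
}
\]
in which the horizontal map is the weak equivalence of Theorem~\ref{thmwordcomp} and the diagonal map is the weak equivalence just obtained. By two-out-of-three, $L(\wco\aC)(A,B)\to\Lco\aC(A,B)$ is a weak equivalence. The identical triangle with $\wco$ replaced by $\w$ gives the second statement. There is no substantial obstacle here: all the work has been done in Theorems~\ref{thmHCLF} and~\ref{thmwordcomp}, and the only delicate point is recognizing that the restricted word categories $\WC(A,B)_{\wco}$ and $\WC(A,B)_{\w}$ agree on the nose with the ambient $\WC$-categories of the subcategories with weak equivalences.
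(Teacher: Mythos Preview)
Your proof is correct and is essentially identical to the paper's argument: the paper derives the corollary immediately from Theorem~\ref{thmwordcomp} together with the observation (made in that theorem's proof) that $\WC(A,B)_{\wco}$ and $\WC(A,B)_{\w}$ coincide with the $\WC$-word categories of $\wco\aC$ and $\w\aC$, which via Theorem~\ref{thmHCLF} map by weak equivalences to $L(\wco\aC)(A,B)$ and $L(\w\aC)(A,B)$. Your two-out-of-three triangle is exactly what the paper has in mind.
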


Likewise, we obtain the proof of Proposition~\ref{prophypii}, which was
used in the proof of Theorem~\ref{main}:

\begin{cor}\label{corhypii}
In the context of Theorem~\ref{main}, hypothesis~(ii) implies hypothesis~(iii).
\end{cor}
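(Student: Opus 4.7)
The plan is to use the hammock-localization machinery of Section~\ref{sechclf} to recast the two sides of hypothesis~(iii) in terms of the categories of words $\WC$. Because factorization is functorial in $\aC$ and $\aD$, both categories satisfy FMCWC, so Theorem~\ref{thmHCLF} gives them HCLF. By Theorem~\ref{thmwordcomp}, every element of $\Ho\aD(FA,FB)=\pi_{0}N\WC(FA,FB)$ is represented by a $\WC$-diagram $(Y,f_{1},f_{2})$ in $\aD$, with $f_{2}\colon FA\to Y$ arbitrary and $f_{1}\colon FB\to Y$ a weak equivalence; and an element lies in the image of $\Ho(\w\aD)(FA,FB)$ precisely when its component in $\WC(FA,FB)$ contains a $\WW$-vertex, i.e., one with both $f_{1}$ and $f_{2}$ weak equivalences.

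Given such a $\phi$ in the image of $\Ho(\w\aD)(FA,FB)$, note that $\phi$ is in particular an isomorphism in $\Ho\aD$, so the equivalence $\Ho\aC\to\Ho\aD$ produces a unique lift to an isomorphism $\tilde\phi\in\Ho\aC(A,B)$. I would next use HCLF in $\aC$ to represent $\tilde\phi$ by a $\WC$-diagram $(X,g_{1},g_{2})$, where $g_{2}\colon A\to X$ is arbitrary and $g_{1}\colon B\to X$ is a weak equivalence. To place $\phi$ in the image of $\Ho(\w\aC)(A,B)$ it then suffices, by hypothesis~(ii), to show that $Fg_{2}$ is a weak equivalence in $\aD$: that promotes $g_{2}$ to a weak equivalence in $\aC$ and upgrades the representative of $\tilde\phi$ to a $\WW$-diagram. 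The reverse containment in~(iii) is automatic because $F$ preserves weak equivalences.

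The remaining step is a connectedness observation in $\WC(FA,FB)$. A morphism there is a weak equivalence $u\colon Y\to Y'$ in $\aD$ with $f_{2}'=u\circ f_{2}$, so by the 2-out-of-3 property the condition ``$f_{2}$ is a weak equivalence'' is preserved in both directions along each morphism, and hence is constant on each connected component. The $F$-image of the chosen $\WC$-representative of $\tilde\phi$ and any $\WW$-representative of $\phi$ both represent $\phi$ in $\pi_{0}N\WC(FA,FB)$, so they lie in the same component; consequently the forward arrow of the former, namely $Fg_{2}$, is itself a weak equivalence, as required. I do not anticipate a serious obstacle: once the HCLF models of Section~\ref{sechclf} are in hand, the argument reduces to this saturation-plus-connectedness check combined with the lift of isomorphisms afforded by the equivalence of homotopy categories.
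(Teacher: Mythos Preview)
Your proposal is correct and follows essentially the same route as the paper: lift $\phi$ to $\Ho\aC$ via the equivalence of homotopy categories, represent the lift by a $\WC$-word, apply $F$, observe that the resulting word lies in the ``weak equivalence'' components of $\WC(FA,FB)$, and then invoke hypothesis~(ii) to pull the weak equivalence back to $\aC$. The only cosmetic difference is that where the paper cites Theorem~\ref{thmwordcomp} to identify the components $\WC(FA,FB)_{\w}$, you argue the component-invariance of ``$f_{2}$ is a weak equivalence'' directly from the two-out-of-three property; this is exactly the relevant piece of that theorem, so the arguments are materially the same.
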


\begin{proof}
Suppose $\phi$ is in the image of $\Ho\w\aD(FA,FB)$ in $\Ho\aD(FA,FB)$.
Since $F$ induces an equivalence $\Ho\aC\to \Ho\aD$,
$\phi$ is represented by $F\phi'$ for some $\phi'$ in
$\Ho\aC(A,B)$.  We can represent $\phi'$ by a word
\[
\xymatrix@-1pc{%
A\ar[r]^{f_{2}}&X&B\ar[l]_{f_{1}}^{\htp}
}
\]
in
$\WC(A,B)$ for $\aC$, and then $\phi$ is represented by the word
\[
\xymatrix@-1pc{%
FA\ar[r]^{Ff_{2}}&X&B\ar[l]_{Ff_{1}}^{\htp}
}
\]
in
$\WC(FA,FB)$ for $\aD$.  Theorem~\ref{thmwordcomp} implies that 
this word is in the components $\WC(FA,FB)_{\w}$ of $\WC(FA,FB)$ and
hence that $Ff_{2}$ is a weak 
equivalence.  Hypothesis~(ii) implies that $f_{2}$ is a
weak equivalence in $\aC$, and it follows that $\phi$ is in the image
of $\Ho\w\aC(A,B)$. 
\end{proof}

The remainder of the section is devoted to the proof of
Theorem~\ref{thmHCLF}.  Thus, assume that $\aC$ has FMCWC and
fix $A,B$ in $\aC$; we
need to prove that for every pair of 
integers $i,j\geq 0$, the functor $\Wi\C^{i}\C^{j}(A,B)\to
\Wi\C^{i}\Wi\C^{j}(A,B)$ induces a weak equivalence of nerves 
in each of the three cases where the  letter ``$\C$'' in the words
indicates the category 
$\aC$, $\wco\aC$, or $\w\aC$.  The proof is the same in all three
cases.   The following lemma is the case $i=0$.

\begin{lem}\label{lemtriv}
The functor $\Wi\C^{j}(A,B)\to \Wi\Wi\C^{j}(A,B)$ induces a homotopy equivalence
of nerves.
\end{lem}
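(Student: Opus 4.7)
The plan is to construct an explicit homotopy inverse $G \colon \Wi\Wi\C^{j}(A,B) \to \Wi\C^{j}(A,B)$ to the inclusion functor $F$. The guiding idea is that the extra $\Wi$ introduced by $F$ corresponds to an identity morphism, so one can recover an object of $\Wi\C^{j}(A,B)$ by absorbing it into the adjacent weak equivalence.

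Explicitly, $G$ will send an object of $\Wi\Wi\C^{j}(A,B)$, which I picture as
\[
B \xrightarrow{g_{1}} Y_{1} \xrightarrow{g_{2}} Y_{2} \leftarrow Y_{3} \leftarrow \dotsb \leftarrow Y_{j+1} \leftarrow A
\]
with $g_{1}$ and $g_{2}$ weak equivalences, to the object $B \xrightarrow{g_{2}g_{1}} Y_{2} \leftarrow Y_{3} \leftarrow \dotsb \leftarrow Y_{j+1} \leftarrow A$ of $\Wi\C^{j}(A,B)$; the composite $g_{2}g_{1}$ is a weak equivalence by saturation, and on morphisms $G$ simply deletes the component at $Y_{1}$. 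Since $F$ inserts the identity on $X_{1}$ at the second position, the composite $G \circ F$ equals the identity functor on the nose.

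For the other composite I will exhibit a natural transformation $\eta \colon \id \Rightarrow F \circ G$ whose component at $(Y, g)$ is $g_{2} \colon Y_{1} \to Y_{2}$ at the position indexed by $Y_{1}$ and the identity at every other position. Each component is a weak equivalence. The claim that $\eta_{(Y,g)}$ is a morphism in $\Wi\Wi\C^{j}(A,B)$ reduces at the first $\Wi$ to the tautology $g_{2} \circ g_{1} = g_{2} \circ g_{1}$, at the second $\Wi$ to $\id \circ g_{2} = g_{2} \circ \id$, and is automatic at the $\C$-positions. Naturality of $\eta$ with respect to a morphism $\phi \colon (Y,g) \to (Y'',g'')$ reduces at the one nontrivial position to the relation $\phi_{2} \circ g_{2} = g''_{2} \circ \phi_{1}$, which is part of the data of $\phi$.

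Since a natural transformation induces a homotopy of nerves, $NF \circ NG \simeq \id$, and combined with $NG \circ NF = \id$ this shows that $NF$ is a homotopy equivalence. The argument is uniform across the three cases where the letter $\C$ stands for $\aC$, $\wco\aC$, or $\w\aC$, because only the $\Wi$-positions participate in the essential composition step. The main point requiring attention is the naturality check for $\eta$, which is a short bookkeeping exercise; no input beyond the two out of three property for weak equivalences is used, which explains why this base case does not require FMCWC.
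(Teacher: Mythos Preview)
Your proof is correct and is essentially identical to the paper's: both define the inverse functor by composing the two adjacent weak equivalences, observe that one composite is literally the identity, and exhibit the natural transformation with component $g_{2}$ (the paper's $f_{2}$) at the one nontrivial slot to handle the other composite. You supply a bit more detail in verifying that $\eta$ is a morphism and is natural, but the argument and its ingredients match the paper exactly.
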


\begin{proof}
Composition induces a functor back $\Wi\Wi\C^{j}(A,B)\to \Wi\C^{j}(A,B)$.  The
composite functor on $\Wi\C^{j}(A,B)$ is the identity and the
composite functor on
$\Wi\Wi\C^{j}(A,B)$ has a natural transformation to the identity.
\[
\xymatrix@R-\wordsquish{%
&X_{j+1}\ar[r]\ar[dd]^{=}&\dotsb\ar[dd]^{=}\ar[r]
&X_{2}\ar[dd]^{=}&X_{1}\ar[l]^{\htp}_{f_{2}}\ar[dd]_{f_{2}}\\
A\ar[ur]\ar[dr]&&&&&B\ar[ul]^{\htp}_{f_{1}}\ar[dl]_{\htp}^{f_{2}\circ f_{1}}\\
&X_{j+1}\ar[r]&\dotsb\ar[r]
&X_{2}&X_{2}\ar[l]^{=}
}
\]
These functors then induce inverse homotopy equivalences on nerves.
\end{proof}

The following lemma now completes the proof of Theorem~\ref{thmHCLF}.

\begin{lem}\label{lemHCLF}
For $i>0$ and $j\geq 0$, the functor
\[
\Wi \C^{i}\C^{j}(A,B) \to \Wi \C^i \Wi \C^j(A,B)
\]
induces a weak equivalence on
nerves. 
\end{lem}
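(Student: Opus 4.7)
The plan is to apply a variant of Quillen's Theorem~A to the insertion functor $F\colon\Wi\C^{i+j}(A,B)\to\Wi\C^i\Wi\C^j(A,B)$: it suffices to show that for every object $d$ of $\Wi\C^i\Wi\C^j(A,B)$, the nerve of the comma category $d\downarrow F$ is contractible. Writing $d=(X_1,\dotsc,X_{i+j+1})$ with inserted weak equivalence $g\colon X_{i+1}\xrightarrow{\htp}X_{i+2}$, an object of $d\downarrow F$ is a word $c=(Y_1,\dotsc,Y_{i+j})\in\Wi\C^{i+j}(A,B)$ together with a system of weak equivalences from the $X_k$ into the corresponding positions of $c$, compatible with all $\C$-arrows; commutativity at the inserted position forces one of these weak equivalences to factor through the adjacent one via $g$.

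To verify contractibility, I would apply FMCWC to the weak cofibration $g$ (every weak equivalence is a weak cofibration under our saturation convention) to obtain a factorization $X_{i+1}\rightarrowtail Tg\xrightarrow{\htp}X_{i+2}$ together with a natural splitting $s\colon X_{i+2}\to Tg$ of $Tg\to X_{i+2}$; the two out of three property then makes both the cofibration $X_{i+1}\rightarrowtail Tg$ and the map $s$ into weak equivalences, so $Tg$ is simultaneously weakly equivalent to both $X_{i+1}$ and $X_{i+2}$. Using $Tg$, I would construct a distinguished object $(c_0,\phi_0)\in d\downarrow F$ depending functorially on $d$, whose intermediate object at the relevant slot is $Tg$ and whose weak equivalences from $X_{i+1}$ and $X_{i+2}$ into $Tg$ are the acyclic cofibration and the splitting respectively.

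Contractibility of $d\downarrow F$ would then be established by exhibiting, for each $(c,\phi)\in d\downarrow F$, a natural zigzag of morphisms in $d\downarrow F$ connecting $(c,\phi)$ to $(c_0,\phi_0)$. The key move is to ``fold'' the given weak equivalences into $Y_{i+1}$ through $Tg$ using the cofibration $X_{i+1}\rightarrowtail Tg$, and to propagate the resulting comparison along the rest of the word by pushouts along acyclic cofibrations, which preserve weak equivalences by the gluing property of \cite[2.5]{BlumbergMandell}. This parallels the construction in Lemma~\ref{lemtriv}, in which composition of two adjacent $\Wi$'s played the same role that the mapping cylinder plays here across the intervening $\C$-arrow.

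The main obstacle is that $\Wi\C^{i+j}(A,B)$ does not admit internal zigzags among its objects: every intermediate object in the natural zigzag must itself be a legitimate length-$(i+j)$ word with $\C$-arrows in exactly the right positions, so the pushouts used to propagate the folding through the $\C$-arrows flanking position $i+1$ must be assembled carefully, and naturality in $(c,\phi)$ must be checked slot by slot. As remarked in the paragraph preceding Lemma~\ref{lemtriv}, the argument is uniform in the three interpretations $\C=\aC,\wco\aC,\w\aC$, since the only inputs are FMCWC, the acyclic cofibration $X_{i+1}\rightarrowtail Tg$, and the natural splitting $s$, all of which are available in each case.
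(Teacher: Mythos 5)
Your route---Quillen's Theorem~A for the insertion functor $F\colon \Wi\C^{i+j}(A,B)\to\Wi\C^{i}\Wi\C^{j}(A,B)$, via contractibility of the comma categories $d\downarrow F$---is genuinely different from the paper's, which instead builds an explicit homotopy inverse $G$ (send the word with middle weak equivalence $g\colon X_{i+1}\to X_{i+2}$ to the word obtained by replacing the entries on the source side by the pushouts $Tg\cup_{X_{i+1}}(-)$) and then exhibits a natural transformation $GF\Rightarrow\Id$ and a zigzag of natural transformations $\Id\Rightarrow M\Leftarrow FG$ through an intermediate functor $M$ whose slot entry is $Tg$. Your plan has a genuine gap, and its key construction is internally inconsistent: the proposed distinguished object $(c_0,\phi_0)$, with slot entry $Tg$ and structure maps the cylinder cofibration $\iota\colon X_{i+1}\to Tg$ and the splitting $s\colon X_{i+2}\to Tg$, is not an object of $d\downarrow F$. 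As you yourself note, commutativity at the inserted identity forces the component at $X_{i+1}$ to equal $s\circ g$, and $s\circ g$ is in general a different map from $\iota$ (nothing in a Waldhausen category makes the splitting strictly compatible with the cylinder inclusion; they are only ``homotopic''). The object you describe is precisely the middle term $M(d)$ of the paper's zigzag $d\to M(d)\leftarrow FG(d)$, and it cannot be presented as a morphism $d\to Fc_0$ because its inserted map is the cylinder projection $Tg\to X_{i+2}$, not an identity. The same obstruction rules out the other natural candidate $c_0=G(d)$ with slot entry $X_{i+2}$: compatibility with the first forward arrow on the source side would require the two composites $X_{i+1}\to Tg\to Tg\cup_{X_{i+1}}X_i$ given by $\iota$ and by $s\circ g$ to agree, which again fails. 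So there is no evident canonical vertex of $d\downarrow F$ to contract onto; this is exactly why the paper's comparison of $\Id$ with $FG$ is only a zigzag of natural transformations rather than a single natural morphism, and why it bypasses comma categories entirely.

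Beyond the faulty basepoint, the substantive content of your argument---the natural zigzag in $d\downarrow F$ from an arbitrary $(c,\phi)$, with every intermediate stage a legitimate word of the correct shape and every square strictly commuting, naturally in $(c,\phi)$---is precisely what you defer to ``careful assembly'' and ``slot by slot'' checking; given the failure above, it is not routine and is where all the work lies. If you want to salvage the Theorem~A approach you would need a genuinely new argument for contractibility of $d\downarrow F$, e.g.\ an endofunctor of $d\downarrow F$ built from mapping cylinders of the data $(c,\phi)$ itself (not just of $d$), admitting zigzags of natural transformations to both the identity and a constant functor; nothing of the sort is supplied. (A small additional point: weak equivalences are weak cofibrations because $\wco\aC$ contains $\w\aC$ by definition, as recalled before Theorem~\ref{thmHCLF}, not because of the saturation convention.) The paper's argument is shorter because the pushout functor $Tg\cup_{X_{i+1}}(-)$ plus the two (zigzags of) natural transformations immediately yield a generalized simplicial homotopy equivalence on nerves, with no need to land a single morphism from $d$ in the image of $F$.
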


\begin{proof}
We obtain a functor $\Wi \C^i \Wi \C^j(A,B)\to \Wi\C^{i}\C^{j}(A,B)$
by applying the mapping cylinder functor and taking
pushouts:  The object
\[
\xymatrix@-1pc{%
A\ar[r]&Y_{j}\ar[r]&\dotsb\ar[r]&Y_{1}&Z\ar[l]_{f}^{\htp}\ar[r]
&X_{i}\ar[r]&\dotsb\ar[r]&X_{1}&B\ar[l]_(.4){\htp}
}
\]
of $\Wi \C^i \Wi \C^j(A,B)$ is sent to the object
\[
\xymatrix@-1pc{%
A\ar[r]&Y_{j}\ar[r]&\dotsb\ar[r]&Y_{1}\ar[r]&Tf\cup_{Z}X_{i}\ar[r]
&Tf\cup_{Z} X_{i-1}\ar[r]&\dotsb\ar[r]&Tf\cup_{Z}X_{1}&B\ar[l]_(.3){\htp}
}
\]
of $\Wi\C^{i}\C^{j}(A,B)$.  
The composite functor on $\Wi \C^i
\Wi \C^j(A,B)$ has a zigzag of natural transformations relating it to
the identity:
\[
\xymatrix@C-1pc{%
&&Y_{j}\ar[r]\ar[d]_{=}&\dotsb\ar[r]\ar[d]_{=}&Y_{1}\ar[d]_{=}
&Z\ar[l]_{\htp}\ar[r]\ar@{ >->}[d]_{\htp}
&X_{i}\ar[r]\ar[d]_{\htp}&\dotsb\ar[r]\ar[d]_{\htp}&X_{1}\ar[d]_{\htp}\\
A\ar[urr]\ar[drr]\ar[rr]&&Y_{j}\ar[r]&\dotsb\ar[r]&Y_{1}&Tf\ar[l]^{\htp}\ar[r]
&Tf\cup_{Z} X_{i}\ar[r]
&\dotsb\ar[r]&Tf\cup_{Z}X_{1}
&&B.\ar[ull]_{\htp}\ar[dll]^{\htp}\ar[ll]_{\htp}\\
&&Y_{j}\ar[r]\ar[u]^{=}&\dotsb\ar[r]\ar[u]^{=}
&Y_{1}\ar[u]^{=}&Y_{1}\ar[l]^{=}\ar[r]\ar[u]^{\htp}
&Tf\cup_{Z} X_{i}\ar[r]\ar[u]^{=}
&\dotsb\ar[r]\ar[u]^{=}&Tf\cup_{Z}X_{1}\ar[u]^{=}
}
\]
For the composite functor on
$\Wi\C^{i}\C^{j}(A,B)$,  $f$ is the identity map $Z=Y_{1}$, and the map
$Tf\to Y_{1}=Z$ composed with the map $Z\to X_{i}$ induces a natural
transformation from the composite 
functor to the identity
\[
\xymatrix@C-1pc@R-1pc
{%
&Y_{j}\ar[r]\ar[dd]_{=}&\dotsb\ar[dd]_{=}\ar[r]
&Y_{1}\ar[dd]_{=}\ar[r]
&Tf\cup_{Z} X_{i}\ar[r]\ar[dd]_{\htp}
&\dotsb\ar[r]\ar[dd]_{\htp}&Tf\cup_{Z}X_{1}\ar[dd]_{\htp}\\
A\ar[ur]\ar[dr]&&&&\qquad&&&B.\ar[ul]_{\htp}\ar[dl]^{\htp}\\
&Y_{j}\ar[r]&\dotsb\ar[r]&Y_{1}\ar[r]
&X_{i}\ar[r]&\dotsb\ar[r]&X_{1}
}
\]
The induced map on nerves is therefore a generalized simplicial
homotopy equivalence.
\end{proof}

\section[Homotopy cocartesian squares]%
{Homotopy cocartesian squares in {W}aldhausen categories}\label{sechococart}

Fundamentally, algebraic $K$-theory is about splitting ``extensions'',
and in Waldhausen's framework, the category of cofibrations specifies
the extensions to split. 
The key concept is the homotopy cocartesian square, 
i.e., a square diagram that is weakly equivalent (by a
zigzag) to a pushout square where one of the parallel sets of arrows
consists of cofibrations.  In simplicial model categories, the
homotopy cocartesian squares can be characterized in terms of mapping
spaces.  The Dwyer-Kan simplicial localization and function complexes
extend this alternative definition of homotopy cocartesian to the context
of Waldhausen categories.  In this section we show that under mild
hypotheses these two definitions are equivalent in Waldhausen
categories.  This equivalence reflects another aspect of the intrinsic
relationship between the Dwyer-Kan localization and algebraic
$K$-theory, and plays a key role in the proofs of Theorems~\ref{main} 
and~\ref{thmhh}.  The following is the main theorem of the section.

\begin{thm}\label{thmconhococart}
Let $\aC$ be a Waldhausen category whose weak
equivalences are closed under retracts. Suppose furthermore that $\aC$
admits a HCLF and every
weak cofibration in $\aC$ has a mapping cylinder (e.g., when $\aC$
has FMCWC). Let $A\to B$ be a weak
cofibration and $A\to C$, $B\to D$, and $C\to D$ maps that make the
square on the left commute. 
\[
\xymatrix@-1pc{%
A\ar[r]^{wc}\ar[d]&B\ar[d]
&&L\aC(D,E)\ar[r]\ar[d]&L\aC(C,E)\ar[d]\\
C\ar[r]&D
&&L\aC(B,E)\ar[r]&L\aC(A,E)
}
\]
Then the square on the left is homotopy cocartesian in $\aC$ if and
only if for every $E$ the square of simplicial sets on the right is
homotopy cartesian.
\end{thm}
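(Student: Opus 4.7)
The forward direction is essentially Theorem~\ref{thmprehococart} (which is proved in this section), so the content is the converse: if the mapping square is homotopy cartesian for every $E$, then the original square is homotopy cocartesian. The plan is to replace the weak cofibration $A\to B$ by an honest cofibration, form the strict pushout, and then compare mapping spaces to reduce to a Yoneda-style argument in the homotopy category.

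First I would use the mapping cylinder hypothesis to factor $A\to B$ as a cofibration $A\rightarrowtail T$ followed by a weak equivalence $T\xrightarrow{\htp} B$, and then form the ordinary pushout $P=C\cup_{A}T$. By construction the square
\[
\xymatrix@-1pc{
A\ar[r]\ar[d]&T\ar[d]\\
C\ar[r]&P
}
\]
is homotopy cocartesian in $\aC$, and the universal property of the pushout together with the map $T\to B$ produces a comparison map $\pi\colon P\to D$ sitting over the original square. Since the weak equivalence $T\to B$ induces weak equivalences $L\aC(B,E)\to L\aC(T,E)$, the original square induces a homotopy cartesian square of mapping spaces if and only if the analogous square with $B$ replaced by $T$ does. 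Thus the problem is reduced to showing that $\pi\colon P\to D$ is a weak equivalence.

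Second I would apply Theorem~\ref{thmprehococart} to the pushout square $(A,T,C,P)$ to see that the square
\[
\xymatrix@-1pc{
L\aC(P,E)\ar[r]\ar[d]&L\aC(T,E)\ar[d]\\
L\aC(C,E)\ar[r]&L\aC(A,E)
}
\]
is homotopy cartesian for every $E$. The hypothesis says the corresponding square with $P$ replaced by $D$ (and $T$ by $B$) is also homotopy cartesian, and $\pi^{*}\colon L\aC(D,E)\to L\aC(P,E)$ is a map between them that is the identity on the other three corners (after identifying $L\aC(B,E)\htp L\aC(T,E)$). A standard two-out-of-three argument for homotopy cartesian squares then forces $\pi^{*}$ to be a weak equivalence of simplicial sets for every $E$; in particular the induced map $\Ho\aC(D,E)\to \Ho\aC(P,E)$ is a bijection for every $E$.

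Third, by the Yoneda lemma in $\Ho\aC$, this says that $\pi\colon P\to D$ represents an isomorphism in the homotopy category. At this point I would invoke the saturation and closure-under-retracts hypothesis via Proposition~\ref{prophypi} (i.e., Theorem~\ref{thmTFAE}) to conclude that $\pi$ is actually a weak equivalence in $\aC$. Since the homotopy pushout of $A\to B$ along $A\to C$ is computed by $P$ and the comparison map to $D$ is $\pi$, this shows the original square is homotopy cocartesian, completing the converse.

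The main obstacle I expect is the last step: passing from ``iso in $\Ho\aC$'' back to ``weak equivalence in $\aC$.'' This is where the apparently auxiliary hypothesis that weak equivalences are closed under retracts does real work, and it is the reason Theorem~\ref{thmTFAE} has to be developed in parallel. A subsidiary issue is the bookkeeping in step two: one must verify that the map of homotopy cartesian squares induced by $\pi$ really does identify three of the four corners (up to canonical weak equivalence) so that two-out-of-three applies; this should come formally from the construction of $P$ as a pushout and the factorization $T\to B$.
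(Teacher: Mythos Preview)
Your proposal is correct and follows essentially the same argument as the paper: factor the weak cofibration, form the pushout, compare the two homotopy cartesian mapping squares to see that the comparison map induces a weak equivalence on $L\aC(-,E)$ for all $E$, and then invoke Theorem~\ref{thmTFAE} to upgrade the resulting isomorphism in $\Ho\aC$ to a weak equivalence. The only cosmetic difference is that the paper first reduces to the case where $A\to B$ is already a cofibration (so your $T$ is $B$ and your $P$ is $B\cup_{A}C$), rather than carrying the factorization notation through the argument.
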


In fact, the forward direction holds under slightly weaker
hypotheses, and we state this as the following theorem, which implies
Theorem~\ref{thmprehococart} in Section~\ref{secpfmain}.   A similar
result is the main theorem of \cite{WeissHammock}. 

\begin{thm}\label{thmhococart}
Let $\aC$ be a Waldhausen category that admits a HCLF. 
For a cofibration $A\to B$, a map $A\to C$, $D=B\cup_{A}C$, and any
object $E$, the following square is homotopy cartesian:
\[
\xymatrix@-1pc{%
L\aC(D,E)\ar[r]\ar[d]&L\aC(C,E)\ar[d]\\
L\aC(B,E)\ar[r]&L\aC(A,E)
}
\]
\end{thm}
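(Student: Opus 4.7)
The plan is to model the function complexes $L\aC(-,E)$ by nerves of word categories $\WC(-,E)$ using the HCLF hypothesis, identify the resulting square as a strict pullback of simplicial sets via the universal property of the pushout $D = B\cup_A C$, and then apply Theorem~\ref{thmC} to show this strict pullback is in fact the homotopy pullback. For the first reduction, the HCLF hypothesis together with the corresponding proposition in Section~\ref{sechclf} yields natural weak equivalences $N\WC(X,E) \to L\aC(X,E)$ for each of $X \in \{A,B,C,D\}$; so it suffices to show the square of nerves $N\WC(-,E)$ is homotopy cartesian.

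For the strict pullback identification, the universal property of the pushout gives a natural isomorphism of categories
\[
\WC(D,E) \xrightarrow{\;\cong\;} \WC(B,E) \times_{\WC(A,E)} \WC(C,E).
\]
Indeed, a zigzag $(Y, D\to Y, E\xleftarrow{\htp} Y)$ on the left is equivalent, by the pushout property, to a pair of compatible zigzags from $B$ and $C$ to $Y$ over $E$, and morphisms (weak equivalences on the middle object, commuting with structure) correspond bijectively. Hence the square of nerves is a strict pullback of simplicial sets.

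To show this strict pullback is the homotopy pullback, apply Theorem~\ref{thmC} to the commuting square of categories with $\aA = \WC(A,E)$, $\aB = \WC(B,E)$, $\aC' = \WC(C,E)$, $\aD' = \WC(D,E)$ and the evident restriction functors $\beta, \gamma, \delta$. Condition~(ii) is immediate from the identification above: for $Z \in \WC(C,E)$, the functor $Z\downarrow\delta \to \gamma Z\downarrow\beta$ is an isomorphism of categories, since each object on the right uniquely determines a compatible object on the left by the pushout property applied to the middle object. The main obstacle is condition~(i): for every morphism $\phi\colon X\to X'$ in $\WC(A,E)$, the restriction functor $\phi^*\colon X'\downarrow\beta \to X\downarrow\beta$ must be a weak equivalence of nerves. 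This is a Theorem-B-style fibration property for $\beta\colon\WC(B,E)\to\WC(A,E)$, expressing that the homotopy type of the comma category is invariant under weak equivalences in the base. The strategy is to exploit that every morphism in $\WC(A,E)$ is a weak equivalence on middle objects and that $A\to B$ is a cofibration in $\aC$, so that by HCLF (in particular, via the nerve-level equivalence $\Wi\C \simeq \Wi\Wi\C$) one can invert $\phi$ up to a homotopy-coherent zigzag and transport extensions across it, producing the zigzag of natural transformations required to witness the weak equivalence $\phi^*$.
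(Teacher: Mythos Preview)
Your overall architecture matches the paper's proof exactly: replace $L\aC(-,E)$ by $N\WC(-,E)$ via HCLF, observe that the pushout $D=B\cup_{A}C$ makes the resulting square of categories a strict pullback (this is precisely condition~(ii) of Theorem~\ref{thmC}, and your argument for it is correct), and then apply Theorem~\ref{thmC}.

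The gap is in your treatment of condition~(i). You correctly identify this as the crux---it is Lemma~\ref{lemthmhococart} in the paper---but your sketch (``invert $\phi$ up to a homotopy-coherent zigzag via $\Wi\C\simeq\Wi\Wi\C$ and transport extensions'') is not an argument. The HCLF equivalences relate nerves of word categories with the same endpoints; they do not directly produce functors or natural transformations between comma categories $\overleftrightarrow{X'}\downarrow f^{*}$ and $\overleftrightarrow{X}\downarrow f^{*}$, and there is no evident way to ``transport'' an extension $B\to Y$ across a weak equivalence $X\to X'$ of middle objects without already knowing something like the conclusion. The paper's proof of Lemma~\ref{lemthmhococart} is a second, nested application of Theorem~\ref{thmC}: one introduces the functor $G_{\overleftrightarrow{X}}\colon \overleftrightarrow{X}\downarrow f^{*}\to \w\aC_{E}$ sending an object to its target $Y$, and then identifies the comma category $H\downarrow G_{\overleftrightarrow{X}}$ with a union of components of $\WC(B\cup_{A}X,H)$ via the universal property of the pushout. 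Since $N\WC(B\cup_{A}X,H)$ preserves weak equivalences in both $X$ and $H$, both hypotheses of Theorem~\ref{thmC} hold for this inner square, and since the right vertical map is the identity on $\w\aC_{E}$, the left vertical map is a weak equivalence. This is the missing idea: the cofibration $A\to B$ is used not to invert $\phi$, but to form the pushout $B\cup_{A}X$ and thereby express the comma fibers as function complexes that visibly depend on $X$ only up to weak equivalence.
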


The previous theorems make it easier to check in certain cases that
functors are weakly exact.  Applying Theorem~\ref{thmhococart} in
$\aC$ and Theorem~\ref{thmconhococart} in $\aD$ gives the following
corollary. 

\begin{cor}\label{corweakexact}
Let $\aC$ be a Waldhausen category that admits a $HCLF$, and let
$\aD$ be a Waldhausen category that admits a HCLF, whose weak
equivalences are closed under retracts, and whose weak cofibrations
have mapping cylinders.  Let $F\colon \aC\to \aD$ be a
functor that preserves weak equivalences and weak cofibrations.  If
$F$ is a DK-equivalence, then $F$ preserves homotopy cocartesian
squares and so is weakly exact.
\end{cor}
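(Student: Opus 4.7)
The plan is to combine Theorem~\ref{thmhococart} applied in $\aC$ with the converse direction of Theorem~\ref{thmconhococart} applied in $\aD$, using the DK-equivalence hypothesis on $F$ to transfer the homotopy cartesian criterion for function complexes between the two categories.

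Since $F$ already preserves weak equivalences and weak cofibrations by hypothesis, the claim that $F$ is weakly exact reduces to showing that $F$ preserves homotopy cocartesian squares. The remaining condition that $*\to F*$ be a weak equivalence in $\aD$ follows because the induced functor $\Ho F$ sends the initial object of $\aC$ to $F*$, forcing $F*$ to be initial in $\Ho\aD$; hence the canonical map $*\to F*$ represents an isomorphism in $\Ho\aD$ and so is a weak equivalence in $\aD$ by Proposition~\ref{prophypi}, whose hypothesis (weak equivalences in $\aD$ closed under retracts) is assumed.

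Given a homotopy cocartesian square $Q$ in $\aC$, by definition $Q$ is connected by a zigzag of weak equivalences of squares to a pushout square along a cofibration $A\to B$ with pushout $D=B\cup_{A}C$. Since $F$ preserves weak equivalences, $FQ$ is in turn weakly equivalent to the $F$-image of this pushout square, so it suffices to show the latter is homotopy cocartesian in $\aD$. Applying Theorem~\ref{thmhococart} in $\aC$, for every $E$ in $\aC$ the square
\[
\xymatrix@-1pc{
L\aC(D,E)\ar[r]\ar[d]&L\aC(C,E)\ar[d]\\
L\aC(B,E)\ar[r]&L\aC(A,E)
}
\]
is homotopy cartesian. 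Because $F$ is a DK-equivalence, each map $L\aC(X,E)\to L\aD(FX,FE)$ is a weak equivalence, so the analogous square of $L\aD(F-,FE)$ is homotopy cartesian for every $E$ in $\aC$.

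The main obstacle is then to promote this conclusion from test objects of the form $FE$ to arbitrary test objects $E'$ in $\aD$. For this I would invoke the essential surjectivity aspect of a DK-equivalence: every object $E'$ of $\aD$ is connected to some $FE_{0}$ by a zigzag of weak equivalences in $\aD$, and the function complexes $L\aD(X,-)$ take weak equivalences to weak equivalences. With this extension in hand, the $F$-image square has $FA\to FB$ a weak cofibration (since $F$ preserves weak cofibrations), and the associated function-complex square is homotopy cartesian for every test object of $\aD$. The stated hypotheses on $\aD$ (HCLF, weak equivalences closed under retracts, and mapping cylinders for weak cofibrations) are precisely what is needed to invoke the backward direction of Theorem~\ref{thmconhococart}, yielding that the $F$-image square is homotopy cocartesian in $\aD$.
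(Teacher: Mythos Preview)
Your proposal is correct and follows exactly the approach the paper indicates in its one-sentence justification: apply Theorem~\ref{thmhococart} in $\aC$ to obtain the homotopy cartesian function-complex square, transport it to $\aD$ via the DK-equivalence, and then invoke the converse direction of Theorem~\ref{thmconhococart} in $\aD$. You have usefully spelled out two details the paper leaves implicit---the essential-surjectivity step extending the criterion from test objects $FE$ to arbitrary $E'$ in $\aD$, and the verification that $*\to F*$ is a weak equivalence---and both are handled correctly; for the latter, the more precise citation under the present hypotheses is Theorem~\ref{thmTFAE} rather than Proposition~\ref{prophypi}, but the content is the same.
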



The hypothesis that weak equivalences are closed under retracts is
familiar from the theory of model categories.  Two other properties of
weak equivalences in model categories are currently somewhat less
well-known but explored in \cite{DKHS}.  The more subtle of these is
the ``two out of six''  property \cite[7.3]{DKHS}, which we abbreviate
to \term{DKHS-2/6}.   The subcategory $\w\aC$ satisfies
DKHS-2/6 when for any three composable maps
\[
f\colon A\to B, \qquad g\colon B\to C, \qquad h\colon C\to D,
\]
if the composites $g\circ f$ and $h\circ g$ are in $\w\aC$, then so are
the original maps $f$, $g$, and $h$. The proof of
Theorem~\ref{thmconhococart} depends more directly on the other
property, which \cite[8.4]{DKHS} calls ``saturated'' and we call 
\term{DKHS-saturated} (to avoid confusion with Waldhausen's
terminology).  By definition, the localization functor $\aC\to
\Ho\aC$ sends weak equivalences to isomorphisms.  We say that the weak
equivalences of $\aC$ are DKHS-saturated when a map is a weak
equivalence if and only if its image in $\Ho\aC$ is an isomorphism.
Note that when $\aC$ admits a homotopy calculus of two-sided
fractions, the weak equivalences of $\aC$ are DKHS-saturated if and
only if the subcategory of $\Ho\aC$ generated by the weak equivalences
and their inverses consists of all the isomorphisms of $\Ho\aC$.
We prove the following theorem at the end of the section.

\begin{thm}\label{thmTFAE}
Let $\aC$ be a Waldhausen category that admits a HCLF and assume that every
weak cofibration in $\aC$ has a mapping cylinder.  The following are equivalent:
\begin{enumerate}
\item The weak equivalences are closed under retracts.
\item The weak equivalences satisfy the DKHS-2/6 property.
\item The weak equivalences are DKHS-saturated.
\end{enumerate}
\end{thm}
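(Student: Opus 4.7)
The plan is to derive the implications (iii) $\Rightarrow$ (i) and (iii) $\Rightarrow$ (ii) from general categorical properties of isomorphisms, and then to obtain the converses (i) $\Rightarrow$ (iii) and (ii) $\Rightarrow$ (iii) by using the HCLF of Theorem~\ref{thmHCLF} together with mapping cylinders to translate the abstract data about $\Ho\aC$ into concrete diagrams in $\aC$. For the easy direction, (iii) identifies $\w\aC$ with the preimage in $\aC$ of the isomorphisms of $\Ho\aC$. Retracts of isomorphisms are isomorphisms, which gives (i). For (ii), if $gf$ and $hg$ are weak equivalences then $[g]$ is both split monic and split epic in $\Ho\aC$, hence an isomorphism, and then $[f] = [g]^{-1}[gf]$ and $[h] = [hg][g]^{-1}$ are isomorphisms as well; applying (iii) yields $f$, $g$, $h \in \w\aC$.

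For the converses, fix a map $f\colon A \to B$ with $[f]$ an isomorphism in $\Ho\aC$; the goal is to show that $f \in \w\aC$. Using the HCLF of Theorem~\ref{thmHCLF} we identify $\Ho\aC(B,A) = \pi_{0}\WC(B,A)$ and represent $[f]^{-1}$ by a word $B \to X \leftarrow A$ whose backward arrow $w\colon A \to X$ is a weak equivalence. The identity $[f^{-1}][f] = [\id_{A}]$ in $\Ho\aC$ becomes, after passing through the HCLF equivalences between the relevant word categories and composing the resulting forward arrows, the statement that $(X,w,gf)$ lies in the same component of $\WC(A,A)$ as the identity word $(A,\id_{A},\id_{A})$. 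This provides a finite zigzag of morphisms in $\WC(A,A)$ between these two vertices, each such morphism consisting of a weak equivalence of middle objects fitting into a commutative triangle. Using the FMCWC hypothesis, we plan to rectify this zigzag into a concrete commutative diagram in $\aC$: under hypothesis (i), such a diagram exhibits $f$ as a retract of a weak equivalence, so that $f \in \w\aC$; under hypothesis (ii), the diagram provides a composable triple $A \to B \to C \to D$ whose two consecutive composites are weak equivalences, and DKHS-2/6 then gives $f \in \w\aC$.

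The main obstacle is the rectification step: turning the $\pi_{0}$-level data $[f^{-1}][f] = [\id_{A}]$ into a bona fide diagram in $\aC$. The zigzag in $\WC(A,A)$ consists of abstract weak equivalences of middle objects that must be converted into concrete factorization data via mapping cylinders, but our hypothesis supplies mapping cylinders only for weak cofibrations rather than for arbitrary maps. We must therefore arrange each intermediate construction so that the weak equivalence to be factored is a weak cofibration (for instance by representing $[f]^{-1}$ in the richer word category $\WCW$, where Theorem~\ref{thmwordcomp} and the proof of Theorem~\ref{thmHCLF} let us take the middle arrow to be a weak cofibration), and then compile the local rectified pieces into a single global retract or 2-of-6 witness compatible with $f$. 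This bookkeeping, threading the HCLF identifications through iterated mapping cylinders, is the technical crux of the argument.
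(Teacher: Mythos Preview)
Your treatment of the easy direction (iii) $\Rightarrow$ (i),(ii) is fine and matches the paper.  The hard direction, however, is where your plan diverges from the paper and where it has a real gap.

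First, the paper does \emph{not} prove (i) $\Rightarrow$ (iii) directly; it proves (i) $\Rightarrow$ (ii) and then (ii) $\Rightarrow$ (iii).  The (i) $\Rightarrow$ (ii) step is a short explicit construction: given $A\to B\to C\to D$ with $A\to C$ and $B\to D$ weak equivalences, factor the weak equivalence $A\to C$ through a mapping cylinder $A\rightarrowtail C'\to C$, form the single pushout $B'=C'\cup_{A}B$, and read off a retract diagram exhibiting $C\to D$ as a retract of the weak equivalence $B'\to D$.  No zigzag rectification is involved.

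Second, for (ii) $\Rightarrow$ (iii) you are missing the key simplification.  Once you know that your word $(X,w,gf)$ lies in the same component of $\WC(A,A)$ as the identity word, you do not need to ``rectify the zigzag'': by Theorem~\ref{thmwordcomp} the subcategory $\WC(A,A)_{\w}$ is a union of components, so the forward map $gf$ is already a weak equivalence.  That single $\pi_{0}$ conclusion is all the HCLF input the paper uses.  From there the paper builds two concrete pushouts $B'=B\cup_{A}C$ and $C'=C\cup_{B}B'$ and checks, using only cobase change along the cofibration $c$ and the other composite $[f][f^{-1}]=\id$, that in the string $B\to C\to B'\to C'$ the composites $B\to B'$ and $C\to C'$ are weak equivalences; DKHS-2/6 then forces $b$, hence $a$, to be one.

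Your ``rectification via iterated mapping cylinders'' is therefore both unnecessary and underspecified.  You never explain how a rectified zigzag in $\WC(A,A)$, which concerns diagrams $A\to X\leftarrow A$, would produce a retract diagram for the original map $f\colon A\to B$, and handling an arbitrary-length zigzag with only (non-functorial) mapping cylinders for weak cofibrations is exactly the kind of bookkeeping one should avoid.  Replace that step with the component argument above and the two pushouts, and route (i) through (ii) rather than straight to (iii).
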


We can now prove Theorem~\ref{thmconhococart}, assuming
Theorems~\ref{thmhococart} and~\ref{thmTFAE}.

\begin{proof}[Proof of Theorem~\ref{thmconhococart} from
Theorems~\ref{thmhococart} and~\ref{thmTFAE}]
The ``only if'' direction follows from Theorem~\ref{thmhococart}.  
For the ``if'' direction, by factoring
the weak cofibration $A\to B$ as a cofibration followed by a weak
equivalence, it suffices to consider the case when $A\to B$ is a
cofibration.  Consider the induced map $B\cup_{A}C\to D$; in the
commutative diagram of simplicial sets 
\[
\xymatrix@-1pc{%
L\aC(D,E)\ar[r]\ar[dr]
&L\aC(B\cup_{A}C,E)\ar[r]\ar[d]&L\aC(C,E)\ar[d]\\
&L\aC(B,E)\ar[r]&L\aC(A,E)
}
\]
both the outer ``square'' and the inner square are homotopy cartesian.
It follows that the map $L\aC(D,E)\to L\aC(B\cup_{A}C,E)$ is a weak
equivalence for all objects $E$, and so in particular, $B\cup_{A}C\to
D$ is an isomorphism in $\Ho\aC$.  Because $\aC$ is DKHS-saturated by
Theorem~\ref{thmTFAE}, $B\cup_{A}C\to D$ is a weak
equivalence.
\end{proof}

The proof of Theorem~\ref{thmhococart} is slightly more
complicated. 
We apply Quillen's Theorem~B as formulated in Theorem~\ref{thmC}
to the short hammock version of $L\aC(B,E)$, the nerve of the category
$\WC(A,E)$.  Recall that $\WC(A,E)$ is the
category whose objects are the zigzags $\overleftrightarrow{X}$
\[
\xymatrix@-1pc{%
A\ar[r]&X&E\ar[l]_{\htp}
}
\]
and whose maps are the maps $X\to X'$ under $A$ and $E$, 
as in Example~\ref{exwords}.(i).
Composition with $f\colon A\to B$ induces a functor
$f^{*}\colon \WC(B,E)\to\WC(A,E)$.  Theorem~\ref{thmhococart} is 
an immediate consequence of Theorem~\ref{thmC} and the following
lemma.

\begin{lem}\label{lemthmhococart}
Assume $\aC$ admits a HCLF and $f\colon A\to B$ is a cofibration.
For any map $\overleftrightarrow{X}\to \overleftrightarrow{X'}$ in $\WC(A,E)$ the induced map
of comma categories from $\overleftrightarrow{X'}\downarrow f^{*}$ to
$\overleftrightarrow{X}\downarrow f^{*}$ induces a weak equivalence of nerves.
\end{lem}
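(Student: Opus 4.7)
The plan is to exhibit $u^{*}$ as a homotopy equivalence by constructing an explicit inverse $u_{!}$ via pushouts, using the cofibration hypothesis on $f$. The cofibration case is the main content, and the general case reduces to it by factoring.

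I begin by observing that the underlying map $u\colon X\to X'$ in $\aC$ is automatically a weak equivalence: the zigzag data provides weak equivalences $e_{X}\colon E\to X$ and $e_{X'}\colon E\to X'$ with $e_{X'}=u\circ e_{X}$, so two-out-of-three gives $u\in \w\aC$. Using that $f\colon A\to B$ is a cofibration, the pushouts $B\cup_{A}X$ and $B\cup_{A}X'$ exist, the maps $X\to B\cup_{A}X$ and $X'\to B\cup_{A}X'$ are cofibrations, and the induced map $\tilde u\colon B\cup_{A}X\to B\cup_{A}X'$ is a weak equivalence by the gluing axiom.

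When $u$ is itself a cofibration, making $\tilde u$ a cofibration and a weak equivalence, I define $u_{!}$ by pushout: the object $(\overleftrightarrow{X}\to \overleftrightarrow{Y},\,v\colon X\to Y)$ is sent to $(\overleftrightarrow{X'}\to \overleftrightarrow{Y'},\,v')$ with $Y'=Y\cup_{B\cup_{A}X}(B\cup_{A}X')$, the map $v'\colon X'\to Y'$ coming from $X'\to B\cup_{A}X'\to Y'$, the map $B\to Y'$ from $B\to B\cup_{A}X'\to Y'$, and $E\to Y'$ as the composite $E\to Y\to Y'$. The gluing axiom ensures $Y\to Y'$ is a weak equivalence, so the required weak equivalence $E\to Y'$ is preserved, and compatibility in $\overleftrightarrow{X'}\downarrow f^{*}$ follows from the pushout universal property. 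The composite $u^{*}\circ u_{!}$ admits a natural transformation from the identity given by the pushout inclusion $Y\to Y'$, and $u_{!}\circ u^{*}$ admits a natural transformation to the identity given by the pushout universal property applied to $\id_{Y'}$ and $v'$ (producing a retraction $Y''\to Y'$ of a weak equivalence, hence itself a weak equivalence by two-out-of-three); both yield simplicial homotopies on nerves, so $u^{*}$ is a nerve equivalence in this case.

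For a general map $u$, I factor $u=p\circ i$ with $i$ a cofibration and $p$ a weak equivalence (both automatically weak equivalences), using the factorization available in the applications of the theorem, and interpolate through the zigzag $\overleftrightarrow{Tu}=(A\to Tu\xleftarrow{\htp} E)$ built from $i$ to obtain $u^{*}=i^{*}\circ p^{*}$ on comma categories. The cofibration case handles $i^{*}$, leaving $p^{*}$ as the main obstacle: although $p$ admits a section $s\colon X'\to Tu$ from an FMCWC structure, $s$ is not in general a morphism in $\WC(A,E)$ (the triangle $s\circ a'=i\circ a$ need not commute strictly), so $p_{!}$ cannot be constructed by precomposition with $s$. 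My plan to handle this is to invoke Quillen's Theorem~A, reducing to contractibility of the comma fibers of $p^{*}$; these fibers parametrize ``factorizations'' through the weak equivalence $p$, and their contractibility is forced by the HCLF hypothesis together with the fact that $p$ induces weak equivalences on the relevant hammock function complexes.
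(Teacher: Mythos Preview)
Your pushout construction for the case when $u\colon X\to X'$ is a cofibration is sound, but the reduction of the general case to this one has a genuine gap. The lemma is stated under the sole hypothesis that $\aC$ admits a HCLF; no factorization is assumed, and you acknowledge this when you write ``using the factorization available in the applications of the theorem.'' You cannot borrow hypotheses from the applications: the lemma is invoked later (for instance in the proof of Theorem~\ref{thmDKunder}) under the bare HCLF hypothesis, so strengthening the hypotheses here would weaken those results. Even if one grants factorization, your treatment of $p^{*}$ is not a proof but a plan: you correctly note that an FMCWC section $s\colon X'\to Tu$ is not a morphism in $\WC(A,E)$, and then simply assert that the comma fibers of $p^{*}$ are contractible ``by HCLF,'' without identifying those fibers with anything to which HCLF applies.

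The paper's argument avoids any case split on $u$ by a different route. Rather than building an inverse to $u^{*}$, it introduces the functor $G_{\overleftrightarrow{X}}\colon (\overleftrightarrow{X}\downarrow f^{*})\to \w\aC_{E}$ sending an object to its target $Y$, and applies Theorem~\ref{thmC} to the square with $G_{\overleftrightarrow{X}}$, $G_{\overleftrightarrow{X'}}$, and the identity on $\w\aC_{E}$. The key step is the identification of the comma category $H\downarrow G_{\overleftrightarrow{X}}$ with a union of components of $\WC(B\cup_{A}X,H)$; HCLF then says directly that $N\WC(B\cup_{A}X,H)\htp L\aC(B\cup_{A}X,H)$ takes weak equivalences in either variable to weak equivalences, which verifies both hypotheses of Theorem~\ref{thmC}. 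Since the right vertical map of the square is the identity, the left vertical map---your $u^{*}$---is a weak equivalence. The cofibration hypothesis on $f$ is used only to form the pushout $B\cup_{A}X$, and no property of $u$ beyond being a morphism in $\WC(A,E)$ is needed.
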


\begin{proof}
The argument is another application of Theorem~\ref{thmC}.
Let $\w\aC_{E}$ denote the full
subcategory of $\w\aC$ consisting of those objects that are weakly
equivalent to $E$.  We have a functor $G_{\overleftrightarrow{X}}$ from
$\overleftrightarrow{X}\downarrow f^{*}$ to $\w\aC_{E}$ that sends the object
$\overrightarrow{Y}$
\[
\xymatrix@-1pc{%
A\ar[r]\ar@{ >->}[d]&X\ar[d]^{\htp}&E\ar[l]_{\htp}\\
B\ar[r]&Y
}
\]
to $Y$.  For a map $\overleftrightarrow{X}\to\overleftrightarrow{X'}$
in $\WC(A,E)$, consider the following
strictly commuting diagram of functors:
\begin{equation}\label{eqxxpsq}
\begin{gathered}
\xymatrix{%
\overleftrightarrow{X'}\downarrow f^{*}\ar[r]^{G_{\overleftrightarrow{X'}}}\ar[d]
&\w\aC_{E}\ar[d]^{=}\\
\overleftrightarrow{X}\downarrow f^{*}\ar[r]_{G_{\overleftrightarrow{X}}}
&\w\aC_{E}.
}
\end{gathered}
\end{equation}
We verify that this diagram satisfies the hypotheses of
Theorem~\ref{thmC}.
For any object $H$ in $\w\aC_{E}$, the
comma category $H\downarrow G_{\overleftrightarrow{X}}$ has as objects 
the diagrams of the form
\[
\xymatrix@-1pc{%
A\ar[r]\ar@{ >->}[d]&X\ar[d]^{\htp}&E\ar[l]_{\htp}\\
B\ar[r]&Y&H.\ar[l]^{\htp}
}
\]
Using the universal property of the pushout, we see that this category
is equivalent to the full subcategory of $\WC(B\cup_{A}X,H)$ of those
zigzags 
\[
\xymatrix@-1pc{%
B\cup_{A}X\ar[r]&Y&H\ar[l]_{\htp}
}
\]
for which the composite map $E\to B\cup_{A}X\to Y$ is a weak
equivalence.  Thus, $H\downarrow G_{\overleftrightarrow{X}}$ is equivalent to a
disjoint union of certain components of
$\WC(B\cup_{A}X,H)$. Hypotheses (i) and (ii) of Theorem~\ref{thmC}
follow from the fact that
$N(\WC(B\cup_{A}X,H))$ preserves weak equivalences in $H$ and $X$.
We conclude that diagram~\ref{eqxxpsq} is a homotopy cartesian square.
Since the
vertical map on the right is a weak equivalence, so is the vertical map on
the left.
\end{proof}

It remains to prove Theorem~\ref{thmTFAE}.
Obviously (iii) implies both (i) and (ii); we show that (i) implies
(ii) and (ii) implies (iii).

\begin{proof}[Proof that (i) implies (ii)]
Let $A\to B\to C\to D$ be a sequence of composable maps, with $A\to C$
and $B\to D$ weak equivalences.  Since $A\to C$ is a weak equivalence,
it is a weak cofibration, and so we can factor it as a cofibration
$A\to C'$ followed by a split weak equivalence $C'\to C$. Let
$B'=C'\cup_{A}B$.
\[
\xymatrix{%
A\ar[r]^{\htp}\ar@{ >->}[d]_{\htp}&C
&&A\ar@{ >->}[r]^{\htp}\ar[d]&C'\ar[d]\\
C'\ar[ur]_{\htp}&C\ar[l]\ar[u]_{=}
&&B\ar@{ >->}[r]_{\htp}&B'
}
\]
We have a composite map $f\colon C\to C'\to B'$, and the compatible
maps $C'\to C$ and $B\to C$ induce a map $g\colon B'\to C$ such that
$g\circ f$ is the identity on $C$.  Since the composite map $B\to C\to
D$ is a weak equivalence, the composite of $g$ with $C\to D$ is a weak
equivalence.   We therefore obtain a commutative diagram
\[
\xymatrix{%
C\ar[r]^{f}\ar[d]&B'\ar[r]^{g}\ar[d]_{\htp}&C\ar[d]\\
D\ar[r]_{=}&D\ar[r]_{=}&D,
}
\]
where both horizontal composites are the identity and the middle
vertical map is a weak equivalence.  We conclude from (i) that the
map $C\to D$ is a weak equivalence, and it follows that $B\to C$ and
$A\to B$ are weak equivalences. 
\end{proof}

\begin{proof}[Proof that (ii) implies (iii)](cf. \cite[36.4]{DKHS})
Let $a\colon A\to B$ be a map
in $\aC$ that becomes an isomorphism in $\Ho\aC$.  Since
$\aC$ admits a HCLF, the inverse isomorphism from $\Ho\aC$ is
represented by a zigzag (in $\aC$) of the form
\begin{equation}\label{eqzigzagBCA}
\begin{gathered}
\xymatrix{%
B\ar[r]^{b}&C&A\ar[l]^{\htp}_{c}
}
\end{gathered}
\end{equation}
for some $C$.  Moreover, using a mapping cylinder, we can assume without
loss of generality that $c\colon A\to C$ is a cofibration as well as a
weak equivalence.  The composite zigzag
\[
\xymatrix{%
A\ar[r]^{b\circ a}&C&A\ar[l]^{\htp}_{c}
}
\]
is in the component of the identity on $A$, and so $b\circ
a$ is a weak equivalence. Let $B'=B\cup_{A}C$, and let $C'=C\cup_{B}B'$.
\begin{equation}\label{eqdiagiiitoi}
\begin{gathered}
\xymatrix{%
&A\ar@{ >->}[d]_{c}^{\htp}\ar[r]^{a}
&B\ar@{ >->}[d]^{\htp}\ar[r]^{b}
&C\ar@{ >->}[d]^{\htp}\\
B\ar[r]_{b}&C\ar[r]&B'\ar[r]&C'
}
\end{gathered}
\end{equation}
The composite $C\to C'$ is a weak equivalence because it is the
pushout of the weak equivalence $b\circ a$ over the cofibration $c$.
The zigzag 
\[
\xymatrix{%
B\ar[r]_{b}&C\ar[r]&B'&B\ar@{ >->}[l]_{\htp}
}
\]
is in the component representing the composite of the
zigzag~\eqref{eqzigzagBCA} with $a$, i.e., the component containing
the identity of $B$.  It follows that in the
diagram~\eqref{eqdiagiiitoi}, the horizontal composite map $B\to B'$
is a weak equivalence.  Applying DKHS-2/6 to the bottom horizontal
sequence of maps in~\eqref{eqdiagiiitoi}, we conclude that $b$ is a weak
equivalence, and hence that $a$ is a weak equivalence.
\end{proof}

\section{Proof of Theorems~\ref{thmhocoendone}, \ref{thmhocoendtwo},
and~\ref{thmhocoend}}
\label{secpdecomp}

We begin with the proof of part~(i) of Theorem~\ref{thmhocoend}.  The
first reduction is to replace $\w\Spdot[n] \aC$ with a simpler
category.  Let $F'_{n-1}\aC$ denote the Waldhausen category whose
objects are  the sequences of $n-1$ composable weak cofibrations in
$\aC$.  We have an exact forgetful functor $\Spdot[n] \aC \to
F'_{n-1} \aC$ that sends an object $\{A_{i,j}\}$ of $\Spdot[n]\aC$ to
the sequence 
\[
A_{0,1}\to A_{0,2} \to \dotsb \to A_{0,n}.
\]
The Waldhausen category $F'_{n-1}\aC$ is the analogue for $\Spdot\aC$
of the Waldhausen category 
$F_{n-1}\aC$, whose objects are the sequences of $n-1$
composable cofibrations in $\aC$.  The forgetful functor $\Sdot[n]\aC\to
F_{n-1}\aC$ is exact and an equivalence of categories, whose inverse
equivalence is also exact.  Proposition~\ref{propbm}.(iii) and the
analogous fact for $F_{n-1}$ and $F'_{n-1}$  then implies the
following proposition.  (See Appendix~\ref{appmain} for the statement
in the non-functorial case.)

\begin{prop}\label{propfprime}
If $\aC$ admits FFWC, the forgetful functor $\w \Spdot[n]\aC\to
\w F'_{n-1}\aC$ induces a weak equivalence of nerves.
\end{prop}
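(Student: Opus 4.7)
The plan is to deduce the proposition from Proposition~\ref{propbm}.(iii) together with an analogous statement for the $F$-construction. Consider the commutative square of exact functors
\[
\xymatrix{
\w\Sdot[n]\aC \ar[r] \ar[d] & \w F_{n-1}\aC \ar[d] \\
\w\Spdot[n]\aC \ar[r] & \w F'_{n-1}\aC,
}
\]
where the horizontal maps forget all of $\{A_{i,j}\}$ except the top row $A_{0,1}\to\dotsb\to A_{0,n}$ and the vertical maps are the canonical inclusions. The top horizontal map is an equivalence of categories, since the classical pushout construction provides an exact inverse (as noted in the excerpt). The left vertical map induces a weak equivalence of nerves by Proposition~\ref{propbm}.(iii). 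By two-out-of-three for weak equivalences of simplicial sets, it therefore suffices to show that the right vertical map $\iota\colon \w F_{n-1}\aC\to \w F'_{n-1}\aC$ induces a weak equivalence of nerves.

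To prove this, I would construct a functor $R\colon \w F'_{n-1}\aC \to \w F_{n-1}\aC$ by iterating the FFWC factorization functor $T$. Given a sequence $(A_{0,1}\to\dotsb\to A_{0,n})$ of weak cofibrations, set $B_{0,1}=A_{0,1}$, and inductively apply $T$ to the composite $B_{0,j}\to A_{0,j}\to A_{0,j+1}$ to obtain a cofibration $B_{0,j}\rightarrowtail B_{0,j+1}$ together with a weak equivalence $B_{0,j+1}\to A_{0,j+1}$. Functoriality of $T$ in $\Ar\aC$ assembles these into a functor $R$ together with a natural levelwise weak equivalence $\iota R\Rightarrow \id_{\w F'_{n-1}\aC}$. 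When restricted along $\iota$, this same natural weak equivalence yields $R\iota\Rightarrow \id_{\w F_{n-1}\aC}$, since both the source and target are sequences of cofibrations and a morphism in $\w F_{n-1}\aC$ is merely a levelwise weak equivalence of such. These natural weak equivalences induce simplicial homotopies on nerves, exhibiting $N\iota$ and $NR$ as inverse homotopy equivalences.

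The main technical point to verify is that the composite $B_{0,j}\to A_{0,j}\to A_{0,j+1}$ is genuinely a weak cofibration (so that $T$ applies to it). This follows from the definition of weak cofibration as any map weakly equivalent in $\Ar\aC$ to a cofibration, together with the fact that $B_{0,j}\to A_{0,j}$ is a weak equivalence. Beyond this, the key subtlety is simply bookkeeping: ensuring the cofibration structure of $R(A_\bullet)$ and the compatibility of the natural weak equivalences with morphisms in $\w F'_{n-1}\aC$, both of which are immediate from the functoriality of $T$. The non-functorial case is handled by the apparatus of Appendix~\ref{appmain}.
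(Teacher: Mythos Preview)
Your proposal is correct and follows essentially the same approach as the paper: the paper states just before the proposition that the forgetful functor $\Sdot[n]\aC\to F_{n-1}\aC$ is an equivalence of categories and that ``Proposition~\ref{propbm}.(iii) and the analogous fact for $F_{n-1}$ and $F'_{n-1}$ then implies'' the result, which is exactly your two-out-of-three argument on the square. Your explicit construction of $R$ by iterated factorization is the standard way to supply the ``analogous fact'' the paper leaves implicit; indeed, the appendix (Theorem~\ref{thmfpdot}.(ii)) carries out precisely this inductive factorization argument in the more general USE setting.
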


We use proposition~\ref{propfprime} to simplify one side of the
equivalence in part~(i) of Theorem~\ref{thmhocoend}, and we use
homotopy calculus 
of left fractions to simplify the other side.  Although the
categories $\WC$ produce much more manageable simplicial sets than the
hammock function complexes, they are contravariant in weak
equivalences of each variable and so do not have the right
functoriality to fit into a homotopy coend.  The categories $\WCW$ are
covariant in weak equivalences of the source variable and
contravariant in weak equivalences of the target variable, which is
the opposite variance expected of a function complex.  We do likewise
have such an opposite variance on the hammock function complexes
$L\aC(X,Y)$ since the category $L\aC$ contains ``backward'' copies of
the weak equivalences.  The following lemma compares the homotopy
coend in Theorem~\ref{thmhocoend} with the homotopy coend for the
opposite variance.

\begin{lem}\label{lemreversi}
When $\aC$ satisfies HCLF, 
\begin{gather*}
\hocoendlim_{(X_{1},\dotsc,X_{n})\in \w\aC^{n}}\Lco\aC(X_{n-1},X_{n})\times \dotsb \times
\Lco\aC(X_{1},X_{2})\\
\noalign{and}
\hocoendlim_{(X'_{1},\dotsc,X'_{n})\in (\w\aC^{\op})^{n}}\Lco\aC(X'_{n-1},X'_{n})
\times \dotsb \times
\Lco\aC(X'_{1},X'_{2})
\end{gather*}
are weakly equivalent.
\end{lem}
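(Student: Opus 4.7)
My plan is to exploit the fact that, under HCLF, every weak equivalence acts as a homotopy equivalence on $\Lco\aC(-,-)$ in each variable: since weak equivalences become isomorphisms in the hammock localization $L\aC$, composition with such a morphism (on either side) induces a weak equivalence of simplicial sets. Consequently, the bifunctor $\Lco\aC\colon \w\aC^{\op}\times \w\aC\to \text{sSet}$ is ``homotopically symmetric'' in the sense that covariant and contravariant actions of $\w\aC$ are realized by homotopy-inverse maps (the latter via short hammocks representing the formal inverse in $L\aC$). It is this symmetry, promoted to the product $\prod_i \Lco\aC(X_i,X_{i+1})$, that makes the variance of the indexing category irrelevant up to weak equivalence.

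Concretely, I would first use Theorem~\ref{thmwordcomp} (available thanks to Theorem~\ref{thmHCLF}) to replace each factor $\Lco\aC(X_i,X_{i+1})$ by an equivalent word model such as $N\WCW(X_i,X_{i+1})_{\wco}$ or a longer word $N\Wi\Word\Wi(X_i,X_{i+1})_{\wco}$, since these carry explicit outer buffers of weak equivalences that can absorb the chains appearing in a bar construction. Next I would verify that both bifunctors $(X_1,\dotsc,X_n)\mapsto \prod_i \Lco\aC(X_i,X_{i+1})$, considered with natural variance or with opposite variance, satisfy the hypothesis of Proposition~\ref{prophocoend}; this is exactly the observation above. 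Hence each homotopy coend maps to the nerve of its indexing category by a universal simplicial quasifibration, with fiber $\prod_i \Lco\aC(X_i,X_{i+1})$ over any vertex $(X_1,\dotsc,X_n)$. The two bases $N(\w\aC^n)$ and $N((\w\aC^{\op})^n)$ are canonically isomorphic as simplicial sets via the standard $N(C)\cong N(C^{\op})$ identification, and the two fibers agree pointwise.

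To upgrade this agreement of bases and fibers into an actual weak equivalence of total spaces I would construct an explicit comparison in stages. On the bar resolution of each coend, each $k$-simplex is indexed by a chain of weak equivalences in $\w\aC^n$ (oriented one way or the other) together with a tuple of objects in $\WCW(-,-)_{\wco}$. Using the outer $\W$ and $\Wi$ letters to absorb the reverse-oriented chain, and invoking the HCLF comparisons of Theorem~\ref{thmHCLF} (specifically the equivalences $\Wi\C^{i+j}\to \Wi\C^i\Wi\C^j$ of Lemma~\ref{lemHCLF}) to shrink the resulting longer words back, one obtains a natural zigzag between the two bar constructions. Combining this with Theorem~\ref{thmC} applied to the resulting map of nerves then yields the desired equivalence.

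The main obstacle will be the bookkeeping in the last step: writing down the absorption map functorially in the chain of weak equivalences and simultaneously in the $\WCW$-diagrams, so that it defines a simplicial map (or at least a zigzag of such) between the two bar constructions. The key technical input is that HCLF makes inserting a trivial $\Wi\W$ pair into a word a weak equivalence on nerves, which is what allows one to reverse the orientation of a segment of weak equivalences at the cost of lengthening the word without changing homotopy type.
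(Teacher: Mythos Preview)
Your diagnosis is sound up to a point: both homotopy coends sit as universal simplicial quasifibrations over $N\w\aC^{n}$ and $N(\w\aC^{\op})^{n}$ with the same fibers, and those bases are isomorphic as simplicial sets. But that by itself does not produce a comparison map between the total spaces, and you recognize this. The trouble is that your proposed remedy --- absorb the reversed chain of weak equivalences into extra letters of a word model and then contract using HCLF --- is exactly the step you label ``the main obstacle,'' and you have not actually carried it out. Writing down a simplicial map (or even a well-defined zigzag of simplicial maps) that does this absorption coherently across all simplicial degrees is the whole content of the lemma; invoking Lemma~\ref{lemHCLF} and Theorem~\ref{thmC} at this level of vagueness does not constitute a proof. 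In particular, the identification $N(\w\aC)\cong N(\w\aC^{\op})$ reverses the orientation of chains, and nothing you have written explains how the bar-construction face maps on one side are carried to those on the other.

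The paper's argument avoids this bookkeeping entirely by a different device. Rather than trying to map one coend to the other, it introduces a single larger space $D$, a homotopy coend over the doubled category $(\w\aC\times\w\aC^{\op})^{n}$, whose integrand interleaves the factors $\Lco\aC(X'_{i-1},X_{i})$ with bridge factors $\WW(X'_{i},X_{i})$ and $\WW(X_{i},X'_{i})$. Composition with these bridges gives honest simplicial maps $D\to B$ and $D\to B'$. To see these are weak equivalences, the paper drops the $\Lco$ factors to obtain an auxiliary space $C$ and observes that $D\to B$ and $D\to B'$ are pullbacks of $C\to N\w\aC^{n}$ and $C\to N(\w\aC^{\op})^{n}$, which are universal simplicial quasifibrations. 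The remaining work is to show each fiber of $C$ over a vertex is contractible; this is done by a short bisimplicial argument that ultimately reduces to the contractibility of nerves of under- and over-categories in $\w\aC$. No word-lengthening or Theorem~B gymnastics is needed. If you want to salvage your approach, the honest thing to do is to construct such a roof explicitly; the $\WW$-bridge idea is precisely the clean way to encode the ``absorption'' you were reaching for.
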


\begin{proof}
Write $B$ and $B'$ for these homotopy coends, and let $D$ be the
homotopy coend
\begin{multline*}
D=\hocoend_{(X_{1},X'_{1},\dotsc,X_{n},X'_{n})\in  (\w\aC\times \w\aC^{\op})^{n}}\\
\WW(X'_{n},X_{n})\times \Lco\aC(X'_{n-1},X_{n})\times \WW(X_{n-1},X'_{n-1})\times\dotsb  \\
\dotsb \times \WW(X_{2},X'_{2})\times \Lco\aC(X'_{1},X_{2})\times \WW(X_{1},X'_{1}).
\end{multline*}
Composition then induces maps $D\to B$ and $D\to B'$.  Let $C$ be the
homotopy colimit
\begin{multline*}
C=\hocolim_{(X_{1},X'_{1},\dotsc,X_{n},X'_{n})\in  (\w\aC\times \w\aC^{\op})^{n}}\\
\WW(X'_{n},X_{n})\times  \WW(X_{n-1},X'_{n-1})\times\dotsb 
\times \WW(X_{1},X'_{1}).
\end{multline*}
We have an evident map $D\to C$ obtained by dropping the $\Lco\aC$
factors, and we have maps $C\to N\w\aC^{n}$ and $C\to
N(\w\aC^{\op})^{n}$ obtained from the canonical map $C\to
N((\w\aC\times \w\aC^{\op})^{n})$ by dropping the $X'_{i}$ or the
$X_{i}$ respectively.   We then have the following commuting diagrams
\[
\xymatrix{%
D\ar[r]\ar[d]&C\ar[d]&&D\ar[r]\ar[d]&C\ar[d]\\
B\ar[r]&N\w\aC^{n}&&B'\ar[r]&N(\w\aC^{op})^{n}
}
\]
that are easily seen to be pullback squares.  By
Proposition~\ref{prophocolimb}, the canonical map $C\to N((\w\aC\times
\w\aC^{\op})^{n})$ is a universal simplicial quasifibration as are
projection maps, and so the right vertical maps above are universal
simplicial quasifibrations. We will show that for each vertex of
$N\w\aC^{n}$ and of $N(\w\aC^{\op})^{n}$, the fiber of the right
vertical map is weakly contractible; it then follows that the vertical maps
are weak equivalences, and this gives a zigzag of weak equivalences
relating $B$ and $B'$.

Thus, we are reduced to proving that the fibers of the right vertical
maps are weakly contractible.  We will treat the case of 
$N\w\aC^{n}$, the case of $N(\w\aC^{\op})^{n}$ being similar.  The map
$C\to N\w\aC^{n}$ is the product of maps $\hocolim_{X,X'} \WW(X,X')\to
N\w\aC$,
and so it suffices to see that each of these maps has contractible
fiber.  Fixing a vertex $X$ in $N\w\aC$, the 
fiber is the simplicial set with $r$-simplices the diagrams
\[\xymatrix@-1.25ex{%
X\ar[dr]_{\htp}&X'_{r}\ar[d]^{\htp}
&\relax\dotsb \ar[l]_{\htp}&X'_{0}\ar[l]_{\htp}\\
&A_{0}\ar[r]_{\htp}&\relax\dotsb\ar[r]_{\htp}
&A_{r}.
}
\]
We can regard this as the diagonal of the bisimplicial set $F\dsubdot$
with
$(q,r)$-simplices the diagrams
\[\xymatrix@-1.25ex{%
X\ar[dr]_{\htp}&X'_{r}\ar[d]^{\htp}
&\relax\dotsb \ar[l]_{\htp}&X'_{0}\ar[l]_{\htp}\\
&A_{0}\ar[r]_{\htp}&\relax\dotsb\ar[r]_{\htp}
&A_{q}.
}
\]
We have a bisimplicial map to the bisimplicial set $A\dsubdot$ with
$(q,r)$-simplices the diagrams
\[
\xymatrix@-1.25ex{%
X\ar[r]^{\htp}&A_{0}\ar[r]^{\htp}&\relax\dotsb\ar[r]^{\htp}&A_{q}
}
\]
(constant in the $r$ direction) by forgetting the objects
$X'_{0},\dotsc, X'_{r}$.  For each fixed $q$-simplex, this is a
homotopy equivalence in the $r$ direction using the usual simplicial
contraction argument, i.e., using the contraction on $N(\w\aC^{\op}\bs
A_{0})$. It 
follows that the map $F\dsubdot \to A\dsubdot$ is a weak 
equivalence.  On the other hand, $A\dsubdot$ is the constant
bisimplicial set on $N(\w\aC\bs X)$ and so is contractible.  We
conclude that $F\dsubdot$ and its diagonal are weakly contractible.
\end{proof}

We now prove part~(i) of Theorem~\ref{thmhocoend} by comparing
$N(\w F'_{n-1})$ with the homotopy coend in Lemma~\ref{lemreversi}.
Thus, let 
$\aC$ be a Waldhausen category that has FMCWC and fix $n\geq 2$.  We
prove the comparison in a sequence of reductions
$\mathbb{A},\mathbb{B},\mathbb{C}$ obtained from applying simplicial
homotopy theory. 

\begin{lem}
$N\w F'_{n-1}\aC$ is equivalent to the diagonal simplicial set of the
bisimplicial set $\mathbb{A}$ that has as its $(q,r)$ simplices the commutative
diagrams of the following form
\[
\xymatrix@C-1.25ex@R-\redsquish{%
&&&X_{0,1}\ar[r]^{wc}\ar[d]_{\htp}&\dotsb \ar[r]^{wc}\ar[d]_{\htp}
&X_{0,n}\ar[d]^{\htp}
&B_{0}\ar[l]_{\htp}&\dotsb\ar[l]_{\htp}&B_{q}\ar[l]_{\htp}\\
&&&\vdots\ar[r]^{wc}\ar[d]_{\htp}
&\dotsb \ar[r]^{wc}\ar[d]_{\htp}&\vdots\ar[d]^{\htp}\\
A_{q}&\dotsb \ar[l]_{\htp}&A_{0}\ar[l]_{\htp}
&X_{r,1}\ar[l]_{\htp}\ar[r]^{wc}&\dotsb \ar[r]^{wc}&X_{r,n},
}
\]
where the maps labeled ``$\htp$'' are weak equivalences and the maps
labeled ``$wc$'' are weak cofibrations.
\end{lem}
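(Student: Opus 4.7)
The strategy is to compare $\mathbb{A}$ with the bisimplicial set that is constant in the $q$-direction at $N\w F'_{n-1}\aC$, via the obvious forgetful map
\[
p\colon\mathbb{A}_{q,r}\to N_{r}\w F'_{n-1}\aC
\]
that discards the two corner zigzags $B_{q}\to\dotsb\to B_{0}\to X_{0,n}$ and $X_{r,1}\to A_{0}\to\dotsb\to A_{q}$ and retains only the interior $(r+1)\times n$ grid $\{X_{i,j}\}$.  This grid is indeed an $r$-simplex of $N\w F'_{n-1}\aC$, since its rows are sequences of $n-1$ composable weak cofibrations and its columns encode $r$ composable weak equivalences between such sequences.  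By the realization lemma for bisimplicial sets, to conclude that $p$ induces a weak equivalence on diagonals it suffices to show that for each fixed $r$ the map
\[
\mathbb{A}_{*,r}\to N_{r}\w F'_{n-1}\aC
\]
(with the target regarded as constant in $q$) is a weak equivalence of simplicial sets.

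Since the target is discrete, this map decomposes as a disjoint union, over $r$-simplices $G\in N_{r}\w F'_{n-1}\aC$, of the fibers $p^{-1}(G)$.  Fix such a grid $G$ and write $X_{r,1}$, $X_{0,n}$ for the relevant corner entries.  A $q$-simplex of $p^{-1}(G)$ consists of two independent pieces of data: a chain $X_{r,1}\to A_{0}\to\dotsb\to A_{q}$ of weak equivalences out of $X_{r,1}$, and a chain $B_{q}\to\dotsb\to B_{0}\to X_{0,n}$ of weak equivalences into $X_{0,n}$.  The former is precisely a $q$-simplex of the nerve of the undercategory $X_{r,1}\bs\w\aC$, and the latter is a $q$-simplex of the nerve of the overcategory $\w\aC/X_{0,n}$; this yields an isomorphism of simplicial sets
\[
p^{-1}(G)\iso N(X_{r,1}\bs\w\aC)\times N(\w\aC/X_{0,n}).
\]
Each factor on the right is contractible, because $(X_{r,1},\id_{X_{r,1}})$ is an initial object of $X_{r,1}\bs\w\aC$ and $(X_{0,n},\id_{X_{0,n}})$ is a terminal object of $\w\aC/X_{0,n}$ (the identity is always a weak equivalence, and any commuting triangle out of or into it is uniquely determined).

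Consequently $p^{-1}(G)$ is contractible for every $G$, so $p\colon\mathbb{A}_{*,r}\to N_{r}\w F'_{n-1}\aC$ is a weak equivalence for each $r$, and the realization lemma then gives the desired equivalence between $\mathrm{diag}(\mathbb{A})$ and $N\w F'_{n-1}\aC$.  The argument is almost entirely formal once one recognizes the two corner zigzags as simplices of nerves of under- and overcategories; the only mild point requiring care is this identification together with the observation that the identity morphism supplies the required initial or terminal object in each case.
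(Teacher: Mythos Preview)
Your proof is correct and follows essentially the same approach as the paper: forget the two corner zigzags to obtain a bisimplicial map to $N\w F'_{n-1}\aC$ (constant in the $q$-direction), identify the fibers over each $r$-simplex as a product of an undercategory nerve and an overcategory nerve in $\w\aC$, observe these are contractible via their initial and terminal objects, and conclude by the realization lemma. The paper's proof is just a terser version of exactly this argument.
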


\begin{proof}
Each $(q,r)$ simplex is specified by an $r$-simplex of $N\w F'_{n-1}\aC$,
a $q$-simplex of $N\w(\aC\bs X_{r,1})$ and a $q$-simplex of
$N\w(\aC/X_{0,n})$.  Regarding $N\w F'_{n-1}\aC$ as a bisimplicial set
constant in the $q$ direction, we get a bisimplicial map from
$\mathbb{A}$ to $N\w F'_{n-1}\aC$.  Since for each fixed $r$-simplex of
$N \w F'_{n-1}\aC$, the simplicial sets $N\w(\aC\bs X_{r,1})$ and 
$N\w(\aC/X_{0,n})$ are contractible, the bisimplicial map induces a
weak equivalence on diagonals.
\end{proof}

In the case $n=2$, the diagonal of the bisimplicial set $\mathbb{A}$
is 
\[
\hocoendlim_{(A,B)\in (\w\aC^{\op})^{2}} N(\WCW(A,B)_{\wco}),
\]
where, as in Theorem~\ref{thmwordcomp}, $\WCW(A,B)_{\wco}$ denotes the
full subcategory of $\WCW(A,B)$ of diagrams where the forward arrow is
a weak cofibration.
Lemma~\ref{lemreversi} then finishes the argument for the case $n=2$.  Now
assume $n\geq 3$.

\begin{lem}
The diagonal of the bisimplicial set $\mathbb{A}$ is weakly equivalent
to the diagonal of the bisimplicial set $\mathbb{B}$ that has as its
$(q,r)$ simplices the commutative diagrams of the following form
\[
\xymatrix@C-1.25ex@R-\redsquish{%
&Y_{0,1}\ar[r]^{wc}\ar[d]_{\htp}
&Z_{0,1}\ar[d]_{\htp}
&Y_{0,2}\ar[l]_{\htp}\ar[r]^{wc}\ar[d]_{\htp}
&\dotsb\ar[d]_{\htp}
&Y_{0,n-1}\ar[l]_{\htp}\ar[r]^{wc}\ar[d]_{\htp}
&Z_{0,n-1}\ar[d]^{\htp}
&B_{0}\ar[l]_-{\htp}\\
&\vdots\ar[r]^{wc}\ar[d]_{\htp}
&\vdots\ar[d]_{\htp}
&\vdots\ar[l]_{\htp}\ar[r]^{wc}\ar[d]_{\htp}
&\dotsb\ar[d]_{\htp}
&\vdots\ar[l]_{\htp}\ar[r]^{wc}\ar[d]_{\htp}
&\vdots\ar[d]^{\htp}\\
A_{0}
&Y_{r,1}\ar[r]^{wc}\ar[l]_{\htp}
&Z_{r,1}
&Y_{r,2}\ar[l]_{\htp}\ar[r]^{wc}
&\dotsb
&Y_{r,n-1}\ar[l]_{\htp}\ar[r]^{wc}
&Z_{r,n-1}
}
\]
together with sequences of weak equivalences 
\[
\xymatrix@-1pc{%
A_{q}&\dotsb \ar[l]_{\htp}&A_{0}\ar[l]_{\htp}
&
&B_{0}&\dotsb\ar[l]_{\htp}&B_{q}\ar[l]_{\htp}.
}
\]
\end{lem}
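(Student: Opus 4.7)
The plan is to construct a bisimplicial map $\iota\colon \mathbb{A}\to \mathbb{B}$, verify that for each fixed $q$ the induced map $\iota_{q}\colon \mathbb{A}_{q,\bullet}\to \mathbb{B}_{q,\bullet}$ is a weak equivalence of simplicial sets, and then invoke the standard fact that a bisimplicial map which is a levelwise weak equivalence in one direction induces a weak equivalence on diagonals. To define $\iota$, on an $(q,r)$-simplex of $\mathbb{A}$ I would set $Y_{i,j}:=X_{i,j}$ and $Z_{i,j}:=X_{i,j+1}$, take the weak cofibration $Y_{i,j}\to Z_{i,j}$ to be the given $X_{i,j}\to X_{i,j+1}$, take the weak equivalence $Y_{i,j+1}\to Z_{i,j}$ to be the identity on $X_{i,j+1}$, and leave the tails $A_{\bullet}$, $B_{\bullet}$ and all vertical weak equivalences unchanged. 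This is manifestly natural in both $q$ and $r$.

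For fixed $q$, the simplicial sets $\mathbb{A}_{q,\bullet}$ and $\mathbb{B}_{q,\bullet}$ both split as disjoint unions indexed by the set $N(\w\aC)_{q}\times N(\w\aC)_{q}$ of tail pairs $(A_{\bullet},B_{\bullet})$, since the tails are constant along the simplicial operators in $r$, and $\iota_{q}$ respects this decomposition. Propagating the attachments $X_{r,1}\to A_{0}$ and $B_{0}\to X_{0,n}$ to every row via composition with the vertical weak equivalences of the grid identifies the summand of $\mathbb{A}_{q,\bullet}$ over $(A_{\bullet},B_{\bullet})$ with the nerve of the category of words $\Wi\C^{n-1}\Wi(A_{0},B_{0})$ in $\wco\aC$, and likewise identifies the summand of $\mathbb{B}_{q,\bullet}$ with the nerve of $\Wi(\C\Wi)^{n-1}(A_{0},B_{0})$. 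On each summand, $\iota_{q}$ is the ``insert identity weak equivalences between consecutive weak cofibrations'' functor of the homotopy calculus of left fractions.

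By Theorem~\ref{thmHCLF}, $\wco\aC$ admits HCLF, so iterating the defining HCLF equivalence $n-2$ times factors $\iota_{q}$ on each summand as a composite
\[
\Wi\C^{n-1}\Wi(A_{0},B_{0})\to \Wi\C^{n-2}\Wi\C\Wi(A_{0},B_{0})\to\dotsb\to \Wi(\C\Wi)^{n-1}(A_{0},B_{0})
\]
in which each step induces a weak equivalence of nerves. Hence $\iota_{q}$ is a weak equivalence of simplicial sets, and consequently $\iota$ induces a weak equivalence on diagonals.

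The main hurdle I anticipate is the bookkeeping required to identify each summand of $\mathbb{A}_{q,\bullet}$ (and $\mathbb{B}_{q,\bullet}$) with the nerve of the corresponding word category. The attachments of $A_{0}$ and $B_{0}$ are explicitly specified only at rows $r$ and $0$ of the grid, so one must check that propagating through vertical weak equivalences yields precisely the ladder structure matching the nerve of $\Wi\C^{n-1}\Wi(A_{0},B_{0})$ in $\wco\aC$, and that this identification is natural in the simplicial operators on the tails. Once this verification is in hand, the lemma reduces to a single iterated application of HCLF.
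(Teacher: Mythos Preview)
Your approach is the same as the paper's: define the inclusion $\iota\colon\mathbb{A}\to\mathbb{B}$ by inserting identity weak equivalences, then argue levelwise in $q$ by recognizing each summand as a map of nerves of word categories and appealing to the calculus of fractions established for $\wco\aC$ in Theorem~\ref{thmHCLF}. The paper's proof is the one-line version of exactly this.

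One small correction: the steps in your chain
\[
\Wi\C^{n-1}\Wi\to \Wi\C^{n-2}\Wi\C\Wi\to\dotsb\to \Wi(\C\Wi)^{n-1}
\]
are not literally instances of the defining HCLF equivalence $\Wi\C^{i+j}\to\Wi\C^{i}\Wi\C^{j}$, because of the trailing $\Wi(\C\Wi)^{m}$ on the right. What you need is the two-sided reduction machinery of \cite[9.4,9.5]{DKHammock}, which shows that under HC2F (and hence under HCLF, since HCLF implies HC2F) inserting a $\Wi$ anywhere in a word of this shape induces a weak equivalence on nerves. The paper cites precisely this. With that citation in place your argument is complete.
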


\begin{proof}
We have an inclusion of $\mathbb{A}$ in $\mathbb{B}$ by inserting
identity maps in the appropriate columns.  The lemma now follows from
Theorem~\ref{thmHCLF} and homotopy calculus of two-sided fractions
\cite[9.4,9.5]{DKHammock}. 
\end{proof}

\begin{lem}
The diagonal of the bisimplicial set $\mathbb{B}$ is weakly equivalent
to the diagonal of the bisimplicial set $\mathbb{C}$ that has as its
$(q,r)$ simplices the commutative diagrams of the following form
\[
\xymatrix@C-1.25ex@R-\redsquish{%
&Y_{0,1}\ar[r]^{wc}\ar[d]_{\htp}&Z_{0,1}\ar[d]_{\htp}
&Y_{0,2}\ar[r]^{wc}\ar[d]_{\htp}&Z_{0,2}\ar[d]_{\htp}
\ar@{{}{}{}}[r]|{\textstyle \dotsb}
&Y_{0,n-1}\ar[r]^{wc}\ar[d]_{\htp}&Z_{0,n-1}\ar[d]^{\htp}
&B_{0}\ar[l]_-{\htp}\\
&\vdots\ar[r]^{wc}\ar[d]_{\htp}&\vdots\ar[d]_{\htp}
&\vdots\ar[d]_{\htp}\ar[r]^{wc}&\vdots\ar[d]_{\htp}
&\vdots\ar[r]^{wc}\ar[d]_{\htp}&\vdots\ar[d]^{\htp}\\
A_{0}
&Y_{r,1}\ar[r]^{wc}\ar[l]_{\htp}&Z_{r,1}
&Y_{r,2}\ar[r]^{wc}\ar[uul]^{\htp}&Z_{r,2}
\ar@{{}{}{}}[r]|{\textstyle \dotsb}
&Y_{r,n-1}\ar[r]^{wc}\ar[uul]|{\textstyle\mathstrut\dotsb\mathstrut}&Z_{r,n-1}
}
\]
together with sequences of weak equivalences 
\[
\xymatrix@-1pc{%
A_{q}&\dotsb \ar[l]_{\htp}&A_{0}\ar[l]_{\htp}
&
&B_{0}&\dotsb\ar[l]_{\htp}&B_{q}\ar[l]_{\htp}.
}
\]
\end{lem}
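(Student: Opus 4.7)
The plan is to construct an explicit natural map of bisimplicial sets $\phi\colon \mathbb{B}\to \mathbb{C}$ and show that it induces a weak equivalence on diagonals. Given a $(q,r)$-simplex of $\mathbb{B}$, define $\phi$ by retaining all objects (including the $Z_{r',i}$), all cofibrations, all vertical weak equivalences, and the outer sequences involving $A_\bullet$ and $B_\bullet$, but replacing each in-row backward weak equivalence $Y_{r',i+1}\xrightarrow{\htp}Z_{r',i}$ by the composite diagonal $Y_{r',i+1}\to Z_{r',i}\xrightarrow{\htp}Z_{0,i}$. This is well-defined because the composite is a weak equivalence by the two out of three property, and the forgotten in-row backward maps were already compatible with the vertical weak equivalences by commutativity of the original $\mathbb{B}$-diagram.

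To show $\phi$ is a weak equivalence on diagonals, I would interpolate between $\mathbb{B}$ and $\mathbb{C}$ through a finite chain of intermediate bisimplicial sets $\mathbb{B} = \mathbb{B}^{(0)}, \mathbb{B}^{(1)}, \dotsc, \mathbb{B}^{(N)} = \mathbb{C}$, each step replacing a single in-row backward map $Y_{r',i+1}\to Z_{r',i}$ with the corresponding diagonal to $Z_{0,i}$, ordered by some linear ordering of the positions $(r',i)$. For each single step $\mathbb{B}^{(k)}\to \mathbb{B}^{(k+1)}$, I would apply Proposition~\ref{prophocolimb} (and Theorem~\ref{thmC} as needed) to identify the map as a universal simplicial quasifibration over the target after restricting to any simplex, so that it suffices to show the fiber over each simplex is contractible.

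The key contractibility statement is the following: given a fixed weak equivalence $Y\xrightarrow{\htp}Z_{0}$ and a fixed weak equivalence $Z\xrightarrow{\htp}Z_{0}$ (with the other ambient data held constant), the space of weak equivalences $Y\xrightarrow{\htp}Z$ whose composite with $Z\to Z_{0}$ equals the given map $Y\to Z_{0}$ is weakly contractible. Using FMCWC to replace the weak equivalence $Y\xrightarrow{\htp}Z_{0}$ by a weak cofibration, this reduces to an assertion about a component of $N\WW(Y,Z)$, which by Theorem~\ref{thmHCLF} and the homotopy calculus of two-sided fractions discussion in \cite[9.4,9.5]{DKHammock} is identified with a component of the hammock function complex $L(\w\aC)(Y,Z)$ lying over a prescribed point of $L(\w\aC)(Y,Z_0)$; the relevant fiber is therefore contractible.

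The main obstacle is organizing the bisimplicial fiber calculation cleanly: the nested structure of many simultaneously varying positions $(r',i)$ must be untangled, which is why I would work one position at a time through the intermediates $\mathbb{B}^{(k)}$ rather than analyzing $\phi$ directly. Once each single-position change is verified by the contractibility argument above, composing the resulting chain of weak equivalences of diagonals yields the desired equivalence $\operatorname{diag}\mathbb{B}\htp\operatorname{diag}\mathbb{C}$.
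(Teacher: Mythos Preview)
Your construction of $\phi$ does not typecheck. In both $\mathbb{B}$ and $\mathbb{C}$ the vertical weak equivalences run \emph{downward}, from row $0$ to row $r$: there is a map $Z_{0,i}\to Z_{r',i}$, not $Z_{r',i}\to Z_{0,i}$. So there is no composite ``$Y_{r',i+1}\to Z_{r',i}\to Z_{0,i}$'' available, and in fact no direct bisimplicial map $\mathbb{B}\to\mathbb{C}$ of the sort you describe. The diagonal arrow in $\mathbb{C}$ goes from the \emph{bottom} $Y_{r,i+1}$ to the \emph{top} $Z_{0,i}$, against the grain of the vertical maps; it is genuinely new data, not something you can manufacture from a $\mathbb{B}$-simplex by composition. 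This is why the paper does not build a map at all but rather a zigzag.

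There is a second structural problem: your interpolation ``one position $(r',i)$ at a time'' cannot yield bisimplicial sets, because the row index $r'$ ranges over $0,\dotsc,r$ and hence the set of positions varies with the simplicial degree. The paper instead interpolates \emph{column by column}: it defines $\mathbb{C}_k$ (for $1\le k\le n-1$) so that columns $\le k$ carry in-row backward maps as in $\mathbb{B}$ and columns $>k$ carry the single diagonal as in $\mathbb{C}$, with $\mathbb{C}_1=\mathbb{C}$ and $\mathbb{C}_{n-1}=\mathbb{B}$. To pass from $\mathbb{C}_{k+1}$ to $\mathbb{C}_k$ it introduces an auxiliary bisimplicial set $\mathbb{D}_k$ with an extra simplicial direction (extra primed rows below the $k$-th column block), and produces maps $\mathbb{D}_k\to\mathbb{C}_{k+1}$ and $\mathbb{D}_k\to\mathbb{C}_k$. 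The first is a weak equivalence by a contractible-undercategory argument in $\WCdotsW(A_0,Y_{r,k+1})$; the second is, for each fixed value of the extra direction, a simplicial homotopy equivalence by the usual ``fill in with the $0$-th row'' contraction. Your fiber-contractibility statement about lifts $Y\to Z$ over a fixed $Y\to Z_0$ is not the right shape for this problem and, as stated, describes a discrete set rather than a simplicial one.
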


\begin{proof}
Fix $A_{0}$, $B_{0}$ and consider the simplicial set $\mathbb{C}_{k}$
that has its $r$-simplices the pairs of commutative diagrams
\[
\xymatrix@C-1.25ex@R-\redsquish{%
&Y_{0,1}\ar[r]^{wc}\ar[d]_{\htp}
&Z_{0,1}\ar[d]_{\htp}
&Y_{0,2}\ar[l]_{\htp}\ar[r]^{wc}\ar[d]_{\htp}
&\dotsb\ar[d]_{\htp}
&Y_{0,k}\ar[l]_{\htp}\ar[r]^{wc}\ar[d]_{\htp}
&Z_{0,k}\ar[d]^{\htp}\\
&\vdots\ar[r]^{wc}\ar[d]_{\htp}
&\vdots\ar[d]_{\htp}
&\vdots\ar[l]_{\htp}\ar[r]^{wc}\ar[d]_{\htp}
&\dotsb\ar[d]_{\htp}
&\vdots\ar[l]_{\htp}\ar[r]^{wc}\ar[d]_{\htp}
&\vdots\ar[d]^{\htp}\\
A_{0}
&Y_{r,1}\ar[r]^{wc}\ar[l]_{\htp}
&Z_{r,1}
&Y_{r,2}\ar[l]_{\htp}\ar[r]^{wc}
&\dotsb
&Y_{r,k}\ar[l]_{\htp}\ar[r]^{wc}
&Z_{r,k}
}
\]
and
\[
\xymatrix@C-1.25ex@R-\redsquish{%
Z_{0,k}
&Y_{0,k+1}\ar[r]^{wc}\ar[d]_{\htp}&Z_{0,k+1}\ar[d]_{\htp}
\ar@{{}{}{}}[r]|{\textstyle \dotsb}
&Y_{0,n-1}\ar[r]^{wc}\ar[d]_{\htp}&Z_{0,n-1}\ar[d]^{\htp}
&B_{0}\ar[l]_-{\htp}\\
&\vdots\ar[d]_{\htp}\ar[r]^{wc}&\vdots\ar[d]_{\htp}
&\vdots\ar[r]^{wc}\ar[d]_{\htp}&\vdots\ar[d]^{\htp}\\
&Y_{r,k+1}\ar[r]^{wc}\ar[uul]^{\htp}&Z_{r,k+1}
\ar@{{}{}{}}[r]|{\textstyle \dotsb}
&Y_{r,n-1}\ar[r]^{wc}\ar[uul]|{\textstyle\mathstrut\dotsb\mathstrut}&Z_{r,n-1}
}
\]
together with sequences of weak equivalences 
\[
\xymatrix@-1pc{%
A_{q}&\dotsb \ar[l]_{\htp}&A_{0}\ar[l]_{\htp}
&
&B_{0}&\dotsb\ar[l]_{\htp}&B_{q}\ar[l]_{\htp}.
}
\]
Then $\mathbb{C}_{1}$ is $\mathbb{C}$ and $\mathbb{C}_{n-1}$ is
$\mathbb{B}$.  We construct a zigzag of weak equivalences between
$\mathbb{C}_{k+1}$ and 
$\mathbb{C}_{k}$ with the diagonal of a bisimplicial set in the middle.
Let $\mathbb{D}_{k}$ be the bisimplicial set 
that has as its
$(r,s)$ simplices the commutative diagrams of the following form
\[
\xymatrix@C-1.25ex@R-\redsquish{%
\relax\ar@{<->}[r]&\dotsb\ar[r]\ar[d]_{\htp}
&Z_{0,k}\ar[d]_{\htp}&Y_{0,k+1}\ar[l]_{\htp}\ar[r]^{wc}\ar[d]_{\htp}
&Z_{0,k+1}\ar[d]_{\htp}
&Y_{0,k+2}\ar[r]^{wc}\ar[d]_{\htp}
&Z_{0,k+2}\ar[d]_{\htp}\ar@{{}{}{}}[r]|{\textstyle \dotsb}
&\qquad\\
\relax\ar@{<->}[r]&\dotsb\ar[r]\ar[d]_{\htp}
&\vdots\ar[d]_{\htp}&\vdots\ar[l]_{\htp}\ar[r]\ar[d]_{\htp}
&\vdots\ar[d]_{\htp}
&\vdots\ar[d]_{\htp}\ar[r]^{wc}
&\vdots\ar[d]_{\htp}\ar@{{}{}{}}[r]|{\textstyle \dotsb}
&\qquad\\
\relax\ar@{<->}[r]&\dotsb\ar[r]\ar[d]_{\htp}
&Z_{r,k}\ar[d]_{\htp}&Y_{r,k+1}\ar[l]_{\htp}\ar[r]
&Z_{r,k+1}
&Y_{r,k+2}\ar[uul]^{\htp}\ar[r]^{wc}
&Z_{r,k+2}\ar@{{}{}{}}[r]|{\textstyle \dotsb}
&\qquad\\
\relax\ar@{<->}[r]&\dotsb\ar[r]\ar[d]_{\htp}
&Z'_{0,k}\ar[d]_{\htp}\\
\relax\ar@{<->}[r]&\dotsb\ar[r]\ar[d]_{\htp}
&\vdots\ar[d]_{\htp}\\
\relax\ar@{<->}[r]&\dotsb\ar[r]
&Z'_{s,k},
}
\]
where to the right the columns look like those in $\mathbb{C}$ and to the
left the columns look like those in $\mathbb{B}$.  Regarding
$\mathbb{C}_{k+1}$ as a bisimplicial set constant in the $s$-direction,
we obtain a bisimplicial map $\mathbb{D}_{k}\to \mathbb{C}_{k+1}$ by
forgetting the $Y'_{i,j}$ and $Z'_{i,j}$ parts of the diagram.  It is
easy to see that this map is a weak equivalence using the fact that
the undercategory of
\[
\xymatrix@-1pc{%
A_{0}&Y_{r,k}\ar[l]_{\htp}\ar[r]^{wc}&\dotsb\ar[r]&Z_{r,k}
&Y_{r,k+1}\ar[l]_{\htp}
}
\]
in $\WCdotsW(A_{0},Y_{r,k+1})$ has contractible nerve.  To relate $\mathbb{D}_{k}$ and
$\mathbb{C}_{k}$, we regard $\mathbb{C}_{k}$ as the diagonal of the
bisimplicial set where $Y_{i,j}$, $Z_{i,j}$ are indexed in
$i=0,\dotsc,r$ for $j>k$ and in $i=0,\dotsc,s$ for $j\leq k$; to match
the notation in $\mathbb{D}_{k}$, we will refer to these latter
entries as $Y'_{i,j}$ and $Z'_{i,j}$ (for $j\leq k$).
We then get a bisimplicial map from
$\mathbb{D}_{k}$ to $\mathbb{C}_{k}$ by
forgetting the $Y_{i,j}$ and $Z_{i,j}$ parts of the $\mathbb{D}_{k}$
diagram for $j\leq 
k$, and using the composite map $Y_{r,k+1}\to Z'_{0,k}$.  For fixed
$s$, this map is
a simplicial homotopy equivalence: The inverse equivalence fills in
the $Y_{i,j}$ and $Z_{i,j}$ entries for $j\leq k$ with $Y'_{i,0}$ and
$Z'_{i,0}$.  The composite on $\mathbb{C}_{k}$ is the identity, and
the composite on $\mathbb{D}_{k}$ is homotopic to the identity by the
usual argument. The map on diagonals from $\mathbb{D}_{k}$ to
$\mathbb{C}_{k}$ is then a weak equivalence.
\end{proof}

Finally, to complete the argument, by Lemma~\ref{lemreversi} and
Theorem~\ref{thmwordcomp}, it 
suffices to see that the diagonal of
the bisimplicial set $\mathbb{C}$ is weakly equivalent to
\[
\hocoendlim_{(A,C_{1},\dotsc,C_{n-2},B)\in (\w\aC^{op})^{n}}
\hbox to -3em{\hss}
N(\WCW(C_{n-2},B)_{\wco})\times \dotsb \times N(\WCW(A,C_{1})_{\wco}).
\] 
We can view the latter as the bisimplicial set with $q$-direction the
nerve of $(\w\aC^{op})^{n}$ and $r$-direction the nerve of the
$\WCW(-,-)_{\wco}$.  The $(q,r)$-simplices then look very similar to
the diagrams that define 
$\mathbb{C}$, except that in place of the maps $Z_{0,k}\leftarrow
Y_{r,k+1}$, we have sequences of maps of the form
\[
\xymatrix@-1pc{%
Z_{0,k}
&C_{q,k}\ar[l]_{\htp}&\dotsb \ar[l]_{\htp}&C_{0,k}\ar[l]_{\htp}
&Y_{r,k+1}\ar[l]_{\htp}.
}
\]
Composing induces a bisimplicial map from the homotopy coend to
$\mathbb{C}$ that is easily seen to be a weak equivalence.

Part~(iii) of Theorem~\ref{thmhocoend} follows from
Proposition~\ref{prophocoend} and part~(ii).  Thus, it remains to
prove part~(ii), namely, that $N\w\aC$ is weakly
equivalent to the disjoint union of $B\Laut(X)$.  Fixing $X$ in $\aC$,
the undercategory of $X$ in $\w\aC$ has contractible nerve.  
Then the (cartesian) commutative diagram of categories 
\[
\xymatrix@-1pc{%
\WW(X,X)\ar[r]\ar[d]&\w\aC\bs X\ar[d]\\
\w\aC\bs X\ar[r]&\w\aC
}
\]
satisfies the hypotheses of Theorem~\ref{thmC}.   Thus,
$\Laut(X)\htp\WW(X,X)$ is equivalent to the loop space of $N\w\aC$
based at $X$. 

This completes the proof of Theorem~\ref{thmhocoend}, which implies
Theorems~\ref{thmhocoendone}  and ~\ref{thmhocoendtwo}.  We close the
section with a remark on Theorem~\ref{thmhocoendone}.

\begin{rem}
The decomposition of Theorem~\ref{thmhocoendone}  does not
fit into a simplicial structure 
to give a ``construction'' of the algebraic $K$-theory spectrum.  An
indirect construction of the algebraic $K$-theory spectrum for certain
categories enriched in simplicial sets via the category of simplicial
functors can be found in \cite[\S4]{ToenVezzosi}.  The Dwyer-Kan
simplicial localization of a category that admits functorial
factorization satisfies the hypotheses there, 
by Theorem~\ref{thmhococart}.

A direct construction in terms of the Dwyer-Kan simplicial
localization would include a description of face and 
degeneracy maps fitting the pieces together compatibly with the
simplicial structure on the $\Sdot$ construction.  Although we do not
produce such a construction, we can reinterpret some of the simplicial
structure maps of $\Sdot$ in terms of the spaces described above.
The degeneracy maps, as in $\Sdot$, are induced simply by repeating an
object $X_{i}$ and using the identity map in $L\aC(X_{i},X_{i})$,
where we understand $X_{0}$ as the distinguished zero object of
$\aC$.  The face maps $d_{2},\dotsc,d_{n-1}$ are induced by composition 
\[
L\aC(X_{i},X_{i+1})\times L\aC(X_{i-1},X_{i})
\to L\aC(X_{i-1},X_{i+1}).
\]
The face maps $d_{1}$ and $d_{n}$ essentially drop the first and
$n$-th objects, respectively.  The face map $d_{0}$, which in
$\Sdot[n]$ corresponds to replacing the sequence of cofibrations
$X_{1}\to \dotsb \to X_{n}$ with the quotient $X_{2}/X_{1}\to\dotsb\to
X_{n}/X_{1}$, is the impediment to making the spaces above into a
simplicial object.  Under the
hypotheses of the Theorem~\ref{thmhocoend}, Dwyer-Kan mapping complexes take
pushouts along cofibrations to homotopy pullbacks, i.e., take homotopy
cocartesian squares to homotopy cartesian squares.  Roughly speaking,
the face map $d_{0}$ on the spaces above would involve
composition and taking homotopy fibers.
\end{rem}

\section{Proof of Theorem~\ref{thmhh}}\label{secphh}

In this section we prove Theorem~\ref{thmhh} following ideas in
\cite{MandellHH}.  The first key step is relating the homotopy
categories of undercategories to the higher homotopy data implicit in
the Dwyer-Kan simplicial localization.  

\begin{thm}\label{thmDKunder}
Let $\aC$ be a Waldhausen category that admits a HCLF and let $A$ be an object
of $\aC$.  
Let $\overrightarrow{B}=A\to B$ be a cofibration viewed as an object of
$\aC\bs A$, let $\overrightarrow{C}=A\to C$ be an object of $\aC\bs A$, and
write $\overrightarrow{A}$ for $\id\colon A\to A$, viewed as an object of
$\aC\bs A$.
The following square is homotopy cartesian:
\[
\xymatrix@-1pc{%
L(\aC\bs A)(\overrightarrow{B},\overrightarrow{C})\ar[r]\ar[d]
&L(\aC\bs A)(\overrightarrow{A},\overrightarrow{C})\ar[d]\\
L\aC(B,C)\ar[r]
&L\aC(A,C)
}
\]
\end{thm}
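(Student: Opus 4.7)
The plan is to model the four mapping spaces using the $\WC$-categories afforded by HCLF (Theorem~\ref{thmHCLF}) and then apply Theorem~\ref{thmC} to an equivalent square of small categories.

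First, I would verify that $\aC\bs A$ inherits HCLF from $\aC$: an object of a word category $\Wi\C^i\Wi\C^j(\overrightarrow{X},\overrightarrow{Y})$ in $\aC\bs A$ is a word in $\aC$ equipped with coherent structure maps from $A$, and the insertion-of-identity functors defining HCLF are compatible with this extra data. Hence each corner of the square is modeled by the nerve of a $\WC$-category. A direct unpacking then identifies $\WC(\overrightarrow{B},\overrightarrow{C})_{\aC\bs A}$ with the full subcategory of $\WC(B,C)_{\aC}$ on those zigzags $B\xrightarrow{f_2}X\xleftarrow{f_1,\htp}C$ satisfying the equalization $f_2\circ a_B=f_1\circ a_C$, and identifies $\WC(\overrightarrow{A},\overrightarrow{C})_{\aC\bs A}$ with the category of weak equivalences $C\xrightarrow{\htp}X$ under $C$ (where the structure map $a_X$ is forced to be $f_1\circ a_C$); this last category has $\id_C$ as an initial object and hence contractible nerve, and its image in $\WC(A,C)$ lies in the component of the structure map $a_C$.

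Next, I would apply Theorem~\ref{thmC} to the resulting square, with horizontal arrows given by precomposition with $a_B\colon A\to B$ (and the corresponding inclusion $\overrightarrow{A}\to\overrightarrow{B}$) and vertical arrows the forgetful functors from $\aC\bs A$ to $\aC$. Condition~(ii) is immediate by inspection: for $Z=(Z_0,a_{Z_0},f_1^Z\colon C\to Z_0\htp)$ in $\WC(\overrightarrow{A},\overrightarrow{C})_{\aC\bs A}$, both $Z\downarrow\delta$ and $\gamma Z\downarrow\beta$ consist of triples $(Y_0,h_2\colon B\to Y_0,g\colon Z_0\to Y_0\htp)$ satisfying $h_2\circ a_B=g\circ a_{Z_0}$, and the comparison functor is an isomorphism of categories.

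The main obstacle is condition~(i): for a weak equivalence $h\colon X\to X'$ in $\WC(A,C)$, the precomposition functor $X'\downarrow\beta\to X\downarrow\beta$ must induce a weak equivalence of nerves. Objects of $X\downarrow\beta$ are triples $(Y_0,h_2,g\colon X_0\to Y_0\htp)$ satisfying $g\circ a_X=h_2\circ a_B$, and the induced functor sends $(Y_0,h_2,g')$ to $(Y_0,h_2,g'\circ h)$. Homotopy invariance of the comma category as $X$ varies within a fixed weak-equivalence class of $\WC(A,C)$ follows from an HCLF-style argument analogous to Lemma~\ref{lemHCLF}: one inverts the weak equivalence $h$ by passing to a $\Wi$-word and constructs a zigzag of natural transformations exhibiting a simplicial homotopy inverse to precomposition with $h$. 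Once both conditions are verified, Theorem~\ref{thmC} delivers the homotopy cartesian conclusion.
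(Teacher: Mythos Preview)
Your overall strategy matches the paper's: replace the Dwyer--Kan function complexes by the $\WC$-categories (both in $\aC$ and in $\aC\bs A$) and apply Theorem~\ref{thmC} to the resulting square. The paper also handles condition~(ii) by direct inspection, as you do. Your observation that one should check HCLF for $\aC\bs A$ is a point the paper passes over in a phrase (``since $\aC$ admits a HCLF, it suffices\dots'').

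The substantive difference is in condition~(i). What you identify as ``the main obstacle'' is precisely the statement of Lemma~\ref{lemthmhococart} (with $E=C$ and $f$ the cofibration $A\to B$), already proved in Section~\ref{sechococart}; the paper simply cites it. Your proposed ad hoc argument---invert the weak equivalence $h$ by passing to a $\Wi$-word and build a zigzag of natural transformations as in Lemma~\ref{lemHCLF}---is not convincing as stated. The comma category $\overleftrightarrow{X}\downarrow\beta$ is not itself a word category of the kind treated in Lemma~\ref{lemHCLF}, and more importantly your sketch makes no use of the hypothesis that $A\to B$ is a cofibration. The paper's proof of Lemma~\ref{lemthmhococart} uses this hypothesis essentially: it applies Theorem~\ref{thmC} a second time, mapping $\overleftrightarrow{X}\downarrow f^{*}$ to $\w\aC_{C}$ and identifying the comma categories of that functor with unions of components of $\WC(B\cup_{A}X,H)$; the pushout $B\cup_{A}X$ is exactly where the cofibration enters, and its homotopy invariance in $X$ and $H$ is what delivers both hypotheses of Theorem~\ref{thmC} at this inner stage.
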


\begin{proof}
Since $\aC$ admits a HCLF, it suffices to show that the square
\[
\xymatrix@-1pc{%
\WCA(\overrightarrow{B},\overrightarrow{C})\ar[r]\ar[d]
&\WCA(\overrightarrow{A},\overrightarrow{C})\ar[d]\\
\WC(B,C)\ar[r]&\WC(A,C)
}
\]
is homotopy cartesian, where we have written $\WCA$ for the categories
of words $\WC$ in $\aC\bs A$ to avoid confusion.  We apply
Theorem~\ref{thmC}.  An easy check of the
definitions shows that this square 
satisfies the hypothesis~(ii), and it satisfies hypothesis~(i) by 
Lemma~\ref{lemthmhococart} (with $E=C$).
\end{proof}

The previous theorem identifies the Dwyer-Kan function complexes in
$\aC\bs A$ for cofibrations.  In the context of Theorem~\ref{thmhh},
factorization allows us to extend this to compute the
Dwyer-Kan function complexes for arbitrary objects of $\aC \bs A$.
The following proposition is immediately clear when $\aC$ admits
functorial factorization; the non-functorial case is treated in
Appendix~\ref{appmain}.

\begin{prop}\label{propunderiscof}
Let $\COF{A}$ denote the full subcategory of $\aC\bs A$ consisting of
the cofibrations.  If $\aC$ admits factorization, then the
inclusion $\COF{A}\to \aC \bs A$ is a DK-equivalence.
\end{prop}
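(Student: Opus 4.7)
The plan is to use functorial factorization to construct an explicit homotopy inverse to the inclusion $\COF{A}\to \aC\bs A$. Since the paper's convention defers the non-functorial case to the appendix, I assume the factorization is given by a functor $T$ on $\Ar\aC$ of the kind recalled just before Theorem~\ref{thmweakexact}.

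First, I would define a functor $F\colon \aC\bs A\to \COF{A}$ as follows. Given an object $f\colon A\to X$ of $\aC\bs A$, apply $T$ to $f$ to obtain the factorization
\[
\xymatrix@-1pc{
A\ar@{ >->}[r]^{Tf}\ar@{..>}@/_1em/[rr]_{f}&Tf\ar[r]^{\htp}&X,
}
\]
and set $Ff$ equal to the cofibration $A\rightarrowtail Tf$, which is an object of $\COF{A}$. For a morphism $\phi\colon (f\colon A\to X)\to (g\colon A\to Y)$ in $\aC\bs A$ (i.e., a map $\phi\colon X\to Y$ with $\phi\circ f=g$), functoriality of $T$ in $\Ar\aC$ applied to the square with $\id_A$ on top and $\phi$ on the bottom produces a map $T\phi\colon Tf\to Tg$ under $A$. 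This gives us $F$ and shows that it preserves weak equivalences (by functoriality of $T$ in $\Ar\aC$ applied to the square with $\id_A$ on top and a weak equivalence on the bottom, noting that factorization sends weak equivalences to weak equivalences by the two-out-of-three property).

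Next, I would produce natural weak equivalences relating $iF$ and $Fi$ to the identities, where $i\colon \COF{A}\to \aC\bs A$ is the inclusion. The factorization weak equivalence $Tf\to X$ is a morphism in $\aC\bs A$ (since the triangle $A\to Tf\to X$ commutes and equals $f$), and naturality in $\Ar\aC$ makes these weak equivalences assemble into a natural transformation $iF\Rightarrow \id_{\aC\bs A}$. The restriction of this natural transformation to $\COF{A}$ lands in $\COF{A}$ (since $\COF{A}$ is the \emph{full} subcategory of cofibrations, hence closed under morphisms in $\aC\bs A$), giving $Fi\Rightarrow \id_{\COF{A}}$.

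Finally, I would invoke the standard fact about Dwyer-Kan localization: a functor between categories with weak equivalences that preserves weak equivalences and admits a quasi-inverse via natural weak equivalences induces an equivalence on hammock localizations. Concretely, the natural transformations above yield, for each pair of objects, simplicial homotopies between the composite maps of hammock function complexes and the identity, so $i$ and $F$ induce inverse weak equivalences on all Dwyer-Kan mapping spaces and an equivalence on homotopy categories; hence $i$ is a DK-equivalence. The only mildly subtle point is verifying this last step from the definition of the hammock localization, but it is immediate once one observes that a natural weak equivalence between weak-equivalence-preserving functors provides a zigzag in each $L\aC(X,Y)$ realizing the simplicial homotopy.
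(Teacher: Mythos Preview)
Your proof is correct and is exactly the argument the paper has in mind when it declares the proposition ``immediately clear when $\aC$ admits functorial factorization'': functorial factorization supplies a homotopy inverse to the inclusion via natural weak equivalences, which forces a DK-equivalence. The paper only writes out a proof in the non-functorial case (in the appendix), where the lack of a section on the nose requires a more elaborate argument using a USE of factorizations and an analysis of the word categories $\Word(B,C)$.
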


For a map $f\colon B \to C$ in $\aC$, let $\Omega_{f}L\aC(B,C)$ denote
the space of based loops in the geometric realization of $LC(B,C)$,
based at the vertex $f$.  Thinking of $\Omega_{f}L\aC(B,C)$ as the
homotopy pullback of the diagram
\[
\xymatrix{%
&L\aC(B,C)\ar[d]^{\Delta}\\
\{(f,f)\}\ar[r]&L\aC(B,C)\times L\aC(B,C),
}
\]
then up to weak equivalence, we can identify the lower right term as
$L\aC(B\vee B,C)$ by 
Theorem~\ref{thmhococart}; we then get the following result as a
corollary of the previous proposition and theorem.

\begin{cor}\label{corloop}
Let $\aC$ be a Waldhausen category that admits factorization.
Let $f\colon B\to C$ be a map in $\aC$.  Viewing $B$ as an object
under $B\vee B$ via the codiagonal map, and $C$ as such via the composite with
$f$, then the loop space $\Omega_{f}L\aC(B,C)$ is weakly equivalent to
$L(\aC\bs B\vee B)(B,C)$.
\end{cor}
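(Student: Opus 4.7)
The plan is to realize $\Omega_f L\aC(B,C)$ as a homotopy pullback by invoking Theorem~\ref{thmDKunder} in the undercategory over $B\vee B$, after replacing the codiagonal with a cofibration. Using factorization, write the codiagonal as
\[
B\vee B\rightarrowtail B'\xrightarrow{\htp}B,
\]
let $\overrightarrow{B'}$ denote this cofibration viewed as an object of $\aC\bs(B\vee B)$, and let $\overrightarrow{C}$ denote the composite $B\vee B\to B\xrightarrow{f}C$. Theorem~\ref{thmDKunder} then produces a homotopy cartesian square
\[
\xymatrix@-1pc{
L(\aC\bs(B\vee B))(\overrightarrow{B'},\overrightarrow{C})\ar[r]\ar[d]
& L(\aC\bs(B\vee B))(\id_{B\vee B},\overrightarrow{C})\ar[d]\\
L\aC(B',C)\ar[r]
& L\aC(B\vee B,C).
}
\]
Under the equivalence $L\aC(B',C)\htp L\aC(B,C)$ coming from $B'\htp B$, and the equivalence $L\aC(B\vee B,C)\htp L\aC(B,C)\times L\aC(B,C)$ from Theorem~\ref{thmhococart} already noted in the preamble to the corollary, the bottom edge becomes the diagonal $\Delta$.

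The key step is identifying the upper right corner as a contractible space lying over $(f,f)$. Since $\id_{B\vee B}$ is the initial object of $\aC\bs(B\vee B)$ and the undercategory inherits FMCWC from $\aC$, Theorem~\ref{thmHCLF} lets me model the mapping space by the nerve of $\WC(\id_{B\vee B},\overrightarrow{C})$. Unwinding definitions, an object of this category is a weak equivalence $q\colon C\htp X$ in $\aC$, where $\overrightarrow{X}$ is structured as $B\vee B\to B\xrightarrow{f}C\xrightarrow{q}X$, and a morphism is a weak equivalence $X\to X'$ commuting with the $q$'s; the pair $(C,\id_C)$ is then an initial object, so the nerve is contractible. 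Tracking this piece through the forgetful map to $L\aC(B\vee B,C)$, it lands on the component of the structure map of $\overrightarrow{C}$, which under the identification with $L\aC(B,C)\times L\aC(B,C)$ is exactly $(f,f)$. The homotopy cartesian square therefore yields
\[
L(\aC\bs(B\vee B))(\overrightarrow{B'},\overrightarrow{C})\htp \mathrm{hofib}_{(f,f)}(\Delta)\htp \Omega_f L\aC(B,C).
\]

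Finally, to recover the statement as written, the weak equivalence $B'\htp B$ is a weak equivalence in $\aC\bs(B\vee B)$ between $\overrightarrow{B'}$ and the codiagonal $B\vee B\to B$, so induces a weak equivalence on Dwyer-Kan mapping spaces identifying the left-hand side above with $L(\aC\bs(B\vee B))(B,C)$ in the notation of the corollary. The step requiring the most care will be checking that $\aC\bs(B\vee B)$ inherits enough structure (FMCWC, or at least HCLF with the correct interpretation of cofibrations and weak equivalences) from $\aC$ to legitimize both the application of Theorem~\ref{thmDKunder} and the initial-object contractibility argument inside the hammock localization; once that inheritance is in hand, the remainder is a diagram chase through the pullback square.
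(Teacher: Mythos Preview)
Your proof is correct and follows essentially the same route the paper sketches in the paragraph preceding the corollary: realize $\Omega_f L\aC(B,C)$ as the homotopy pullback of the diagonal over $(f,f)$, identify $L\aC(B,C)\times L\aC(B,C)$ with $L\aC(B\vee B,C)$ via Theorem~\ref{thmhococart}, and then read off the answer from the homotopy cartesian square of Theorem~\ref{thmDKunder} after factoring the codiagonal. The technical concern you flag about $\aC\bs(B\vee B)$ inheriting enough structure is exactly what Proposition~\ref{propunderiscof} is for---it identifies $L(\aC\bs(B\vee B))$ with $L(\COF{(B\vee B)})$, and the latter is a Waldhausen category with factorization (hence HCLF), so your $\WC$-model contractibility argument for the initial object goes through there.
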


In comparing function complexes for $\aC$ to function complexes for
$\aD$, we need the following proposition, which is an easy
consequence of Theorem~\ref{thmDKunder} and 
Proposition~\ref{propunderiscof}.

\begin{prop}\label{propunderwewe}
Let $A'\to A$ be a map in $\aC$ that is an isomorphism in $\Ho\aC$.
If $\aC$ admits factorization,
then the induced functor $\aC\bs A\to \aC\bs A'$ is a DK-equivalence.
\end{prop}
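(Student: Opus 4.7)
The plan is to verify that the functor $F\colon \aC\bs A\to \aC\bs A'$, which visibly preserves weak equivalences, induces weak equivalences on Dwyer-Kan function complexes and is essentially surjective on $\Ho$. Both $\aC\bs A$ and $\aC\bs A'$ inherit factorization from $\aC$ by factoring the underlying maps, so each admits FMCWC and hence a homotopy calculus of left fractions by Theorem~\ref{thmHCLF}. Since $\overrightarrow{A}=\id_A$ is an initial object of $\aC\bs A$, HCLF gives $L(\aC\bs A)(\overrightarrow{A},\overrightarrow{C})\simeq N\WC(\overrightarrow{A},\overrightarrow{C})$, and initiality identifies this with the nerve of the undercategory of $\overrightarrow{C}$ in $\w(\aC\bs A)$, which has an initial object and is therefore contractible. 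Combined with Theorem~\ref{thmDKunder}, for any $\overrightarrow{B}\in\COF{A}$ the homotopy cartesian square collapses to the weak equivalence
\[
L(\aC\bs A)(\overrightarrow{B},\overrightarrow{C})\simeq \text{hofib}_{A\to C}\bigl(L\aC(B,C)\to L\aC(A,C)\bigr),
\]
and an analogous expression holds in $\aC\bs A'$.

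For the function complex condition, by Proposition~\ref{propunderiscof} it suffices to consider $\overrightarrow{B},\overrightarrow{C}\in\COF{A}$ and to replace $F\overrightarrow{B}$, $F\overrightarrow{C}$ by their cofibrant replacements $T_B,T_C\in\COF{A'}$ obtained from factoring $A'\to A\to B$ and $A'\to A\to C$. The factorization weak equivalences let me rewrite the hofib description of $L(\aC\bs A')(T_B,T_C)$ as $\text{hofib}_{A'\to C}\bigl(L\aC(B,C)\to L\aC(A',C)\bigr)$, and naturality identifies the map induced by $F$ with the one induced by precomposition $L\aC(A,C)\to L\aC(A',C)$ with $A'\to A$. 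Since $A'\to A$ is an isomorphism in $\pi_0 L\aC=\Ho\aC$, it becomes an isomorphism in the simplicial category $L\aC$, so this precomposition is a weak equivalence sending the basepoint $A\to C$ to $A'\to A\to C$; therefore $F$ induces a weak equivalence on function complexes.

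For essential surjectivity on $\Ho(\aC\bs A')$, given $(A'\to Y)$ I would use HCLF to represent the composite in $\Ho\aC$ of the inverse of $A'\to A$ with $A'\to Y$ by a zigzag $A\to X\xleftarrow{\simeq}Y$. Then $(A\to X)\in\aC\bs A$, and the hofib description of $\Ho$-morphisms in $\aC\bs A'$ produces an isomorphism in $\Ho(\aC\bs A')$ between $F(A\to X)=(A'\to A\to X)$ and $(A'\to Y)$: it is realized by the weak equivalence $Y\to X$, whose composite with the structure map $A'\to Y$ agrees in $\Ho\aC$ with $A'\to A\to X$ by construction of $A\to X$, and whose image in $\Ho\aC$ is invertible since $Y\to X$ is a weak equivalence. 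The main technical point, and the main obstacle, is the contractibility of $L(\aC\bs A)(\overrightarrow{A},\overrightarrow{C})$ via initiality together with HCLF; once that is in place, all remaining steps are routine bookkeeping with Theorem~\ref{thmDKunder} and the hofib computations.
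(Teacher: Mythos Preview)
Your approach is the one the paper intends: reduce to cofibrations via Proposition~\ref{propunderiscof}, use Theorem~\ref{thmDKunder} to get the homotopy fiber description, and then exploit that precomposition with $A'\to A$ is a weak equivalence on $L\aC(-,-)$. The mapping-space half of your argument is correct.

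The essential surjectivity argument has a gap. You produce a morphism $\alpha$ in $\Ho(\aC\bs A')$ from $(A'\to Y)$ to $F(A\to X)$ whose image in $\Ho\aC$ is the weak equivalence $Y\to X$, and then conclude $\alpha$ is an isomorphism because its image in $\Ho\aC$ is invertible. But $Y\to X$ is \emph{not} a map in $\aC\bs A'$: the triangle only commutes in $\Ho\aC$, not in $\aC$, so you cannot invoke the definition of weak equivalence in $\aC\bs A'$. What you are implicitly using is that the forgetful functor $\Ho(\aC\bs A')\to\Ho\aC$ reflects isomorphisms, and that is not automatic.

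It is true, however, and follows from the hofib description you already have. For any object $(A'\to Z)$ (cofibrant, say), postcomposition with $\alpha$ on $L(\aC\bs A')\bigl((A'\to Z),-\bigr)$ becomes, under the identification with $\mathrm{hofib}\bigl(L\aC(Z,-)\to L\aC(A',-)\bigr)$, the map induced by postcomposition with the weak equivalence $Y\to X$ on both $L\aC(Z,-)$ and $L\aC(A',-)$. This is a weak equivalence of homotopy fibers, so $\alpha_*$ is a bijection on $\Ho(\aC\bs A')\bigl((A'\to Z),-\bigr)$ for every $Z$, and Yoneda gives that $\alpha$ is an isomorphism. Once you insert this step, the proof is complete.
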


Finally, we prove Theorem~\ref{thmhh}.  Clearly, by
Theorem~\ref{thmDKunder} and Proposition~\ref{propunderiscof}, a
DK-equivalence implies an equivalence of homotopy categories and
homotopy categories of all under categories.  For the converse, note
that $\COF{(B \vee B)}$ inherits from $\aC$ the property of
admitting factorization.  Likewise, note
that for an arbitrary component of $L\aC(B,C)$, we can find a vertex
$\phi$ of the form
\[
\xymatrix@-1pc{%
B\ar[r]^{f}&X&C\ar[l]_{\htp}
}
\]
by HCLF.   The loop space based at $\phi$,
$\Omega_{\phi}L\aC(B,C)$, is 
then homotopy equivalent to
$\Omega_{f}L\aC(B,X)$, and so weakly equivalent to $L(\aC\bs (B\vee
B))(B,X)$, as per Corollary~\ref{corloop}. Thus, iterating
Corollary~\ref{corloop} identifies the homotopy groups of $L\aC(B,C)$
at arbitrary basepoints in terms of sets of maps in the homotopy
categories of undercategories, as in \cite[5.4]{MandellHH}.

Specifically, we can identify $\pi_{n}(L\aC(B,C))$ based at $\phi$ as
$\Ho(\aC\bs S^{n-1})(B,X)$, for certain objects $S^{n-1}$  formed
inductively as follows: Starting with $S^{-1}=*$ and $B^{0}=B$,
$S^{n}$ is formed as 
the coproduct $B^{n}\cup_{S^{n-1}}B^{n}$ in $\COF{S^{n-1}}$ where $B^{n}$
is an object of $\COF{S^{n-1}}$ with a weak equivalence $B^{n}\to B$
in $\aC\bs S^{n-1}$.  Now in
$\aD$, we can perform the analogous construction starting with
$S^{-1}_{\aD}=*$ and $B^{0}_{\aD}=FB$ to form $S^{n-1}_{\aD}$ and
$B^{n}_{\aD}$.  When inductively we choose the weak equivalence
$B^{n}_{\aD}\to FB$ to factor through $FB^{n}$ in $\aD\bs
S^{n-1}_{\aD}$, then $S^{n}_{\aD}\to FB$ factors through $FS^{n}$.
Since we have not assumed that $F$ is weakly exact, we cannot conclude
that the map $S^{n}_{\aD}\to FS^{n}$ is a weak equivalence; however,
we do have the following lemma, which then completes the proof of
Theorem~\ref{thmhh}. 

\begin{lem}
For all $n$, the restriction of $F$ to a functor $\aC\bs S^{n}\to
\aD\bs S^{n}_{\aD}$ induces an equivalence of homotopy categories
$\Ho(\aC\bs S^{n})\to \Ho(\aD\bs S^{n}_{\aD})$.
\end{lem}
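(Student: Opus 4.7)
The plan is to prove the lemma by induction on $n$. The base case $n=-1$ reduces to the hypothesis of Theorem~\ref{thmhh}: since $S^{-1}=S^{-1}_\aD=*$, the undercategories are $\aC$ and $\aD$ themselves, and the claim is the assumed equivalence $\Ho(\aC)\to\Ho(\aD)$.

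For the inductive step I would strengthen the induction hypothesis to simultaneously assert that the canonical comparison map $\alpha_n\colon S^n_\aD\to FS^n$ is an isomorphism in $\Ho(\aD)$. Granted this strengthening at stage $n$, the stated lemma at stage $n$ follows directly: Proposition~\ref{propunderwewe} applied to $\alpha_n$ produces a DK-equivalence $\aD\bs FS^n\to\aD\bs S^n_\aD$, hence an equivalence of homotopy categories, which composes with the equivalence $\Ho(\aC\bs S^n)\simeq\Ho(\aD\bs FS^n)$ from the hypothesis of Theorem~\ref{thmhh} (applied to $A=S^n$) to produce the desired equivalence.

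It remains to verify that $\alpha_n$ is iso in $\Ho(\aD)$ assuming $\alpha_{n-1}$ is. Applying 2-out-of-3 to the triangle $B^n_\aD\to FB^n\to FB$---whose composite is the weak equivalence $B^n_\aD\to FB$ chosen in the construction, and whose second arrow is $F$ of a weak equivalence---shows that $B^n_\aD\to FB^n$ is a weak equivalence in $\aD$. To test $\alpha_n$ on $\Hom$-sets, note that every object of $\Ho(\aD)$ is isomorphic to some $FY$ with $Y\in\aC$ (by the hypothesis $\Ho(\aC)\simeq\Ho(\aD)$), so it suffices to show $\alpha_n^*\colon\Ho(\aD)(FS^n,FY)\to\Ho(\aD)(S^n_\aD,FY)$ is a bijection for every $Y\in\aC$. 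Theorem~\ref{thmhococart} applied to the pushout $S^n_\aD=B^n_\aD\cup_{S^{n-1}_\aD}B^n_\aD$ along the cofibration $S^{n-1}_\aD\to B^n_\aD$ in $\aD$ expresses $L\aD(S^n_\aD,FY)$ as a homotopy pullback, and Theorem~\ref{thmhococart} in $\aC$ applied to $S^n=B^n\cup_{S^{n-1}}B^n$ expresses $L\aC(S^n,Y)$ as the analogous homotopy pullback. The weak equivalence $B^n_\aD\to FB^n$ and the inductive iso $\alpha_{n-1}\colon S^{n-1}_\aD\to FS^{n-1}$ in $\Ho(\aD)$ (each inducing weak equivalences of Dwyer-Kan mapping complexes, via the HCLF on $\aD$ and the under-category equivalence from Proposition~\ref{propunderwewe}), combined with the hypothesis of Theorem~\ref{thmhh} applied to $A=B^n$ and $A=S^{n-1}$ to match $\aC$-side and $\aD$-side mapping complexes, identify the two homotopy pullbacks and yield the bijection on $\pi_0$.

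The main obstacle is that $F$ is not assumed weakly exact, so $FS^n$ is not a priori a homotopy pushout in $\aD$ and Theorem~\ref{thmhococart} cannot be invoked directly on the $FS^n$ side. Bridging this gap requires using the full under-category hypothesis $\Ho(\aC\bs A)\simeq\Ho(\aD\bs FA)$ of Theorem~\ref{thmhh} to effectively transport the pushout structure across $F$ at the homotopy-category level, and verifying that the relevant $\pi_0$'s (rather than just underlying $\Hom$-sets) match in the homotopy pullback comparison---compatibility and naturality of these identifications with the homotopy pullbacks is the technically most delicate part.
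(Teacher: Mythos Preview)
Your inductive skeleton is correct and matches the paper: base case $n=-1$, strengthen the induction to include that $\alpha_n\colon S^n_\aD\to FS^n$ is an isomorphism in $\Ho\aD$, then use Proposition~\ref{propunderwewe} together with the hypothesis $\Ho(\aC\bs S^n)\simeq\Ho(\aD\bs FS^n)$ to conclude. The divergence is in how you establish that $\alpha_n$ is an isomorphism.

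You attempt this by comparing homotopy pullbacks of mapping spaces $L\aD(-,FY)$ in $\aD$ itself, and you correctly flag the obstacle: since $F$ is not weakly exact, $FS^n$ is not known to be a homotopy pushout in $\aD$, so Theorem~\ref{thmhococart} gives you nothing on the $FS^n$ side. Your proposed fix---``transporting the pushout structure across $F$ at the homotopy-category level'' via the under-category hypothesis---is only a gesture, and the ``delicate'' naturality check you defer is in fact the entire content of the step.

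The paper closes this gap with a one-line categorical observation that avoids all of your homotopy-pullback bookkeeping. Work not in $\Ho\aD$ but in the under-category $\Ho(\aD\bs S^{n-1}_\aD)$. There, by Theorem~\ref{thmhococart}, the pushout $S^n_\aD=B^n_\aD\cup_{S^{n-1}_\aD}B^n_\aD$ is the \emph{coproduct} of two copies of $FB$ in $\Ho(\aD\bs S^{n-1}_\aD)$; likewise $S^n$ is the coproduct of two copies of $B$ in $\Ho(\aC\bs S^{n-1})$. By the inductive hypothesis the functor $\Ho(\aC\bs S^{n-1})\to\Ho(\aD\bs S^{n-1}_\aD)$ is an equivalence of categories, and equivalences of categories preserve coproducts. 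Hence $FS^n$, the image of the coproduct $S^n$, is isomorphic in $\Ho(\aD\bs S^{n-1}_\aD)$ to the coproduct $S^n_\aD$, via the comparison map $\alpha_n$. Passing to $\Ho\aD$ finishes the step. No mapping-space comparison, no naturality of homotopy pullbacks, and no direct appeal to Theorem~\ref{thmhococart} on the $FS^n$ side is needed.
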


\begin{proof}
For $n=-1$, $S^{-1}=*$ and $S^{-1}_{\aD}=*$ are the initial object in
each category; the equivalence $\Ho\aC\to \Ho\aC$ is part of the
hypothesis on $F$.  Now by induction, assume that $\Ho(\aC\bs
S^{n-1})\to \Ho(\aD\bs S^{n-1}_{\aD})$ is an equivalence.  By
Theorem~\ref{thmhococart}, we have that
$S^{n}=B^{n}\cup_{S^{n-1}}B^{n}$ is the coproduct of two copies of $B$
in $\Ho(\aC\bs S^{n-1})$ and
$S^{n}_{\aD}=B^{n}_{\aD}\cup_{S^{n-1}_{\aD}}B^{n}_{\aD}$ is the
coproduct of two copies of $FB$ in 
$\Ho(\aD\bs S^{n-1}_{\aD})$.  It follows that the map $S^{n}_{\aD}\to
FS^{n}$ is an isomorphism in $\Ho(\aD\bs S^{n-1}_{\aD})$ and hence in
$\Ho\aD$.  By Proposition~\ref{propunderwewe}, the map $S^{n}_{\aD}\to
FS^{n}$ induces an equivalence $\Ho(\aD\bs FS^{n})\to \Ho(\aD\bs
S^{n}_{\aD})$, and by hypothesis on $F$, the functor $\Ho(\aC\bs
S^{n})\to \Ho(\aD\bs FS^{n})$ is an equivalence.  The functor 
$\Ho(\aC\bs S^{n})\to \Ho(\aD\bs S^{n}_{\aD})$ in question is the
composite of these two equivalences.
\end{proof}

\section{Proof of Theorem~\ref{propapprox}}\label{secpwaldapp}

This section proves Theorem~\ref{propapprox}, which relates the
approximation property to the homotopy categories of the
undercategories.  It is convenient to prove the theorem in the
following form.

\begin{thm}\label{thmpropapprox}
Let $\aC$ be a Waldhausen category where every map factors as a
cofibration followed by a weak equivalence. Let $\aD$ be a category with
weak equivalences and let $F\colon \aC\to \aD$ be a functor that
satisfies the approximation property, preserves finite coproducts, and
preserves pushouts where one leg in $\aC$ is a cofibration.  Then $F$
induces an equivalence $\Ho\aC\to \Ho\aD$.
\end{thm}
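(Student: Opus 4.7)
My plan is to show that the induced functor $\bar F\colon \Ho\aC \to \Ho\aD$ is essentially surjective and fully faithful, exploiting that $\aC$ admits a homotopy calculus of left fractions (Theorem~\ref{thmHCLF}) thanks to factorization. Essential surjectivity is immediate: since $F$ preserves finite coproducts, $F(*) = *$ in $\aD$, and the approximation property applied to $F(*) \to X$ for any $X \in \aD$ produces $B \in \aC$ with a weak equivalence $FB \xrightarrow{\htp} X$, so $X \cong FB$ in $\Ho\aD$.

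The technical core is a lifting lemma: for any weak equivalence $\alpha\colon FC \to FD$ in $\aD$ with $C, D \in \aC$, the inverse $[\alpha]^{-1} \in \Ho\aD(FD, FC)$ lies in the image of $\bar F$. I would prove this by applying the approximation property to the combined map $(\alpha, \id_{FD})\colon F(C \vee D) \to FD$ (using $F$'s preservation of coproducts); the resulting cofibration $C \vee D \to E$ in $\aC$ and weak equivalence $FE \xrightarrow{\htp} FD$, together with two-out-of-three in $\aD$ and approximation property~(i), force both $C \to E$ and $D \to E$ to be weak equivalences in $\aC$, so the zigzag $D \xrightarrow{\htp} E \xleftarrow{\htp} C$ in $\aC$ represents $[\alpha]^{-1}$ under $\bar F$. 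Fullness then follows: given $\phi \in \Ho\aD(FA, FB)$ represented by some zigzag in $\aD$, I would inductively reduce it to the short form $FA \xrightarrow{Ff} FX \xleftarrow{\htp} FB$ by iterating (i) approximation to factor forward arrows through $F$-objects, (ii) essential surjectivity to replace intermediate objects by $F$-objects, and (iii) gluing via coproducts plus another application of approximation to combine parallel weak equivalences into a common one; the lifting lemma then converts the trailing weak equivalence $FX \xleftarrow{\htp} FB$ into $F$ of a zigzag in $\aC$, and HCLF for $\aC$ assembles the whole into a preimage in $\Ho\aC(A, B)$.

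For faithfulness, given $\psi_1, \psi_2 \in \Ho\aC(A, B)$ with $\bar F\psi_1 = \bar F\psi_2$, I would represent both by spans $A \xrightarrow{f_i} X_i \xleftarrow{\htp} B$ via HCLF, then use $F$'s preservation of pushouts along cofibrations together with the gluing procedure above to reduce to parallel maps $f_1, f_2\colon A \to X$ under a common $X$ with a common backward weak equivalence $B \xrightarrow{\htp} X$. The equality $\bar F\psi_1 = \bar F\psi_2$ then manifests as a zigzag in $\aD$ witnessing $Ff_1 = Ff_2$ compatibly under $FA$ and $FB$, which the same inductive reduction transports back to an equality in $\Ho\aC$. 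The main obstacle is tracking the $FA$- and $FB$-compatibility data through the reduction; my plan is to address this by running the fullness argument in the appropriate undercategory, namely $(A \vee B) \downarrow \aC \to F(A \vee B) \downarrow \aD$, where the restriction of $F$ inherits an analogous approximation property along with the coproduct and pushout preservation from the hypotheses on $F$.
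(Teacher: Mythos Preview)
Your essential surjectivity argument and your lifting lemma are correct, and the coproduct trick they rely on---applying approximation to a map $F(C\vee D)\to Y$---is exactly the engine the paper uses as well.  Your fullness sketch is plausible and can be made to work along the lines you indicate.  The approaches diverge, however, in how the argument is organized: the paper does \emph{not} prove fullness and faithfulness separately, but instead constructs an explicit inverse functor $R\colon\aD\to\Ho\aC$ (choosing $RX$ via approximation on $F*\to X$, and defining $Rf$ via approximation on $F(RX\vee RY)\to Y$), proves a well-definedness lemma (Lemma~\ref{lemeqQf}) showing $Rf$ is independent of choices, and then checks $FR\cong\id$ and $RF\cong\id$ by producing natural weak equivalences.

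The reason this matters is that your faithfulness argument has a genuine gap.  After reducing $\psi_1,\psi_2$ to parallel spans $A\xrightarrow{f_i}X\xleftarrow{\htp}B$, the hypothesis $\bar F\psi_1=\bar F\psi_2$ is an equality in $\Ho\aD(FA,FB)$, hence is witnessed by an arbitrary hammock in $\aD$.  You assert that this ``manifests as a zigzag in $\aD$ witnessing $Ff_1=Ff_2$ compatibly under $FA$ and $FB$'', but $\aD$ has no calculus of fractions (we assumed nothing about $\aD$ beyond having weak equivalences), so there is no reason the witnessing hammock respects any under-$F(A\vee B)$ structure.  Concretely: the equality in $\Ho\aD(FA,FB)$ yields an isomorphism between $(FX,Ff_1,Fg)$ and $(FX,Ff_2,Fg)$ in the undercategory $F(A\vee B)\downarrow\Ho\aD$ of the localization, but what your undercategory plan needs is an isomorphism in $\Ho\bigl(F(A\vee B)\downarrow\aD\bigr)$, the localization of the undercategory, and the comparison functor between these goes the wrong way.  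Even granting such an isomorphism, lifting it back along $\bar F_{A\vee B}$ using only fullness and essential surjectivity would require faithfulness of $\bar F_{A\vee B}$, which is circular.

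The paper's inverse-functor strategy sidesteps this entirely: once $R$ is shown to be a functor (this is where the work goes, via Lemma~\ref{lemeqQf}, which is essentially your lifting lemma promoted to a uniqueness statement), the natural isomorphisms $\epsilon\colon FR\Rightarrow\id$ and the $PC$-construction for $RF\Rightarrow\id$ are local checks that never require manipulating arbitrary zigzags in $\aD$.  Your lifting lemma is morally the object-level part of the paper's Lemma~\ref{lemeqQf}; the cleanest fix to your argument is to upgrade it to that well-definedness statement and then package everything as an inverse functor rather than pursuing faithfulness directly.
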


Note that we do not assume that $\aD$ has all finite coproducts or pushouts;
only the finite coproducts and pushouts required by the hypotheses are
assumed to exist.

Theorem~\ref{thmpropapprox}  implies Theorem~\ref{propapprox}:  When
$\aC$ has the property that every map can be factored as a cofibration
followed by a weak equivalence, then for any object $A$, the inclusion
of $\COF{A}$ in $\aC\bs A$ (with notation as in Proposition~\ref{propunderiscof}
above) satisfies the hypothesis of Theorem~\ref{thmpropapprox} and so
induces an equivalence of homotopy categories.  Likewise, $F$ regarded
as a functor $\COF{A}\to \aD\bs FA$ satisfies the hypothesis of
Theorem~\ref{thmpropapprox} and so induces an equivalence of homotopy
categories.  It follows that $F$ induces an equivalence of homotopy
categories $\Ho(\aC\bs A)\to \Ho(\aD\bs FA)$. 

The remainder of the section is devoted to the proof of
Theorem~\ref{thmpropapprox}. 
We begin by constructing a functor $R\colon \aD\to \Ho\aC$.  For each
object $X$ in $\aD$, apply the approximation property to the initial
map $F*\to X$ to choose an object $RX$ in $\aC$ and a weak equivalence
$\epsilon_{X}\colon FRX\to X$ in $\aD$.  For each map in $\aD$, $f\colon X\to Y$, apply
the approximation property to the map
\[
F(RX \vee RY) \iso FRX \vee FRY \xrightarrow{f\circ \epsilon_{X}+\epsilon_{Y}} Y
\]
to obtain an object $Qf$ of $\aC$, cofibrations $RX\to Qf$ and $RY\to Qf$, 
and a commuting diagram in $\aD$.
\begin{equation}\label{diagQf}
\begin{gathered}
\xymatrix{%
FRX\ar[r]\ar[d]_{\htp}^{\epsilon_{X}}&FQf\ar[dr]_{\htp}
&FRY\ar[l]\ar[d]_{\htp}^{\epsilon_{Y}}\\
X\ar[rr]_{f}&&Y
}
\end{gathered}
\end{equation}
It follows from part~(i) of the approximation property then that the
map $RY\to Qf$ is a weak equivalence, and so we obtain a zigzag in $\aC$,
\[
\xymatrix@C-1pc{%
RX\ar[r]&Qf&RY.\ar[l]_{\htp}
}
\]
Let $Rf$ be the map $RX\to RY$ in $\Ho\aC$ represented by this zigzag.
The following lemma implies that $Rf$ is independent of the choice of $Qf$.

\begin{lem}\label{lemeqQf}
Let $f\colon X\to Y$ be a map in $\aD$.  Let 
\[
\xymatrix@-1pc{%
RX\ar[r]^{\alpha}&B&RY\ar[l]^{\htp}_{\beta}
}
\]
be any zigzag in $\aC$.  If there exists a
map $\gamma \colon FB\to Y$
that makes the diagram 
\[
\xymatrix{%
FRX\ar[r]\ar[d]_{\htp}^{\epsilon_{X}}&FB\ar[dr]_{\gamma}
&FRY\ar[l]_{\htp}\ar[d]_{\htp}^{\epsilon_{Y}}\\
X\ar[rr]_{f}&&Y
}
\]
commute in $\aD$, then $Rf=\beta^{-1}\circ \alpha$ in $\Ho\aC$.
\end{lem}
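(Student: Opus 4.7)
The plan is to reduce both zigzags (the given one through $B$ and the one through $Qf$ defining $Rf$) to a common refinement by combining them via a pushout in $\aC$ and then applying the approximation property once more.

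First I would replace the zigzag through $B$ with one in which both legs are cofibrations. Factor $(\alpha,\beta)\colon RX\vee RY\to B$ as a cofibration $RX\vee RY\to\bar B$ followed by a weak equivalence $\pi\colon\bar B\to B$, obtaining cofibrations $\alpha''\colon RX\to\bar B$ and $\beta''\colon RY\to\bar B$; since $\beta=\pi\circ\beta''$ is a weak equivalence, two-out-of-three forces $\beta''$ to be a weak equivalence. The new zigzag $RX\xrightarrow{\alpha''}\bar B\xleftarrow{\beta''}RY$ represents $\beta^{-1}\circ\alpha$ in $\Ho\aC$, and $\bar\gamma:=\gamma\circ F\pi\colon F\bar B\to Y$ makes the analogous triangle commute.

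Next, form the pushout $P=\bar B\cup_{RX\vee RY}Qf$ in $\aC$, using that both $RX\vee RY\to\bar B$ and $RX\vee RY\to Qf$ are cofibrations (the latter by the construction of $Qf$). By the hypotheses on $F$ in Theorem~\ref{thmpropapprox}, $F$ preserves the coproduct and the pushout, so $FP=F\bar B\cup_{FRX\vee FRY}FQf$. The maps $\bar\gamma$ and $\delta\colon FQf\to Y$ (from diagram~\eqref{diagQf}) both restrict on $FRX\vee FRY$ to $(f\circ\epsilon_X,\epsilon_Y)$, and therefore induce a canonical map $FP\to Y$. Applying the approximation property to this map yields a cofibration $P\to\tilde P$ in $\aC$ and a weak equivalence $F\tilde P\to Y$ making the evident triangle commute.

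The key step is then to observe that $\bar B\to\tilde P$ and $Qf\to\tilde P$ are weak equivalences in $\aC$. Indeed, $\bar\gamma\circ F\beta''=\epsilon_Y$ is a weak equivalence and $F\beta''$ is a weak equivalence (since $\beta''$ is and $F$ preserves weak equivalences by (i) of the approximation property), so $\bar\gamma$ is a weak equivalence by two-out-of-three; combined with $F\tilde P\to Y$ being a weak equivalence, this makes $F\bar B\to F\tilde P$ a weak equivalence, whence $\bar B\to\tilde P$ is a weak equivalence in $\aC$ by part~(i) of the approximation property. The argument for $Qf\to\tilde P$ is identical using $\delta$ in place of $\bar\gamma$.

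To conclude, the pushout identity guarantees that the two composites $RX\to\bar B\to\tilde P$ and $RX\to Qf\to\tilde P$ coincide, and likewise for $RY$. Writing $\kappa=[\bar B\to\tilde P]$ and $\lambda=[Qf\to\tilde P]$, both invertible in $\Ho\aC$, we get $\kappa\circ[\alpha'']=\lambda\circ[\alpha_0]$ and $\kappa\circ[\beta'']=\lambda\circ[\beta_0]$, where $\alpha_0,\beta_0$ are the legs of the defining zigzag for $Rf$. Canceling $\kappa$ and $\lambda$ yields $[\beta'']^{-1}\circ[\alpha'']=[\beta_0]^{-1}\circ[\alpha_0]=Rf$, and this equals $\beta^{-1}\circ\alpha$ by step~1. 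I expect the main obstacle to be the careful bookkeeping in verifying that $\bar\gamma$ and $\delta$ agree on the coproduct $F(RX\vee RY)$—this requires unwinding the construction of $Qf$ in the definition of $R$—while the rest is a routine assembly of factorization, pushouts, and two-out-of-three.
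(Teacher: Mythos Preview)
Your proposal is correct and follows essentially the same approach as the paper: factor $RX\vee RY\to B$ to make it a cofibration, form the pushout with $Qf$ over $RX\vee RY$, apply the approximation property to the induced map from $F$ of the pushout to $Y$, and deduce that both $\bar B$ and $Qf$ map by weak equivalences into the resulting object, so the two zigzags represent the same morphism in $\Ho\aC$. Your write-up is in fact more explicit than the paper's in justifying why $\bar B\to\tilde P$ and $Qf\to\tilde P$ are weak equivalences (via two-out-of-three on the $F$-images and part~(i) of the approximation property), a step the paper simply asserts.
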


\begin{proof}
By construction, the map $RX\vee RY\to Qf$ is a cofibration, and
by factorization, we can assume without loss of
generality that the map $\alpha + \beta \colon RX\vee RY\to B$ is a
cofibration.  Since the maps $FB\to Y$ and $FQf\to Y$ both compose
to the same maps $FRX\to Y$ and the same maps $FRY\to Y$, we obtain
a map 
\[
F(B\cup_{RX\vee RY}Qf) \iso FB\cup_{FRX\vee FRY}FQf\to Y.
\]
Applying the approximation property to this map, we obtain an object
$C$ in $\aC$ and a map 
\[B \cup_{RX\vee RY} Qf \to C\]
in $\aC$ such that the composites $B\to C$ and $Qf\to C$ are both weak
equivalences and restrict to the same maps $RX\to C$ and the same maps
$RY\to C$.  Thus, we have the following commutative diagrams in $\aC$.
\[\xymatrix@-1.5pc{%
&RX\ar[dl]\ar[dr]&&&&RY\ar[dl]_{\htp}\ar[dr]^{\htp}\\
B\ar[dr]_{\htp}&&Qf\ar[dl]^{\htp}
&&B\ar[dr]_{\htp}&&Qf\ar[dl]^{\htp}\\
&C&&&&C
}
\]
We see from these diagrams that the maps in $\Ho\aC$  represented by the zigzags
\[
\xymatrix@-1pc{%
RX\ar[r]&Qf&RY,\ar[l]_{\htp}&&
RX\ar[r]&C&RY,\ar[l]_{\htp}&&
RX\ar[r]&B&RY,\ar[l]_{\htp}
}
\]
coincide.  The first is $Rf$ and the third is $\beta^{-1}\circ \alpha$.
\end{proof}

\begin{thm}
$R$ is a functor $\aD\to \Ho\aC$.
\end{thm}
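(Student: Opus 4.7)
The plan is to verify the two functoriality axioms: identities and composition. Both rely on the uniqueness result of Lemma~\ref{lemeqQf}, which reduces the problem to exhibiting, in each case, a zigzag in $\aC$ together with a commuting triangle in $\aD$.

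For the identity, the zigzag $RX \xrightarrow{\id} RX \xleftarrow{\id} RX$ is accompanied by the map $\gamma = \epsilon_X \colon FRX \to X$, and the triangle required by Lemma~\ref{lemeqQf} commutes trivially. The lemma then gives $R(\id_X) = \id_{RX}^{-1}\circ \id_{RX} = \id_{RX}$.

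For composition, given $f\colon X\to Y$ and $g\colon Y\to Z$, the plan is to form the pushout $B = Qf \cup_{RY} Qg$ in $\aC$; this exists because the map $RY \to Qg$ (coming from the construction of $Qg$ in diagram~\eqref{diagQf}) is a cofibration. Setting $\alpha$ to be the composite $RX \to Qf \to B$ and $\beta$ to be $RZ \to Qg \to B$, one checks that $\beta$ is a weak equivalence (as $RZ \to Qg$ is a weak equivalence by the remark after~\eqref{diagQf}, and $Qg \to B$ is the pushout of the weak equivalence $RY \to Qf$ along a cofibration). A short diagram chase using the pushout property in $\Ho\aC$ shows that the zigzag $RX \xrightarrow{\alpha} B \xleftarrow{\beta} RZ$ represents $Rg \circ Rf$: the two ways of mapping $RY$ into $B$ agree, so $(Qg\to B)^{-1}\circ(Qf\to B) = (RY\to Qg)\circ(RY\to Qf)^{-1}$ in $\Ho\aC$.

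It remains to construct $\gamma \colon FB \to Z$ fitting into the hypothesis of Lemma~\ref{lemeqQf} for the map $g\circ f$. Since $F$ preserves pushouts along cofibrations, $FB \iso FQf \cup_{FRY} FQg$; the maps $FQf \to Y \xrightarrow{g} Z$ and $FQg \to Z$ agree on $FRY$ (both restrict to $g\circ \epsilon_Y$ by~\eqref{diagQf} applied to $f$ and $g$ respectively), so the universal property of the pushout yields $\gamma$. Verifying that $\gamma\circ(FRX \to FB) = (g\circ f)\circ \epsilon_X$ and $\gamma\circ(FRZ \to FB) = \epsilon_Z$ is a direct diagram chase using the two instances of~\eqref{diagQf}. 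Lemma~\ref{lemeqQf} then gives $R(g\circ f) = \beta^{-1}\circ\alpha = Rg\circ Rf$. The only subtle point is the pushout-preservation step for $F$, which is exactly why that hypothesis was included in Theorem~\ref{thmpropapprox}; everything else is formal manipulation of zigzags.
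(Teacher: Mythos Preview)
Your proof is correct and follows essentially the same approach as the paper: the identity axiom via the trivial zigzag with $\gamma=\epsilon_X$, and the composition axiom via the pushout $B=Qf\cup_{RY}Qg$, with $\gamma\colon FB\to Z$ built from $FQf\to Y\to Z$ and $FQg\to Z$ using that $F$ preserves pushouts along cofibrations. You supply more explicit justifications (existence of the pushout, why $\beta$ is a weak equivalence, the zigzag calculus showing $\beta^{-1}\alpha=Rg\circ Rf$) than the paper, which leaves these to the reader via a diagram, but the argument is the same.
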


\begin{proof}
Applying Lemma~\ref{lemeqQf} with $\alpha=\id_{RX}=\beta$ and $\gamma
=\epsilon_{X}$, it follows that $R\id_{X}$ is $\id_{RX}$.  Now given
maps $f\colon X\to Y$ and $g\colon Y\to Z$ in $\aD$, let
$B=Qf\cup_{RY}Qg$.  Then we see from the commuting diagram on the left
\[
\xymatrix@-1.5pc{%
RX\ar[dr]&&RY\ar[dl]_{\htp}\ar[dr]&&RZ\ar[dl]_{\htp}\\
&Qf\ar[dr]&&Qg\ar[dl]^{\htp}&&&&RX\ar[rr]&&B&&RZ\ar[ll]_{\htp}\\
&&B
}
\]
that $Rg\circ Rf$ is represented by the zigzag on the right.
Applying Lemma~\ref{lemeqQf} to $g \circ f$ with
$\alpha$ and $\beta$ the maps $RX\to B$ and $RZ\to B$ above
and $\gamma\colon FB\to Z$ the map induced by the maps
$FQf\to Y\to Z$ and $FQg\to Z$, we see that $R(f\circ
g)=\beta^{-1}\circ \alpha =Rf\circ Rg$.
\end{proof}

Clearly $R$ takes weak equivalences in $\aD$ to isomorphisms in
$\Ho\aC$, and so $R$ factors through a functor $\Ho\aD\to \Ho\aC$ that we
also denote as $R$.  It is clear from Diagram~\ref{diagQf} that
$\epsilon$ is a natural isomorphism from $FR$ to the identity in $\Ho\aD$.
For $C$ an object of $\aC$, applying the approximation property to the map
\[
F(C\vee RFC)\iso FC\vee FRFC\to FC
\]
constructs an object $PC$ in $\aC$ with weak equivalences $C\to PC$
and $RFC\to PC$.  This then gives a zigzag in $\aC$ that represents an
isomorphism in $\Ho\aC$ from $C$ to $RFC$.  It is straightforward to
verify using Lemma~\ref{lemeqQf} that this
isomorphism is natural.


\appendix
\numberwithin{equation}{subsection}

\section{{USE} of factorizations and mapping cylinders}
\label{appbm}

In this appendix, we introduce the concept of a universal simplicial
equivalence (USE) of a space of factorizations or a space of mapping
cylinders for a Waldhausen category $\aC$.  A
USE of factorizations or mapping cylinders is a way of encoding the
data of a contractible space of choices of factorization or mapping
cylinder for each morphism in $\aC$ (or a distinguished subcategory).
Although our formulation is new (and relies on the definition of a
universal quasifibration), this kind of approach to handling a lack of
functoriality in factorizations derives from the work of Dwyer and
Kan in \cite{DKModel}.

The purpose of this appendix is to generalize the results
of \cite{BlumbergMandell} on homotopy cocartesian squares and on the
$\Spdot$ construction from requiring functorial factorization of weak
cofibrations to requiring a USE of factorizations of weak
cofibrations.  In addition, we show that the existence of
(non-functorial) factorizations implies a USE of mapping cylinders.  
In the next appendix, we show how to modify the arguments of the body
of the paper to remove functoriality hypotheses assuming just
existence of factorization or a USE of mapping cylinders for weak
cofibrations.

Throughout this section $\aC$ denotes a Waldhausen category whose weak
equivalences satisfy the two out of three property (i.e., are
saturated in the sense of Waldhausen). 

\subsection{Categories of factorizations and mapping cylinders}

We begin with the formal
definition of the categories of factorizations and mapping 
cylinders for $\aC$.

\begin{defn}
The category $\Fac{\aC}$ is the
full subcategory of diagrams
\[
\xymatrix@-1pc{
A \ar@{ >->}[r] & X \ar[r]^{\htp} & B
}
\]
where the map $A\to X$ is a cofibration and the map $X\to B$ is a weak
equivalence.  The forgetful functor $\Fac{\aC}\to \Ar\aC$ sends the
diagram to the composite map $A\to B$.

The category $\MC{\aC}$ is the full subcategory of diagrams
\[
\xymatrix@R-1pc{%
&B\ar[d]\ar[dr]^{=}\\
A\ar@{ >->}[r]&X\ar[r]_{\htp}&B,
}
\]
where the map $A\to X$ is a cofibration, the map $X\to B$ is a weak
equivalence, and the composite map $B\to B$ the identity.  The
forgetful functor $\MC{\aC}\to\Fac{\aC}$ forgets the map $B\to X$, and
we obtain a composite forgetful functor $\MC{\aC}\to\Ar\aC$.
\end{defn}

In this terminology, the existence of factorizations is equivalent to
the forgetful functor $\Fac{\aC}\to\Ar\aC$ being surjective on
objects.  Likewise, functorial factorization as defined in
Section~\ref{secweak} consists of a functor
$T\colon \Ar\aC\to \Fac{\aC}$ such that the composite with the
forgetful functor $\Fac{\aC}\to \Ar\aC$ is the identity on $\Ar\aC$.
We write $\wcAr\aC$ for the full
subcategory of $\Ar\aC$ consisting of the weak cofibrations.
Functorial factorization of weak cofibrations or functorial
mapping cylinders for weak cofibrations then consists of a functor $T\colon
\wcAr\aC\to \Fac{\aC}$ or $T\colon \wcAr\aC\to \MC{\aC}$ such that the
composite with the forgetful functor $\Fac\aC\to \Ar\aC$ or
$\MC{\aC}\to \Ar\aC$ is the inclusion $\wcAr\aC\to \Ar\aC$.

\subsection{Universal simplicial equivalences}

Building on the terminology of Definition~\ref{defunivsq}, we call a
map of simplicial sets $X\subdot\to Y\subdot$ a \term{universal
simplicial equivalence} when it is a universal simplicial
quasifibration and a weak equivalence.  Such a map is characterized by
the property that for any simplicial map $Z\subdot\to Y\subdot$, the
geometric realization of the pullback map $|Z\subdot \times_{Y\subdot}
X\subdot|\to |Z\subdot|$ has contractible fibers.  For a subcategory
$\aS$ of $\Ar\aC$ (such as $\wcAr\aC$), we define a USE of
factorizations and a USE of mapping cylinders as follows.

\begin{defn}\label{defuse}
Let $\aS$ be a subcategory of $\Ar\aC$.  A \term{universal simplicial
equivalence (USE) of factorizations for $\aS$} consists of a
simplicial set $T\subdot$ and a simplicial map $T\subdot\to
N\Fac{\aC}$ such that the composite $T\subdot\to N\Ar\aC$ restricts to
a universal simplicial equivalence 
\[
T\subdot\times_{N\Ar\aC}N\aS\to N\aS.
\]
A \term{universal simplicial
equivalence (USE) of mapping cylinders for $\aS$} consists of a
simplicial set $T\subdot$ and a simplicial map $T\subdot\to
N\Fac{\aC}$ such that the composite $T\subdot\to N\Ar\aC$ restricts to
a universal simplicial equivalence 
\[
T\subdot\times_{N\Ar\aC}N\aS\to N\aS.
\]
When $\aS$ is the category of weak cofibrations $\wcAr$, we say that
$T\subdot$ is a USE of factorizations or mapping cylinders for weak
cofibrations. 
\end{defn}

This definition has several immediate consequences.

\begin{prop}Let $\aS$ be a subcategory of $\Ar\aC$.
\begin{enumerate}
\item Functorial factorization or functorial mapping cylinders for
$\aS$ implies a USE of factorizations or mapping cylinders,
respectively, with $T\subdot=N\aS$ and with $NT$ as the map $T\subdot$ to
$N\Fac{\aC}$ or $N\MC{\aC}$.
\item A USE of factorizations or mapping cylinders for $\aS$ implies a
USE of factorizations or mapping cylinders, respectively, for any
subcategory of $\aS$.
\item A USE of mapping cylinders for $\aS$ implies a USE of
factorizations for $\aS$.
\end{enumerate}
\end{prop}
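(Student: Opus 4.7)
The plan is to dispatch each of the three parts by direct manipulation of pullbacks, using the stability properties built into the definition of universal simplicial quasifibration.

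For part (i), suppose $T\colon \aS \to \Fac{\aC}$ (resp.\ $T\colon \aS\to \MC{\aC}$) is a functorial factorization, so by hypothesis the composite $\aS \to \Fac{\aC}\to \Ar\aC$ (resp.\ $\aS\to\MC{\aC}\to\Ar\aC$) is the inclusion $\aS\hookrightarrow \Ar\aC$. Take $T\subdot=N\aS$ and the map $NT$ to $N\Fac{\aC}$ (or $N\MC{\aC}$). The composite $T\subdot \to N\Ar\aC$ is then the nerve of the inclusion $\aS\hookrightarrow \Ar\aC$, so the pullback $T\subdot\times_{N\Ar\aC}N\aS$ is canonically $N\aS$ and the projection is the identity, which is trivially a universal simplicial equivalence.

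For part (ii), let $\aS'\subseteq \aS$ and let $T\subdot \to N\Fac{\aC}$ be a USE of factorizations for $\aS$. Associativity of pullback gives
\[
T\subdot\times_{N\Ar\aC}N\aS' \;=\; \bigl(T\subdot\times_{N\Ar\aC}N\aS\bigr)\times_{N\aS}N\aS',
\]
so the projection to $N\aS'$ is the pullback of the given universal simplicial equivalence $T\subdot\times_{N\Ar\aC}N\aS\to N\aS$ along the nerve of the inclusion $\aS'\hookrightarrow \aS$. By Definition~\ref{defunivsq}, universal simplicial quasifibrations are stable under arbitrary pullback; and since fibers over simplices are preserved by pullback, contractibility of the fibers of the universal simplicial equivalence is preserved as well. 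The argument for mapping cylinders is identical.

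For part (iii), the forgetful functors assemble into a commutative triangle $\MC{\aC}\to \Fac{\aC}\to \Ar\aC$ whose composite is $\MC{\aC}\to \Ar\aC$. Hence if $T\subdot \to N\MC{\aC}$ is a USE of mapping cylinders for $\aS$, composition with the forgetful functor $N\MC{\aC}\to N\Fac{\aC}$ produces a map $T\subdot \to N\Fac{\aC}$ whose composite to $N\Ar\aC$ is literally the same map, so the defining pullback is the same and the USE property transfers.

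The only step with any content is the pullback stability invoked in (ii), but this is immediate: the universal quasifibration condition is defined to be closed under pullback, and the weak equivalence half is characterized by contractible fibers over every simplex, which pullback preserves. No other obstacles arise.
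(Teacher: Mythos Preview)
Your proof is correct and matches the paper's treatment: the paper states this proposition as ``immediate consequences'' of the definition and gives no proof, and your argument spells out precisely the direct pullback manipulations that make each part immediate. The only minor point worth noting in part~(i) is that $N\aS\times_{N\Ar\aC}N\aS\cong N\aS$ relies on the nerve of the inclusion being a monomorphism of simplicial sets, which it is since $\aS\hookrightarrow\Ar\aC$ is a subcategory; otherwise everything is exactly as you say.
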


In particular, a USE of factorizations or mapping cylinders for
$\Ar\aC$ implies a USE of factorizations or mapping cylinders,
respectively, for every subcategory of $\Ar\aC$.  Moreover, a USE of
factorizations or mapping cylinders for $\Ar\aC$ implies existence of
(non-functorial) factorizations or mapping cylinders, respectively.
In fact, the following lemma, proved at the end of this appendix, shows
that a USE of mapping cylinders for all arrows, a USE of factorizations
for all arrows, and existence of factorizations for all arrows are all
equivalent.

\begin{lem}\label{lemfactuse}
If $\aC$ admits factorization (not necessarily functorially), then
there is a USE of mapping cylinders for $\Ar\aC$.
\end{lem}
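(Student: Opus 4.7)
The plan rests on the observation foreshadowed in Section 2: a mapping cylinder for $f\colon A\to B$ is precisely a factorization of $f+\id_B\colon A\vee B\to B$ as a cofibration followed by a weak equivalence. Indeed, the required splitting $B\to X$ arises automatically as the composite $B\hookrightarrow A\vee B\hookrightarrow X$, and the splitting condition $(X\to B)\circ(B\to X)=\id_B$ is forced by the factorization. Consequently, the hypothesis that $\aC$ admits factorization supplies a mapping cylinder for every arrow, so the forgetful functor $\pi\colon\MC\aC\to\Ar\aC$ is essentially surjective on objects.

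I would take $T\subdot=N\MC\aC$ with the identity map $T\subdot\to N\MC\aC$, and verify that the composite $T\subdot\to N\Ar\aC$ (the nerve of $\pi$) is a universal simplicial equivalence. By Proposition \ref{propunivsq} it suffices to check that for every $n$-simplex $\sigma\colon\Delta[n]\to N\Ar\aC$, the pullback is a trivial quasifibration over $\Delta[n]$. Applying Theorem \ref{thmC} (Quillen's Theorem~B), this reduces further to showing (i) that for every vertex $f$ of $N\Ar\aC$ the nerve of the comma category $f\downarrow\pi$ is contractible, and (ii) that for any morphism $g\to f$ in $\Ar\aC$, the induced functor $f\downarrow\pi\to g\downarrow\pi$ is a weak equivalence on nerves.

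The key construction for (i) and (ii) is a pushout-and-factor procedure. Given any object $(M',\phi)\in f\downarrow\pi$, where $M'=(A'\hookrightarrow X'\xrightarrow{\htp}B')$ is a mapping cylinder of some $f'\colon A'\to B'$ and $\phi=(\alpha,\beta)\colon f\to f'$ is an $\Ar\aC$-morphism, form the pushout $Q=X'\cup_{A'\vee B'}(A\vee B)$ along the cofibration $A'\vee B'\hookrightarrow X'$ and the map $\alpha\vee\beta$, and factor the induced map $Q\to B$ as a cofibration followed by a weak equivalence $Q\hookrightarrow X\xrightarrow{\htp}B$ using the factorization hypothesis. The resulting $X$ is a mapping cylinder of $f$ equipped with a morphism from $M'$ in $\MC\aC$ respecting the comma structure. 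This identifies the full subcategory $\MC\aC(f)\subset f\downarrow\pi$ of pairs $(X,\id_f)$ with $X$ a mapping cylinder of $f$ as cofinal.

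The main obstacle is establishing the contractibility of $N\MC\aC(f)$ itself, which is where most of the technical work lies. My approach is to show that $\MC\aC(f)$ satisfies a weak directedness condition: every finite subdiagram $D$ of $\MC\aC(f)$ admits a cocone in $\MC\aC(f)$. The cocone is produced by forming the iterated pushout $P=X_1\cup_{A\vee B}\dotsb\cup_{A\vee B}X_m$ over the shared cofibrations $A\vee B\hookrightarrow X_i$, followed by factorization of the induced map $P\to B$ as $P\hookrightarrow Y\xrightarrow{\htp}B$; then $Y\in\MC\aC(f)$ receives a morphism from each $X_i$. The delicate point is to enforce coherence with the morphisms of $D$—ensuring that a composite $X_i\to X_j\to Y$ agrees with the direct map $X_i\to Y$—which I would handle by iteratively augmenting the pushout and reapplying factorization to identify parallel candidate maps. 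Once this contractibility is in hand, the preceding reductions combine with Proposition \ref{prophocolimb} (or a direct application of Theorem \ref{thmC}) to establish the quasifibration property, completing the verification that $T\subdot\to N\Ar\aC$ is a USE.
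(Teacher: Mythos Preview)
Your overall strategy---reduce to contractibility of a category of mapping cylinders over a fixed arrow and establish that by a pushout-and-factor cocone construction---is the same as the paper's. But there are two real gaps in the execution.

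First, in $\MC\aC$ only $A\to X$ is required to be a cofibration; the map $A\vee B\to X$ need not be one. Your pushouts $X'\cup_{A'\vee B'}(A\vee B)$ and $X_{1}\cup_{A\vee B}\dotsb\cup_{A\vee B}X_{m}$ are therefore not guaranteed to exist in a Waldhausen category. The paper introduces the full subcategory $\MCp\aC\subset\MC\aC$ of \emph{strong} mapping cylinders (those with $A\vee B\to X$ a cofibration) precisely so that these pushouts are available, and works there throughout. Your first paragraph in fact describes strong mapping cylinders, so the fix is to replace $\MC$ by $\MCp$ everywhere. (Incidentally, with $\phi=(\alpha,\beta)\colon f\to f'$ the map $\alpha\vee\beta$ runs $A\vee B\to A'\vee B'$, so the legs of your pushout $Q$ point the wrong way; you want the comma category in the other direction, or a different construction.)

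Second, and more structurally, Theorem~\ref{thmC} does not give the reduction you claim. Quillen's Theorem~B identifies the homotopy fiber of $N\pi$ with $N(f\downarrow\pi)$; it does not show that $N\pi\colon N\MC\aC\to N\Ar\aC$ is itself a universal simplicial quasifibration. What the Theorem~B machinery yields (via Proposition~\ref{prophocolimb}) is that $\hocolim_{f}N(f\downarrow\pi)\to N\Ar\aC$ is a USE, but $N\MC\aC$ only receives a weak equivalence from that homotopy colimit, and the USE property does not transfer along weak equivalences of the source. The paper sidesteps this with a bisimplicial device: take $T_{p,q}$ to be the set of $(p{+}1)\times(q{+}1)$ grids in $\MCp\aC$ whose columns lie over identity maps in $\Ar\aC$, and let $T_\bullet$ be the diagonal. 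The map $T_{\bullet,\bullet}\to N_\bullet\Ar\aC$ is then constant in the $q$-direction, so any pullback can be analyzed levelwise in $p$; at level $p$ one is precisely asserting that $N\MCp(f)$ is contractible for each arrow $f$ in the Waldhausen category $\aC^{[p]}$, which inherits factorization from $\aC$ and so reduces to $p=0$. For contractibility of $N\MCp(f)$ itself the paper invokes the Waldhausen--Schlichting technique \cite[A.6,A.10]{Schlichting}: replace any $\alpha\colon\aP\to\MCp f$ from a finite poset by $\beta$ (with a natural transformation $\beta\to\alpha$) so that $\colim_{\aP}\beta$ exists as an iterated pushout over cofibrations, and then factor $\colim_{\aP}\beta\to B$ to land in $\MCp f$. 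This is the systematic resolution of the coherence issue you flag as ``delicate''.
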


\subsection{Weak cofibrations and homotopy cocartesian squares}

We now generalize the results of Section~2 of \cite{BlumbergMandell}
on homotopy cocartesian diagrams.  In these results, we follow the
notation of Definition~\ref{defuse} and denote the domain of a USE as
$T\subdot$.  For Proposition~2.3 of~\cite{BlumbergMandell}, recall that a full
subcategory $\aB$ of $\aC$ is called a \term{Waldhausen subcategory}
when it forms a Waldhausen category with weak equivalences the weak
equivalences of $\aC$ and with cofibrations the cofibrations $A\to B$
in $\aC$ (between objects $A$ and $B$ of $\aB$) whose quotient
$B/A=B\cup_{A}*$ is in $\aB$.  We say that the Waldhausen subcategory
$\aB$ is \term{closed} if every object of $\aC$ weakly equivalent to
an object of $\aB$ is an object of $\aB$.

\begin{prop}[{\cite[2.3]{BlumbergMandell}}]\label{propwcofpf}
If $\aB$ is a closed Waldhausen subcategory of a Waldhausen category
$\aC$ with a USE of factorizations for weak cofibrations, then $\aB$
has a USE of factorizations for weak cofibrations.  Moreover, a
weak cofibration $f \colon A \to B$ in $\aC$ between objects in
$\aB$ is a weak cofibration in $\aB$ if and only if there exists some
factorization $A \to X\to B$ in the image of $T\subdot$ such that the
cofibration (in $\aC$) $A\to X$ is a cofibration in $\aB$.
\end{prop}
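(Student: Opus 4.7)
The plan is to define
\[
T^\aB\subdot \;:=\; T\subdot \times_{N\Fac\aC} N\Fac\aB,
\]
where $\Fac\aB$ is regarded as the full subcategory of $\Fac\aC$ consisting of factorizations $A\to X\to B$ whose cofibration part is a cofibration in $\aB$; equivalently, all of $A$, $X$, $B$, and $X/A$ lie in $\aB$ (closedness of $\aB$ together with $X\htp B$ makes $X\in\aB$ and $B\in\aB$ mutually redundant conditions, and fullness of $\aB$ in $\aC$ makes $\Fac\aB$ full in $\Fac\aC$). My strategy is to prove the ``moreover'' characterization first and then leverage the sharper statement that actually falls out of the argument in order to verify the USE condition.

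For the moreover, the ``if'' direction is immediate: such a factorization exhibits $f$ as weakly equivalent in $\Ar\aB$ to the cofibration $A\to X$ in $\aB$. For ``only if'', given a weak cofibration $f$ in $\aB$, I would fix a zigzag in $\Ar\aB$ connecting $f$ to a cofibration $g\in\aB$ and let $Z\subset N\Ar\aB\subset N\wcAr\aC$ be the nerve of the zigzag poset. Since $Z$ is contractible, the pullback $T\subdot|_Z := T\subdot\times_{N\Ar\aC}Z\to Z$ is a USE by pullback, hence a weak equivalence whose total space is weakly contractible and so connected. Two local observations then drive the argument. First, every factorization $A'\to X\to B'$ of a cofibration $g\colon A'\to B'$ in $\aB$ automatically lies in $\Fac\aB$: closedness gives $X\in\aB$, and the gluing axiom combined with closedness gives $X/A'\htp B'/A'\in\aB$. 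Second, for any morphism of factorizations in $\Fac\aC$ whose underlying morphism of arrows lies in $\Ar\aB$ with vertical maps in $\w\aB$, either both endpoints or neither endpoint lies in $\Fac\aB$, by the same gluing/closedness calculation applied to source, middle term, and cofiber (the middle vertical is automatically a weak equivalence by two out of three). Every $1$-simplex of $T\subdot|_Z$ projects to such a morphism in $Z$, so the property ``lies in $N\Fac\aB$'' is constant on connected components of $T\subdot|_Z$; the first observation supplies a vertex in $N\Fac\aB$ over $g$, and connectedness propagates this to every vertex, including the nonempty fiber over $f$.

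The argument in fact shows more: \emph{every} factorization in $T\subdot$ over any weak cofibration in $\aB$ lies in $\Fac\aB$. Combined with the fullness of $N\Fac\aB\subset N\Fac\aC$ (so that a simplex lies in $N\Fac\aB$ iff all its vertices do), this identifies $T^\aB\subdot$ with the sub-simplicial set $T\subdot\times_{N\Ar\aC}N\wcAr\aB$ of $T\subdot$. The USE property is preserved under pullback along $N\wcAr\aB\hookrightarrow N\wcAr\aC$, so the map $T^\aB\subdot\to N\wcAr\aB$ is itself a USE, which is precisely the condition for $T^\aB\subdot\to N\Fac\aB$ to be a USE of factorizations for weak cofibrations in $\aB$.

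The main obstacle is the propagation step in the ``only if'' direction: the gluing axiom makes ``lies in $\Fac\aB$'' a locally well-behaved property along $1$-simplices, but transporting it from the distinguished fiber over $g$ to every vertex of $T\subdot|_Z$ requires genuine contractibility of $T\subdot|_Z$, not merely nonemptiness of individual fibers. This is exactly where the full strength of the USE hypothesis---rather than the weaker statement that $\aC$ admits factorizations---enters in an essential way.
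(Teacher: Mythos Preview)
Your proof is correct and follows essentially the same line as the paper's: lift the zigzag that witnesses $f$ as a weak cofibration up into $T\subdot$, and use the gluing axiom to show that the cofibers of the resulting factorizations are all weakly equivalent. The paper's written proof addresses only the ``moreover'' clause, ending at the observation that $X/A\htp B'/A'$ forces $A\to X$ to be a cofibration in $\aB$ exactly when $f$ is a weak cofibration in $\aB$; you go further and make explicit the construction of the USE for $\aB$ as the pullback $T^{\aB}\subdot$, together with the identification $T^{\aB}\subdot = T\subdot\times_{N\Ar\aC}N\wcAr\aB$. This last step is exactly what one needs to deduce the first assertion from the second, and the paper leaves it to the reader.

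One minor slip: the chain of inclusions $Z\subset N\Ar\aB\subset N\wcAr\aC$ is wrong as written, since an arbitrary arrow in $\aB$ need not be a weak cofibration in $\aC$. What you actually need, and what your argument uses, is only that the specific zigzag $Z$ lands in $N\wcAr\aC$; this holds because every object of the zigzag is weakly equivalent in $\Ar\aB$ to the cofibration $g$ and hence is itself a weak cofibration. Replace the middle term by $N\wcAr\aB$ and the inclusion is fine.
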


\begin{proof}
Let $f\colon A\to B$ be a weak cofibration in $\aC$ between objects
in $\aB$.  Then $f$ is weakly equivalent by a zigzag in $\aB$ to a
cofibration $f'\colon A'\to B'$ 
in $\aC$,
\[
\xymatrix{%
A'\ar@{ >->}[d]_{f'}\ar[r]^{\htp}&A_{1}\ar[d]_{f_{1}}&\dotsb\ar[l]_{\htp}&\ar[l]_{\htp}
A_{n}\ar[r]^{\htp}\ar[d]_{f_{n}}&A\ar[d]_{f}\\
B'\ar[r]_{\htp}&B_{1}&\dotsb\ar[l]^{\htp}
&\ar[l]^{\htp}B_{n}\ar[r]_{\htp}&B.
}
\]
This diagram specifies a generalized simplicial path in $N\wcAr\aC$
between the vertices $A\to B$ and $A'\to B'$.  It follows that there
exists a generalized simplicial path between a lift of $A\to B$ to
$T_{0}$ and some lift of $A'\to B'$ to $T_{0}$.  Without loss of
generality (replacing the original generalized simplicial path if
necessary), the image of this path in $N\Fac\aC$ is a commutative 
diagram
\[
\xymatrix{%
A'\ar@{ >->}[d]_{f'}\ar[r]^{\htp}&A_{1}\ar@{ >->}[d]
&\dotsb\ar[l]_{\htp}&\ar[l]_{\htp}
A_{n}\ar[r]^{\htp}\ar@{ >->}[d]&A\ar@{ >->}[d]\\
B'\ar[d]_{=}\ar[r]^{\htp}&X_{1}\ar[d]
&\dotsb\ar[l]_{\htp}&\ar[l]_{\htp}
X_{n}\ar[r]^{\htp}\ar[d]_{f_{n}}&X\ar[d]\\
B'\ar[r]_{\htp}&B_{1}&\dotsb\ar[l]^{\htp}
&\ar[l]^{\htp}B_{n}\ar[r]_{\htp}&B.
}
\]
We then get weak equivalences,
\[ 
\xymatrix@C=2em{%
B'/A'\ar[r]^{\htp}
&X_{1}/A_{1}&\dotsb\ar[l]_(.4){\htp}
&\ar[l]_{\htp} X_{n}/A_{n}\ar[r]^{\htp}&X/A,
}
\]
which imply that the map $A\to X$ is a cofibration in $\aB$ if and
only if $f\colon A\to B$ is a weak cofibration in $\aB$.
\end{proof}

The proof of Proposition~2.4 in \cite{BlumbergMandell} does not
actually use the functoriality of the factorizations of weak
cofibrations but just their existence.  It therefore admits the
following generalization.

\begin{prop}[{\cite[2.4]{BlumbergMandell}}]
Let $\aC$ be a Waldhausen category that admits factorization of weak
cofibrations.  If $f \colon A \to B$ 
and $g \colon B \to C$ are weak cofibrations in $\aC$, then $g \circ
f \colon  A \to C$ is a weak cofibration in $\aC$.
\end{prop}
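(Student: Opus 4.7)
The plan is to construct an explicit factorization of $g\circ f$ as a cofibration followed by a weak equivalence; any map so factored is tautologically a weak cofibration, since the commutative square consisting of the identity on $A$ and the factorization weak equivalence exhibits $g\circ f$ as weakly equivalent in $\Ar\aC$ to the cofibration at the top.

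First, apply factorization of weak cofibrations to $f\colon A\to B$, obtaining a cofibration $A\rightarrowtail X$ and a weak equivalence $X\xrightarrow{\htp}B$ whose composite is $f$. Next, consider the composite map $X\to B\xrightarrow{g}C$. The square
\[
\xymatrix@-1pc{
X\ar[r]^{\htp}\ar[d]&B\ar[d]^{g}\\
C\ar[r]_{=}&C
}
\]
is a weak equivalence in $\Ar\aC$ from $X\to C$ to $g$, so $X\to C$ is a weak cofibration because $g$ is. Now apply factorization of weak cofibrations a second time, this time to $X\to C$, obtaining a cofibration $X\rightarrowtail Y$ and a weak equivalence $Y\xrightarrow{\htp}C$ whose composite is $X\to C$.

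The composite $A\rightarrowtail X\rightarrowtail Y$ is a cofibration (cofibrations are closed under composition in any Waldhausen category), and chasing around the diagram shows that the outer composite $A\to Y\to C$ is precisely $g\circ f$. Thus the square
\[
\xymatrix@-1pc{
A\ar[r]^{=}\ar@{ >->}[d]&A\ar[d]^{g\circ f}\\
Y\ar[r]_{\htp}&C
}
\]
exhibits $g\circ f$ as weakly equivalent in $\Ar\aC$ to a cofibration, so $g\circ f$ is a weak cofibration.

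The argument uses nothing beyond the existence of factorizations of weak cofibrations and the closure of cofibrations under composition; in particular, no functoriality is invoked. The only place one must be careful is the intermediate step verifying that $X\to C$ is itself a weak cofibration, which is needed before a second application of factorization is permitted — but this is immediate from the zigzag displayed above.
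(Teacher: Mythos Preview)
Your proof is correct and is precisely the natural argument: factor $f$, observe that the intermediate map $X\to C$ is a weak cofibration because it is weakly equivalent in $\Ar\aC$ to $g$ (and hence, by concatenating zigzags, to a genuine cofibration), factor again, and compose the two cofibrations. The paper does not spell out its own proof here but simply observes that the argument of \cite[2.4]{BlumbergMandell} goes through without functoriality; your write-up is exactly that argument, and the care you take at the intermediate step---checking that $X\to C$ is a weak cofibration before invoking factorization a second time---is the only point where the non-functorial hypothesis needs attention.
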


The following proposition generalizes Proposition~2.5 in
\cite{BlumbergMandell}.  The proof is similar to that of
Proposition~\ref{propwcofpf}. 

\begin{prop}[{\cite[2.5]{BlumbergMandell}}]\label{prophococart}
Let $\aC$ be a Waldhausen category with a USE of
factorizations for weak cofibrations.  A commutative diagram
\[
\xymatrix@-1pc{%
A\ar[r]^{f}\ar[d]&B\ar[d]\\
C\ar[r]&D
}
\]
with $f$ a weak cofibration is homotopy cocartesian if and only
if the map $X\cup_{A}C\to D$ is a weak equivalence for
some factorization $A \to X\to B$ in the image of $T\subdot$.
\end{prop}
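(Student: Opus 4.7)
The plan is to handle the two directions of the equivalence separately. For the backward direction, given a factorization $A\to X\to B$ in the image of $T\subdot$ with $X\cup_A C\to D$ a weak equivalence, I would observe that the pushout square along the cofibration $A\to X$ is tautologically homotopy cocartesian and admits a map to the original square---via the identities on $A$ and $C$, the weak equivalence $X\to B$ from the factorization, and the assumed weak equivalence $X\cup_A C\to D$---which is a termwise weak equivalence of squares and hence exhibits the original square as homotopy cocartesian.

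For the forward direction, I would start from the definition and fix a zigzag of termwise weak equivalences of commutative squares in $\aC$ connecting the given square to a pushout square along some cofibration $A'\to B'$. Extracting the top edges yields a generalized simplicial path in $N\wcAr\aC$ from $f$ to $A'\to B'$, all of whose vertices are weak cofibrations. Following the strategy in the proof of Proposition~\ref{propwcofpf}, I would use the USE---specifically that $T\subdot\times_{N\Ar\aC}N\wcAr\aC\to N\wcAr\aC$ is a universal simplicial equivalence, so its fibers are contractible and in particular path-connected---to lift the given factorization of $f$ together with some chosen factorization of $A'\to B'$ to a generalized simplicial path in $T\subdot$ (possibly after replacing the original path in $N\wcAr\aC$ with one sharing its endpoints). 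Projection to $N\Fac\aC$ then produces compatible factorizations $A_i\to X_i\to B_i$ along the zigzag, all in the image of $T\subdot$, with each $A_i\to X_i$ a cofibration and with termwise weak equivalences between successive factorizations.

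To finish, I would apply Waldhausen's gluing axiom twice. Pushing out each cofibration $A_i\to X_i$ along $A_i\to C_i$ and comparing successive spans $(C_i\leftarrow A_i\to X_i)\to (C_{i\pm 1}\leftarrow A_{i\pm 1}\to X_{i\pm 1})$ gives a zigzag of weak equivalences connecting $X\cup_A C$ to $X'\cup_{A'}C'$. At the endpoint, a separate application of the gluing axiom to the map of spans $(C'\leftarrow A'\to X')\to (C'\leftarrow A'\to B')$---whose right legs are both cofibrations and in which $X'\to B'$ is a weak equivalence---identifies $X'\cup_{A'}C'\to B'\cup_{A'}C'=D'$. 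Naturality of the induced maps $X_i\cup_{A_i}C_i\to D_i$ together with the termwise weak equivalences on the $D_i$ then propagates the weak equivalence back along the zigzag via two out of three, yielding that $X\cup_A C\to D$ is a weak equivalence as required. The main obstacle will be the coherent USE lift: vertex-by-vertex lifts are immediate from nonemptiness of fibers, but assembling them into a single diagram in $\Fac\aC$ covering the entire zigzag requires combining the universal quasifibration property with the contractibility of the fibers to connect the endpoint lifts by a genuine simplicial path in $T\subdot$.
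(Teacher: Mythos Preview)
Your overall strategy matches the paper's, which says only that the proof is ``similar to that of Proposition~\ref{propwcofpf}'' and gives no further details; your outline follows exactly that template. The backward direction is correct as written.

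In the forward direction there is a genuine gap in the lifting step. You extract the top edges of the zigzag of squares and lift the resulting path in $N\wcAr\aC$ to $T_\bullet$, explicitly allowing the path in $N\wcAr\aC$ to be replaced by another with the same endpoints. But the replaced path then consists of new arrows $A_i\to B_i$ that need not agree with the top edges of your original zigzag of squares, so when you write ``pushing out each cofibration $A_i\to X_i$ along $A_i\to C_i$'' the maps $A_i\to C_i$ have no reason to exist: the $C_i$ belong to the old zigzag of squares, while the $A_i$ belong to the new one. In Proposition~\ref{propwcofpf} this issue does not arise because the conclusion there depends only on the cofibers $X_i/A_i$, which are determined by the top edge and its factorization alone.

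The repair uses the word \emph{universal} in USE. Let $\aS q$ denote the category whose objects are commutative squares in $\aC$ with top edge a weak cofibration and whose morphisms are termwise weak equivalences, and let $\aS q\to\wcAr\aC$ be the top-edge functor. Since $T_\bullet\times_{N\Ar\aC}N\wcAr\aC\to N\wcAr\aC$ is a universal simplicial equivalence, so is its pullback
\[
T_\bullet\times_{N\Ar\aC}N\aS q\to N\aS q.
\]
Now lift the zigzag of squares, regarded as a generalized simplicial path in $N\aS q$, to this pullback. The projection to $N\aS q$ yields a (possibly new) zigzag of \emph{squares} with the same two endpoint squares, and the projection to $N\Fac\aC$ yields compatible factorizations of all the top edges. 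With this replacement your gluing-axiom and two-out-of-three argument goes through exactly as you wrote it.
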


The previous proposition then implies the following proposition, which
generalizes Proposition~2.6 of \cite{BlumbergMandell}.

\begin{prop}[{\cite[2.6]{BlumbergMandell}}]\label{proppush}
Let $\aC$ be a Waldhausen category with a USE of
factorizations for weak cofibrations. 
\begin{enumerate}
\item Given a commutative cube
\[
\xymatrix@-1.5pc{%
A'\ar[rr]\ar[dd]\ar[dr]&&B'\ar[dd]\ar[dr]\\
\relax&A\ar'[r][rr]\ar'[d][dd]&&B\ar[dd]\\
C'\ar[rr]\ar[dr]&&D'\ar[dr]\\
\relax&C\ar[rr]&&D
}
\]
with the $(A,B,C,D)$-face and $(A',B',C',D')$-face homotopy
cocartesian, if the maps $A'\to A$, 
$B'\to B$, and $C'\to C$ are weak equivalences, 
then the map $D'\to D$ is a weak equivalence.
\item Given a commutative diagram
\[
\xymatrix@-1pc{%
A\ar[r]\ar[d]&B\ar[d]\ar[r]&X\ar[d]\\ 
C\ar[r]&D\ar[r] &Y \\
}
\]
with the square $(A,B,C,D)$ homotopy cocartesian, if either $A\to C$
is a weak cofibration or both $A\to B$ and $B\to X$ are weak
cofibrations, then the $(A,X,C,Y)$
square is homotopy cocartesian if and only if the $(B,X,D,Y)$ square
is homotopy cocartesian.
\end{enumerate}
\end{prop}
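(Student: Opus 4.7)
The strategy for both parts is to reduce to the classical gluing and pasting lemmas for strict pushouts along cofibrations, using Proposition~\ref{prophococart} to replace each homotopy cocartesian square by a strict pushout along a cofibration up to weak equivalence, and invoking the USE property to ensure these replacements can be chosen coherently across the whole diagram.

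For part~(i), Proposition~\ref{prophococart} furnishes factorizations $A \to X \to B$ and $A' \to X' \to B'$ in the image of $T\subdot$ for which $X \cup_A C \to D$ and $X' \cup_{A'} C' \to D'$ are weak equivalences. The first key step is to promote these to a compatible commuting diagram of such factorizations refining the $1$-simplex $(A'\to B') \to (A\to B)$ of $N\wcAr\aC$. This is precisely what the USE property delivers: since the map $T\subdot \times_{N\Ar\aC} N\wcAr\aC \to N\wcAr\aC$ has contractible geometric fibers, the given $1$-simplex lifts, up to a generalized simplicial path, to a $1$-simplex of $T\subdot$ whose endpoints may be taken to be the previously chosen factorizations. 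The classical gluing lemma for strict pushouts along cofibrations then shows that $X' \cup_{A'} C' \to X \cup_A C$ is a weak equivalence, and two-out-of-three with the weak equivalences $X \cup_A C \to D$ and $X' \cup_{A'} C' \to D'$ yields that $D' \to D$ is a weak equivalence.

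For part~(ii), consider first the case in which $A \to C$ is a weak cofibration. Choose a factorization $A \to Z \to C$ in the image of $T\subdot$. By Proposition~\ref{prophococart}, the $(A,B,C,D)$-square is homotopy cocartesian if and only if $B \cup_A Z \to D$ is a weak equivalence; analogously, after choosing further factorizations in the image of $T\subdot$ for the maps assembling the two outer/right squares, each of the $(A,X,C,Y)$- and $(B,X,D,Y)$-squares is homotopy cocartesian if and only if a corresponding pushout-comparison map is a weak equivalence. The classical pasting lemma for strict pushouts along the cofibration $A \to Z$ then interchanges these two conditions, and two-out-of-three closes the argument. The case in which both $A \to B$ and $B \to X$ are weak cofibrations is analogous, using in addition the preceding proposition that composites of weak cofibrations are weak cofibrations, so that $A \to X$ is a weak cofibration and the USE may be applied to it directly. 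The principal obstacle in the argument is the coherence step in part~(i)---choosing factorizations of $A \to B$ and $A' \to B'$ that fit into a single commuting diagram, as opposed to merely existing independently---which the USE property is specifically designed to address in the absence of a strict factorization functor.
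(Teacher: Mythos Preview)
Your proposal is correct and matches the paper's approach: the paper states this proposition as an immediate consequence of Proposition~\ref{prophococart} (``The previous proposition then implies the following proposition'') without further detail, and you have supplied exactly the kind of argument that justifies this claim---replacing homotopy cocartesian squares by strict pushouts along cofibrations via the characterization in Proposition~\ref{prophococart}, and using the USE property in place of functoriality to obtain the required coherence between factorizations, just as in the proof of Proposition~\ref{propwcofpf}.

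Two small remarks. First, in part~(i) your phrase ``lifts, up to a generalized simplicial path, to a $1$-simplex'' is the right caveat, but note that the resulting zigzag in $T\subdot$ may contain steps lying over the degenerate edges of $\Delta[1]$; these give maps of factorizations of the same arrow, which are weak equivalences by two-out-of-three, so the gluing lemma still applies step by step. Second, your argument for part~(ii) can be streamlined: once you have the factorization $A\to Z\to C$ and know $Z\cup_A B\to D$ is a weak equivalence, the square $(B,X,Z\cup_A B,Z\cup_A X)$ is a strict pushout along the cofibration $B\to Z\cup_A B$, and applying part~(i) to the cube connecting this square to $(B,X,D,Y)$ shows directly that $(B,X,D,Y)$ is homotopy cocartesian if and only if $Z\cup_A X\to Y$ is a weak equivalence. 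This avoids choosing a separate factorization $B\to W\to D$ in the image of $T\subdot$ and the attendant coherence issue of comparing $W$ with $Z\cup_A B$.
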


\subsection{Comparing $\Sdot$ and $\Spdot$}

Next we compare $\Sdot$ and $\Spdot$ constructions.  As a first step,
we prove the following version of Theorems~2.8 and~2.9 of
\cite{BlumbergMandell} for $F\subdot$ and $F'\subdot$. Recall that
$F_{n}$ and $F'_{n}$ are the Waldhausen categories with objects the sequences of
$n$ composable cofibrations and weak cofibrations respectively.

\begin{thm}\label{thmfpdot}
Let $\aC$ be a Waldhausen category with a USE of
factorizations for weak cofibrations. 
\begin{enumerate}
\item The Waldhausen category $F'_{n}\aC$ has a USE of 
factorizations for weak cofibrations. 
\item The forgetful functor 
$\w F_{n}\aC\to \w F'_{n}\aC$ induces a weak equivalence on nerves.
\end{enumerate}
\end{thm}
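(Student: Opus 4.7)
The plan is to prove both parts by induction on $n$, with the case $n=0$ trivial since $F'_0\aC = \aC$ and $F_0\aC = \aC$.

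For part~(i), I would construct the USE $T'\subdot\to N\Fac(F'_n\aC)$ by iterating the given USE $T\subdot$ over the $n+1$ levels of a sequence. Given an $r$-simplex of $N\wcAr(F'_n\aC)$, encoding a family of weak cofibrations $A^{(j)}_k\to B^{(j)}_k$ for $k=0,\dots,n$ and $j=0,\dots,r$, I would inductively produce compatible factorizations $A^{(j)}_k\rightarrowtail C^{(j)}_k\xrightarrow{\htp} B^{(j)}_k$ as follows. At level~$0$, lift the $r$-simplex $A^{(j)}_0\to B^{(j)}_0$ via $T\subdot$. At level $k\geq 1$, form the pushouts $P^{(j)}_k:=A^{(j)}_k\cup_{A^{(j)}_{k-1}}C^{(j)}_{k-1}$ (which exist since $A^{(j)}_{k-1}\rightarrowtail C^{(j)}_{k-1}$ is a cofibration), and lift the $r$-simplex of induced arrows $P^{(j)}_k\to B^{(j)}_k$ via $T\subdot$ to obtain $P^{(j)}_k\rightarrowtail C^{(j)}_k\xrightarrow{\htp} B^{(j)}_k$; the cofibration $A^{(j)}_k\rightarrowtail C^{(j)}_k$ is then the composite. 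Stacking over $k$ exhibits each output as a cofibration $(A^{(j)}_i)\rightarrowtail (C^{(j)}_i)$ in $F'_n\aC$ followed by a componentwise weak equivalence, so the construction lands in $N\Fac(F'_n\aC)$, and $T'\subdot$ emerges as an $(n+1)$-fold iterated fiber product of copies of $T\subdot$.

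The key sub-claim is that each induced arrow $P^{(j)}_k\to B^{(j)}_k$ is itself a weak cofibration, so that the USE $T\subdot$ is applicable at every stage. This follows by combining Propositions~\ref{prophococart} and~\ref{proppush}: the square $(A^{(j)}_{k-1},C^{(j)}_{k-1},A^{(j)}_k,P^{(j)}_k)$ is homotopy cocartesian as a pushout along a cofibration; comparing with the composite of the original weak cofibration $A^{(j)}_k\to B^{(j)}_k$ and the arrow $B^{(j)}_{k-1}\to B^{(j)}_k$, the universal-property argument exhibits $P^{(j)}_k\to B^{(j)}_k$ up to weak equivalence in $\Ar\aC$ as a map obtained by gluing along a cofibration to a weak cofibration, hence a weak cofibration. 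Granting this, the USE property of $T'\subdot$ follows: the pullback $T'\subdot\times_{N\Ar(F'_n\aC)}N\wcAr(F'_n\aC)$ is an iterated fiber product of $n+1$ copies of the universal simplicial equivalence $T\subdot\times_{N\Ar\aC}N\wcAr\aC\to N\wcAr\aC$ along the successive level-$k$ induced-arrow maps, and both the universal simplicial quasifibration property (Definition~\ref{defunivsq}) and contractibility of fibers are stable under pullback and composition in a tower.

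For part~(ii), I would apply Quillen's Theorem~A to the forgetful inclusion $\w F_n\aC\to \w F'_n\aC$. For each object $(A_i)$ of $\w F'_n\aC$, the comma category $\w F_n\aC\downarrow (A_i)$, whose objects are cofibration sequences $(C_i)\in F_n\aC$ equipped with a componentwise weak equivalence $(C_i)\to (A_i)$ in $F'_n\aC$, has nerve weakly equivalent to the fiber of the universal simplicial equivalence $T'\subdot\times_{N\Ar(F'_n\aC)}N\wcAr(F'_n\aC)\to N\wcAr(F'_n\aC)$ over the degenerate vertex at $\id_{(A_i)}$, hence contractible. The main obstacle throughout is the sub-claim in the second paragraph; the iteration hinges on knowing the induced pushout arrows remain weak cofibrations, which is precisely why Propositions~\ref{prophococart} and~\ref{proppush} are needed and why the USE hypothesis must be stated for weak cofibrations (a USE of factorizations for all arrows would trivialize this step but is a stronger input).
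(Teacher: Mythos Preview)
Your argument for part~(i) is essentially the paper's: both build the USE for $F'_n\aC$ as an iterated fiber product of copies of $T\subdot$, factoring level by level and pushing out to feed the next stage. The paper is terser about why the pushout map $P_k\to B_k$ lands in $\wcAr\aC$---it simply records this as the characterization of weak cofibrations in $F'_n\aC$---while you sketch a direct argument via Propositions~\ref{prophococart} and~\ref{proppush}; either way the content is the same.

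For part~(ii) your approach diverges from the paper's and has a gap. The paper does not use Theorem~A. Instead it builds a second tower $U^{n}\to N\w F'_{n}\aC$, a universal simplicial equivalence constructed by factoring the \emph{structure maps} $A_{k}\to A_{k+1}$ of each sequence (not the vertical arrow $\id_{(A_i)}$), together with a map $U^{n}\to N\w F_{n}\aC$; the weak equivalences $\phi_i\colon A'_i\to A_i$ furnish simplicial homotopies making both triangles in
\[
\xymatrix{
U^{n}\times_{N\w F'_{n}\aC}N\w F_{n}\aC\ar@{->>}[d]_{\htp}\ar[r]&
U^{n}\ar@{->>}[d]_{\htp}\ar@{.>}[dl]\\
N\w F_{n}\aC\ar[r]&N\w F'_{n}\aC
}
\]
commute, and the conclusion follows directly. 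Your Theorem~A route could in principle succeed, but the specific step you give does not: the fiber of your $T'\subdot$ over $\id_{(A_i)}$ parametrizes factorizations $(A_i)\rightarrowtail (C_i)\xrightarrow{\htp}(A_i)$, each carrying the extra datum of a section $(A_i)\rightarrowtail (C_i)$ that a general object of the comma category $\w F_n\aC\downarrow (A_i)$ lacks. There is a forgetful map from that fiber to the comma category, but contractibility of the source does not by itself yield contractibility of the target; closing that gap requires an argument of roughly the same shape as the paper's $U^n$ construction. The paper's direct homotopy-commuting-square method sidesteps the need to analyze any comma category at all.
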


\begin{proof}
The arguments are straightforward modifications of the usual pushout
arguments.  As above, let
$T\subdot$ be the domain of the USE of factorizations of 
weak cofibrations in $\aC$. For part~(i), we denote a typical object
of $\Ar F'_{n}$ as
\[
\xymatrix{%
A_{0}\ar[r]\ar[d]&A_{1}\ar[r]\ar[d]&\dotsb \ar[r]&A_{n}\ar[d]\\
B_{0}\ar[r]&B_{1}\ar[r]&\dotsb \ar[r]&B_{n}
}
\]
(where the maps $A_{i}\to A_{i+1}$ and $B_{i}\to B_{i+1}$ are weak
cofibrations).  Such an object is in $\wcAr F'_{n}$ when the maps
$A_{i}\to B_{i}$ are weak cofibrations and for any factorization
$A_{i}\to X\to B_{i}$ in the image of $T\subdot$, the map
$A_{i+1}\cup_{A_{i}}X\to B_{i+1}$ is a weak cofibration.
For $F'_{n}$, let $T^{0}\subdot$ be the
pullback
\[
\xymatrix{%
T^{0}\subdot\ar[r]\ar@{->>}[d]_{\htp}
&T\subdot\ar@{->>}[d]^{\htp}\\
N\subdot\wcAr F'_{n}\ar[r]&N\subdot\wcAr\aC
}
\]
where the bottom map is induced by the zeroth object
functor $F'_{n}\to \aC$.  Using the map $T\subdot\to N\Fac\aC$, we get
a simplicial map from $T^{0}$ to the nerve of the category of diagrams
of the form
\[
\xymatrix@R-1pc{%
A_{0}\ar[r]\ar@{ >->}[d]&A_{1}\ar[r]\ar[dd]&\dotsb \ar[r]&A_{n}\ar[dd]\\
X_{0}\ar[d]_{\htp}\\
B_{0}\ar[r]&B_{1}\ar[r]&\dotsb \ar[r]&B_{n}.
}
\]
We have a functor from this category to $\Ar\aC$ taking the object
pictured above to the pushout map
\[
X_{0}\cup_{A_{0}}A_{1}\to B_{1},
\]
and the composite map $T^{0}\subdot\to \Ar\aC$ factors as a map 
$p_{0}\colon T^{0}\subdot\to \wcAr\aC$.  Inductively, having
constructed $T^{i}\subdot$ and $p_{i}\colon T^{i}\subdot\to \wcAr\aC$,
define $T^{i+1}\subdot$ as the pullback 
\[
\xymatrix{%
T^{i+1}\subdot\ar[r]\ar@{->>}[d]_{\htp}
&T\subdot\ar@{->>}[d]^{\htp}\\
T^{i}\subdot\ar[r]_{p_{i}}&N\subdot\wcAr\aC
}
\]
and $p_{i+1}\colon T^{i+1}\subdot\to \wcAr\aC$ by the analogous
pushout.  By construction, $T^{n}\subdot$ admits a map to the nerve of
the category of diagrams of the form
\[
\xymatrix@R-1pc{%
A_{0}\ar[r]\ar@{ >->}[d]&A_{1}\ar[r]\ar@{ >->}[d]&\dotsb \ar[r]&A_{n}\ar@{ >->}[d]\\
X_{0}\ar[d]_{\htp}\ar[r]&X_{1}\ar[d]_{\htp}\ar[r]&\dotsb\ar[r]&X_{n}\ar[d]^{\htp}\\
B_{0}\ar[r]&B_{1}\ar[r]&\dotsb \ar[r]&B_{n}
}
\]
such that each map $X_{i}\cup_{A_{i}}A_{i+1}\to X_{i+1}$ is a weak
cofibration, i.e., the category $\Fac F'_{n}$.  The composite functor
$T^{n}\subdot\to \wcAr F'_{n}$ is the composite 
\[
T^{n}\subdot\to T^{n-1}\subdot\to \dotsb \to T^{0}\subdot\to \wcAr F'_{n}
\]
and so is a universal simplicial equivalence.  This constructs the USE
of factorizations of weak cofibrations for $F'_{n}$ and proves part~(i).

For part~(ii), let $U^{1}$ be the
pullback
\[
\xymatrix{%
U^{1}\subdot\ar[r]\ar@{->>}[d]_{\htp}
&T\subdot\ar@{->>}[d]^{\htp}\\
N\w F'_{n}\aC\ar[r]&N\wcAr\aC
}
\]
where the bottom map is induced by the functor $f_{1}\colon
\w F'_{n}\aC\to \w\wcAr\aC$ that takes the object
\[
\xymatrix{%
A_{0}\ar[r]^{f_{1}}&A_{1}\ar[r]^{f_{2}}&\dotsb \ar[r]^{f_{n}}&A_{n},
}
\]
to the arrow $f_{1}$.  Using the map $T\subdot\to N\Fac\aC$, we get
a simplicial map from $U^{1}$ to the nerve of (the subcategory of
weak equivalences in) the category of commuting diagrams
of the form
\[
\xymatrix{%
A_{0}\ar@{ >->}[r]^{f'_{1}}\ar[dr]_{f_{1}}&A'_{1}\ar[d]_{\htp}^{\phi_{1}}\ar[dr]^{g_{2}}\\
&A_{1}\ar[r]_{f_{2}}&A_{2}\ar[r]&\dotsb\ar[r]_{f_{n}}&A_{n}
}
\]
where the map $f'_{1}$ is a cofibration, the maps $f_{i}$ are weak
cofibrations, and the map $\phi_{1}$ is a weak equivalence.
Inductively constructing $U^{i+1}$ as the pullback
\[
\xymatrix{%
U^{i+1}\subdot\ar[r]\ar@{->>}[d]_{\htp}
&T\subdot\ar@{->>}[d]^{\htp}\\
U^{i}\ar[r]_{g_{i+1}}&N\wcAr\aC,
}
\]
we obtain a universal simplicial equivalence $U^{n}\to N\w F'_{n}\aC$
and a simplicial map from $U^{n}$ to the nerve of the category of
commuting diagrams of the form
\[
\xymatrix{%
A_{0}\ar[dr]_{f_{1}}\ar@{ >->}[r]^{f'_{1}}&A'_{1}\ar[d]_{\htp}^{\phi_{1}}\ar@{ >->}[r]
&\dotsb \ar@{ >->}[r]&A'_{n}\ar[d]_{\htp}^{\phi_{n}}\\
&A_{1}\ar[r]&\dotsb \ar[r]&A_{n}.
}
\]
This constructs a map $U^{n}\to N\w F_{n}\aC$. 
The maps $\phi_{i}$ in
the diagram above induce a homotopy between the composite map
$U^{n}\to N\w F_{n}\aC\to N\w F'_{n}\aC$ and the universal simplicial
equivalence $U^{n}\to N\w F'_{n}\aC$.  It follows that the right-hand
triangle in the diagram
\[
\xymatrix{%
U^{n}\times_{N\w F'_{n}\aC}N\w F_{n}\aC\ar@{->>}[d]_{\htp}\ar[r]&
U^{n}\ar@{->>}[d]_{\htp}\ar@{.>}[dl]\\
N\w F_{n}\aC\ar[r]&N\w F'_{n}\aC
}
\]
commutes up to homotopy.  Likewise, the maps $\phi_{i}$ induce a
homotopy for the left-hand triangle.  This proves part~(ii).
\end{proof}

A difficulty in generalizing the previous theorem to $\Spdot[n]$
arises in part~(i):
One would like to continue the construction of the $T^{*}\subdot$ down
the rows of $\Spdot[n]$, but the trick in the previous argument fails
at this stage because the map from the pushout to the appropriate
target object $B_{1,2}$ need not be a weak cofibration.  (In the case
when all maps are weak cofibrations, Lemma~\ref{lemfactuse}
immediately implies that $\Spdot[n]$ has a USE of 
factorizations.)  On the other hand, the analogue of part~(ii) for
$\Spdot[n]$ follows from part~(ii) for $F_{n}$.

\begin{cor}[{\cite[2.9]{BlumbergMandell}}]\label{corspdot}
Let $\aC$ be a Waldhausen category with a USE of
factorizations for weak cofibrations. 
The forgetful functor $\w\Sdot[n]\aC\to \w\Spdot[n]\aC$ induces a weak
equivalence on nerves.
\end{cor}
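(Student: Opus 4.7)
The plan is to reduce the statement to Theorem~\ref{thmfpdot}(ii) via the commutative square
\[
\xymatrix@-1pc{
N\w\Sdot[n]\aC \ar[r]\ar[d] & N\w\Spdot[n]\aC \ar[d]\\
N\w F_{n-1}\aC \ar[r] & N\w F'_{n-1}\aC
}
\]
in which both vertical maps are induced by the top-row forgetful functor $\{A_{i,j}\}\mapsto (A_{0,1}\to\dotsb\to A_{0,n})$. The bottom horizontal is a weak equivalence by Theorem~\ref{thmfpdot}(ii) applied with index $n-1$. The left vertical is an equivalence of categories: as noted after the definition of $\Sdot$, the inverse functor sends a sequence of cofibrations to the diagram with $A_{i,j}=A_{0,j}/A_{0,i}$, and the two composites are naturally isomorphic to the identities. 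Hence, if the right vertical is also a weak equivalence, then so is the top horizontal, which is the desired statement.

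What remains is to establish that $N\w\Spdot[n]\aC\to N\w F'_{n-1}\aC$ is a weak equivalence; this is the USE analogue of Proposition~\ref{propfprime}. To prove it, I would mimic the pullback construction in the proof of Theorem~\ref{thmfpdot}(ii): iterate the USE $T\subdot$ along the edges of the top row to produce a simplicial set $V\subdot$ equipped with a universal simplicial equivalence $V\subdot\to N\w F'_{n-1}\aC$ together with a coherent choice, for each simplex, of a factorization $A_{0,j-1}\to X_{j}\to A_{0,j}$ (cofibration followed by weak equivalence) for every $j$. Given these factorizations, the subdiagonal entries $A_{i,j}$ of an object of $\Spdot[n]\aC$ can be produced as iterated pushouts formed from the $X_{j}$, and the weak equivalences $X_{j}\xrightarrow{\htp} A_{0,j}$ then assemble into a coherent lift $V\subdot\to N\w\Spdot[n]\aC$.

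The main obstacle is verifying that these iterated pushouts actually land in $\Spdot[n]\aC$ and that the constructed map $V\subdot\to N\w\Spdot[n]\aC$ is compatible, up to a natural simplicial homotopy, with the forgetful functor to $N\w F'_{n-1}\aC$. For the first point, the non-functorial gluing results of this appendix, in particular Proposition~\ref{prophococart} and Proposition~\ref{proppush}, ensure that each successive pushout square is homotopy cocartesian and that the induced horizontal maps remain weak cofibrations, so that the resulting diagram genuinely defines an object of $\w\Spdot[n]\aC$. For the second point, the factorization weak equivalences $X_{j}\xrightarrow{\htp} A_{0,j}$ assemble, in the manner of the $\phi_{i}$ homotopy at the end of the proof of Theorem~\ref{thmfpdot}(ii), into a simplicial homotopy between the two composites $V\subdot\to N\w\Spdot[n]\aC\to N\w F'_{n-1}\aC$ and $V\subdot\to N\w F'_{n-1}\aC$. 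Combined with the universal-quasifibration pullback argument, this forces the right vertical of the square to be a weak equivalence, completing the proof.
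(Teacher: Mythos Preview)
Your reduction via the square is sound, and once you unwind it your pullback $V\subdot\times_{N\w F'_{n-1}\aC}N\w\Spdot[n]\aC$ is exactly the paper's $M$; the paper simply works with $M$ directly over $\Spdot[n]$ and builds $\Phi\colon M\to N\w\Sdot[n]\aC$ by the same pushout recipe you describe. So the organization differs but the content should not.

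The genuine gap is that the ``universal-quasifibration pullback argument'' requires \emph{two} homotopies, and you have only supplied one. What you check---that $V\subdot\to N\w\Spdot[n]\aC\to N\w F'_{n-1}\aC$ is homotopic to the USE $V\subdot\to N\w F'_{n-1}\aC$ via the $\phi_j$---is the easy triangle; it is essentially already Theorem~\ref{thmfpdot}(ii). What is missing is the other triangle: that the composite
\[
V\subdot\times_{N\w F'_{n-1}\aC}N\w\Spdot[n]\aC \to V\subdot \to N\w\Spdot[n]\aC
\]
(forget the $\Spdot[n]$-data, then rebuild by pushouts) is homotopic to the projection to $N\w\Spdot[n]\aC$. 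This asks you to connect, naturally in the simplicial direction, an \emph{arbitrary} object $\{A_{i,j}\}$ of $\Spdot[n]\aC$ to the pushout-constructed one $\{A'_{0,j}/A'_{0,i}\}$. There is no direct map here: the universal property of $A'_{0,j}/A'_{0,i}$ would require $A'_{0,i}\to A_{i,j}$ to factor through $*$, but in $\Spdot[n]$ the object $A_{i,i}$ is only \emph{weakly} trivial, so one must pass through an intermediate object and produce a zigzag. This is exactly why the paper says the corresponding triangle commutes only up to \emph{generalized} simplicial homotopy and defers the construction to \cite[2.9]{BlumbergMandell}. Your ``main obstacle'' paragraph misidentifies the difficulty: landing in $\Spdot[n]$ and the $\phi_j$-homotopy are routine, while the comparison with a given $\Spdot[n]$-object is the substantive step you still need to carry out.
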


\begin{proof}
Let $M=U^{n}\times_{N\w F'_{n}\aC}N\w\Spdot[n]\aC$, where $U^{n}\to
N\w F'_{n}\aC$ is as in the proof of Theorem~\ref{thmfpdot}.  We have a
universal simplicial equivalence $M\to N\w\Spdot[n]\aC$, and pushout
induces a map $\Phi \colon M\to N\w\Sdot[n]\aC$ as in the proof of Theorem~2.9 in
\cite{BlumbergMandell}.  Looking at the diagram,
\[
\xymatrix{%
M\times_{N\w\Spdot[n]\aC}N\w\Sdot[n]\aC\ar@{->>}[d]_{\htp}\ar[r]&
M\ar@{->>}[d]_{\htp}\ar@{.>}[dl]\\
N\w\Sdot[n]\aC\ar[r]&N\w\Spdot[n]\aC
}
\]
The argument of \cite[2.9]{BlumbergMandell} generalizes to show that
the left-hand triangle commutes up to simplicial homotopy and the
right-hand triangle commutes up to generalized simplicial homotopy.
\end{proof}

\subsection{Iterating $F'\subdot$ and $\Spdot$}

Corollary~\ref{corspdot} provides a space-level comparison of the
(delooped) $K$-theory spaces provided by the $\Sdot$ and $\Spdot$
constructions.  We next extend this to a spectrum-level comparison by
comparing the iterated $\Sdot$ construction with an $\Spdot$ analogue.
Because we can not prove the analogue of part~(i) of
Theorem~\ref{thmfpdot} for $\Spdot$ (the generalization of
\cite[2.8]{BlumbergMandell}), we need to take a direct approach to the
construction of iterated $\Spdot$.  Again, we find it convenient to
start by examining $F'\subdot$.

\begin{defn}\label{deffpdotq}
Let $\aC$ be a Waldhausen category with a USE of factorizations for weak
cofibrations.  For $q,n_{1},\dotsc,n_{q}\geq 0$, let $F^{(q)}_{n_{1},\dotsc,n_{q}}\aC$ be the
Waldhausen category $F_{n_{1}}F_{n_{2}}\dotsb F_{n_{q}}\aC$ and let
$F^{\prime(q)}_{n_{1},\dotsc,n_{q}}\aC$ be the 
Waldhausen category $F'_{n_{1}}F'_{n_{2}}\dotsb F'_{n_{q}}\aC$.
Similarly, let $S^{(q)}_{n_{1},\dotsc,n_{q}} \aC$ denote the
Waldhausen category $S_{n_{1}} S_{n_{2}} \dotsb S_{n_{q}}\aC$ given by
iterating the $\Sdot$ construction.
\end{defn}

Combining the construction of the universal simplicial
equivalence $U^{n}\to \w F'_{n}\aC$ in part~(ii) of
Theorem~\ref{thmfpdot} with the construction in part~(i) of
Theorem~\ref{thmfpdot} proves the following theorem.

\begin{thm}\label{thmuq}
Let $\aC$ be a Waldhausen category with a USE of factorizations of
weak cofibrations.  There exist weak equivalences 
\[
u\colon U^{(q)}_{n_{1},\dotsc,n_{q}}\to
N\w F^{\prime(q)}_{n_{1},\dotsc,n_{q}}\aC\qquad \text{and}\qquad 
p\colon U^{(q)}_{n_{1},\dotsc,n_{q}}\to
N\w F^{(q)}_{n_{1},\dotsc,n_{q}}\aC
\]
and a simplicial homotopy from the composite map 
\[
U^{(q)}_{n_{1},\dotsc,n_{q}}\to
N\w F^{(q)}_{n_{1},\dotsc,n_{q}}\aC\to 
N\w F^{\prime(q)}_{n_{1},\dotsc,n_{q}}\aC
\]
to $u$; moreover $u$ is a universal simplicial quasifibration.
\end{thm}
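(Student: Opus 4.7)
The plan is to induct on $q$. For the base case $q=1$, take $U^{(1)}_n := U^n$ from the proof of Theorem~\ref{thmfpdot}(ii): the universal simplicial equivalence $u$ is the map $U^n \to N\w F'_n\aC$ constructed there, $p$ is the map $U^n\to N\w F_n\aC$ obtained from the factorizations, and the weak equivalences $\phi_{1},\dotsc,\phi_{n}$ produced in that proof provide the required generalized simplicial homotopy between $p$ followed by the forgetful functor and $u$. Since $u$ is already shown to be a universal simplicial equivalence, in particular it is a universal simplicial quasifibration.

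For the inductive step, assume the theorem holds for all sequences of length $q$, and fix a sequence $(n_{1},\dotsc,n_{q+1})$. Setting $\aC' := F^{\prime(q)}_{n_{2},\dotsc,n_{q+1}}\aC$, iterating Theorem~\ref{thmfpdot}(i) $q$ times equips $\aC'$ with a USE of factorizations for weak cofibrations. Applying Theorem~\ref{thmfpdot}(ii) to $\aC'$ at parameter $n_{1}$ produces a simplicial set $V\subdot$ together with a universal simplicial equivalence $V \to N\w F'_{n_{1}}\aC' = N\w F^{\prime(q+1)}_{n_{1},\dotsc,n_{q+1}}\aC$, a comparison map $\bar p \colon V \to N\w F_{n_{1}}\aC'$, and a generalized simplicial homotopy comparing $\bar p$ with the universal simplicial equivalence after composition with the forgetful functor. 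This resolves the outermost $F'_{n_{1}}$ layer.

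To construct $U^{(q+1)}$ from $V$, we must further replace the inner $F^{\prime(q)}$ appearing in each slot of $F_{n_{1}}\aC'$ by $F^{(q)}$. We achieve this by iterated pullbacks: pull $V$ back along each of the $n_{1}+1$ projections $N\w F_{n_{1}}\aC' \to N\w F^{\prime(q)}\aC$ against $U^{(q)}$ via $u^{(q)}$, then pull back further along the USE data at each lower level so that the chosen $F^{(q)}$-replacements are compatible with the cofibrations connecting the slots. The composite $u = u^{(q+1)}$ is then a composite of universal simplicial equivalences and is therefore itself a universal simplicial equivalence; the map $p$ and the generalized simplicial homotopy are assembled by combining $\bar p$ with the corresponding data at each lower level supplied by $p^{(q)}$ and the inductive homotopy.

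The main technical obstacle is the cross-level compatibility of the factorizations: cofibrations produced by factoring weak cofibrations at one level must respect the homotopy cocartesian conditions at the neighboring levels. This is controlled by Propositions~\ref{prophococart} and~\ref{proppush}, which guarantee that the iterated pullback construction preserves homotopy cocartesian squares and weak cofibrations, so that the resulting data genuinely assembles to an object of $F^{(q+1)}\aC$. Stability of universal simplicial quasifibrations under pullback then yields the moreover clause.
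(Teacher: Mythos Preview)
Your overall strategy---inducting on $q$ and invoking parts~(i) and~(ii) of Theorem~\ref{thmfpdot}---is in the right spirit and matches the paper's terse indication. However, there is a genuine gap in your inductive step. After applying part~(ii) at the outermost level to obtain $\bar p\colon V \to N\w F_{n_1}\aC'$, you propose to pull back along each of the $n_1+1$ slot projections against the inductive $u^{(q)}\colon U^{(q)}\to N\w F^{\prime(q)}\aC$. But this produces $n_1+1$ \emph{independent} replacements $B_i\in F^{(q)}\aC$ with weak equivalences $B_i\to A'_i$; it supplies no maps $B_i\to B_{i+1}$ at all, let alone cofibrations in $F^{(q)}\aC$ compatible with the given cofibrations $A'_i\rightarrowtail A'_{i+1}$ in $\aC'$. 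The simplicial set $U^{(q)}$ lives over the nerve of the \emph{weak equivalences} in $F^{\prime(q)}\aC$, so it carries no data along cofibrations between slots. Your proposed remedy---``pull back further along the USE data at each lower level''---is not specified, and Propositions~\ref{prophococart} and~\ref{proppush} concern homotopy cocartesian squares rather than the manufacture of compatible cofibrations between independently chosen replacements.

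The paper's construction avoids this by not using the inductive $U^{(q)}$ for slot-wise replacement. Instead one runs the iterated-pullback machine of part~(ii) directly over the USE $T_\bullet$ for $\aC$ itself, stepping through all the entries of the $n_1\times\dotsb\times n_q$ hypercube. At each step one factors a single weak cofibration in $\aC$ obtained by pushing out along the already-constructed cofibrations, exactly as in the one-dimensional case; the role of part~(i) is to show that these pushout-then-factor steps propagate correctly from one level of the hypercube to the next. Because each new entry is built from its already-replaced neighbors rather than chosen independently, the compatibility problem never arises.
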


\begin{cor}\label{corfpqmain}
Let $\aC$ be a Waldhausen category with a USE of factorizations of
weak cofibrations. 
An $n_{1}\times \dotsb \times n_{q}$ rectangle of maps in $\aC$ is an
object of $F^{\prime(q)}_{n_{1},\dotsc,n_{q}} \aC$ if and only there
exists a weak equivalence (of rectangular diagrams) to it from an
object of $F^{(q)}_{n_{1},\dotsc,n_{q}} \aC$. 
\end{cor}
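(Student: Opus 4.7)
The plan is to derive the corollary directly from Theorem~\ref{thmuq}. There are two directions to verify, and the nontrivial one (``only if'') is essentially a formal consequence of the existence of the maps $u$ and $p$ and the simplicial homotopy between them, while the ``if'' direction follows from the defining property of weak cofibrations.

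For the ``if'' direction, suppose $A'$ is an object of $F^{(q)}_{n_{1},\dotsc,n_{q}}\aC$ and $A'\xrightarrow{\htp} A$ is a weak equivalence of rectangular diagrams (i.e., a levelwise weak equivalence) to a rectangle $A$. I would argue by induction on $q$: the conditions for $A$ to lie in $F^{\prime(q)}_{n_{1},\dotsc,n_{q}}\aC$ require certain maps (at every level of iteration) to be weak cofibrations. Each such map in $A$ is paired via the levelwise weak equivalence with the corresponding map in $A'$, which by hypothesis is a cofibration in the appropriate iterated sense, and cofibrations are weak cofibrations; since weak cofibrations are by definition stable under zigzags of weak equivalences in the arrow category, the corresponding map in $A$ is a weak cofibration. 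The straightforward induction over $q$ is needed to handle the fact that at inner levels of the iteration the ``zigzags of weak equivalences'' take place in $F^{\prime(r)}$ for $r<q$, but Proposition~\ref{propwcofpf}-style characterizations of weak cofibrations at each level let this go through.

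For the ``only if'' direction, let $A$ be an object of $F^{\prime(q)}_{n_{1},\dotsc,n_{q}}\aC$, viewed as a $0$-simplex of $N\w F^{\prime(q)}_{n_{1},\dotsc,n_{q}}\aC$. By Theorem~\ref{thmuq}, the map $u\colon U^{(q)}_{n_{1},\dotsc,n_{q}}\to N\w F^{\prime(q)}_{n_{1},\dotsc,n_{q}}\aC$ is a universal simplicial equivalence, hence a universal simplicial quasifibration and a weak equivalence; pulling back along the inclusion $\{A\}\hookrightarrow N\w F^{\prime(q)}_{n_{1},\dotsc,n_{q}}\aC$ produces a quasifibration over a point with contractible total space, so the fiber is nonempty. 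Pick a lift $\tilde A\in U^{(q)}_{n_{1},\dotsc,n_{q}}$ of $A$, and set $A'=p(\tilde A)$, an object of $F^{(q)}_{n_{1},\dotsc,n_{q}}\aC$. The simplicial homotopy provided by Theorem~\ref{thmuq} restricts at the $0$-simplex $\tilde A$ to a $1$-simplex of $N\w F^{\prime(q)}_{n_{1},\dotsc,n_{q}}\aC$ joining the image of $A'$ under the forgetful functor $F^{(q)}\to F^{\prime(q)}$ to $A$. Since a $1$-simplex of $N\w F^{\prime(q)}_{n_{1},\dotsc,n_{q}}\aC$ is by definition a morphism in the weak-equivalence subcategory, this is precisely a weak equivalence of rectangular diagrams from $A'$ to $A$.

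The step that requires the most care is extracting the weak equivalence from the simplicial homotopy: one must confirm that the two ``ends'' of the homotopy, evaluated at the vertex $\tilde A$, give exactly $A$ and the image of $A'$, and that the resulting edge in $N\w F^{\prime(q)}$ indeed corresponds to a single weak equivalence of diagrams (rather than a zigzag) in the iterated sense. Once this identification is made, both directions of the corollary follow. This is likely why the statement is packaged as a corollary rather than a theorem: all the real work is in Theorem~\ref{thmuq}, and the statement merely unpacks its content at the level of vertices and edges.
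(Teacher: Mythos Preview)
Your proposal is correct and is precisely the argument the paper has in mind; the paper states the result as an immediate corollary of Theorem~\ref{thmuq} without proof, and you have correctly unpacked it. The ``only if'' direction is exactly as you describe: surjectivity of $u$ on vertices (from its being a universal simplicial equivalence) gives a lift $\tilde A$, and the simplicial homotopy of Theorem~\ref{thmuq}, evaluated on the degenerate $1$-simplex $s_{0}\tilde A$, yields a genuine $1$-simplex of $N\w F^{\prime(q)}_{n_{1},\dotsc,n_{q}}\aC$, i.e.\ a single weak equivalence of rectangular diagrams from $p(\tilde A)$ to $A$ (not a zigzag---the construction in the proof of Theorem~\ref{thmfpdot}(ii) shows the homotopy comes from natural transformations $\phi_{i}$ pointing in exactly this direction). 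Your inductive argument for the ``if'' direction is also correct; the only point to make explicit is that a cofibration in $F^{(q-1)}$ is in particular a cofibration in $F^{\prime(q-1)}$, which follows by induction from the definitions.
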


This corollary makes the categories
$F^{\prime(q)}_{n_{1},\dotsc,n_{q}} \aC$
significantly more tractable.  For example, it follows that the usual
symmetric group action on $F^{(q)}_{n,\dotsc,n} \aC$ extends to
$F^{\prime(q)}_{n,\dotsc,n} \aC$.  

We now construct 
categories $\Spdotq \aC$ that play the role of the iterated $\Spdot$
construction.  For this, recall that $\Ar[n]$
denotes the category with objects $(i,j)$ for $0\leq i\leq j\leq n$ and
a unique map $(i,j)\to (i',j')$ for $i\leq i'$ and $j\leq j'$.  We
write an object of $\Ar[n_{1}]\times \dotsb \times \Ar[n_{q}]$ as
$(i_{1},j_{1};\dotsc;i_{q},j_{q})$.

\begin{defn}\label{defspdotq}
Let $\aC$ be a Waldhausen category with a USE of factorizations of
weak cofibrations. Let
$\Spdotq[n_{1},\dotsc,n_{q}]\aC$ be the full subcategory of functors 
\[
\Ar[n_{1}] \times \dotsb \times \Ar[n_{q}]\to \aC
\]
such that: 
\begin{itemize}
\item The initial map $*\to A_{i_{1},j_{1};\dotsc;i_{q},j_{q}}$ is a
weak equivalence whenever $i_{k}=j_{k}$ for some $k$.
\item The $n_{1}\times \dotsb \times n_{q}$ rectangular subdiagram
$A_{0,j_{1};\dotsb;0,j_{q}}$ is an object of
$F^{\prime(q)}_{n_{1},\dotsc,n_{q}} \aC$. 
\item For every object 
$(i_{1},j_{1};\dotsc;i_{q},j_{q})$ in $\Ar[n_{1}]\times \dotsb \times
\Ar[n_{q}]$, the square
\[  \xymatrix@-1pc{%
A_{0,i_{1};\dotsc;0,i_{q}}\ar[r]\ar[d]&A_{0,j_{1};\dotsc;0,j_{q}}\ar[d]\\
A_{i_{1},i_{1};\dotsc;i_{q},i_{q}}\ar[r]&A_{i_{1},j_{1};\dotsc;i_{q},j_{q}}
} \]
is homotopy cocartesian.
\end{itemize}
The subcategory $\w\Spdotq[n_{1},\dotsc,n_{q}]$ consists of the maps in
$\Spdot[n_{1},\dotsc,n_{q}]$ that are objectwise weak equivalences.
\end{defn}

We understand $\Spdotmac{0}{}\aC$ to be $\aC$ and we see that
$\Spdotmac{1}{n}\aC$ is $\Spdot[n] \aC$.  As an easy consequence of
Corollary~\ref{corfpqmain}, $\Spdotq \aC$ and $\w\Spdotq \aC$ assemble
into simplicial categories.  Likewise, it follows from
Corollary~\ref{corfpqmain} that $\Sdotq \aC$ and $\w\Sdotq \aC$ are
simplicial subcategories of $\Spdotq \aC$ and $\w\Spdotq \aC$.  The
argument for Corollary~\ref{corspdot} generalizes to prove the
following theorem, which provides the replacement for Theorems~2.8
and~2.9 of \cite{BlumbergMandell}.

\begin{thm}
The forgetful functor $\w\Sdotq \aC\to \w\Spdotq \aC$ induces a weak equivalence
on nerves.
\end{thm}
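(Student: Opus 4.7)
The plan is to follow the template of Corollary~\ref{corspdot} using Theorem~\ref{thmuq} as the multi-index replacement for Theorem~\ref{thmfpdot}. Form the pullback
\[
M^{(q)}_{n_{1},\dotsc,n_{q}}=U^{(q)}_{n_{1},\dotsc,n_{q}}\times_{N\w F^{\prime(q)}_{n_{1},\dotsc,n_{q}}\aC}N\w\Spdotq[n_{1},\dotsc,n_{q}]\aC,
\]
where the forgetful functor $\Spdotq\aC\to F^{\prime(q)}\aC$ is the ``restrict to the $A_{0,j_{1};\dotsc;0,j_{q}}$ rectangular subdiagram'' functor of Definition~\ref{defspdotq}. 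Since $u\colon U^{(q)}\to N\w F^{\prime(q)}\aC$ is both a weak equivalence and a universal simplicial quasifibration by Theorem~\ref{thmuq}, the pullback projection $M^{(q)}\to N\w\Spdotq\aC$ is a universal simplicial equivalence, and the composite with $p\colon U^{(q)}\to N\w F^{(q)}\aC$ supplies cofibration-replacement data for the rectangular boundary of every simplex of $N\w\Spdotq\aC$.

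Next, I would construct a simplicial map $\Phi^{(q)}\colon M^{(q)}\to N\w\Sdotq\aC$ by applying this boundary cofibration replacement and filling in the remainder of the $\Ar[n_{1}]\times\dotsb\times\Ar[n_{q}]$ diagram by iterated pushouts off of $A_{0,j_{1};\dotsc;0,j_{q}}$, just as in the proof of Corollary~\ref{corspdot}. An object of $\Spdotq\aC$ is determined up to weak equivalence by its rectangular boundary thanks to the homotopy cocartesian condition in Definition~\ref{defspdotq}, and since the $p$-replaced boundary lies in $F^{(q)}\aC$, each filled-in square is a strict pushout along a cofibration, so $\Phi^{(q)}$ genuinely lands in $N\w\Sdotq\aC$. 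The simplicial homotopy from $u$ to the composite $U^{(q)}\to N\w F^{(q)}\aC\to N\w F^{\prime(q)}\aC$ guaranteed by Theorem~\ref{thmuq} provides weak equivalences $\phi$ between the two boundaries, and these extend via the pushout filling to objectwise weak equivalences between $\Phi^{(q)}$ and the original $\w\Spdotq\aC$-object pulled back into $M^{(q)}$.

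Finally, I would verify that in the diagram
\[
\xymatrix{
M^{(q)}\times_{N\w\Spdotq\aC}N\w\Sdotq\aC\ar@{->>}[d]_{\htp}\ar[r]
&M^{(q)}\ar@{->>}[d]^{\htp}\ar@{.>}[dl]_{\Phi^{(q)}}\\
N\w\Sdotq\aC\ar[r]&N\w\Spdotq\aC
}
\]
both triangles commute up to (generalized) simplicial homotopy: the right-hand triangle via the objectwise $\phi$-weak equivalences of the previous paragraph, and the left-hand triangle because on a point already in $N\w\Sdotq\aC$ the cofibration-replacement data can be chosen as the identity, so $\Phi^{(q)}$ agrees with the projection up to the same $\phi$-homotopies. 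The main obstacle is essentially bookkeeping: propagating the single-coordinate cofibration replacement and the pushout filling through all $q$ directions while respecting the simplicial structure in the face/degeneracy indices as well as in the internal simplicial structure on $M^{(q)}$. Since Theorem~\ref{thmuq} already delivers the replacement data in a simplicially coherent fashion, and the homotopy cocartesian condition in Definition~\ref{defspdotq} is coordinate-symmetric, no genuinely new technical input beyond Corollary~\ref{corspdot} and Theorem~\ref{thmuq} is required.
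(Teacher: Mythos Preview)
Your proposal is correct and follows essentially the same approach as the paper, which simply states that ``the argument for Corollary~\ref{corspdot} generalizes'' using Theorem~\ref{thmuq} in place of Theorem~\ref{thmfpdot}. You have spelled out precisely this generalization: forming the pullback $M^{(q)}$ over $N\w F^{\prime(q)}\aC$, building $\Phi^{(q)}$ by pushout-filling from the replaced boundary, and invoking the simplicial homotopy of Theorem~\ref{thmuq} to make the two triangles commute up to (generalized) simplicial homotopy.
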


\begin{cor}\label{corusewkexact}
Let $F\colon \aC\to \aD$ be a weakly exact functor.  If $\aD$ has a
USE of factorizations of weak cofibrations, then $F$ induces a map of
$K$-theory spectra.
\end{cor}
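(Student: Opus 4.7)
My plan is to mimic the strategy used to prove Theorem~\ref{thmweakexact} from Proposition~\ref{propbm} and Proposition~\ref{propwexact}, substituting the iterated $\Spdotq$ construction of Definition~\ref{defspdotq} for the iterated $\Spdot$ construction of Proposition~\ref{propbm}. The underlying idea is that although we cannot produce a USE of factorizations for weak cofibrations on $\Spdot\aD$ (i.e.\ we lack the analogue of part~(i) of Theorem~\ref{thmfpdot} for $\Spdot$), we have nonetheless defined $\Spdotq \aD$ directly, and it carries the multisimplicial structure needed to assemble a spectrum.

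First I would check that $F$ induces, for every $q$ and every multi-index $(n_{1},\dotsc,n_{q})$, a functor
\[
F\colon \Sdotq[n_{1},\dotsc,n_{q}]\aC\longrightarrow \Spdotq[n_{1},\dotsc,n_{q}]\aD.
\]
This is essentially immediate from the definition of ``weakly exact'': $F$ preserves weak equivalences, sends cofibrations to weak cofibrations, sends pushouts along cofibrations to homotopy cocartesian squares, and makes $*\to F*$ a weak equivalence. Each of these conditions matches exactly with a bullet in Definition~\ref{defspdotq}, and by Corollary~\ref{corfpqmain} the resulting rectangular diagram in $\aD$ satisfies the ``comes from $F^{\prime(q)}$'' condition because it already comes (via $F$) from a diagram in $F^{(q)}\aC\subset F^{\prime(q)}\aD$. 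Furthermore, this functor is weakly exact, and the same verification shows that $F$ commutes strictly with all face and degeneracy operators in each of the $q$ simplicial directions.

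Next I would invoke the theorem immediately preceding the corollary, which says that the inclusion $\w\Sdotq\aD\to \w\Spdotq\aD$ induces a weak equivalence on nerves for each multi-index $(n_{1},\dotsc,n_{q})$. Because this inclusion is itself multisimplicial (by the remark after Definition~\ref{defspdotq} that $\Sdotq\aD$ and $\w\Sdotq\aD$ are multisimplicial subcategories of $\Spdotq\aD$ and $\w\Spdotq\aD$), we obtain for every $q$ a zigzag of multisimplicial maps
\[
N\w\Sdotq\aC\xrightarrow{\ F\ }N\w\Spdotq\aD\xleftarrow{\ \htp\ }N\w\Sdotq\aD,
\]
in which the right-hand arrow is a levelwise weak equivalence. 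Realizing along each simplicial direction, this yields a zigzag of the standard $q$-fold delooping models for $K\aC$ and $K\aD$ in which the wrong-way arrow is a weak equivalence. Moreover these zigzags are compatible as $q$ varies with the spectrum bonding maps, because both the bonding maps and $F$ are defined through strictly multisimplicial functors. Hence we obtain a well defined map $F_{*}\colon K\aC\to K\aD$ in the stable homotopy category, as desired.

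The main obstacle is the one already circumvented by introducing $\Spdotq\aD$ as an iterated construction rather than by iterating $\Spdot$: without an analogue of Theorem~\ref{thmfpdot}.(i) for $\Spdot$, the naive inductive construction of a USE of factorizations on $\Spdot\aD$ breaks down, and so one cannot literally repeat the proof of Theorem~\ref{thmweakexact}. Everything else in the argument is bookkeeping to confirm that the zigzag is indeed multisimplicial and that no functoriality of the factorizations is used in passing from $N\w\Sdotq\aC$ to $N\w\Spdotq\aD$, which is automatic since the target side of the zigzag is the one where the USE hypothesis lives.
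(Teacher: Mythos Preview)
Your proposal is correct and follows precisely the approach the paper intends: the corollary is stated without proof because it is meant to follow immediately from the preceding theorem via exactly the zigzag
\[
N\w\Sdotq\aC\xrightarrow{\ F\ }N\w\Spdotq\aD\xleftarrow{\ \htp\ }N\w\Sdotq\aD
\]
you describe. One small imprecision: the phrase ``$F^{(q)}\aC\subset F^{\prime(q)}\aD$'' is not literally meaningful, and Corollary~\ref{corfpqmain} applied to $\aD$ requires a weak equivalence from an object of $F^{(q)}\aD$, not $F^{(q)}\aC$. The clean way to verify bullet~2 of Definition~\ref{defspdotq} is to argue inductively that $F$ induces a weakly exact functor $F^{(q-1)}\aC\to F^{\prime(q-1)}\aD$ (using Theorem~\ref{thmfpdot}.(i) to know the target has a USE), so that cofibrations in $F^{(q-1)}\aC$ go to weak cofibrations in $F^{\prime(q-1)}\aD$; this is routine from the characterization of weak cofibrations in $F'_{n}$ given in the proof of Theorem~\ref{thmfpdot}.(i) together with the fact that $F$ preserves homotopy cocartesian squares.
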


\subsection{Proof of Lemma~\ref{lemfactuse}}
For a map $f\colon A\to B$ in $\aC$, say that a mapping cylinder 
\[
\xymatrix@R-1pc{%
&B\ar[d]\ar[dr]^{=}\\
A\ar@{ >->}[r]\ar@{..>}@/_1em/[rr]_{f}&X\ar[r]^{\htp}&B,
}
\]
is a \term{strong} mapping cylinder when the map $A\vee B\to X$ is a
cofibration.  Let $\MCp\aC$ be the full subcategory of the category of mapping
cylinders $\MC\aC$ consisting of the strong mapping cylinders.  For a
fixed map $f$ in $\aC$, we write $\MCp f$ for the category of strong
mapping cylinders for $f$; this is the subcategory of $\MCp\aC$
consisting of the objects that go to the object $f$ of $\Ar\aC$ and
the maps that go to the identity map of $f$ in $\Ar\aC$ under the
forgetful functor 
$\MCp\aC\to\Ar \aC$.

Now assume that $\aC$ admits factorization.
Consider the bisimplicial set $T_{\ssdot,\ssdot}$ whose 
set of $p,q$-simplices $T_{p,q}$ consists of the commuting diagrams in
$\MCp\aC$ 
\[
\xymatrix@-1pc{%
\mathbf{X}_{0,0}\ar[r]\ar[d]&
\mathbf{X}_{1,0}\ar[r]\ar[d]&
\dotsb\ar[r]
&\mathbf{X}_{p-1,0}\ar[r]\ar[d]
&\mathbf{X}_{p,0}\ar[d]\\
\mathbf{X}_{0,1}\ar[r]\ar[d]&
\mathbf{X}_{1,1}\ar[r]\ar[d]&
\dotsb\ar[r]
&\mathbf{X}_{p-1,1}\ar[r]\ar[d]
&\mathbf{X}_{p,1}\ar[d]\\
\vdots\ar[d]&\vdots\ar[d]&&\vdots\ar[d]&\vdots\ar[d]\\
\mathbf{X}_{0,q}\ar[r]&
\mathbf{X}_{1,q}\ar[r]&
\dotsb\ar[r] 
&\mathbf{X}_{p-1,q}\ar[r]
&\mathbf{X}_{p,q}
}
\]
where each of the vertical maps $\mathbf{X}_{i,j}\to
\mathbf{X}_{i,j+1}$ forgets to an identity 
morphism $\id_{f}$ in the category $\Ar\aC$.  We have a map from the diagonal
simplicial set to $N\subdot\MC$ that takes the diagram above (for
$p=q$) to the sequence
\[
\mathbf{X}_{0,0}\to \mathbf{X}_{1,1}\to \dotsb \to \mathbf{X}_{q,q}.
\]
We show that the composite map to $N\subdot\Ar\aC$ is a universal
simplicial equivalence.

The composite map $T\subdot\to N\subdot\Ar\aC$ is the diagonal of a
map of the bisimplicial sets $T_{p,q}\to N_{p}\Ar\aC$ induced by the
forgetful functor $\MCp\to\Ar\aC$, where
we regard $N_{p}\Ar\aC$ as constant in the second simplicial
direction.  Since we can regard any simplicial map $Z\subdot \to
N\subdot\Ar\aC$ as a bisimplicial map, constant in the second
simplicial direction,
and since the diagonal preserves pullbacks, it suffices to show that
for each $p$, the map of simplicial sets from $T_{p,\ssdot}$ to the
constant simplicial set $N_{p}\Ar\aC$ is a weak equivalence.
Moreover, identifying the category $N_{p}\Ar\aC$ with the category
$\Ar N_{p}\aC$, this amounts to showing that for a map $f$ between
objects of $N_{p}\aC$, the category of strong mapping cylinders for
$f$ has contractible nerve.  Since the Waldhausen category $N_{p}\aC$
admits factorizations when $\aC$ does, it suffices to treat the case
$p=0$. 

Thus, we need to show that for every $f\colon A\to B$ in $\aC$, the
category $\MCp f$ has contractible nerve.  We view $\MCp f$ as a
subcategory of the category $\aC\mid f$ of diagrams 
\[
A\vee B\to X \to B
\]
in $\aC$ such that the composite map $A\vee B\to B$ is $f+\id_{B}$.
We apply Waldhausen's argument for the Approximation Theorem as
generalized in \cite[A.2]{Schlichting}.  First, observe that
factorization implies that $\MCp f$ is nonempty.
Since after suitable simplicial approximation and subdivision any
homotopy class of maps from a sphere to the geometric realization of
$\MCp$ is represented by the geometric 
realization of a functor from a finite partially ordered set into
$\MCp$, it suffices to show that any functor $\alpha$ from a
finite partially ordered set $\aP$ to $\MCp$ admits a zigzag of
natural transformations to a constant functor 
\cite[A.10]{Schlichting}.  The key idea is
to inductively apply factorization so that colimits over sub-posets
exist as iterated pushouts over cofibrations; this approach
constructs a functor $\beta \colon \aP\to \MCp f$ and natural
transformation $\beta \to \alpha$ 
such that the colimit of $\beta \colon \aP\to \aC\mid f$ (exists and)
can be constructed as an iterated pushout over cofibrations
\cite[A.6]{Schlichting}.  This colimit is not an element in $\MCp f$,
but applying factorization in $\aC$, gives an object $\mathbf{X}$ in $\MCp f$
and a map $\colim_{\aP}\beta \to \mathbf{X}$ in $\aC\mid f$.  This then gives a
natural transformation from $\beta$ to the constant functor on $\mathbf{X}$.
\qed

\section{Generalizing to the non-functorial case}
\label{appmain}

In this appendix, we go section by section through the paper and
indicate the changes in statements and proofs needed for the case
when the required factorizations are not functorial.

\subsection{Introduction}

Statements are made in the non-functorial case.

\subsection{Weakly exact functors}

Corollary~\ref{corusewkexact} substitutes for Theorem~\ref{thmweakexact}
for categories that have a USE of factorizations for weak cofibrations
in place of FFWC.  Lemma~\ref{lemfactuse} implies that Waldhausen
categories that admit factorization in particular have a USE of factorization
for weak cofibrations.

The hypothesis of FMCWC in Theorem~\ref{thmhocoend} generalizes to the
hypothesis of a USE of 
mapping cylinders for weak cofibrations.  Here is a full statement:

\begin{thm}\label{appthmhocoend}
Let $\aC$ be a saturated Waldhausen category that has a USE of mapping
cylinders for weak cofibrations.
\begin{enumerate}
\item
For $n>1$, the nerve of $\w\Spdot[n] \aC$ is weakly equivalent to
the homotopy coend 
\[
\hocoendlim_{(X_{1},\dotsc,X_{n})\in \w\aC^{n}}\Lco\aC(X_{n-1},X_{n})\times \dotsb \times
\Lco\aC(X_{1},X_{2}),
\]
naturally in weakly exact functors.  
\item The nerve of $\w\aC$ is weakly equivalent to the disjoint
union of $B\Laut{X}$ over the weak equivalence classes of objects of
$\aC$.
\item
For $n\geq 1$, the nerve of $\w\Spdot[n] \aC$ is weakly equivalent
to the total space
of a fibration where the base is 
the disjoint union of 
\[
B\Laut{X_{n}}\times \dotsb \times B\Laut{X_{1}}
\]
over $n$-tuples of weak equivalences
classes of objects of $\aC$, and the fiber is equivalent to
\[
\Lco\aC(X_{n-1},X_{n})\times \dotsb \times
\Lco\aC(X_{1},X_{2}).
\]
for $n>1$ and contractible for $n=1$.
\end{enumerate}
\end{thm}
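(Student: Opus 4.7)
The plan is to follow the same three-step architecture as the proof of Theorem~\ref{thmhocoend} in Section~\ref{secpdecomp}, but to substitute USE-based inputs from Appendix~\ref{appbm} wherever the functorial version invoked FFWC or FMCWC. Part~(ii) is formally independent of factorization: the cartesian square of categories relating $\WW(X,X)$, $\w\aC\bs X$, and $\w\aC$ satisfies the hypotheses of Theorem~\ref{thmC} purely because each undercategory $\w\aC\bs X$ has contractible nerve, and the identification $\Laut{X}\htp \WW(X,X)$ is then a consequence of HCLF. Part~(iii) follows from part~(i), part~(ii), and Proposition~\ref{prophocoend}, exactly as in the functorial case.

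For part~(i) the essential input is that $\aC$, $\wco\aC$, and $\w\aC$ each admit a HCLF. First I would verify that Theorem~\ref{thmHCLF} goes through under a USE of mapping cylinders for weak cofibrations in place of FMCWC. The only place in that proof which uses functoriality is Lemma~\ref{lemHCLF}, where one replaces the backward weak equivalence $f$ in the letter $\Wi$ of a word $\Wi\C^{i}\Wi\C^{j}$ by a cofibration $Z\hookrightarrow Tf$ together with a split weak equivalence $Tf\to Y_{1}$. The USE of mapping cylinders provides, by pulling the universal simplicial equivalence $T\subdot\to N\MCp\aC$ back along the nerve of the relevant word category, a simplicial set equipped with a universal simplicial equivalence to that nerve together with a coherent choice of strong mapping cylinder at every vertex. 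The zigzag of natural transformations in Lemma~\ref{lemHCLF} then becomes a zigzag of simplicial maps out of this pullback, and by the USE property the two projections to the word category in question are weakly equivalent. Theorem~\ref{thmwordcomp} and Corollary~\ref{corcomp} depend only on HCLF, so they also persist.

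With HCLF in hand I would replace the use of Proposition~\ref{propfprime} by Corollary~\ref{corspdot}, which requires only a USE of factorizations for weak cofibrations and is therefore available. The bisimplicial reductions $\mathbb{A}\Rightarrow\mathbb{B}\Rightarrow\mathbb{C}$ of Section~\ref{secpdecomp} and the final identification with the homotopy coend use only HCLF (for the insertion-of-identity moves and the comparison between short and long words) and the contractibility of undercategories of weak equivalences, neither of which requires functorial factorization. The main obstacle is organizing the pullback of $T\subdot$ to produce coherently chosen mapping cylinders inside word categories of arbitrary length: since the argument needs mapping cylinder replacements in every backward letter simultaneously, the correct construction is to pull $T\subdot$ back inductively letter-by-letter, in the same spirit as the iterated pullback in the proof of Theorem~\ref{thmfpdot}.(i), obtaining a universal simplicial equivalence onto the original word nerve through which the insertion-of-identity and pushout moves factor. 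Finally, the naturality in weakly exact functors follows because such functors preserve weak equivalences, weak cofibrations, and homotopy cocartesian squares, and Corollary~\ref{corusewkexact} guarantees that they induce maps on $N\w\Spdot[n]\aC$.
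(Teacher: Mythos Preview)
Your architecture is the paper's: establish HCLF under a USE of mapping cylinders by redoing Lemma~\ref{lemHCLF} via a pullback of $T_\bullet$, then rerun Section~\ref{secpdecomp} verbatim, with parts~(ii) and~(iii) following formally. The USE-based proof of Lemma~\ref{lemHCLF} you describe is exactly what the paper does in Appendix~B.

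Two small corrections. First, Corollary~\ref{corspdot} is not a substitute for Proposition~\ref{propfprime}: the former compares $\w\Sdot[n]\aC$ with $\w\Spdot[n]\aC$, while the latter compares $\w\Spdot[n]\aC$ with $\w F'_{n-1}\aC$, and it is the latter reduction that feeds the bisimplicial argument $\mathbb{A}\Rightarrow\mathbb{B}\Rightarrow\mathbb{C}$. The paper instead records the direct generalization of Proposition~\ref{propfprime} (Proposition~\ref{apppropfprime}), which follows from Theorem~\ref{thmfpdot}.(ii) together with Corollary~\ref{corspdot}. Second, your ``main obstacle'' paragraph is unnecessary: once HCLF is established, the reductions $\mathbb{A}\Rightarrow\mathbb{B}\Rightarrow\mathbb{C}$ and Lemma~\ref{lemreversi} invoke HCLF only as a black box (via insertion-of-identity weak equivalences and contractibility of undercategories), so no further iterated pullback of $T_\bullet$ is required there. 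The single pullback you already described for Lemma~\ref{lemHCLF} suffices.
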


\subsection{Outline of the proof of Theorem~\ref{main}}

The outline proceeds somewhat differently without functorial
factorizations.  First, applying Lemma~\ref{lemfactuse}, let
$T\subdot\to N\Fac\aC$ be a USE of factorizations
for $\aC$.  Next, in place of cone and suspension functors, we define a
category of cone and suspension objects.

\begin{defn}
Let $E\aC$ be the Waldhausen subcategory of $\Sdot[2]\aC$ of objects 
\[
\xymatrix@-1pc{%
X\ar@{ >->}[r]&C\ar[r]&\Sigma
}
\]
such that the initial map $*\to C$ is a weak equivalence.  Let $E'\aC$
be the Waldhausen subcategory of $\Spdot[2]\aC$ of objects $\{A_{i,j}\}$
such that the initial map $*\to A_{0,2}$ is a weak equivalence.
\end{defn}

We have three exact functors $E\aC\to \aC$, the \term{forgetful} functor,
the \term{cone} functor, and the \term{suspension} functor defined by
sending the object pictured above to $X$, $C$, and $\Sigma$
respectively.  We refer to the corresponding functors $E'\aC\to \aC$
by the same names; specifically, for an object $\{A_{i,j}\}$ in
$E'\aC$, the forgetful functor sends it
to $A_{0,1}$, the cone functor sends it to $A_{0,2}$,
and the suspension functor sends it to $A_{1,2}$. 

The Waldhausen categories $E\aC$ and $E'\aC$ inherit factorizations
from $\aC$.  The forgetful functors $E\aC\to\aC$ and $E'\aC\to\aC$
satisfy Waldhausen's approximation property.  Using
Schlichting's extension of Waldhausen's Approximation Theorem
\cite[A.2]{Schlichting}, 
Theorem~\ref{thmhh}, and Theorem~\ref{propapprox}, we obtain the 
following theorem.

\begin{thm}\label{appthmforg}
The forgetful functors $E\aC\to \aC$ and $E'\aC\to \aC$ are
DK-equivalences and induce weak
equivalences on $K$-theory.
\end{thm}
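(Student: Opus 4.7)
The plan is to verify Waldhausen's approximation property for both forgetful functors and then apply the three cited results in turn: Theorem~\ref{propapprox} (via Theorem~\ref{thmpropapprox}) yields the equivalences on homotopy categories of undercategories, Theorem~\ref{thmhh} upgrades these to DK-equivalences, and Schlichting's extension of Waldhausen's Approximation Theorem \cite[A.2]{Schlichting} supplies the $K$-theory equivalences. Each forgetful functor preserves weak equivalences, finite coproducts, and pushouts along cofibrations, since all three are computed componentwise in $\Sdot[2]\aC$ and $\Spdot[2]\aC$; this supplies the hypotheses of Theorem~\ref{thmpropapprox}.

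For approximation property (i), I use that $*\to A_{0,2}$ is a weak equivalence in every object of $E\aC$ and $E'\aC$. Combined with two-out-of-three and the gluing axiom for homotopy cocartesian squares (Proposition~\ref{proppush}), this shows that a map in $E\aC$ or $E'\aC$ is a weak equivalence if and only if its image $A_{0,1}\to B_{0,1}$ in $\aC$ is a weak equivalence: the weak equivalence on the $C$-component follows from two-out-of-three applied to $*\to C_A\to C_B$, and the weak equivalence on the $\Sigma$-component then follows from the gluing axiom applied to the cube built out of the pushout squares defining $\Sigma_A$ and $\Sigma_B$. For the $E\aC$ case of approximation (ii), given an object with data $X_A\to C_A\to \Sigma_A$ and a map $X_A\to X$ in $\aC$, I factor it as a cofibration $X_A\to X'$ followed by a weak equivalence $X'\to X$, then factor the composite $C_A\cup_{X_A}X'\to *$ as a cofibration $C_A\cup_{X_A}X'\to C_B$ followed by a weak equivalence $C_B\to *$, and set $B=(X',C_B,C_B/X')$. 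The resulting $A\to B$ is a cofibration in $\Sdot[2]\aC$ (each component is a cofibration, and the $(0,2)$-pushout map $C_A\cup_{X_A}X'\to C_B$ is a cofibration by construction), and the weak equivalence $X'\to X$ provides the required factorization.

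For $E'\aC$, since $X_A\to C_A$ is only a weak cofibration, the naive pushout $C_A\cup_{X_A}X'$ need not exist as a genuine pushout. The resolution is to use the USE of mapping cylinders for weak cofibrations supplied by Lemma~\ref{lemfactuse}: replace $X_A\to C_A$ by a factorization $X_A\to Y\to C_A$ with the first map a cofibration and the second a weak equivalence, carry out the $E\aC$-style construction on $(X_A,Y,Y/X_A)$, and then reassemble, observing that the $(0,2)$-pushout map is required only to be a weak cofibration (not a cofibration) in the definition of cofibrations for $\Spdot[2]\aC$. The main obstacle is this mapping-cylinder bookkeeping: one must verify that the resulting structure genuinely assembles into a cofibration in $\Spdot[2]\aC$ with the correct homotopy cocartesian squares, using Proposition~\ref{prophococart} to convert the cofibration condition on the replacement back into the weak cofibration condition required by $\Spdot[2]\aC$. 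Once approximation is verified for both functors, the three cited results complete the proof.
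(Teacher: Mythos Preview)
Your proposal is correct and follows essentially the same approach as the paper: the paper's entire argument is the assertion that the forgetful functors satisfy Waldhausen's approximation property, followed by invoking Schlichting's extension \cite[A.2]{Schlichting}, Theorem~\ref{thmhh}, and Theorem~\ref{propapprox}. You have supplied the verification of the approximation property that the paper leaves to the reader, and your construction for part~(ii) in the $E\aC$ case (factor $X_A\to X$, push out the cone, refactor to obtain a contractible $C_B$) is the natural one. One small point worth making explicit is that the paper also asserts, just before the theorem, that $E\aC$ and $E'\aC$ inherit factorizations from $\aC$; you use this implicitly when invoking Theorem~\ref{thmpropapprox}, and it deserves a sentence since it is not entirely automatic in the non-functorial setting.
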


Applying Waldhausen's Additivity Theorem, we obtain the following result.

\begin{cor}\label{appcorsusp}
The suspension functors $E\aC\to \aC$ and $E'\aC\to \aC$ induce weak
equivalences on $K$-theory.
\end{cor}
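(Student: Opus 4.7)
The plan is to establish the result for $E\aC$ first by an application of Waldhausen's Additivity Theorem, and then to transfer the conclusion to $E'\aC$ by comparing both constructions through their shared forgetful functors to $\aC$. Regarding $E\aC$ as a Waldhausen subcategory of $S_{2}\aC$, the defining cofibration sequence of each object assembles into a natural cofibration sequence of exact functors
\[
\text{forgetful}\hookrightarrow \text{cone}\twoheadrightarrow \text{suspension}\colon E\aC\longrightarrow\aC.
\]
Waldhausen's Additivity Theorem \cite[1.4.2,1.3.2.(3)]{Wald} then yields
\[
(\text{cone})_{*}\simeq (\text{forgetful})_{*}+(\text{suspension})_{*}
\]
in the stable category of spectra.

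The next step is to show that $(\text{cone})_{*}$ is null on $K$-theory. By the very definition of $E\aC$, for each object $(X,C,\Sigma)$ the initial map $*\to C$ is a weak equivalence, and these maps are natural in $E\aC$; consequently the constant exact functor at $*$ admits a natural weak equivalence to the cone functor. Since naturally weakly equivalent exact functors induce homotopic maps on $K$-theory, $(\text{cone})_{*}$ is null-homotopic. Combined with the additivity relation, this gives $(\text{suspension})_{*}\simeq -(\text{forgetful})_{*}$ in the stable category; since $(\text{forgetful})_{*}$ is a $K$-theory equivalence by Theorem~\ref{appthmforg}, so is $(\text{suspension})_{*}$.

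For $E'\aC$, the inclusion $E\aC\hookrightarrow E'\aC$ is compatible with both the forgetful and the suspension functors down to $\aC$. Theorem~\ref{appthmforg} implies that both forgetful functors $E\aC\to\aC$ and $E'\aC\to\aC$ are $K$-theory equivalences; applying two-out-of-three to the resulting commuting triangle of forgetful maps shows that $E\aC\hookrightarrow E'\aC$ is itself a $K$-theory equivalence, and the analogous commuting triangle of suspension functors then transfers the conclusion from $E\aC$ to $E'\aC$. The main technical point to verify is that the forgetful, cone, and suspension functors really are exact on $E\aC$ as a Waldhausen subcategory of $S_{2}\aC$ (rather than merely weakly exact) and that the natural transformations between them form objectwise cofibration sequences; these are straightforward levelwise checks, and the comparison step above deliberately avoids the need for a weakly exact version of Additivity on $E'\aC$.
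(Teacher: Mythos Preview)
Your argument is correct and follows the paper's approach: the paper's entire proof is the sentence ``Applying Waldhausen's Additivity Theorem, we obtain the following result,'' and you have spelled this out, including a clean two-out-of-three comparison through the inclusion $E\aC\hookrightarrow E'\aC$ to handle the $E'\aC$ case without needing a weakly exact Additivity theorem. The paper does not make this last step explicit, so your treatment is in fact more complete.
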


For the purposes of generalizing the arguments in
Section~\ref{secpfmain}, we say that an object of $\aC$ is a
suspension object if it is in the image of the suspension functor $E'\aC\to
\aC$.  Then a weakly exact functor between Waldhausen categories that
admit factorization sends suspension objects to suspension objects.
Theorem~\ref{thmprehococart} (proved in Section~\ref{sechococart})
then implies the following result in this language.

\begin{cor}\label{appcordk}
With hypotheses as in Theorem~\ref{main}, the map $LF\colon
L\aC(X,Y)\to L\aD(FX,FY)$ is a weak equivalence when $X$ is a
suspension object. 
\end{cor}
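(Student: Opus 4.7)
The plan is to establish the weak equivalence component by component: first on the component of the trivial map (for any source and target), and then propagate to all components using the H-group structure on $L\aC(X,Y)$ that arises when $X$ is a suspension object.

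First I would handle the component of the trivial map, adapting the argument of Corollary~\ref{corsuspdkequiv} to the non-functorial setting. Given any objects $Z, W$ in $\aC$, factorization provides a cone $CZ$ (from a factorization of $Z \to *$ as a cofibration followed by a weak equivalence) and a suspension $\Sigma Z := CZ/Z$, realized as an object of $E'\aC$. Theorem~\ref{thmprehococart} applied to its defining homotopy cocartesian square, whose $CZ$ and $*$ corners are contractible, yields a weak equivalence
\[L\aC(\Sigma Z, W) \simeq \Omega L\aC(Z, W)\]
based at the trivial map. Iterating the construction to form $\Sigma^{n} Z$ and taking $\pi_{0}$ gives
\[\pi_{n} L\aC(Z, W) \cong \Ho\aC(\Sigma^{n} Z, W)\]
at the trivial basepoint, and analogously for $\aD$. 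Because $F$ is weakly exact it preserves homotopy cocartesian squares, so $F(\Sigma^{n} Z)$ is a suspension object of $\aD$ weakly equivalent in $\Ho\aD$ to any chosen $\Sigma^{n}(FZ)$. Combining with the equivalence $\Ho\aC \to \Ho\aD$ induced by $F$ shows that $LF$ is a weak equivalence on the component of the trivial map, for arbitrary $Z$ and $W$.

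Next, suppose $X = A_{1,2}$ is a suspension object for some $\{A_{i,j}\}$ in $E'\aC$. A further application of Theorem~\ref{thmprehococart} to the defining cocartesian square gives $L\aC(X, Y) \simeq \Omega L\aC(A_{0,1}, Y)$ based at the trivial map, which exhibits $L\aC(X, Y)$ as a loop space and hence as an H-group. Similarly $L\aD(FX, FY)$ is an H-group, using that $F$ preserves the contractibility of $A_{0,2}$ and $A_{1,1}$. The hypothesis that $F$ induces an equivalence $\Ho\aC \to \Ho\aD$ makes $\pi_{0} LF$ a bijection, matching components of the source to components of the target. On the component of the trivial map, $LF$ is a weak equivalence by the preceding paragraph applied to $(A_{0,1}, Y)$ and passage to loop spaces. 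Since H-group translation by any element gives a weak equivalence from a chosen component to the trivial component, compatibly with $LF$ up to homotopy (because $F$ preserves the pinch-map cogroup structure on $X$), I would conclude that $LF$ is a weak equivalence on every component.

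The principal obstacle will be handling the non-functorial choices of suspension coherently under $F$; weak exactness of $F$ ensures that $F$ sends any suspension object of $\aC$ to a suspension object of $\aD$, so any two choices of iterated suspension are canonically weakly equivalent in $\Ho\aD$, which is the degree of naturality required to make the loop-space identifications above compatible with $LF$ up to the weak equivalences needed.
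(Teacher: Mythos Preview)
Your argument is correct and follows the paper's approach: use Theorem~\ref{thmprehococart} on the defining homotopy cocartesian square of the suspension object to identify $L\aC(X,Y)\simeq\Omega L\aC(A_{0,1},Y)$ based at the trivial map (and likewise in $\aD$, since $F$ is weakly exact), and use the iterated-suspension computation of $\pi_n$ at the trivial basepoint together with the equivalence $\Ho\aC\to\Ho\aD$.

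Your H-group translation step, however, is an unnecessary detour. The loop space $\Omega L\aC(A_{0,1},Y)$ at the trivial map depends only on the component of that basepoint, so once your first paragraph establishes that $LF$ is a weak equivalence on the trivial component of $L\aC(A_{0,1},Y)$, looping already yields a weak equivalence on \emph{all} of $L\aC(X,Y)\simeq\Omega L\aC(A_{0,1},Y)$, not merely on its trivial component. (Your justification via a ``pinch-map cogroup structure on $X$'' is also slightly misdirected: in this generality no literal comultiplication on $X$ is available, and the H-group structure you want comes from loop concatenation through the loop-space identification itself; compatibility with $LF$ then follows because $F$ preserves the defining homotopy cocartesian square.)
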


Define $K'\aC$ as the homotopy colimit of the diagram
\[
\xymatrix@-1pc{%
&N(\w\Spdot E'\aC)\ar[dl]_{\htp}\ar[dr]^{\htp}&
&N(\w\Spdot E'\aC)\ar[dl]_{\htp}\ar[dr]^{\htp}&&\dotsb\ar[dl]^-{\htp} \\
N(\w\Spdot\aC)&&
N(\w\Spdot\aC)&&
N(\w\Spdot\aC)&\dotsb 
}
\]
where the leftward arrows are induced by the forgetful functor and the
rightward arrows are induced by the suspension functor.  We have the
corresponding construction $K'\aD$ for $\aD$ and $F$ induces a map
$K'\aC\to K'\aD$.  By Theorem~\ref{appthmforg} and
Corollary~\ref{appcorsusp}, all the maps in the diagram are weak
equivalences and it follows that $K'\aC\to K'\aD$ models the induced
map on delooped $K$-theory spaces.  The proof of Theorem~\ref{main} is
completed by showing that this map is a weak equivalence.

The argument in Section~\ref{secpfmain} generalizes as follows.
According to Theorem~\ref{thmhocoend}, the
commuting square of functors
\[
\xymatrix{%
\w\Spdot[n]\aC\ar[d]_{F}&\w\Spdot[n]E'\aC\ar[l]_{\htp}\ar[r]^{\htp}\ar[d]_{F}&
\w\Spdot[n]\aC\ar[d]_{F}\\
\w\Spdot[n]\aD&\w\Spdot[n]E'\aD\ar[l]^{\htp}\ar[r]_{\htp}&
\w\Spdot[n]\aD
}
\]
(where the lefthand functors are the forgetful functor and the
righthand maps are the suspension functors) induces on nerves a map
modeled by the diagram
\begin{small}
\[
\xymatrix@C-1pc{%
\hocoend L\aC(X_{1},\dotsc,X_{n})\ar[d]_{LF}
&\hocoend LE'\aC(\mathbf{X}_{1},\dotsc,\mathbf{X}_{n})\ar[d]_{LF}
\ar[l]\ar[r]
&\hocoend L\aC(X_{1},\dotsc,X_{n})\ar[d]_{LF}\\
\hocoend L\aD(Y_{1},\dotsc,Y_{n})
&\hocoend LE'\aD(\mathbf{Y}_{1},\dotsc,\mathbf{Y}_{n})
\ar[l]\ar[r]
&\hocoend L\aD(Y_{1},\dotsc,Y_{n})
}
\]%
\end{small}%
where the homotopy coends are over $(X_{1},\dotsc,X_{n})\in \w\aC^{n}$,
$(\mathbf{X}_{1},\dotsc,\mathbf{X}_{n})\in (wE'\aC)^{n}$,
$(Y_{1},\dotsc,Y_{n})\in \w\aD^{n}$, and
$(\mathbf{Y}_{1},\dotsc,\mathbf{Y}_{n})\in (wE'\aD)^{n}$.  The right
hand square factors as
\begin{small}
\[
\xymatrix@C-1pc{%
\hocoend LE'\aC(\mathbf{X}_{1},\dotsc,\mathbf{X}_{n})\ar[d]_{LF}
\ar[r]
&\hocoend L\aC(C_{1},\dotsc,C_{n})\ar[d]_{LF}\ar[r]
&\hocoend L\aC(X_{1},\dotsc,X_{n})\ar[d]_{LF}\\
\hocoend LE'\aD(\mathbf{Y}_{1},\dotsc,\mathbf{Y}_{n})
\ar[r]
&\hocoend L\aD(D_{1},\dotsc,D_{n})\ar[r]
&\hocoend L\aD(Y_{1},\dotsc,Y_{n})
}
\]%
\end{small}%
where the middle homotopy coends are over $n$-tuples of suspension objects in
$\w\aC$ and $\w\aD$.  Corollary~\ref{appcordk} then
implies that the middle vertical arrow is a weak equivalence.  Going
back to the homotopy colimit defining $K'\aC$ and $K'\aD$, we see that
the map $K'\aC\to K'\aD$ is a weak equivalence.  This completes the
proof of Theorem~\ref{main}.

\subsection{Universal simplicial quasifibrations}

No statements or arguments in this section involve factorization.

\subsection{Homotopy calculi of fractions and mapping cylinders}

The hypothesis of FMCWC in Theorems~\ref{thmHCLF} and~\ref{thmwordcomp} (and
implicitly Lemma~\ref{lemHCLF}, which shares the hypothesis of
Theorem~\ref{thmHCLF}) generalizes to the hypothesis of USE of mapping
cylinders for weak cofibrations.  The statements become:

\begin{thm}\label{appthmHCLF}
Let $\aC$ be a Waldhausen category with a USE of mapping cylinders for
weak cofibrations.  Then $\aC$,
$\wco\aC$, and $\w\aC$ admit homotopy
calculi of left fractions.
\end{thm}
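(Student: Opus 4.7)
My plan is to mimic the functorial proof of Theorem~\ref{thmHCLF}, replacing pointwise applications of the functor $T$ by a parameterized construction built from the USE of mapping cylinders. The formal scaffolding of the original proof — the three-case dispatch for $\aC$, $\wco\aC$, $\w\aC$; the reduction via Lemma~\ref{lemtriv} (which is already functor-free) to the case $i>0$; and the focus on the essential content being Lemma~\ref{lemHCLF} — transfers directly. Thus the real work is a non-functorial version of Lemma~\ref{lemHCLF}.

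The central idea is as follows. The simplicial map $N\Wi\C^i\Wi\C^j(A,B)\to N\wcAr\aC$ extracting the middle $\Wi$-arrow (which is a weak equivalence and hence a weak cofibration) allows one to form the pullback
\[
X := T\subdot \times_{N\wcAr\aC} N\Wi\C^i\Wi\C^j(A,B),
\]
and by the USE property the projection $\pi\colon X \to N\Wi\C^i\Wi\C^j(A,B)$ is a universal simplicial equivalence. Over $X$ one has a coherent choice of mapping cylinder for the middle arrow, and the pushout construction of the original Lemma~\ref{lemHCLF} produces a simplicial map $G\colon X \to N\Wi\C^{i+j}(A,B)$. The zigzag of natural transformations in the original proof, which uses the weak equivalences already present in the zigzag together with the structure map $B\to Tf$ built into a mapping cylinder, lifts at this parameterized level to a chain of simplicial homotopies between $\iota \circ G$ and $\pi$, where $\iota\colon N\Wi\C^{i+j}(A,B)\to N\Wi\C^i\Wi\C^j(A,B)$ is the insertion-of-identity functor.

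To conclude, it suffices to show that $G$ is surjective on homotopy groups: combined with $\iota\circ G\htp\pi$ and the fact that $\pi$ is a weak equivalence, this also yields injectivity of $G$ on homotopy groups, and then $\iota$ (sitting in a triangle with $G$ and $\pi$ commuting up to homotopy) is a weak equivalence. Surjectivity of $G$ follows by lifting each simplex $\tau$ of $N\Wi\C^{i+j}(A,B)$ along $\pi$ to a simplex of $X$: the image of $\tau$ under $\iota$ yields a simplex of $N\wcAr\aC$ consisting of identity maps, whose fiber in $T\subdot$ is contractible by the USE property, providing the lift $\tilde\tau$. The second natural transformation in the original proof (showing the composite on $\Wi\C^{i+j}(A,B)$ is homotopic to the identity, via the map $T(\id)\to\id$) then lifts to a simplicial homotopy between $G(\tilde\tau)$ and $\tau$.

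The main obstacle I anticipate is packaging all the natural transformations of the original argument coherently as simplicial homotopies at the parameterized level. Each natural transformation spans several objects of $\MC\aC$ and interlocks the forward weak equivalences with the mapping-cylinder structure maps, so recording the data as simplicial homotopies between maps out of pullbacks of $T\subdot$ will require a careful but routine translation. A secondary subtlety is ensuring the identity simplices of $N\wcAr\aC$ always have nonempty (and in fact contractible) preimage in $T\subdot$, which is automatic from the USE property but worth verifying explicitly.
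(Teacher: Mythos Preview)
Your approach is the paper's: form the pullback $X=T^{i,j}\subdot(A,B)$ of the USE along the middle-arrow functor $\phi\colon N\Wi\C^i\Wi\C^j(A,B)\to N\wcAr\aC$, build $G\colon X\to N\Wi\C^{i+j}(A,B)$ by the pushout construction, and lift the two natural transformations of Lemma~\ref{lemHCLF} to simplicial homotopies. The ingredients are all correct.

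Your final step, however, is packaged less cleanly than the paper's and as literally stated has a small gap. A USE gives weakly contractible fibers, not surjectivity on simplices, so ``lifting each simplex $\tau$ along $\pi$'' does not produce an actual simplex $\tilde\tau$ in general, nor a simplicial map section; you cannot read off surjectivity of $G_*$ this way. The paper avoids this by forming the further pullback $P=X\times_{N\Wi\C^i\Wi\C^j(A,B)}N\Wi\C^{i+j}(A,B)$ and recording both triangles of
\[
\xymatrix{
P\ar@{->>}[d]_{\htp}^{p_1}\ar[r]^{p_2}&X\ar@{->>}[d]^{\htp}_{\pi}\ar@{.>}[dl]_{G}\\
N\Wi\C^{i+j}(A,B)\ar[r]_{\iota}&N\Wi\C^i\Wi\C^j(A,B)
}
\]
as commuting up to generalized simplicial homotopy: the right triangle by your zigzag, and the left triangle by the second natural transformation (this is exactly where your observation that $\phi\circ\iota$ lands in identity arrows enters, via the collapse $Tf\to Y_1$ for $f=\id$). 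Since both verticals are universal simplicial equivalences, a two-out-of-three in the homotopy category gives that $\iota$ is a weak equivalence. This is the same diagram pattern used in the proofs of Theorem~\ref{thmfpdot}(ii) and Corollary~\ref{corspdot}; replacing your simplex-lifting paragraph with the left-triangle homotopy $G\circ p_2\htp p_1$ closes the gap.
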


\begin{thm}\label{appthmwordcomp}
Let $\aC$ be a Waldhausen category with a USE of mapping cylinders for
weak cofibrations.  Then the maps
\begin{gather*}
\WC(A,B)_{\wco}\to \Lco\aC(A,B), \qquad 
\WCW(A,B)_{\wco}\to \Lco\aC(A,B),\\
\WC(A,B)_{\w}\to \Lw\aC(A,B),\qquad\text{and}\qquad 
\WCW(A,B)_{\w}\to \Lw\aC(A,B)
\end{gather*}
are weak equivalences
\end{thm}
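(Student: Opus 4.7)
The plan is to re-run the proof of Theorem~\ref{thmwordcomp} essentially verbatim, substituting Theorem~\ref{appthmHCLF} for Theorem~\ref{thmHCLF} at the single point where FMCWC was actually used. Inspecting the original proof, the only input from the mapping cylinder hypothesis is that $\wco\aC$ and $\w\aC$ admit HCLF, and Theorem~\ref{appthmHCLF} supplies exactly this under the USE hypothesis. So the task is really to verify that no other step secretly depended on functoriality.

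I would treat the weak cofibration cases first; the weak equivalence cases are formally identical with $\w\aC$ in place of $\wco\aC$. Since $\WC(A,B)_{\wco}$ and $\WCW(A,B)_{\wco}$ are unions of components of $\WC(A,B)$ and $\WCW(A,B)$, and $\Lco\aC(A,B)$ is a union of components of $L\aC(A,B)$, and since by Theorem~\ref{appthmHCLF} (applied to $\aC$) the maps $N\WC(A,B)\to L\aC(A,B)$ and $N\WCW(A,B)\to L\aC(A,B)$ are weak equivalences, it suffices to check that on the level of $\pi_0$ the components of $\WC(A,B)_{\wco}$ (resp.\ $\WCW(A,B)_{\wco}$) hit precisely the components of $\Lco\aC(A,B)$.

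The key formal observation, which requires no mapping cylinder hypothesis at all, is that $\WC(A,B)_{\wco}$ is literally the same as the word category $\WC(A,B)$ formed inside the subcategory $\wco\aC$, and similarly for $\WCW(A,B)_{\wco}$. Thus I would take an arbitrary component of $\Lco\aC(A,B)$, represent it by a vertex whose forward arrows are all weak cofibrations (this is the definition of $\Lco\aC$), and reinterpret that vertex as a vertex of $L(\wco\aC)(A,B)$. Since Theorem~\ref{appthmHCLF} says $\wco\aC$ admits HCLF, this vertex is connected in $L(\wco\aC)(A,B)$ to a vertex coming from $\WC(A,B)$ for $\wco\aC$, i.e., from $\WC(A,B)_{\wco}$. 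The inclusion $\wco\aC\hookrightarrow\aC$ induces $L(\wco\aC)(A,B)\to \Lco\aC(A,B)$, transporting this path to $\Lco\aC(A,B)$ and showing the targeted component is in the image. The $\WCW$ case is the same after passing through the two-sided fractions comparison (which follows from HCLF by \cite[6.1,\S9]{DKHammock}).

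The main potential obstacle would have been reproving HCLF for $\wco\aC$ and $\w\aC$ without functorial mapping cylinders, but this has already been dispatched by Theorem~\ref{appthmHCLF}; no other step in the proof of Theorem~\ref{thmwordcomp} makes any appeal to functoriality, so the argument goes through unchanged. The only care needed is to ensure that in applying Theorem~\ref{appthmHCLF} to the subcategories $\wco\aC$ and $\w\aC$, we do in fact have the statement, rather than needing to reprove it for these subcategories; but Theorem~\ref{appthmHCLF} explicitly asserts HCLF for all three of $\aC$, $\wco\aC$, and $\w\aC$, which is precisely what we need.
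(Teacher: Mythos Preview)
Your proposal is correct and matches the paper's approach exactly: the paper simply asserts that the proof of Theorem~\ref{thmwordcomp} from Theorem~\ref{thmHCLF} generalizes immediately to prove Theorem~\ref{appthmwordcomp} from Theorem~\ref{appthmHCLF}, and you have accurately spelled out why --- the only appeal to the mapping cylinder hypothesis in the original proof was the HCLF for $\wco\aC$ and $\w\aC$, which Theorem~\ref{appthmHCLF} supplies under the USE hypothesis.
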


The proof of Theorem~\ref{thmHCLF} from Lemma~\ref{lemHCLF} and
Theorem~\ref{thmwordcomp} from Theorem~\ref{thmHCLF} generalize
immediately to Theorems~\ref{appthmHCLF} and~\ref{appthmwordcomp}.
The proof of Lemma~\ref{lemHCLF} is modified as follows:

\begin{proof}[Proof of Lemma~\ref{lemHCLF}]
Let $T\subdot\to N\Fac\aC$ be a
USE of mapping cylinders for weak cofibrations.  Let $T\subdot^{i,j}(A,B)$
be the pullback
\[
\xymatrix{%
T^{i,j}\subdot\ar[r]\ar@{->>}[d]_{\htp}
&T\subdot\ar@{->>}[d]^{\htp}\\
N\Wi \C^i \Wi \C^j(A,B)\ar[r]_(.65){\phi}&N\wcAr\aC,
}
\]
where the map $\phi$ is induced by the functor that takes the object 
\[
\xymatrix@-1pc{%
A\ar[r]&Y_{j}\ar[r]&\dotsb\ar[r]&Y_{1}&Z\ar[l]_{f}^{\htp}\ar[r]
&X_{i}\ar[r]&\dotsb\ar[r]&X_{1}&B\ar[l]_(.4){\htp}
}
\]
of $\Wi \C^i \Wi \C^j(A,B)$ to the object $f$ of $\wcAr\aC$.
Then the pushout construction in the proof of this lemma in
Section~\ref{sechclf} constructs a map 
\[
T^{i,j}\subdot(A,B)\to N\Wi\C^{i}\C^{j}(A,B)
\]
and the natural transformations there give
simplicial homotopies to make the diagram
\[
\xymatrix@C-1pc{%
T^{i,j}\subdot(A,B)\times_{N\Wi \C^i \Wi \C^j(A,B)}
N\Wi\C^{i}\C^{j}(A,B)\ar@{->>}[d]_{\htp}\ar[r]&
T^{i,j}\subdot(A,B)\ar@{->>}[d]_{\htp}\ar@{.>}[dl]\\
N\Wi\C^{i}\C^{j}(A,B)\ar[r]&N\Wi \C^i \Wi \C^j(A,B)
}
\]
commute up to generalized simplicial homotopy.
\end{proof}

\subsection{Homotopy cocartesian squares in {W}aldhausen categories}

We note that the hypothesis of a USE of mapping cylinders for weak
cofibrations also implies
the existence of mapping cylinders for weak cofibrations and HCLF.
The statements and proofs in this section are unchanged.

\subsection{Proof of Theorems~\ref{thmhocoendone}, \ref{thmhocoendtwo},
and~\ref{thmhocoend}}

Proposition~\ref{propfprime} generalizes to the case of a USE of
factorizations for weak cofibrations by the work in the previous
appendix.  Combining Lemma~\ref{lemfactuse} as well, we have the
following statement:

\begin{prop}\label{apppropfprime}
If $\aC$ has a USE of factorizations of weak cofibrations or $\aC$
admits factorizations, then the forgetful functor $\w\Spdot[n]\aC\to
\w F'_{n-1}\aC$ induces a weak equivalence on nerves.
\end{prop}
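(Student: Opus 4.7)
The plan is to reduce this to results already established in the appendix by exploiting the commutative square of forgetful functors
\[
\xymatrix{
\w\Sdot[n]\aC \ar[r] \ar[d] & \w F_{n-1}\aC \ar[d] \\
\w\Spdot[n]\aC \ar[r] & \w F'_{n-1}\aC,
}
\]
where both horizontal arrows send $\{A_{i,j}\}$ to its top row $A_{0,1}\to\dotsb\to A_{0,n}$ and both vertical arrows are the evident inclusions. Commutativity is immediate from the definitions.

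First I would dispose of the hypothesis disjunction: if $\aC$ admits factorizations then Lemma~\ref{lemfactuse} produces a USE of mapping cylinders for $\Ar\aC$, and this restricts to a USE of mapping cylinders (hence of factorizations) for $\wcAr\aC$. So it suffices to treat the case where $\aC$ has a USE of factorizations for weak cofibrations.

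Next I would identify the top horizontal arrow as an equivalence of categories, exactly as in the original functorial proof: an object of $\Sdot[n]\aC$ is determined up to canonical isomorphism by its top row of cofibrations (the remaining $A_{i,j}$ being the quotients $A_{0,j}/A_{0,i}$), and the inverse equivalence is exact. Hence the top arrow induces a weak equivalence on nerves. The left vertical arrow induces a weak equivalence on nerves by Corollary~\ref{corspdot}, and the right vertical arrow induces a weak equivalence on nerves by Theorem~\ref{thmfpdot}.(ii). A three-out-of-four argument then forces the bottom arrow to induce a weak equivalence on nerves, which is the desired conclusion.

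There is essentially no obstacle here beyond verifying that the non-functorial replacements are the right ones; the key technical inputs (Corollary~\ref{corspdot} and Theorem~\ref{thmfpdot}.(ii)) already do the work of absorbing the absence of functoriality through the USE formalism, so this proposition is a bookkeeping consequence of the machinery set up in Appendix~\ref{appbm}.
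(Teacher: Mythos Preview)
Your proposal is correct and follows essentially the same approach as the paper: the paper states that Proposition~\ref{propfprime} ``generalizes to the case of a USE of factorizations for weak cofibrations by the work in the previous appendix'' and invokes Lemma~\ref{lemfactuse} to cover the factorization hypothesis, which is precisely the commutative-square-and-two-out-of-three argument you spell out using Corollary~\ref{corspdot} and Theorem~\ref{thmfpdot}.(ii).
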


\subsection{Proof of Theorem~\ref{thmhh}}

Most of the statements and proofs in this section are written in terms of
non-functorial factorization.  The only exception is
Proposition~\ref{propunderiscof}, which we need to prove in the
non-functorial case.

\begin{proof}[Proof of Proposition~\ref{propunderiscof}]
Let $A\to B$ and $A\to C$ be objects in $\COF{A}$.  For each word
$\Word$ in $\C$ and $\Wi$, we have categories $\Word_{\co}(B,C)$ and
$\Word(B,C)$ of diagrams in $\COF{A}$ and $\aC\bs A$, respectively, as
in Section~\ref{sechclf}; it suffices to show that the inclusion
$\Word_{\co}(B,C)\to \Word(B,C)$ induces a weak equivalence on
nerves. 

Fix a word $\Word$; the argument for Lemma~\ref{lemtriv}
shows that it suffices to consider the case where $\Word$ contains no
subword of the form $\Wi\Wi$.  Any two letter subword of $\Word$ is
then of the form $\C\Wi$, $\C\C$, or $\Wi\C$:
\[
\xymatrix@-1pc{%
\C\Wi:&
B\ar@{{}-{}}[r]&\cdots\ar@{{}-{}}[r]
&X_{i+1}&X_{i}\ar[r]\ar[l]_{\htp}&X_{i-1}\ar@{{}-{}}[r]
&\cdots\ar@{{}-{}}[r]&C\\
\C\C:&
B\ar@{{}-{}}[r]&\cdots\ar@{{}-{}}[r]
&X_{i+1}\ar[r]&X_{i}\ar[r]&X_{i-1}\ar@{{}-{}}[r]
&\cdots\ar@{{}-{}}[r]&C\\
\Wi\C:&
B\ar@{{}-{}}[r]&\cdots\ar@{{}-{}}[r]
&X_{i+1}\ar[r]&X_{i}&X_{i-1}\ar[l]_{\htp}\ar@{{}-{}}[r]
&\cdots\ar@{{}-{}}[r]&C
}
\]
In each of these cases we call $X_{i}$ the \term{pivot} of the subword.
Let $\Word_{\co(\C\Wi)}(B,C)$ be the
full subcategory of $\Word(B,C)$ consisting of those objects where
the structure maps $A\to X_{i}$ is a cofibration whenever $X_{i}$ is
the pivot of a $\C\Wi$ subword.  Likewise, let
$\Word_{\co(\C\Wi,\C\C)}(B,C)$ be the full subcategory where the
structure map is a cofibration for the pivots of all $\C\Wi$ and $\C\C$
subwords.  We then have inclusions of full subcategories
\[
\Word_{\co}(B,C)\to
\Word_{\co(\C\Wi,\C\C)}(B,C)\to
\Word_{\co(\C\Wi)}(B,C)\to 
\Word(B,C)
\]
and it suffices to show that each of these induces weak equivalences
on nerves.

We start with the inclusion $\Word_{\co(\C\Wi)}(B,C)\to
\Word(B,C)$.  By Lemma~\ref{lemfactuse}, we have a USE of
factorizations $T\subdot\to N\Fac\aC$. 
For each subword $\C\Wi$, we have a functor
$\Word(B,C)\to \Ar\aC$ sending the object in $\Word(B,C)$ pictured
above to the object $A\to X_{i}$ in $\Ar\aC$.  Let $U\subdot$ be the
pullback of the diagram below.
\[
\xymatrix{%
U\subdot\ar[r]\ar@{->>}[d]_{\htp}
&T\subdot\times \dotsb \times T\subdot\ar@{->>}[d]_{\htp}\\
N\Word(B,C)\ar[r]&N\Ar\aC\times \dotsb \times N\Ar\aC
}
\]
Then from the map $T\subdot\to N\Fac\aC$, we get a map $U\subdot\to
\Word_{\co(\C\Wi)}(B,C)$ and simplicial homotopies making the diagram
\[
\xymatrix{%
U\subdot\times_{N\Word(B,C)}N\Word_{\co(\C\Wi)}(B,C)
\ar@{->>}[d]_{\htp}\ar[r]&
U\subdot
\ar@{->>}[d]_{\htp}\ar@{.>}[dl]\\
N\Word_{\co(\C\Wi)}(B,C)\ar[r]&N\Word(B,C)
}
\]
commute up to simplicial homotopy.

For the inclusion $\Word_{\co(\C\Wi,\C\C)}(B,C)\to
\Word_{\co(\C\Wi)}(B,C)$, we work by induction.  Let $\Word_{i}$ be
the full subcategory of $\Word_{\co(\C\Wi)}(B,C)$ consisting of the
objects for which the structure maps are cofibrations for the pivots
of the last $i$ subwords of the form $\C\C$.  Then
$\Word_{\co(\C\Wi)}(B,C)=\Word_{0}(B,C)$ and
$\Word_{\co(\C\Wi,\C\C)}(B,C)$ is $\Word_{n}(B,C)$ for some $n$; we
show that the inclusions 
\[
\Word_{i+1}(B,C)\to \Word_{i}(B,C)
\]
induce weak equivalences on nerves.  In $\Word_{i}(B,C)$, consider the
pivot of the $(i+1)$-st from last subword of the form $\C\C$:
\[
\xymatrix@-1pc{%
B\ar@{{}-{}}[r]&\cdots\ar@{{}-{}}[r]
&X_{j+1}\ar[r]&X_{j}\ar[r]&X_{j-1}\ar@{{}-{}}[r]
&\cdots\ar@{{}-{}}[r]&C\\
}
\]
Either $X_{j+1}=B$, or $X_{j+1}$ is a pivot of a subword $\C\Wi$, or
$X_{j+1}$ is the pivot of the $i$-th from last subword of the form
$\C\C$.  In any of these cases, the structure map $A\to X_{j+1}$ is a
cofibration.  Using the functor $\Word_{i}(B,C)\to \Ar\aC$ sending the
object in $\Word_{i}(B,C)$ pictured above to the object $X_{j+1}\to
X_{j}$ of $\Ar\aC$, we get the solid arrow diagram below.
\[
\xymatrix{%
T\subdot\times_{N\Ar\aC}N\Word_{i+1}(B,C)\ar@{->>}_{\htp}[d]\ar[r]&
T\subdot\times_{N\Ar\aC}N\Word_{i}(B,C)\ar@{->>}_{\htp}[d]\ar@{.>}[dl]\\
N\Word_{i+1}(B,C)\ar[r]&N\Word_{i}(B,C)
}
\]
The map $T\subdot\to N\Fac\aC$ induces the dotted arrow and simplicial
homotopies making both triangles commute up to simplicial homotopy.

For the inclusion $\Word_{\co}(B,C)\to \Word_{\co(\C\Wi,\C\C)}(B,C)$,
consider the subwords of the form $\Wi\C$
\[
\xymatrix@-1pc{%
B\ar@{{}-{}}[r]&\cdots\ar@{{}-{}}[r]
&X_{k+1}\ar[r]&X_{k}&X_{k-1}\ar[l]_{\htp}\ar@{{}-{}}[r]
&\cdots\ar@{{}-{}}[r]&C.
}
\]
If the subword is the final two symbols of $\Word$, then $X_{k+1}$ is
$B$; if not, then the next letter in $\Word$ is $\Wi$ or $\C$, and $X_{k+1}$
is in the middle of a $\C\Wi$ or $\C\C$ subword.  In either case, the structure
map $A\to X_{k+1}$ is a cofibration.  Likewise the structure map $A\to
X_{k-1}$ is a cofibration.  We therefore obtain a functor
$\Word_{\co(\C\Wi,\C\C)}(B,C)\to \Ar\aC$ taking the object of 
$\Word_{\co(\C\Wi,\C\C)}(B,C)$ pictured above to the object
$X_{k-1}\cup_{A}X_{k+1}\to X_{k}$ of $\Ar\aC$.  The same argument as
in the $\C\Wi$ subword argument then shows that the inclusion of 
$\Word_{\co}(B,C)$ in $\Word_{\co(\C\Wi,\C\C)}(B,C)$ induces a weak
equivalence on nerves, and completes the proof.
\end{proof}

\subsection{Proof of Theorem~\ref{propapprox}}

No statements or arguments in this section involve functoriality of
the factorizations.


\bibliographystyle{plain}
\def\MR#1{}

\providecommand{\bysame}{\leavevmode\hbox to3em{\hrulefill}\thinspace}
\providecommand{\MR}{\relax\ifhmode\unskip\space\fi MR }
\providecommand{\MRhref}[2]{%
  \href{http://www.ams.org/mathscinet-getitem?mr=#1}{#2}
}
\providecommand{\href}[2]{#2}

\end{document}